\documentclass{amsart}
\usepackage{amsmath, amssymb, url}

\input xy
\xyoption{all}

\newtheorem{thm}{Theorem}[section]
\newtheorem{lem}[thm]{Lemma}
\newtheorem{cor}[thm]{Corollary}

\newtheorem{prop}[thm]{Proposition}

\theoremstyle{definition}
\newtheorem{example}[thm]{Example}
\newtheorem{rmk}[thm]{Remark}
\newtheorem{defn}[thm]{Definition}

\newtheorem{test}[thm]{Test}

\def\E{\mathcal{E}}
\def\K{\mathcal{K}}
\def\L{\mathcal{L}}
\def\M{\mathcal{M}}
\def\mN{\mathcal{N}}
\def\O{\mathcal{O}}
\def\N{\mathbb{N}}
\def\Q{\mathbb{Q}}
\def\Z{\mathbb{Z}}
\def\m{\mathfrak{m}}
\def\x{\mathbf{x}}
\def\y{\mathbf{y}}
\def\rv{\mathrm{rv}}
\renewcommand\vec[1]{\ensuremath\boldsymbol{#1}}

\begin{document}

\title{Multiplicative Valued Difference Fields}

\author{Koushik Pal}


\begin{abstract}
The theory of valued difference fields $(K, \sigma, v)$ depends on how the valuation $v$ interacts with the automorphism $\sigma$. Two special cases have already been worked out - the \textit{isometric} case, where $v(\sigma(x)) = v(x)$ for all $x\in K$, has been worked out by Luc Belair, Angus Macintyre and Thomas Scanlon \cite{BMS}; and the \textit{contractive} case, where $v(\sigma(x)) > n v(x)$ for all $n\in\mathbb{N}$ and $x\in K^\times$ with $v(x) > 0$, has been worked out by Salih Azgin \cite{A}. In this paper we deal with a more general version, called the \textit{multiplicative} case, where $v(\sigma(x)) = \rho\cdot v(x)$, where $\rho\; ( > 0)$ is interpreted as an element of a real-closed field. We give an axiomatization and prove a relative quantifier elimination theorem for such a theory.
\end{abstract}

\maketitle

\section{Introduction}
A \textit{valued field} is a structure $\mathcal{K} = (K, \Gamma, k; v, \pi)$, where $K$ is the underlying field, $\Gamma$ is an ordered abelian group (called the \textit{value group}), and $k$ is a field; $v: K\to\Gamma\cup\{\infty\}$ is the (surjective) \textit{valuation map}, with the \textit{valuation ring} (also called the \textit{ring of integers}) given by $$\O_K := \{a\in K: v(a)\ge 0\},$$ with a unique maximal ideal given by $$\m_K := \{a\in K: v(a) > 0\};$$ and $\pi: \mathcal{O}_K\to k$ is a surjective ring morphism. Then $\pi$ induces an isomorphism of fields $$a + \m_K\mapsto\pi(a) : \O_K/\m_K\to k,$$ and we identify the residue field $\O_K/\m_K$ with $k$ via this isomorphism. Accordingly $k$ is called the \textit{residue field}. When $K$ is clear from the context, we denote $\O_K$ and $\m_K$ by $\O$ and $\m$ respectively.

A \textit{valued difference field} is a valued field $\K$ as above with a distinguished automorphism (denoted by $\sigma$) of the base field $K$, which also satisfies $\sigma(\O_K) = \O_K$. It then follows that $\sigma$ induces an automorphism of the residue field: $$\pi(a)\mapsto \pi(\sigma(a)) : k \to k, \;\;\;\; a\in\O_K.$$ We denote this automorphism by $\bar{\sigma}$; and $k$  equipped with $\bar{\sigma}$ is called the \textit{residue difference field} of $\K$. Likewise, $\sigma$ induces an automorphism of the value group as well: $$\gamma\mapsto\sigma(\gamma) := v(\sigma(a)), \;\;\;\; \mbox{where } \gamma = v(a).$$ We denote this automorphism also by $\sigma$, and construe the value group as an ordered abelian group equipped with this special automorphism, and we call it the \textit{valued difference group}.

Depending on how the automorphism interacts with the valuation, we get different structures and hence different theories. For example, $\sigma$ is called \textit{isometric} if $v(\sigma(x)) = v(x)$ for all $x\in K$; and is called \textit{contractive} if $v(\sigma(x)) > nv(x)$ for all $n\in\N$, and $x\in K^{\times}$ with $v(x) > 0$. The existence of model companions of both these theories have been worked out in detail \cite{S}, \cite{BMS}, \cite{AD}, \cite{A}. There also has been a related work recently that needs mention. It is the work on valued ordered difference fields and rings by Fran\c coise Point \cite{P}. Note that in this theory the valued field itself is ordered, which, at least on the face of it, makes this theory quite different from the others mentioned.

Note that no matter how the automorphism interacts with the valuation, if we want any hope of having a model companion of the theory of a valued difference field, we better have a model companion of the theory of the valued difference group at least. But unfortunately, by Kikyo and Shelah's theorem \cite{KS}, the theory of a structure with the strict order property (e.g., an ordered abelian group) and a distinguished automorphism doesn't have a model companion. So we need to put some restriction on the automorphism. In the isometric case, $\sigma$ induces the identity automorphism on the value group; and so in this case, the value group is only an ordered abelian group, whose model companion is the theory of the ordered divisible abelian groups ($ODAG$). However, in the case when the induced automorphism is not the identity, the model companion (if it exists) should be able to decide how to extend the order between linear difference operators. In particular, for any $L(\gamma) = \sum_{i = 0}^{n} a_i \sigma^{i}(\gamma)$, where $a_i\in\Z$, $a_n\not= 0$ and $\gamma > 0$, the model companion should be able to decide when $L(\gamma) > 0$. In the contractive case, it is easily decided by the following condition: $$L(\gamma) > 0 \iff a_n > 0.$$

However, in more general cases, the decision criteria are not so simple. For example, it is not known whether the theory of an ordered abelian group $\Gamma$ with a strictly increasing automorphism ($\sigma(\gamma) > \gamma$ for all $0 < \gamma \in \Gamma$) has a model companion. So we restrict ourselves to a more specific case, where we impose that the induced automorphism $\sigma$ on the value group should satisfy the following axiom (scheme): for each $a_0, \ldots, a_n\in\Z$ and $L(\gamma) = \sum_{i = 0}^{n} a_i\sigma^i(\gamma)$,
$$\Big(\forall \gamma > 0 (L(\gamma) > 0)\Big)\bigvee\Big(\forall \gamma > 0 (L(\gamma) = 0)\Big)\bigvee\Big(\forall \gamma > 0 (L(\gamma) < 0)\Big)\;\;\;\;\;\;\;\;\;\;\;\;\;\;\;\;(\mbox{Axiom OM}).$$
It follows that for all $a, b\in\Z_+$,
$$\Big(\forall \gamma > 0 (a\sigma(\gamma) > b\gamma)\Big)\bigvee\Big(\forall \gamma > 0 (a\sigma(\gamma) = b\gamma)\Big)\bigvee\Big(\forall \gamma > 0 (a\sigma(\gamma) < b\gamma)\Big),$$
which is nothing but cuts with respect to rational multiples. Equivalently, we can also represent $\sigma$ as $$\sigma(\gamma) = \rho\cdot\gamma$$ for all $\gamma\in\Gamma$, where $\rho$ is interpreted as an element of an ordered ring, $\rho > 0$ and we make sense of the ``multiplication'' by defining the type of $\rho$ by Axiom OM. $\Gamma$ is then understood as an ordered module over that ordered ring. We call such a $\Gamma$ a \textit{multiplicative ordered difference abelian group} (henceforth, $MODAG$). We will show in Section 2 that the theory of such a $\Gamma$ has a model companion, the theory of multiplicative ordered divisible difference abelian group (henceforth, $MODDAG$).

In this paper we are thus interested in dealing with this more general case. We call $\sigma$ \textit{multiplicative} if $\sigma$ induces the structure of a $MODAG$ on $\Gamma$ via the rule 
$$v(\sigma(x)) = \rho. v(x)\;\;\;\;\mbox{ for all }x\in K,$$ 
where $\rho > 0$ (as interpreted in an ordered ring). The induced automorphism on the value group then satisfies $\sigma(\gamma) = \rho. \gamma$ for all $\gamma\in\Gamma$. $\rho$ is intended to be interpreted as an element of a real-closed field; for example, $\rho = 2$, or $\rho = \dfrac{5}{3}$, or $\rho = \sqrt{2}$, or $\rho = \pi$, or $\rho = 3 + \delta$ where $\delta$ is an infinitesimal, etc.

Three quick points should be noted here. First, we construe a $MODAG$ $\Gamma$ as an ordered $\Z[\rho, \rho^{-1}]$-module, where we think of $\rho\cdot\gamma$ as $\sigma(\gamma)$ and $\rho^{-1}\cdot\gamma$ as $\sigma^{-1}(\gamma)$. Clearly, $\rho^m\cdot\gamma = \sigma^m(\gamma)$ for all $m\in\Z$ and all $\gamma\in\Gamma$. To be able to extend $\Gamma$ to a model of $MODDAG$, we would then want divisibility by ``non-zero'' linear difference operators, which typically look like $L = \sum_{l = 1}^{n} a_l\rho^l + \sum_{l = 1}^{m} b_l\rho^{-l}$, with $a_n \not= 0 \not= b_m$. Any question of solvability of a system $L\cdot x = b$ for $b\in\Gamma$, can then easily be transformed to a question involving only $\rho$, by multiplying the equation throughout by $\rho^m$. Thus, $(\sum_{l = 1}^{n} a_l\rho^l + \sum_{l = 1}^{m} b_l\rho^{-l})\cdot x = b$ is solvable if and only if $(\sum_{l = 1}^{n} a_l\rho^{m + l} + \sum_{l = 1}^{m} b_l\rho^{m-l})\cdot x = \rho^m(b)$ is solvable. In particular, for all practical purposes we can think of $\Gamma$ as a $\Z[\rho]$-module, with the understanding that $\rho$ has an inverse.

The second point to be noted is that if $\sigma(\gamma) = \rho\cdot\gamma$, then $\sigma^{-1}(\gamma) = \rho^{-1}\cdot\gamma$. In particular, if $0 < \rho \le 1$, we can shift to $\sigma^{-1}$, and instead work with $\rho^{-1} \ge 1$. Thus, without loss of generality, we may assume that $\rho\ge 1$.

And finally, the third point to be noted is that this is a generalization over the isometric and the contractive cases. The case $\rho = 1$ is precisely the isometric case; and the case ``$\rho = \infty$'', i.e., when all $0<\gamma\in\Gamma$ satisfy for all $b\in\Z_+$, $\rho\cdot\gamma > b\gamma$, is the contractive case. We can have other finite and infinitesimal values for $\rho$ as well.

Also one more thing needs mention here about the characteristics of the relevant fields. Any automorphism of a field is trivial on the integers. Thus for any $n\in\Z$, we have $\sigma(n) = n$. In particular, this means that for any prime $p$, if $v(p) > 0$, then $v(p) = v(\sigma(p)) = \rho. v(p)$, which implies $\rho=1$. Thus the mixed characteristic case doesn't arise for $\rho > 1$, and the mixed characteristic case for $\rho = 1$ has already been dealt with in \cite{BMS}. The equi-characteristic $p$ case even without the automorphism is too non-trivial and is not known yet. So we will restrict ourselves only to the equi-characteristic zero case in this paper.

\bigskip
\section{Multiplicative Ordered Difference Abelian Group ($MODAG$)}
We work in the language of ordered groups with a symbol for an automorphism and its inverse $\L_{\rho\cdot, <} = \{+, -, 0, <, \rho\cdot, \rho^{-1}\cdot\}$.
\newline

The $\L_{\rho\cdot, <}$-theory $T_{\rho\cdot, <}$ of ordered difference abelian groups can be axiomatized by the following axioms:
\begin{enumerate}
\item{Axioms of Abelian Groups in the language \{+, -, 0\}}
\item{Axioms of Linear Order in the language \{$<$\}}
\item{\begin{itemize}
\item{$\forall x\forall y\forall z(x < y\implies x + z < y + z)$}
\item{$\forall x\forall y(x < y\implies \rho\cdot x < \rho\cdot y)$}
\item{$\forall x\forall y(x < y\implies \rho^{-1}\cdot x < \rho^{-1}\cdot y)$}
\end{itemize}}
\item{Axioms mentioning $\rho\cdot$ and $\rho^{-1}\cdot$ are endomorphisms, and they are inverses of each other - thus making $\rho$ an automorphism.}
\end{enumerate}
Note that $T_{\rho\cdot, <}$ is an universal $\L_{\rho\cdot, <}$-theory.
\newline

Unfortunately the theory of ordered abelian groups has the strict order property. And hence, by Kikyo and Shelah's theorem \cite{KS}, we cannot hope to have a model companion of the theory of ordered difference abelian groups.

However, if we restrict ourselves to very specific kind of automorphisms, we do actually get model companion. The intended automorphisms are multiplication by an element of a real-closed field, for example, $\rho\cdot x = 2x$, or $\rho\cdot x = \sqrt{2}x$, or $\rho\cdot x = \delta x$, where $\delta$ could be an infinite or infinitesimal element.

The problem, however, is that in general abelian groups such multiplications do not make sense. But since integers embed in any abelian group, by imitating what we do for real numbers, we can make sense of such multiplications.
\newline
Note that for an abelian group $G$, multiplication by $\N$ makes sense: $mg := \overbrace{g + \cdots + g}^{m\mbox{ times}}.$ 
\newline
Taking additive inverses, multiplication by $\Z$ also makes sense: $(-m)g := -(mg).$
\newline
If $G$ is torsion-free divisible, multiplication by $\Q$ makes sense: $\dfrac{m}{n}g = \dfrac{mg}{n} := $ the unique $y$ such that $ny = mg.$

We carry this idea forward and define cuts with rational numbers to make sense of multiplication by irrationals. And in the process we get multiplication by infinite numbers and infinitesimals as well. Unfortunately this idea doesn't quite work and so we need a little stronger axiom as we will see below.

Our intended models are the additive groups of ordered $\Z[\rho, \rho^{-1}]$-modules. For $i\in\N$, we denote
\begin{center}
$\rho^{i}\cdot x := \overbrace{\rho\cdot\rho\cdot\ldots\cdot\rho\cdot}^{i\mbox{ times}}x$ and $\rho^{-i}\cdot x := \overbrace{\rho^{-1}\cdot\rho^{-1}\cdot\ldots\cdot\rho^{-1}\cdot}^{i\mbox{ times}}x$.
\end{center}
As noted in the introduction, for all practical purposes, we can restrict ourselves only to ordered $\Z[\rho]$-modules. There is a natural map $\Phi: \Z[\rho]\to End(G)$, which maps any $L := m_k \rho^{k} + m_{k - 1}\rho^{k - 1} + \ldots + m_1 \rho + m_0$ (thought of as an element of $\Z[\rho]$ with the $m_i$'s coming from $\Z$), to an endomorphism $L: G\to G$. Such an $L$ is called a linear difference operator. And we make sense of this multiplication by imposing the following additional condition on $\rho$: for each $L\in\Z[\rho]$,
$$\Big(\forall x > 0\; (L\cdot x > 0)\Big)\bigvee\Big(\forall x > 0\; (L\cdot x = 0)\Big)\bigvee\Big(\forall x> 0\; (L\cdot x < 0)\Big)\;\;\;\;\;\;\;\;\;\;\;\;\;(\mbox{Axiom OM})$$
(OM stands for Ordered Module). It immediately follows that for each $a, b\in\Z_+$,
$$\Big(\forall x > 0\; (a\rho\cdot x > bx)\Big)\bigvee\Big(\forall x > 0\; (a\rho\cdot x = bx)\Big)\bigvee\Big(\forall x> 0\; (a\rho\cdot x < bx)\Big)$$
which is nothing but cuts with respect to rational multiples (recall that since we are considering only order-preserving automorphisms, $\rho > 0$). For any $\rho$ satisfying Axiom OM, we also define the order type of $\rho$ (relative to $\Gamma$) as
$$otp_{\Gamma}(\rho) := \{L\in\Z[\rho] : \forall x \in\Gamma\; (x>0\implies L\cdot x > 0)\},$$
and we say two $\rho$ and $\rho'$ are same if they have the same order type.

Note that Axiom OM is consistent with Axioms 1-4 because any ordered abelian group is a model of these axioms with $\rho = 1$. Also note that with this axiom $\Z[\rho]$ becomes an ordered commutative ring: $L_1 \geqq L_2$ iff $\forall x > 0 \Big((L_1 - L_2)\cdot x \geqq 0\Big)$.
\begin{defn}
An ordered difference abelian group is called \textit{multiplicative} if it satisfies Axiom OM. The theory of such structures (called as $MODAG$) is axiomatized by Axioms 1-4 and Axiom OM. Note that this theory is also universal.

We also denote by $MODAG_\rho$ the theory $MODAG$ where the order type of $\rho$ is fixed.
\end{defn}

If there is a non-zero $L\in\Z[\rho]$ such that $\forall x > 0 (L\cdot x = 0)$, we say $\rho$ is algebraic (over the integers); otherwise we say $\rho$ is transcendental. If $\rho$ is algebraic, there is a minimal (degree) polynomial that it satisfies.

Note that the kernel of $\Phi$ need not be trivial. For example, if $\rho\cdot x = 2x$ for all $x$, then $\rho - 2\in $ Ker$(\Phi)$. In particular, Ker$(\Phi)$ is non-trivial iff $\rho$ is algebraic. We then form the following ring:
$$\widetilde{\Z[\rho]} := \Z[\rho]/Ker(\Phi).$$

\begin{defn}
A difference group $G$ is called \textit{divisible} (or \textit{linear difference closed}) if for any non-zero $L \in \widetilde{\Z[\rho]}$ and $b\in G$, the system $L\cdot x = b$ has a solution in $G$.
\end{defn}

\begin{defn}
Let $MODDAG$ be the $\L_{\rho, <}$-theory of non-trivial multiplicative ordered divisible difference abelian groups. This theory is axiomatized by the above axioms along with 
$$\exists x (x\not=0)$$
and the following additional infinite list of axioms: for each $L\in\Z[\rho]$,
$$\Big(\forall\gamma\in\Gamma (L\cdot\gamma = 0) \Big) \vee \Big(\forall\gamma\in\Gamma\;\exists\delta\in\Gamma (L\cdot\delta = \gamma)\Big),$$
i.e., all non-zero linear difference operators are surjective. Thus, $MODDAG$ is an $\forall\exists$-theory. Similarly as above, we denote by $MODDAG_\rho$ the theory $MODDAG$ where we fix the order type of $\rho$.
\end{defn}

\vspace{1em}
We would now like to show that $MODDAG$ is the model companion of $MODAG$. By abuse of terminology, we would refer to any model of $MODAG$ (respectively $MODDAG$) also as $MODAG$ (respectively $MODDAG$).
\newline\newline
\textbf{Remark.} It might already be clear from the definitions above that for a given $\rho$, $MODDAG_\rho$ is basically the theory of non-trivial ordered vector spaces over the ordered field $\Q(\rho)$ and then quantifier elimination actually follows from well-known results. However, here we are doing things a little differently. Instead of proving the result for a particular $\rho$, we are proving it uniformly across all $\rho$ using Axiom OM. And even though in the completion the type of $\rho$ is determined and the theory actually reduces to the above well-known theory, nevertheless it makes sense to write down some of the trivial details just to make sure that nothing fishy happens.
\newline

\begin{lem}
$MODAG$ and $MODDAG$ are co-theories. 
\end{lem}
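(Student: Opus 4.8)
The plan is to verify the two directions packaged in the term ``co-theories'': every model of $MODDAG$ is a model of $MODAG$, and every model of $MODAG$ embeds into a model of $MODDAG$. The first is immediate, since every axiom of $MODAG$ is an axiom of $MODDAG$. Only the second direction has content: given an arbitrary $MODAG$ $\Gamma$, I must build an ``ordered divisible hull'', uniformly in $\rho$, and this is where essentially all the work lies.

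So fix a model $\Gamma$ of $MODAG$. If $\Gamma = \{0\}$, it embeds as the trivial subgroup of $(\Q, +, -, 0, <, \mathrm{id}, \mathrm{id})$, which is a model of $MODDAG$ (the case $\rho = 1$), so assume $\Gamma \neq \{0\}$. First I record that $\widetilde{\Z[\rho]} = \Z[\rho]/\mathrm{Ker}(\Phi)$ is a nonzero ordered commutative ring: it is nonzero because $1 \notin \mathrm{Ker}(\Phi)$ (as $\mathrm{id}_\Gamma \neq 0$), and it is ordered by $L \geq 0 :\Leftrightarrow \forall x > 0\,(L\cdot x \geq 0)$, a well-defined total order by Axiom OM. Since an ordered commutative ring with $1 \neq 0$ has no zero divisors, $\widetilde{\Z[\rho]}$ is an ordered integral domain; let $\Q(\rho)$ denote its canonically ordered fraction field. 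Next I check that $\Gamma$ is torsion-free as a $\widetilde{\Z[\rho]}$-module: if $L\cdot\gamma = 0$ with $\gamma \neq 0$, say $\gamma > 0$, then $L$ acts neither strictly positively nor strictly negatively on the positive cone, so $L = 0$ in $\widetilde{\Z[\rho]}$ by Axiom OM. Hence $\Gamma$ embeds into the $\Q(\rho)$-vector space $V := \Q(\rho)\otimes_{\widetilde{\Z[\rho]}}\Gamma$.

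The substantive step is to linearly order $V$ so that it becomes an ordered difference abelian group into which $\Gamma$ embeds order-preservingly. Every element of $V$ has the form $L^{-1}\cdot\gamma$ with $0 \neq L \in \widetilde{\Z[\rho]}$ and $\gamma \in \Gamma$; since $L^{-1} = L^{-2}\cdot L$ with $L^{-2} > 0$ in $\Q(\rho)$, the sign of $L^{-1}\cdot\gamma$ must agree with that of $L\cdot\gamma \in \Gamma$, so I define $L^{-1}\cdot\gamma > 0 :\Leftrightarrow L\cdot\gamma > 0$ in $\Gamma$ (equivalently, $L$ and $\gamma$ have the same sign). The routine but fiddly part is to check that this is well defined (if $L^{-1}\gamma = M^{-1}\delta$ then $M\gamma = L\delta$ in $\Gamma$, and one compares signs using that $\widetilde{\Z[\rho]}$ is ordered) and that the resulting positive cone makes $V$ into an ordered abelian group on which multiplication by every positive element of $\Q(\rho)$, in particular by $\rho$, is order-preserving. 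Finally I observe that $V$ is a model of $MODDAG$: it is non-trivial since $\Gamma$ embeds into it; every nonzero $L \in \widetilde{\Z[\rho]}$ is invertible in $\Q(\rho)$, so $L\cdot x = b$ has the solution $x = L^{-1}\cdot b$, giving divisibility; and $V$ satisfies Axiom OM with $otp_V(\rho) = otp_\Gamma(\rho)$, because any $L \in \Z[\rho]$ acts on $V$ exactly according to whether its class in $\widetilde{\Z[\rho]}$ is zero, positive, or negative. Since the embedding $\Gamma \hookrightarrow V$ is order-preserving, this completes the second direction.

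The main obstacle is bookkeeping rather than ideas: verifying well-definedness of the order on $V$ and the ordered-abelian-group axioms for its positive cone, while remembering to quarantine the degenerate case $\mathrm{Ker}(\Phi) = \Z[\rho]$ (equivalently $\Gamma = \{0\}$) at the very start, where $\widetilde{\Z[\rho]}$ collapses and the construction would otherwise break down.
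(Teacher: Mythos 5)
Your proof is correct and is essentially the paper's construction, just presented at a higher level of abstraction. The paper builds the divisible hull $H = G \times \widetilde{\Z[\rho]}_+/\sim$ by hand (fractions $(g,L)$ modulo cross-multiplication, with addition, order, and the $\rho$-action all verified directly), which is precisely the localization $S^{-1}\Gamma$ at the positive cone; your $V = \Q(\rho)\otimes_{\widetilde{\Z[\rho]}}\Gamma$ is the same object, obtained by first observing that $\widetilde{\Z[\rho]}$ is an ordered integral domain with ordered fraction field $\Q(\rho)$ and that $\Gamma$ is torsion-free, then extending scalars. What your framing buys is economy and transparency: divisibility and Axiom OM for $V$ become immediate from $V$ being a $\Q(\rho)$-vector space with an order determined by the ordering on $\Q(\rho)$, and the preservation of $\rho$ under the embedding is automatic from $\widetilde{\Z[\rho]}$-linearity; the cost is that you delegate the ``fiddly'' verification of the positive cone axioms rather than exhibiting it. What the paper's explicit version additionally delivers, though it is not needed for the lemma statement itself, is the universal mapping property of $H$ (any embedding of $G$ into a $MODDAG_\rho$ factors through $H$), which it then uses to conclude that $MODDAG_\rho$ has algebraically prime models. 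Both handle the trivial-group edge case, and both in fact prove the sharper statement for each fixed $\rho$.
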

\begin{proof}
We will actually prove something stronger: for a fixed $\rho$, $MODAG_\rho$ and $MODDAG_\rho$ are co-theories. Any model of $MODDAG_\rho$ is trivially a model of $MODAG_\rho$. So all we need to show is that we can embed any model $G$ of $MODAG_\rho$ into a model of $MODDAG_\rho$.

If $G$ is trivial, we can embed it into $\Q$ with any given $\rho$. So without loss of generality we may assume, $G$ is non-trivial.

Let $\widetilde{\Z[\rho]}_+ := \{L\in\widetilde{\Z[\rho]} : L > 0\}$. Define an equivalence relation $\sim$ on $G\times \widetilde{\Z[\rho]}_+$ as follows:
$$(g, L) \sim (g', L') \iff L'\cdot g = L\cdot g'.$$
Reflexivity and symmetry are obvious. For transitivity, suppose $$(g, L)\sim (g', L')\mbox{ and } (g', L')\sim (g'', L'').$$ 
Then, $$L'\cdot g = L\cdot g' \mbox{ and } L'\cdot g'' = L''\cdot g'.$$
Applying $L''$ to the first equation and $L$ to the second equation, we get $L''\cdot L'\cdot g = L''\cdot L\cdot g'$ and $L\cdot L'\cdot g'' = L\cdot L''\cdot g'$. Since these operators commute with each other, we can rewrite this as:
$$L'\cdot L''\cdot g = L''\cdot L'\cdot g = L''\cdot L\cdot g' = L\cdot L''\cdot g' = L\cdot L'\cdot g'' = L'\cdot L\cdot g'',$$
i.e., $L'\cdot (L''\cdot g - L\cdot g'') = 0$. Since $L'\in\widetilde{\Z[\rho]}_+$, i.e., $L' > 0$, we have $L''\cdot g = L\cdot g''$, i.e., $(g, L)\sim (g'', L'')$.

Let $[(g, L)]$ denote the equivalence class of $(g, L)$ and let $H = G\times \widetilde{\Z[\rho]}_+/\sim$. 

We define $+$ on $H$ by $[(g, L)] + [(h, P)] = [P\cdot g + L\cdot h, L\cdot P]$, where by $L\cdot P$ we mean $L\circ P$.

To show that this is well-defined, let $(g, L)\sim (g', L')$. Want to show that $[(g, L)] + [(h, P)] = [(g', L')] + [(h, P)]$, i.e., $[(P\cdot g + L\cdot h, L\cdot P)] = [(P\cdot g' + L'\cdot h, L'\cdot P)]$. In other words, we want to show that
$$(P\cdot g + L\cdot h, L\cdot P) \sim (P\cdot g' + L'\cdot h, L'\cdot P).$$
But,
$$L\cdot (P\cdot (P\cdot g' + L'\cdot h)) = L\cdot P\cdot P\cdot g' + L\cdot P\cdot L'\cdot h = P\cdot P\cdot L\cdot g' + L'\cdot P\cdot L\cdot h$$
$$ = P\cdot P\cdot L'\cdot g + L'\cdot P\cdot L\cdot h = L'\cdot P\cdot P\cdot g + L'\cdot P\cdot L\cdot h = L'\cdot (P\cdot (P\cdot g + L\cdot h)).$$
Hence, $+$ is well-defined. Similarly, we can define $-$ by $$[(g, L)] - [(h, P)] = [(P\cdot g - L\cdot h, L\cdot P)].$$ 
This is also well-defined. It follows easily that $(H, +, -)$ is an abelian group, where $[(0, 1)]$ is the identity and $[(-g, L)]$ is the inverse of $[(g, L)]$.
\newline

We define an automorphism of $H$ (which we still denote by $\rho\cdot$) as follows: let $\rho\cdot [(g, L)] = [(\rho\cdot g, L)]$. It is easy to check that this is well-defined and defines an automorphism of $H$. \newline

For any non-zero $L\in \Z[\rho]$ and any $[(h, P)]\in H$, we have
$$L\cdot [(h, P\cdot L)] = [(L\cdot h, P\cdot L)] = [(h, P)].$$
Hence, $H$ is linear difference closed or divisible.
\newline

We extend the order as follows:
$$[(g, L)] < [(g', L')] \iff L'\cdot g < L\cdot g'.$$
If $g, h\in G$ with $g < h$, then $[(g, 1)] < [(h, 1)]$; so this extends the ordering on $G$. Moreover, for $[(a_1, L_1)] < [(a_2, L_2)]$ and $[(b_1, P_1)] \le [(b_2, P_2)]$, we have $L_2\cdot a_1 < L_1\cdot a_2$ and $P_2\cdot b_1 \le P_1\cdot b_2$. Then,
\begin{eqnarray*}
P_1 \cdot P_2\cdot  L_2\cdot a_1 + L_1\cdot L_2\cdot P_2\cdot b_1 & < & P_1 \cdot P_2\cdot L_1\cdot a_2 + L_1\cdot L_2\cdot P_1\cdot b_2 \\
\mbox{i.e., } L_2 \cdot P_2\cdot (P_1\cdot a_1 + L_1\cdot b_1) & < & L_1\cdot P_1\cdot (P_2\cdot a_2 + L_2\cdot b_2) \\
\mbox{i.e., } [(P_1\cdot a_1 + L_1\cdot b_1, L_1\cdot P_1)] & < & [(P_2\cdot a_2 + L_2\cdot b_2, L_2\cdot P_2)]  
\end{eqnarray*}
Also, 
$$[(a, L)] < [(b, P)]\iff P\cdot a < L\cdot b \iff \rho\cdot P\cdot  a < \rho\cdot L\cdot b \iff P\cdot \rho\cdot a < L\cdot \rho\cdot b \iff [(\rho\cdot a, L)] < [(\rho\cdot b, P)].$$

Finally, for any $L\in\Z[\rho]$, and $x > 0$ and $P\in\widetilde{\Z[\rho]}_+$, we have
$$L\cdot [(x, P)] > [(0, 1)] \iff [L\cdot x, P] > [(0, 1)] \iff L\cdot x > 0.$$
Hence, $H$ is a multiplicative ordered divisible difference abelian group.
\newline\newline
\textbf{Claim.} $G$ embeds into $H$.
\newline
\textbf{Proof.} Define $\iota: G\to H$ by $\iota(g) = [(g, 1)].$ Then
\newline
$\iota(0) = [(0, 1)]; \iota(g + h) = [(g + h , 1)] = [(g, 1)] + [(h, 1)]; \iota(-g) = [(-g, 1)] = -[(g, 1)].$
\newline
Also, $\iota(\rho\cdot g) = [(\rho\cdot g, 1)] = \rho\cdot [(g, 1)]$.
\newline
And, $g < h \implies \iota(g) = [(g, 1)] < [(h, 1)] = \iota(h)$.

Moreover, if $H'\models MODDAG_\rho$ and $j: G\to H'$ is an embedding, then let $h: H\to H'$ be given by $h([(g, L)]) = [(j(g), L)]$. It is routine to check that $h$ is a well-defined embedding, preserves order and $j = h\circ\iota$.
We, thus, call $H$ as the (multiplicative) divisible hull of $G$.
\end{proof}

We have thus shown that for a fixed $\rho$, $MODAG_\rho$ and $MODDAG_\rho$ are co-theories. In fact, since $(MODDAG_\rho)_\forall = MODAG_\rho$, what we have actually shown is that $MODDAG_\rho$ has algebraically prime models, namely the (multiplicative) dvisible hull. We will now show that $MODDAG_\rho$ eliminates quantifiers.
\begin{lem}
$MODDAG_\rho$ has quantifier elimination.
\end{lem}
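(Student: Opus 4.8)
The plan is to verify the usual embedding criterion for quantifier elimination: it is enough to show that whenever $M, N \models MODDAG_\rho$ with $N$ being $|M|^{+}$-saturated, $A$ is an $\L_{\rho\cdot,<}$-substructure of $M$, and $f\colon A \to N$ is an $\L_{\rho\cdot,<}$-embedding, then $f$ extends to an embedding of $M$ into $N$. By a standard Zorn's lemma argument this reduces to a one-point extension: for such an $f$ and any $a \in M\setminus A$, the map $f$ extends to an embedding of the substructure $A\langle a\rangle$ generated by $A\cup\{a\}$ into $N$. Furthermore, by the ``Moreover'' clause of the proof of the previous Lemma --- which extends any embedding of a model of $MODAG_\rho$ into a model of $MODDAG_\rho$ to the divisible hull --- we may, before each one-point extension, first enlarge the current domain to its divisible hull inside $M$; so we may assume $A$ is divisible. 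Then $A$, and likewise $M$, is naturally an ordered vector space over the ordered field $F := \operatorname{Frac}(\widetilde{\Z[\rho]})$ (the fraction field of the ordered domain $\widetilde{\Z[\rho]}$, with $\rho$ acting as multiplication by $\rho\in F$; this is the field denoted $\Q(\rho)$ in the Remark above), and every $\L_{\rho\cdot,<}$-term in one variable with parameters in $A$ is, modulo $A$, of the form $r\cdot x$ for some $r\in F$. I would then distinguish two cases according to whether $a$ is ``algebraic'' or ``transcendental'' over $A$.

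In the \emph{algebraic} case there is a nonzero $L\in\widetilde{\Z[\rho]}$ with $L\cdot a = b\in A$; replacing $L$ by $-L$ if needed, assume $L>0$. Since $N$ is divisible, $L\cdot x = f(b)$ has a solution $a'\in N$, and it is the unique one because a nonzero operator in $\widetilde{\Z[\rho]}$ acts injectively (by Axiom OM it is positive, or negative, on all positive elements). Define $g\colon A\langle a\rangle \to N$ by $g(c + r\cdot a) := f(c) + r\cdot a'$ for $c\in A$, $r\in F$. This is well defined: any relation $r\cdot a = c\in A$ is witnessed by some nonzero $L'\in\widetilde{\Z[\rho]}$ with $L'\cdot a\in A$, and applying $L$ and using its injectivity transfers the relation across $f$ --- the same computation as the transitivity argument in the proof of the previous Lemma. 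Then $g$ is a $\Z[\rho]$-module homomorphism commuting with $\rho\cdot$ and extending $f$, and it preserves order: for $c\in A$, whether $a<c$, $a=c$ or $a>c$ is equivalent (via $L$, which preserves order as $L>0$) to the corresponding comparison between $b = L\cdot a$ and $L\cdot c$, and this transfers through $f$ to the comparison between $a'$ and $f(c)$; the order between two arbitrary terms reduces to such comparisons of $a'$ with elements of $f(A)$ because $A$ is divisible. An order-preserving map into a linear order is injective, so $g$ is the desired embedding.

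In the \emph{transcendental} case no nonzero $L\in\widetilde{\Z[\rho]}$ has $L\cdot a\in A$, so as an $F$-vector space $A\langle a\rangle$ sits inside $A\oplus F\cdot a$. For $r\neq 0$ the sign of $c + r\cdot a$ is governed solely by whether $a$ is $<$, $=$, or $>$ the element $-r^{-1}\cdot c\in A$; hence the quantifier-free type of $a$ over $A$ is completely determined by the cut $\big(\{c\in A: c<a\},\ \{c\in A: c>a\}\big)$. To realize the $f$-image of this cut in $N$, note that the partial type $\{x > f(c): c\in A,\ c<a\}\cup\{x < f(c): c\in A,\ c>a\}$ over $f(A)$ is finitely satisfiable in $N$ because $N$ --- being a nontrivial divisible ordered abelian group --- is densely ordered and unbounded; by $|M|^{+}$-saturation it is realized by some $a'\in N$. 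Then $g(c + r\cdot a) := f(c) + r\cdot a'$ defines a well-defined $\L_{\rho\cdot,<}$-embedding of $A\langle a\rangle$ into $N$ extending $f$, by the same order-preservation check as before.

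The only real content is the bookkeeping, and that is where I expect the (mild) obstacle to lie: in the transcendental case one must confirm that the single datum ``the cut of $a$ in $A$'' truly pins down every atomic formula over $A$ --- which is exactly what divisibility of $A$ provides, since it lets every $F$-``denominator'' be absorbed into $A$ --- and in the algebraic case one must check that the forced witness $a'$ automatically satisfies every atomic relation over $A$, not merely the chosen equation $L\cdot x = f(b)$. Neither point is deep; once they are in place the extension is forced in both cases, and the single appeal to saturation handles the transcendental case. As the Remark above indicates, this is nothing more than the classical quantifier elimination for nontrivial ordered vector spaces over the ordered field $F$, written out by hand.
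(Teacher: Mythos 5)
Your proof is correct. The paper's own proof is a one-paragraph citation: after classifying the possible order types of $\rho$, it observes (as its Remark already hinted) that $MODDAG_\rho$ is the theory of non-trivial ordered $\Q(\rho)$-vector spaces and invokes the known quantifier elimination for ordered vector spaces over an ordered field (van den Dries, 1981). You make the same identification of $\Q(\rho)$ with $\operatorname{Frac}(\widetilde{\Z[\rho]})$, but then carry out the standard back-and-forth argument from scratch: reduce to a one-point extension over a divisible $A$, absorb denominators into $A$, and realize the cut of the new element using $|M|^{+}$-saturation and density. This buys self-containedness at the cost of length; the paper buys brevity at the cost of an external reference. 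One small streamlining you could make: once you have enlarged $A$ to its divisible hull inside $M$ (so that $A$ is an $F$-vector subspace and every nonzero $L\in\widetilde{\Z[\rho]}$ acts bijectively on $A$), the ``algebraic'' case can never occur for $a\in M\setminus A$ — if $L\cdot a=b\in A$ with $L\neq 0$, divisibility of $A$ gives $c\in A$ with $L\cdot c=b$, and injectivity of $L$ on $M$ forces $a=c\in A$. So only the transcendental/cut case actually fires, and the algebraic paragraph, while correct, is dead code.
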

\begin{proof}
The relevant $\rho$'s correspond bijectively to the numbers $1, 1+ \epsilon, a-\epsilon, b, a+\epsilon, 1/\epsilon$, where we fix some positive infinitesimal $\epsilon$ in an ordered field extension of the field $\mathbb{R}$ of real numbers, and $a$ ranges over the real algebraic numbers $>1$, and $b$ over the real numbers $>1$. For each such $\rho$, we have the ordered field $\Q(\rho)$. The construction in Lemma 2.4 essentially shows that $MODDAG_\rho$ is the theory of non-trivial ordered vector spaces over the ordered field $\Q(\rho)$, which admits quantifier elimination by well-known standard results, see \cite{D}.
\end{proof}

Thus, $MODDAG_\rho$ eliminates quatifiers. In particular, $MODDAG_\rho$ is model complete. Moreover, for a fixed $\rho$, $\Q(\rho)$ with the induced ordering is a prime model of $MODDAG_\rho$. In particular, $MODDAG_\rho$ is complete. Note that $MODDAG$ is not complete; its completions are given by $MODDAG_\rho$ by fixing a (consistent) order type of $\rho$. Finally we have,

\begin{thm}
$MODDAG$ is the model companion of $MODAG$.
\end{thm}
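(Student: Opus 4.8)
The plan is to verify the two conditions characterizing a model companion: that $MODAG$ and $MODDAG$ are co-theories, and that $MODDAG$ is model complete. The first is exactly Lemma~2.4, together with the trivial remark that every model of $MODDAG$ is already a model of $MODAG$ (the axioms of $MODAG$ are among those listed in Definition~2.5), so it embeds into a model of $MODAG$, namely itself. Hence the real work lies entirely in establishing model completeness of $MODDAG$. Note that $MODDAG$ itself is not complete, so ``model complete'' here must be read in the literal sense: every embedding between models of $MODDAG$ is elementary.

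So let $M \subseteq N$ be models of $MODDAG$; I want to show $M \preceq N$. The key point is that $M$ and $N$ must assign $\rho$ the same order type, i.e.\ they are models of one and the same completion $MODDAG_\rho$. Indeed, for $L \in \Z[\rho]$, if $\forall x > 0\,(L\cdot x > 0)$ holds in $N$ then it certainly holds in $M$; conversely, suppose it holds in $M$ but fails in $N$, so there is some $x \in N$ with $x > 0$ and $L\cdot x \le 0$. Applying Axiom OM in $N$, we get that $\forall x > 0\,(L\cdot x = 0)$ or $\forall x > 0\,(L\cdot x < 0)$ holds in $N$; restricting to a positive element of $M$ (which exists, since $M \models \exists x(x\neq 0)$ and $M$ is ordered), the same sentence holds in $M$, contradicting $L \in otp_M(\rho)$. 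Thus $otp_M(\rho) = otp_N(\rho)$, and $M, N$ are both models of $MODDAG_\rho$ for this common $\rho$.

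Now $M \subseteq N$ as $\L_{\rho\cdot, <}$-structures and both are models of $MODDAG_\rho$. By Lemma~2.6, $MODDAG_\rho$ has quantifier elimination, hence is model complete, so $M \preceq N$. Since $M \subseteq N$ were arbitrary models of $MODDAG$, the theory $MODDAG$ is model complete. Combined with Lemma~2.4 this shows $MODDAG$ is a model companion of $MODAG$, and a model companion, when it exists, is unique.

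I expect the only genuinely substantive point to be the stability of the order type of $\rho$ under embeddings between models of $MODDAG$; everything else is bookkeeping that reduces to the per-$\rho$ results already established. It is worth emphasizing that this is precisely the role of Axiom OM: without it, an embedding could in principle move the cut realized by $\rho$, an existential formula could then fail to be inherited upward, and model completeness would break down.
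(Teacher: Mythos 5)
Your proof is correct and follows exactly the paper's route: reduce model completeness of $MODDAG$ to model completeness of $MODDAG_\rho$ (Lemma 2.6) by observing that a submodel and its extension must share the same order type of $\rho$. The only difference is that you spell out, via Axiom OM and downward persistence of universal sentences, why the order type is preserved under inclusion of nontrivial models, a step the paper states without proof.
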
 
\begin{proof}
By Lemma 2.4, $MODAG$ and $MODDAG$ are co-theories. All we need to show now is that $MODDAG$ is model complete.

So let $G\subseteq H$ be two models of $MODDAG$. Want to show that $G\preceq H$.

Since $G\subseteq H$ and both are non-trivial, in particular they have the same order type of $\rho$. Thus, for some fixed $\rho$, $G, H\models MODDAG_\rho$. But $MODDAG_\rho$ is model complete. 

Hence, $G\preceq H$.
\end{proof}

\bigskip
\section{Preliminaries}
Let $\K\prec\K'$ be an extension of valued fields. For any $a\in\K'$, $K\langle a\rangle$ denotes the smallest difference subfield of $\K'$ containing $K$ and $a$. The underlying field of $K\langle a\rangle$ is $K(\sigma^i(a) : i\in\Z)$. In literature a difference field generally means a field with an endomorphism. For our case, a difference field always means a field with an automorphism. So ``the smallest difference subfield'' in our context actually means the smallest inversive difference subfield.

For any $(n+1)$-variable polynomial $P(X_0, \ldots, X_n)\in K[X_0, \ldots, X_n]$, we define a corresponding $1$-variable $\sigma$-polynomial $f(x) = P(x, \sigma(x), \sigma^2(x), \ldots, \sigma^n(x)).$ We define the \textit{degree} of $f$ to be the total degree of $P$; and the \textit{order of $f$} to be the largest integer $0\le d\le n$ such that the coefficient of $\sigma^d(x)$ in $f(x)$ is non-zero. If $f\in K$, then order$(f) := -\infty$. Finally we define the complexity of $f$ as $$\mbox{complexity}(f) := (d, \mbox{deg }_{x_d} f, \mbox{deg } f) \in (\N\cup\{-\infty\})^3,$$
where complexity$(0) := (-\infty, -\infty, -\infty)$ and for $f\in K, f\not= 0$, complexity$(f) := (-\infty, 0, 0)$. We order complexities lexicographically.

Let $\x = (x_0, \ldots, x_n)$, $\y = (y_0, \ldots, y_n)$ be tuples of indeterminates and $\vec{a} = (a_0, \ldots, a_n)$ be a tuple of elements from some field. Let $\vec{I} = (i_0, \ldots, i_n)$ be a multi-index ($\vec{I}\in\Z^{n+1}$). We define the \textit{length} of $\vec{I}$ as $|\vec{I}| := i_0 + \cdots + i_n$ and $\vec{a}^{\vec{I}} := a_0^{i_0}\cdots a_n^{i_n}$. For any element $\rho$ of any ring, we define the $\rho$-length of $\vec{I}$ as $|\vec{I}|_\rho := i_0\rho^0 + i_1\rho^1 + \cdots + i_n\rho^n.$ Then $|\vec{I}|\in\Z$ and $|\vec{I}|_\rho$ is an element of that ring. For any polynomial $P(\x)$ over $K$, we have a unique Taylor expansion in $K[\x, \y]:$
$$P(\x + \y) = \sum_{\vec{I}} P_{(\vec{I})}(\x)\cdot \y^{\vec{I}},$$
where the sum is over all $\vec{I} = (i_0, \ldots, i_n)\in\N^{n+1}$, each $P_{(\vec{I})}(x)\in K[\x]$, with $P_{(\vec{I})} = 0$ for $|\vec{I}| > $ deg$(P)$, and $\y^{\vec{I}} := y_0^{i_0} \cdots y_n^{i_n}$. Thus $\vec{I}!P_{(\vec{I})} = \mathbf{\partial}_{\vec{I}}P$ where $\mathbf{\partial}_{\vec{I}}$ is the operator $(\partial/\partial x_0)^{i_0}\cdots(\partial/\partial x_n)^{i_n}$ on $K[\x]$, and $\vec{I}! := i_0! \cdots i_n!.$ We construe $\N^{n + 1}$ as a monoid under $+$ (componentwise addition), and let $\le$  be the (partial) product ordering on $\N^{n + 1}$ induced by the natural order on $\N$. Define for $\vec{I}\le\vec{J}\in\N^{n + 1}$,
$$\Big(\begin{tabular}{c}$\vec{J}$ \\ $\vec{I}$\end{tabular}\Big) := \Big(\begin{tabular}{c} $j_0$ \\ $i_0$\end{tabular}\Big) \cdots \Big(\begin{tabular}{c} $j_n$ \\ $i_n$\end{tabular}\Big).$$
Then it is easy to check that for $\vec{I}, \vec{J}\in\N^{n + 1}$,  
$$(f_{(\vec{I})})_{(\vec{J})} = \Big(\begin{tabular}{c}$\vec{I + J}$ \\ $\vec{I}$\end{tabular}\Big)f_{(\vec{I} + \vec{J})}.$$
Let $x$ be an indeterminate. When $n$ is clear from the context, we set $\vec{\sigma}(x) := (x, \sigma(x), \ldots, \sigma^n(x))$, and also $\vec{\sigma}(a) = (a, \sigma(a), \ldots, \sigma^n(a))$ for $a\in K$. Then for $P\in K[x_0, \ldots, x_n]$ as above and $f(x) = P(\vec{\sigma}(x))$, we have
\begin{eqnarray*}
f(x + y) & = & P(\vec{\sigma}(x + y)) = P(\vec{\sigma}(x) + \vec{\sigma}(y)) \\
& = & \sum_{\vec{I}} P_{(\vec{I})}(\vec{\sigma}(x)) \cdot \vec{\sigma}(y)^{\vec{I}} = \sum_{\vec{I}} f_{(\vec{I})}(x)\cdot \vec{\sigma}(y)^{\vec{I}},
\end{eqnarray*}
where $f_{(\vec{I})}(x) := P_{(\vec{I})}(\vec{\sigma}(x)).$

A \textit{pseudo-convergent sequence} (henceforth, \textit{pc-sequence}) from $K$ is a limit ordinal indexed sequence $\{a_\eta\}_{\eta < \lambda}$ of elements of $K$ such that for some index $\eta_0$, 
$$\eta'' > \eta' > \eta \ge \eta_0 \implies v(a_{\eta''} - a_{\eta'}) > v(a_{\eta'} - a_{\eta}).$$

We say $a$ is a \textit{pseudo-limit} of a limit ordinal indexed sequence $\{a_\eta\}$ from $K$ (denoted $a_\eta\leadsto a$) if there is some index $\eta_0$ such that
$$\eta' > \eta \ge \eta_0 \implies v(a - a_{\eta'}) > v(a - a_{\eta}).$$

Note that such a sequence is necessarily a pc-sequence in K. For a pc-sequence $\{a_\eta\}$ as above, let $\gamma_\eta := v(a_{\eta'} - a_\eta)$ for $\eta' > \eta \ge \eta_0$; note that this depends only on $\eta$. Then $\{\gamma_\eta\}_{\eta \ge \eta_0}$ is strictly increasing. We define the \textit{width} of $\{a_\eta\}$ as the set $$\{\gamma\in\Gamma\cup\{\infty\}: \gamma > \gamma_\eta \mbox{ for all } \eta\ge\eta_0\}.$$

We say two pc-sequences $\{a_\eta\}$ and $\{b_\eta\}$ from $K$ are \textit{equivalent} if they have the same pseudo-limits in all valued field extensions of $\mathcal{K}$. Equivalently, $\{a_\eta\}$ and $\{b_\eta\}$ are equivalent iff they have the same width and a common pseudo-limit in some extension of $\mathcal{K}$.

\bigskip
\section{Pseudoconvergence and Pseudocontinuity}
As already stated, we are interested in proving an Ax-Kochen-Ershov type theorem for (and hence, finding the model companion of) the theory of multiplicative valued difference fields (valued fields where $\sigma$ is multiplicative). In this paper, we are always in equi-characteristic zero. So all valued fields and residue fields are of characteristic zero. Our main axiom is
$$\mathbf{Axiom\; 1. }\;\;\;\;\; v(\sigma(x)) = \rho\cdot v(x) \;\;\;\;\forall x\in K \mbox{ and } \rho\ge 1.$$
The value group $\Gamma\models MODAG$, and as already mentioned before, such a multiplication makes sense in a $MODAG$, where $\Z[\rho]$ is an ordered ring and $\Gamma$ is construed as an ordered module over that ring. From now on, we assume that \textit{all our valued difference fields and valued difference field extensions satisfy Axiom 1.} 

Our first goal is to prove pseudo-continuity. It follows from \cite{K} that if $\{a_\eta\}$ is a pc-sequence from $K$, and $a_\eta\leadsto a$ with $a\in K$, then for any ordinary non-constant polynomial $P(x) \in K[x]$, we have $P(a_\eta)\leadsto P(a)$. Unfortunately, this is not true in general for non-constant $\sigma$-polynomials over valued difference fields. As it turns out, this is true when $\rho$ is transcendental over the integers (which includes the contractive case ``$\rho = \infty$''), but not true if $\rho$ is algebraic (which includes the isometric case $\rho = 1$). Fortunately, in the algebraic case, we can remedy the situation by resorting to equivalent pc-sequences. We will follow the treatment of \cite{BMS}, \cite{AD} with appropriate modifications. We will, however, need the following basic lemma.
\begin{lem}
Let $\{\gamma_\eta\}$ be an increasing sequence of elements in a $MODAG$ $\Gamma$. Let $A = \{|I_i|_\rho : I_i\in\Z^{n + 1}, i = 1, \ldots, l\}$ be a finite set with $|A| = m$, and for $i = 1, \ldots, m$, let $c_i + n_i\cdot x$, $c_i\in\Gamma$, $n_i \in A$, be linear functions of $x$ with distinct $n_i$. Then there is a $\mu$, and an enumeration $i_1, i_2, \ldots, i_m$ of $\{1, \ldots, m\}$ such that for $\eta > \mu$, $c_{i_1} + n_{i_1}\cdot\gamma_\eta < c_{i_2} + n_{i_2}\cdot\gamma_\eta < \cdots < c_{i_m} + n_{i_m}\cdot\gamma_\eta$.
\end{lem}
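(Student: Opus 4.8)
The plan is to show that the finitely many linear functions $c_i + n_i \cdot x$, with distinct coefficients $n_i \in A \subseteq \Z[\rho]$, become totally ordered (in a fixed way) along the tail of the sequence $\{\gamma_\eta\}$. First I would reduce to the pairwise case: for each pair $i \ne j$, consider the difference $(c_i - c_j) + (n_i - n_j)\cdot\gamma_\eta$. Since the $n_i$ are distinct, $n_i - n_j$ is a nonzero element of $\Z[\rho]$, hence a nonzero linear difference operator. The key point is that, because $\Gamma$ is a $MODAG$, Axiom OM applies to $n_i - n_j$: either $(n_i - n_j)\cdot x > 0$ for all $x > 0$, or it is identically $0$ on positives, or it is $< 0$ for all $x > 0$. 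I should be a little careful here because $\{\gamma_\eta\}$ need not be eventually positive as stated; but I can fix an index $\mu_0$ and a reference $\gamma_{\mu_0}$, and work with $\gamma_\eta - \gamma_{\mu_0} \geq 0$ for $\eta \geq \mu_0$, or simply note that an increasing sequence in an ordered abelian group is eventually of constant sign, and the constant-sign-zero and eventually-negative cases are handled symmetrically — only the eventually-positive (equivalently, strictly increasing and cofinal past $0$) situation requires Axiom OM, and the bounded/eventually-constant-difference situation is where I use that the $n_i$ are distinct together with the next step.

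Next, I would analyze the three cases from Axiom OM applied to $L := n_i - n_j$. If $L$ is identically zero on positives, then $(n_i - n_j)\cdot \gamma_\eta$ is eventually constant (indeed it equals $(n_i-n_j)\cdot\gamma_{\mu_0}$ plus $(n_i-n_j)\cdot(\gamma_\eta - \gamma_{\mu_0}) = 0$... so it stabilizes), so the sign of $(c_i - c_j) + (n_i-n_j)\cdot\gamma_\eta$ is eventually the sign of a fixed element of $\Gamma$ — eventually constant. Actually wait: if $L \cdot x = 0$ for all $x>0$, it does not follow that $L$ is the zero operator, but it does follow that $L \cdot \gamma_\eta$ is eventually determined only by the "position" of $\gamma_\eta$ relative to $0$; once $\gamma_\eta > 0$ (or $\gamma_\eta$ has reached its eventual sign-class), $L\cdot\gamma_\eta$ is constant. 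In the remaining two cases, $(n_i - n_j)\cdot\gamma_\eta$ is eventually strictly monotonic and, crucially, cofinal or coinitial in $\Gamma$ among the scale of $\gamma_\eta$: since $\{\gamma_\eta\}$ is increasing and $L$ has a fixed positive (resp. negative) sign on positives, $L \cdot \gamma_\eta$ grows without bound in the appropriate direction relative to the fixed element $c_j - c_i$, so $(c_i - c_j) + (n_i - n_j)\cdot\gamma_\eta$ eventually has constant sign equal to that of $n_i - n_j$ (on positives). In every case, then, for each pair $(i,j)$ there is an index past which $\operatorname{sgn}\big((c_i + n_i\gamma_\eta) - (c_j + n_j\gamma_\eta)\big)$ is constant.

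Finally, I would take $\mu$ to be the maximum of the finitely many indices obtained for all pairs $(i,j)$ with $1 \le i,j \le m$. For $\eta > \mu$, every pairwise comparison $c_i + n_i\cdot\gamma_\eta$ versus $c_j + n_j\cdot\gamma_\eta$ has settled to a fixed outcome; since $<$ on $\Gamma$ is a linear order, these outcomes are mutually consistent and pick out a single permutation $i_1, \dots, i_m$ of $\{1,\dots,m\}$ with $c_{i_1} + n_{i_1}\cdot\gamma_\eta < \cdots < c_{i_m} + n_{i_m}\cdot\gamma_\eta$ — and this permutation does not depend on $\eta > \mu$, precisely because each pairwise relation is now constant. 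I expect the main obstacle to be the bookkeeping around the sign behavior when $L \cdot x$ is identically zero on positives versus the strictly monotone cases, and in particular justifying that in the monotone cases $L\cdot\gamma_\eta$ really does escape the fixed element $c_j - c_i$; this uses that $\{\gamma_\eta\}$ is increasing so that $\gamma_\eta - \gamma_{\eta_0}$ is positive and (being a strictly increasing sequence of positive elements) its $L$-image dominates any fixed element, together with the fact that $\Z[\rho]$ acts by order-preserving maps in the relevant direction. Everything else is a straightforward finiteness-of-pairs argument.
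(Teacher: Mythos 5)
Your overall structure (reduce to pairwise comparisons, appeal to Axiom OM, take a maximum over finitely many thresholds) matches the paper, but the key step contains a genuine error. You claim that in the monotone cases $L\cdot\gamma_\eta$ ``grows without bound in the appropriate direction relative to the fixed element $c_j - c_i$,'' and hence that the eventual sign of $(c_i - c_j) + L\cdot\gamma_\eta$ equals the sign of $L$. Both claims are false: an increasing sequence in an ordered abelian group need not be cofinal, so $L\cdot\gamma_\eta$ can stay permanently below $c_i - c_j$ even when $L > 0$. For instance, take $\Gamma = \Q$ with $\rho = 1$, $n_i - n_j = -1$ acting as $-\mathrm{id}$, $\gamma_\eta = 1 - 1/\eta$, and $c_i - c_j = 2$; then $(c_i - c_j) + (n_i - n_j)\cdot\gamma_\eta = 1 + 1/\eta > 0$ for every $\eta$, so the sign is constantly positive even though $n_i - n_j < 0$. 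The lemma is still true, but the argument must be the dichotomy the paper gives: with $L := n_j - n_i > 0$, either $L\cdot\gamma_\eta < c_i - c_j$ for all $\eta$ (the pairwise order is constant with one orientation), or there is a first $\eta_{ij}$ with $L\cdot\gamma_{\eta_{ij}} \ge c_i - c_j$, and then for $\eta > \eta_{ij}$ the strict positivity of $\gamma_\eta - \gamma_{\eta_{ij}}$ together with $L > 0$ gives $L\cdot\gamma_\eta > L\cdot\gamma_{\eta_{ij}} \ge c_i - c_j$, so the order is constant with the other orientation from $\eta_{ij}$ on. No cofinality is involved, and the eventual sign is not determined by the sign of $L$ alone.

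Two smaller points. First, you devote a case to $L\cdot x = 0$ on positives; but under the intended reading of ``distinct $n_i$'' --- distinct as operators on $\Gamma$, i.e.\ in $\widetilde{\Z[\rho]}$, which is how $A$ is used throughout Section 4 and is what makes the paper's phrase ``there is a linear order amongst the $n_i$'s'' meaningful --- the operator $n_i - n_j$ is nonzero, so this branch of Axiom OM is vacuous. Indeed if it could occur with $c_i = c_j$, the two linear functions would eventually coincide and the strict inequalities in the conclusion would fail, so the case genuinely must be excluded, not handled. Second, your worry that $\{\gamma_\eta\}$ need not be eventually positive is legitimate and your fix (work with $\gamma_\eta - \gamma_{\eta_0} > 0$) is exactly what the paper does implicitly.
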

\begin{proof}
Since $\Gamma$ is a $MODAG$, there is a linear order amongst the $n_i$'s. Suppose $n_i\not=n_j\in A$. WMA $n_i < n_j$. Then either $c_j + n_j\cdot\gamma_\eta < c_i + n_i\cdot\gamma_\eta$ for all $\eta$, or for some $\eta_{ij}$, $c_i + n_i\cdot\gamma_{\eta_{ij}} \le c_j + n_j\cdot\gamma_{\eta_{ij}}$. But in the later case, for all $\eta > \eta_{ij}$, we have $c_i + n_i\cdot\gamma_\eta < c_j + n_j\cdot\gamma_\eta$, as $n_i < n_j$ and $\{\gamma_\eta\}$ is increasing. Since $A$ is a finite set, the set of all such $\eta_{ij}$'s is also finite, and hence taking $\mu$ to be the maximum of those $\eta_{ij}$'s, we have our result.
\end{proof}

\hspace{-1.2em}\textbf{\underline{Basic Calculation.}}
\newline\newline
Suppose $\K$ is a multiplicative valued difference field. Let $\{a_\eta\}$ be a pc-sequence from $K$ with a pseudo-limit $a$ in some extension. Let $P(x)$ be a non-constant $\sigma$-polynomial over $K$ of order $\le n$.
\newline\newline
\textbf{Case I. $\rho$ is transcendental.}

Let $\gamma_\eta = v(a_\eta - a)$. Then for each $\eta$ we have,
\begin{eqnarray*}
P(a_\eta) - P(a) & = & \sum_{\substack{\vec{L}\in\N^{n+1} \\ 1\le |\vec{L}| \le \deg(P)}} P_{(\vec{L})}(a)\cdot \vec{\sigma}(a_\eta - a)^{\vec{L}} =: \sum_{\substack{\vec{L}\in\N^{n+1} \\ 1\le |\vec{L}| \le \deg(P)}} Q_{\vec{L}}(\eta)
\end{eqnarray*}

To calculate $v(P(a_\eta) - P(a))$, we need to calculate the valuation of each summand $Q_{\vec{L}}(\eta)$. We claim that there is a unique $\vec{L}$ for which the valuation of $Q_{\vec{L}}(\eta)$ is minimum eventually. 
Suppose not. Note that the valuation of $Q_{\vec{L}}(\eta)$
$$v(Q_{\vec{L}}(\eta)) = v(P_{(\vec{L})}(a)\cdot\vec{\sigma}(a_\eta - a)^{\vec{L}}) = v(P_{(\vec{L})}(a)) + |\vec{L}|_\rho\cdot\gamma_\eta$$
is a linear function in $\gamma_\eta$. Thus, by Lemma 4.1, the only way there isn't a unique $\vec{L}$ with the valuation of $Q_{\vec{L}}(\eta)$ minimum eventually is if there are $\vec{L}\not=\vec{L'}$ with $|\vec{L}|_\rho = |\vec{L'}|_\rho$. But then,
\begin{eqnarray*}
|\vec{L}|_\rho = |\vec{L'}|_\rho & \implies & |\vec{L - L'}|_\rho = 0 \\
& \implies & (l_0 - l'_0)\rho^0 + (l_1 - l'_1)\rho^1 + \cdots + (l_n - l'_n)\rho^n = 0
\end{eqnarray*}
which implies that $\rho$ is algebraic over $\Z$, a contradiction. Hence, the claim holds. In particular, there is a unique $\vec{L}_0$ such that eventually (in $\eta$),
$$v(P(a_\eta) - P(a)) = v(P_{(\vec{L}_0)}(a)) + |\vec{L_0}|_\rho\cdot\gamma_\eta,$$
which is strictly increasing. Hence, $P(a_\eta)\leadsto P(a)$.

Note that if $\rho = \infty$, then $\rho$ is transcendental over $\Z$. Hence, the contractive case is included in Case I.
\newline\newline
\textbf{Case II. $\rho$ is algebraic.}

Since $\rho$ satisfies some algebraic equation over the integers, there can be accidental cancelations and we might have $v(Q_{\vec{L}}(\eta)) = v(Q_{\vec{L'}}(\eta))$ for infinitely many $\eta$ and $\vec{L}\not=\vec{L'}$, and the above proof fails. To remedy this, we construct an equivalent pc-sequence $\{b_\eta\}$ such that $P(b_\eta)\leadsto P(a)$. 

Put $\gamma_\eta := v(a_\eta - a)$; then $\{\gamma_\eta\}$ is eventually strictly increasing.  Since $v$ is surjective, choose $\theta_\eta\in K$ such that $v(\theta_\eta) = \gamma_\eta$. Set $b_\eta := a_\eta + \mu_\eta \theta_\eta$, where we demand that $\mu_\eta\in K$ and $v(\mu_\eta) = 0$. Define $d_\eta$ by $a_\eta - a = \theta_\eta d_\eta$. So $v(d_\eta) = 0$ and $d_\eta$ depends on the choice of $\theta_\eta$. Since $a$ is normally not in $K$, $d_\eta$ won't normally be in $K$ either. Then,
\begin{eqnarray*}
b_\eta - a & = & b_\eta - a_\eta + a_\eta - a \\
& = & \theta_\eta(\mu_\eta + d_\eta). 
\end{eqnarray*}

We impose $v(\mu_\eta + d_\eta) = 0$. This ensures $b_\eta\leadsto a$, and that $\{a_\eta\}$ and $\{b_\eta\}$ have the same width; so they are equivalent. Let $A := \{|\vec{L}|_\rho : \vec{L}\in\mathbb{N}^{n+1} \mbox{ and } 1\le |\vec{L}| \le \deg(P)\}$. Now,
\begin{eqnarray*}
P(b_\eta) - P(a) & = & \sum_{|\vec{L}|_\rho\; \in\; A} P_{(\vec{L})}(a)\cdot\vec{\sigma}(b_\eta - a)^{\vec{L}} \\
& = & \sum_{m \in A} \sum_{|\vec{L}|_\rho = m} P_{(\vec{L})}(a) \cdot\vec{\sigma}(b_\eta - a)^{\vec{L}} \\
& = & \sum_{m \in A} \sum_{|\vec{L}|_\rho = m} P_{(\vec{L})}(a) \cdot\vec{\sigma}(\theta_\eta(\mu_\eta + d_\eta))^{\vec{L}} \\
& = & \sum_{m \in A} \sum_{|\vec{L}|_\rho = m} P_{(\vec{L})}(a) \cdot\vec{\sigma}(\theta_\eta)^{\vec{L}}\cdot\vec{\sigma}(\mu_\eta + d_\eta)^{\vec{L}} \\
& = & \sum_{m\in A} P_{m, \eta}(\mu_\eta + d_\eta)
\end{eqnarray*}
where $P_{m, \eta}$ is the $\sigma$-polynomial over $K\langle a\rangle$ given by
$$P_{m, \eta}(x) = \sum_{|\vec{L}|_\rho = m} P_{(\vec{L})}(a) \cdot\vec{\sigma}(\theta_\eta)^{\vec{L}}\cdot \vec{\sigma}(x)^{\vec{L}}.$$

Since $P\not\in K$, there is an $m\in A$ such that $P_{m, \eta}\not= 0$. For such $m$, pick $\vec{L} = \vec{L}(m)$ with $|\vec{L}|_\rho = m$ for which $v(P_{(\vec{L})}(a)\cdot\vec{\sigma}(\theta_\eta)^{\vec{L}})$ is minimal, so 
$$P_{m, \eta}(x) = P_{(\vec{L})}(a)\cdot\vec{\sigma}(\theta_\eta)^{\vec{L}}\cdot p_{m, \eta}(\vec{\sigma}(x)),$$
where $p_{m, \eta}(x_0, \ldots, x_n)$ has its coefficients in the valuation ring of $K\langle a\rangle$, with one of its coefficients equal to $1$. Then
$$v(P_{m, \eta}(\mu_\eta + d_\eta)) =  v(P_{(\vec{L})}(a)) + m\cdot\gamma_\eta + v(p_{m, \eta}(\vec{\sigma}(\mu_\eta + d_\eta))).$$

This calculation suggests a new constraint on $\{\mu_\eta\}$, namely that for each $m\in A$ with $P_{m, \eta}\not= 0$, $$v(p_{m, \eta}(\vec{\sigma}(\mu_\eta + d_\eta))) = 0 \;\;\;\; (\mbox{eventually in $\eta$}).$$

Assume this constraint is met. Then Lemma 4.1 yields a fixed $m_0\in A$ such that if $m\in A$ and $m\not= m_0$, then eventually in $\eta$,
$$v(P_{m_0, \eta}(\mu_\eta + d_\eta)) < v(P_{m, \eta}(\mu_\eta + d_\eta))$$

For this $m_0$ we have, eventually in $\eta$,
$$v(P(b_\eta) - P(a)) = v(P_{(\vec{L})}(a)) + m_0\cdot\gamma_\eta, \;\;\;\; \vec{L} = \vec{L}(m_0),$$
which is increasing. So $P(b_\eta)\leadsto P(a)$, as desired.

To have $\{\mu_\eta\}$ satisfy all constraints, we introduce an axiom (scheme) about $\K$ which involves only the residue field $k$ of $\K:$
\begin{center}
\textbf{Axiom 2.} For each integer $d > 0$ there is $y\in k$ such that $\bar{\sigma}^d(y) \not= y$.
\end{center}
By \cite{C}, p. 201, this axiom implies that there are no residual $\bar{\sigma}$-identities at all, that is, for every non-zero $f\in k[x_0, \ldots, x_n]$, there is a $y\in k$ with $f(\bar{\vec{\sigma}}(y))\not= 0$ (and thus the set $\{y\in k : f(\bar{\vec{\sigma}}(y))\not= 0\}$ is infinite). Now note that the $p_m$'s are over $K\langle a\rangle$, and we need $\bar{\mu}_\eta\in k$. The following lemma will take care of this.
\begin{lem}
Let $k\subseteq k'$ be a field extension, and $p(x_0, \ldots, x_n)$ a non-zero polynomial over $k'$. Then there is a non-zero polynomial $f(x_0, \ldots, x_n)$ over $k$ such that whenever $y_0, \ldots, y_n\in k$ and $f(y_0, \ldots, y_n)\not= 0$, then $p(y_0, \ldots, y_n)\not= 0$.
\end{lem}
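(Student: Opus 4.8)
The plan is to reduce the question to linear independence over $k$. Since the polynomial $p$ has only finitely many nonzero coefficients, say $c'_1, \ldots, c'_N \in k'$, these span a finite-dimensional $k$-subspace $V$ of $k'$; choose a $k$-basis $c_1, \ldots, c_r$ of $V$. Writing each coefficient of $p$ as a $k$-linear combination of $c_1, \ldots, c_r$ and collecting terms, we obtain an expression
$$p(x_0, \ldots, x_n) = \sum_{i=1}^{r} c_i\, p_i(x_0, \ldots, x_n),$$
where each $p_i \in k[x_0, \ldots, x_n]$. Since $p \neq 0$, not all the $p_i$ vanish; fix $i_0$ with $p_{i_0} \neq 0$ and set $f := p_{i_0}$.

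It remains to check that this $f$ works, which is immediate. Let $y_0, \ldots, y_n \in k$. Then each $p_i(y_0, \ldots, y_n)$ lies in $k$, so $p(y_0, \ldots, y_n) = \sum_{i=1}^{r} c_i\, p_i(y_0, \ldots, y_n)$ exhibits $p(y_0, \ldots, y_n)$ as a $k$-linear combination of the $k$-linearly independent elements $c_1, \ldots, c_r$. Hence if $p(y_0, \ldots, y_n) = 0$, then $p_i(y_0, \ldots, y_n) = 0$ for every $i$, and in particular $f(y_0, \ldots, y_n) = p_{i_0}(y_0, \ldots, y_n) = 0$. Taking the contrapositive: whenever $f(y_0, \ldots, y_n) \neq 0$, we have $p(y_0, \ldots, y_n) \neq 0$, as claimed.

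There is no real obstacle here; the only points deserving a moment's attention are that the coefficients of $p$ lie in a \emph{finite}-dimensional $k$-subspace of $k'$ (so that a finite basis suffices and the rewriting above is legitimate even when $k'/k$ is infinite), and the fact that the implication goes only one way — $f$ nonvanishing is merely a \emph{sufficient} condition for $p$ nonvanishing, which is exactly what is needed to transfer the ``no residual $\bar\sigma$-identities'' property from the coefficient field $K\langle a\rangle$ down to $k$ in the intended application to the polynomials $p_{m,\eta}$.
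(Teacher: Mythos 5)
Your proof is correct and is essentially the same argument as in the paper: expand $p$ over a $k$-basis of the finite-dimensional subspace of $k'$ spanned by its coefficients, write $p = \sum_i c_i p_i$ with $p_i \in k[x_0,\ldots,x_n]$, and take $f$ to be a nonzero $p_i$. You are slightly more careful than the paper in explicitly requiring $p_{i_0}\neq 0$ (the paper just says ``let $f$ be one of the $f_i$'s''), but that is the only difference.
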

\begin{proof}
Using a basis $b_1, \ldots, b_m$ of the $k$-vector subspace of $k'$ generated by the coefficients of $p$, we have $p = b_1 f_1 + \cdots + b_m f_m$, with $f_1, \ldots, f_m\in k[x_0, \ldots, x_n].$ Let $f$ be one of the $f_i$'s. Then $f$ has the required property.
\end{proof}

\hspace{-1.2em}Consider an $m\in A$ with non-zero $P_{m, \eta}$, and define
$$q_{m, \eta}(x_0, \ldots, x_n) := p_{m, \eta}(x_0 + d_\eta, \ldots, x_n + \sigma^n(d_\eta)).$$
Then the reduced polynomial
$$\bar{q}_{m, \eta}(x_0, \ldots, x_n) := \bar{p}_m(x_0 + \bar{d}_\eta, \ldots, x_n + \bar{\sigma}^n(\bar{d}_\eta))$$
is also non-zero for each $\eta$. By Lemma 4.2, we can pick a non-zero polynomial $f_\eta(x_0, \ldots, x_n)\in k[x_0, \ldots, x_n]$ such that if $y\in\O_K$ and $f_\eta(\bar{\vec{\sigma}}(\bar{y})) \not= 0$, then $\bar{q}_{m, \eta}(\bar{\vec{\sigma}}(\bar{y}))\not= 0$ for each $m\in A$ with $P_{m, \eta}\not= 0.$
\newline\newline
\textbf{Conclusion:} if for each $\eta$ the element $\mu_\eta\in\O_K$ satisfies $\bar{\mu}_\eta\not= 0, \bar{\mu}_\eta + \bar{d}_\eta \not= 0$, and $f_\eta(\bar{\vec{\sigma}}(\bar{\mu}_\eta)) \not= 0$, then all constraints on $\{\mu_\eta\}$ are met.

Axiom 2 allows us to meet these constraints, even if instead of a single $P(x)$ of order $\le n$ we have finitely many non-constant $\sigma$-polynomials $Q(x)$ of order $\le n$ and we have to meet simultaneously the constraints coming from each of those $Q$'s. This leads to:
\begin{thm}
Suppose $\K$ satisfies Axiom 2. Suppose $\{a_\eta\}$ in $K$ is a pc-sequence and $a_\eta\leadsto a$ in an extension with $\gamma_\eta := v(a - a_\eta)$. Let $\Sigma$ be a finite set of $\sigma$-polynomials $P(x)$ over $K$. 
\begin{itemize}
\item If $\rho$ is transcendental, then $P(a_\eta)\leadsto P(a)$, for all non-constant $P\in K[x]$; more specifically there is a unique $\vec{L}_0 = \vec{L}_0(P)$ such that for all $\vec{I}\not=\vec{L}_0$, eventually
$$v(P(a_\eta) - P(a)) = v(P_{(\vec{L}_0)}(a)) + |\vec{L}_0|_\rho\cdot\gamma_\eta < v(P_{(\vec{I})}(a)) + |\vec{I}|_\rho\cdot\gamma_\eta.$$
\item If $\rho$ is algebraic, then there is a pc-sequence $\{b_\eta\}$ from $K$, equivalent to $\{a_\eta\}$, such that $P(b_\eta)\leadsto P(a)$ for each non-constant $P\in\Sigma$; more specifically there is a unique $m_0 = m_0(P)$ such that for all $\vec{I}$ with $|\vec{I}|_\rho \not= m_0$, eventually
$$v(P(b_\eta) - P(a)) = \min_{|\vec{L}_0|_\rho=m_0} v(P_{(\vec{L}_0)}(a)) + |\vec{L}_0|_\rho\cdot\gamma_\eta < v(P_{(\vec{I})}(a)) + |\vec{I}|_\rho\cdot\gamma_\eta.$$
\end{itemize}
\end{thm}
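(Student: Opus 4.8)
The plan is to read the theorem off the Basic Calculation above: that computation already isolates, for a single non-constant $\sigma$-polynomial $P$, the unique index $\vec L_0$ (transcendental case) or the unique value $m_0\in A$ (algebraic case) governing $v(P(a_\eta)-P(a))$, so the only new work is to package it for a finite family $\Sigma$, which is vacuous in the transcendental case and is exactly the role of Axiom 2 in the algebraic case.

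In the transcendental case I fix non-constant $P\in K[x]$ of order $\le n$, set $\gamma_\eta=v(a-a_\eta)$, and expand $P(a_\eta)-P(a)=\sum_{1\le|\vec L|\le\deg P}P_{(\vec L)}(a)\cdot\vec\sigma(a_\eta-a)^{\vec L}$, so the $\vec L$-summand has valuation $v(P_{(\vec L)}(a))+|\vec L|_\rho\cdot\gamma_\eta$, linear in $\gamma_\eta$ with slope $|\vec L|_\rho$ depending only on $\vec L$. Since $\rho$ is transcendental over $\Z$, distinct $\vec L$ give distinct slopes (an equality $|\vec L|_\rho=|\vec L'|_\rho$ would be an algebraic relation on $\rho$), so Lemma 4.1 yields a unique $\vec L_0$ realizing the minimum for all large $\eta$, which is the displayed inequality; as $|\vec L_0|\ge1$ and $\rho\ge1$ force $|\vec L_0|_\rho\ge1>0$, the common value is eventually strictly increasing, i.e. $P(a_\eta)\leadsto P(a)$. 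Finitely many $\vec L$ are involved, so the members of $\Sigma$ never interfere.

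In the algebraic case I construct the equivalent pc-sequence $\{b_\eta\}$ as in the Basic Calculation: choose $\theta_\eta\in K$ with $v(\theta_\eta)=\gamma_\eta$, write $a_\eta-a=\theta_\eta d_\eta$ with $v(d_\eta)=0$, and put $b_\eta=a_\eta+\mu_\eta\theta_\eta$ with $\mu_\eta\in\O_K$ chosen so that $\bar\mu_\eta\ne0$ and $\bar\mu_\eta+\bar d_\eta\ne0$; the latter gives $v(\mu_\eta+d_\eta)=0$, so $b_\eta\leadsto a$ with the same width and $\{b_\eta\}\sim\{a_\eta\}$. Grouping the expansion of $P(b_\eta)-P(a)$ by the value $m=|\vec L|_\rho\in A$ gives $\sum_{m\in A}P_{m,\eta}(\mu_\eta+d_\eta)$; for each $m$ with $P_{m,\eta}\ne0$ I factor out a coefficient of minimal valuation to write $P_{m,\eta}(x)=P_{(\vec L(m))}(a)\,\vec\sigma(\theta_\eta)^{\vec L(m)}\cdot p_{m,\eta}(\vec\sigma(x))$ with $p_{m,\eta}$ having valuation-ring coefficients over $K\langle a\rangle$, one of them $1$, whence $v(P_{m,\eta}(\mu_\eta+d_\eta))=v(P_{(\vec L(m))}(a))+m\cdot\gamma_\eta+v(p_{m,\eta}(\vec\sigma(\mu_\eta+d_\eta)))$. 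I then need the extra constraint $v(p_{m,\eta}(\vec\sigma(\mu_\eta+d_\eta)))=0$ eventually: translating by $d_\eta$, reducing mod $\m$, descending the nonvanishing from $K\langle a\rangle$ to $k$ via Lemma 4.2, and invoking \cite{C} (Axiom 2 forbids residual $\bar\sigma$-identities), I can pick $\mu_\eta\in\O_K$ with $\bar\mu_\eta\ne0$, $\bar\mu_\eta+\bar d_\eta\ne0$, and $f_\eta(\bar{\vec\sigma}(\bar\mu_\eta))\ne0$, taking $f_\eta$ to be the product of the relevant descended polynomials over all $m\in A$ and all $P\in\Sigma$ at once. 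With $\{\mu_\eta\}$ so fixed, $v(P_{m,\eta}(\mu_\eta+d_\eta))=v(P_{(\vec L(m))}(a))+m\cdot\gamma_\eta$ is linear in $\gamma_\eta$ with slope $m$, so Lemma 4.1 again gives a unique $m_0$ winning the minimum eventually; using $m_0\ge1>0$ this common value is eventually strictly increasing, giving the displayed inequality and $P(b_\eta)\leadsto P(a)$.

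The main obstacle is precisely this construction of $\{b_\eta\}$: cuts with rationals alone fail because accidental coincidences $|\vec L|_\rho=|\vec L'|_\rho$ genuinely occur (already for $\rho=1$ all slopes collapse), and repairing this forces the perturbations $\bar\mu_\eta$ to simultaneously avoid the zero sets of the residual polynomials attached to every relevant $m$ and every $P\in\Sigma$, which is exactly why Axiom 2 appears in the hypotheses. Granting Lemmas 4.1 and 4.2, everything else — well-definedness of the grouping, linearity in $\gamma_\eta$, and tracking which $\vec L$ is minimal — is routine.
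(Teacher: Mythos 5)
Your proposal is correct and follows essentially the same route as the paper: Theorem 4.3 is stated as the outcome of the "Basic Calculation" immediately preceding it, and your write-up reproduces that calculation faithfully — the Taylor expansion and slope argument with Lemma 4.1 in the transcendental case, and in the algebraic case the perturbation $b_\eta = a_\eta + \mu_\eta\theta_\eta$, the factorization $P_{m,\eta} = P_{(\vec L(m))}(a)\vec\sigma(\theta_\eta)^{\vec L(m)} p_{m,\eta}$, the constraint $v(p_{m,\eta}(\vec\sigma(\mu_\eta+d_\eta)))=0$ met via Lemma 4.2 and Axiom 2, and taking a product of the descended polynomials over all $m$ and all $P\in\Sigma$ to handle the finite family simultaneously, exactly as the paper remarks at the end of the Conclusion.
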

\begin{cor}
The same result, where $a$ is removed and one only asks that $\{P(b_\eta)\}$ is a pc-sequence. 
\end{cor}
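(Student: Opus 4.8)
The plan is to deduce the corollary from Theorem 4.3 by a standard reduction: replace the ``absolute'' statement about a pseudo-limit $a$ with the ``relative'' statement about a pc-sequence, using that pseudo-limits always exist in some extension. Concretely, given a finite set $\Sigma$ of $\sigma$-polynomials $P(x)$ over $K$ and a pc-sequence $\{a_\eta\}$ from $K$, first pass to a valued difference field extension $\K'$ of $\K$ in which $\{a_\eta\}$ acquires a pseudo-limit $a$; such an extension exists by the usual maximal-immediate-extension construction (or simply by taking an $\aleph_1$-saturated elementary extension, which is still a multiplicative valued difference field satisfying Axiom 1, and — for the algebraic case — Axiom 2, since that is a first-order scheme about the residue field). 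Set $\gamma_\eta := v(a - a_\eta)$, noting this is eventually strictly increasing because $\{a_\eta\}$ is a pc-sequence and $a$ is a pseudo-limit.

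Next I would apply Theorem 4.3 in $\K'$ to this $a$, this $\{a_\eta\}$, and this $\Sigma$. In the transcendental case this directly gives, for each non-constant $P$, a unique $\vec{L}_0(P)$ with $v(P(a_\eta) - P(a)) = v(P_{(\vec{L}_0)}(a)) + |\vec{L}_0|_\rho\cdot\gamma_\eta$ eventually, and this quantity is eventually strictly increasing; hence $\{P(a_\eta)\}$ is a pc-sequence (with $\{b_\eta\} = \{a_\eta\}$, so no modification is needed). In the algebraic case Theorem 4.3 produces a pc-sequence $\{b_\eta\}$ from $K$, equivalent to $\{a_\eta\}$, with $v(P(b_\eta) - P(a)) = \min_{|\vec{L}_0|_\rho = m_0} v(P_{(\vec{L}_0)}(a)) + m_0\cdot\gamma_\eta$ eventually, again strictly increasing in $\eta$. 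In both cases the key point is purely formal: if $c_\eta := P(\bullet_\eta)$ and $c := P(a)$ satisfy $v(c - c_{\eta'}) > v(c - c_\eta)$ for all $\eta' > \eta \ge \eta_0$, then $\{c_\eta\}$ is automatically a pc-sequence, by the remark in Section 3 that a sequence with a pseudo-limit is necessarily a pc-sequence. Since this holds for every $P \in \Sigma$ simultaneously (the same $\{b_\eta\}$ works for all of $\Sigma$, as Theorem 4.3 is stated for a finite set), we are done. Finally I would remark that $a$ lived only in the auxiliary extension $\K'$ and plays no role in the conclusion, which is entirely about sequences from $K$ — this is exactly the content of ``$a$ is removed''.

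The only point requiring a little care — and the one I would flag as the main obstacle — is ensuring that the auxiliary extension $\K'$ still satisfies the hypotheses of Theorem 4.3, in particular Axiom 2 in the algebraic case and Axiom 1 throughout. Axiom 1 is preserved under any valued difference field extension by our blanket assumption, and Axiom 2 is a first-order scheme over the residue difference field, hence preserved under elementary extension; so taking $\K'$ to be a sufficiently saturated elementary extension of $\K$ (saturated enough that $\{a_\eta\}$, being a pc-sequence of length some cardinal $\lambda$, has a pseudo-limit) resolves both issues at once. One should also check that the value group of $\K'$ is still a $MODAG$ with the same $\rho$, which again is immediate from elementarity. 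With $\K'$ in hand the rest is a transcription of Theorem 4.3.
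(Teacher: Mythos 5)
Your proposal is correct and follows exactly the paper's argument: pass to a sufficiently saturated elementary extension (Macintyre's observation) where $\{a_\eta\}$ acquires a pseudo-limit $a$, then apply Theorem 4.3 and note that the conclusion concerns only sequences from $K$. The extra care you take about preservation of Axioms 1 and 2 under elementary extension is sound but is exactly the routine point the paper leaves implicit.
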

\begin{proof}
By an observation of Macintyre, any pc-sequence in any expansion of valued fields (for example, a valued difference field) has a pseudo-limit in an elementary extension of that expansion. In particular, $\{a_\eta\}$ has a pseudo-limit $a$ in an elementary extension of $\mathcal{K}$. Use this $a$ and Theorem 4.3.
\end{proof}
\hspace{-1.2em}\textbf{Refinement of the Basic Calculation.} The following improvement of the basic calculation will be needed later on.
\begin{thm}
Suppose $\K$ satisfies Axiom 2 and $\rho$ is algebraic. Let $\{a_\eta\}$ be a pc-sequence from $K$ and let $a_\eta\leadsto a$ in some extension. Let $P(x)$ be a $\sigma$-polynomial over $K$ such that
\begin{enumerate}
\item[$(i)$] $P(a_\eta)\leadsto 0$,
\item[$(ii)$] $P_{(\vec{L})}(b_\eta)\not\leadsto 0$, whenever $|\vec{L}|\ge 1$ and $\{b_\eta\}$ is a pc-sequence in $K$ equivalent to $\{a_\eta\}.$
\end{enumerate}
Let $\Sigma$ be a finite set of $\sigma$-polynomials $Q(x)$ over $K$. Then there is a pc-sequence $\{b_\eta\}$ in $K$, equivalent to $\{a_\eta\}$, such that $P(b_\eta)\leadsto 0$, and $Q(b_\eta)\leadsto Q(a)$ for all non-constant $Q$ in $\Sigma$.
\end{thm}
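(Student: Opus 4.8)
The plan is to deduce the statement from Theorem 4.3 together with a valuation estimate forced by hypotheses $(i)$ and $(ii)$. First note that $(i)$ forces $\{P(a_\eta)\}$ to be non-constant, hence $P$ is non-constant and $P_{(\vec{L})}(a) \ne 0$ for at least one $\vec{L}$ with $|\vec{L}| \ge 1$ (if all such $P_{(\vec{L})}(a)$ vanished, $P(a_\eta) - P(a) = \sum_{|\vec{L}| \ge 1} P_{(\vec{L})}(a)\vec{\sigma}(a_\eta - a)^{\vec{L}}$ would be $0$ and $\{P(a_\eta)\}$ would be constant). Hypothesis $(ii)$ together with Theorem 4.3 in fact forces $P_{(\vec{L})}(a) \ne 0$ for every $\vec{L}$ with $|\vec{L}| \ge 1$ and $P_{(\vec{L})} \not\equiv 0$: if some such $P_{(\vec{L})}(a)$ were $0$, Theorem 4.3 applied to $P_{(\vec{L})}$ would produce a pc-sequence equivalent to $\{a_\eta\}$ along which $P_{(\vec{L})}$ pseudo-converges to $0$, contradicting $(ii)$. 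Now apply Theorem 4.3 to the finite family $\Sigma \cup \{P\}$ and the pc-sequence $\{a_\eta\}$: this yields a pc-sequence $\{b_\eta\}$ in $K$, equivalent to $\{a_\eta\}$, with $Q(b_\eta) \leadsto Q(a)$ for every non-constant $Q$ in $\Sigma \cup \{P\}$; in particular $P(b_\eta) \leadsto P(a)$, and there is a fixed $m_0$ and a threshold past which $v(P(b_\eta) - P(a)) = \beta_\eta$, where $\beta_\eta := \min_{|\vec{L}|_\rho = m_0} v(P_{(\vec{L})}(a)) + m_0\cdot\gamma_\eta$ and $\gamma_\eta := v(a - a_\eta)$. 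Since $m_0 = |\vec{L}|_\rho \ge |\vec{L}| \ge 1$ for a suitable $\vec{L}$ (as $\rho \ge 1$) and $\{\gamma_\eta\}$ is eventually strictly increasing, $\{\beta_\eta\}$ is eventually strictly increasing. It therefore suffices to promote $P(b_\eta) \leadsto P(a)$ to $P(b_\eta) \leadsto 0$, i.e., to show $v(P(a)) > \beta_\eta$ eventually.

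To do this I would estimate $v(P(a_\eta))$ and $v(P(a_\eta) - P(a))$ from below. Expanding $P(a_\eta) - P(a) = \sum_{|\vec{L}| \ge 1} P_{(\vec{L})}(a)\vec{\sigma}(a_\eta - a)^{\vec{L}}$ and using $v(\vec{\sigma}(a_\eta - a)^{\vec{L}}) = |\vec{L}|_\rho\cdot\gamma_\eta$ (which is immediate from Axiom 1), the sum of the terms with $|\vec{L}|_\rho = m$ has valuation at least $\min_{|\vec{L}|_\rho = m} v(P_{(\vec{L})}(a)) + m\cdot\gamma_\eta$; by Lemma 4.1 the minimum of these quantities over the relevant values of $m$ is, past a threshold, attained at the same index $m_0$ as above, so $v(P(a_\eta) - P(a)) \ge \beta_\eta$ eventually. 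For $v(P(a_\eta))$: by $(i)$, $v(P(a_{\eta'})) > v(P(a_\eta))$ for $\eta' > \eta$ large, hence $v(P(a_\eta)) = v(P(a_{\eta'}) - P(a_\eta)) \ge \min\big(v(P(a_{\eta'}) - P(a)),\, v(P(a_\eta) - P(a))\big) \ge \min(\beta_{\eta'}, \beta_\eta) = \beta_\eta$. Combining, $v(P(a)) \ge \min\big(v(P(a_\eta)),\, v(P(a_\eta) - P(a))\big) \ge \beta_\eta$ for all large $\eta$; applying this bound at $\eta + 1$ and using $\beta_{\eta + 1} > \beta_\eta$ gives $v(P(a)) > \beta_\eta$ for all large $\eta$, as wanted.

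Then $v(P(b_\eta)) = v\big(P(a) + (P(b_\eta) - P(a))\big) = \beta_\eta$ eventually, since $v(P(a)) > \beta_\eta = v(P(b_\eta) - P(a))$; as $\{\beta_\eta\}$ is eventually strictly increasing, $P(b_\eta) \leadsto 0$. Together with $Q(b_\eta) \leadsto Q(a)$ for all non-constant $Q \in \Sigma$, this proves the theorem. The main obstacle — and the reason this goes beyond Theorem 4.3 — is precisely this last point: the perturbation that must be introduced into $\{b_\eta\}$ in order to restore $Q(b_\eta) \leadsto Q(a)$ could in principle push $P(b_\eta)$ away from $0$, and it is hypotheses $(i)$ and $(ii)$ that rule this out, by forcing $v(P(a))$ to strictly dominate the sharply controlled valuation of $P(b_\eta) - P(a)$; everything else is bookkeeping with the Basic Calculation preceding Theorem 4.3.
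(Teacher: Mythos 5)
Your proof is correct, and it takes a genuinely different route from the paper's.

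The paper re-enters the Basic Calculation: it sets $b_\eta = a_\eta + \theta_\eta\mu_\eta$, gets $Q(b_\eta)\leadsto Q(a)$ from Theorem 4.3's machinery, then expands $P(b_\eta) = P(a_\eta) + \sum_m Q_{m,\eta}(\mu_\eta)$ and imposes one further residue-level constraint ($\bar{H}_\eta(\bar{\vec{\sigma}}(\bar{\mu}_\eta))\neq 0$, fulfilled by Axiom 2) to prevent cancelation between $P(a_\eta)$ and $Q_{m_0,\eta}(\mu_\eta)$ when their valuations collide, so that $v(P(b_\eta)) = \min\{v(P(a_\eta)), v(Q_{m_0,\eta}(\mu_\eta))\}$ is forced to be increasing. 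You instead treat Theorem 4.3 as a black box applied to $\Sigma\cup\{P\}$, then decompose $P(b_\eta) = P(a) + (P(b_\eta) - P(a))$ and prove the valuation inequality $v(P(a)) > \beta_\eta$ directly from (i) and (ii) — with (ii) first used to guarantee $P_{(\vec{L})}(a)\neq 0$ for all $\vec{L}\neq\vec{0}$ with $P_{(\vec{L})}\not\equiv 0$, so that the linear functions $c_m + m\cdot\gamma_\eta$ are well-defined and Lemma 4.1 identifies the dominant $m_0$. The chain $v(P(a_\eta) - P(a))\ge\beta_\eta$, $v(P(a_\eta))\ge\beta_\eta$, $v(P(a))\ge\beta_\eta$, followed by the bootstrap to a strict inequality using the strict monotonicity of $\beta_\eta$ (via $m_0\ge 1$ and $\rho\ge 1$), is sound. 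What your route buys: it makes explicit that under hypotheses (i) and (ii), the extra constraint on $\{\mu_\eta\}$ in the paper's proof is redundant — $v(P(a))$ is already forced to dominate $v(P(b_\eta) - P(a))$, so $P(b_\eta)\leadsto 0$ is automatic once Theorem 4.3 provides the refined equality $v(P(b_\eta) - P(a)) = \beta_\eta$. The paper's version is more constructive in that it keeps explicit control over the perturbation, which may be needed elsewhere, but your argument is shorter and conceptually cleaner for this particular statement.
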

\begin{proof}
By augmenting $\Sigma$, we can assume $P_{(\vec{L})}\in\Sigma$ for all $\vec{L}$. Let $n$ be such that all $Q\in\Sigma$ have order $\le n$. Let $\{\theta_\eta\}$ and $\{d_\eta\}$ be as before. By following the proof in the basic calculation and using Axiom 2, we get non-zero polynomials $f_\eta\in k[x_0, \ldots, x_n]$ and a sequence $\{\mu_\eta\}$ satisfying the constraints
$$\mu_\eta\in\O,\;\;\;\; \bar{\mu}_\eta \not= 0,\;\;\;\; \bar{\mu}_\eta + \bar{d}_\eta \not= 0,\;\;\;\; f_\eta(\bar{\vec{\sigma}}(\bar{\mu}_\eta))\not= 0,$$
such that, by setting $b_\eta := a_\eta + \theta_\eta \mu_\eta$, we have
$$Q(b_\eta)\leadsto Q(a) \;\;\;\;\;\;\;\mbox{for each non-constant $Q\in\Sigma$}.$$
We would like to constrain $\{\mu_\eta\}$ further so that we also have $P(b_\eta)\leadsto 0.$ Letting $A := \{|\vec{L}|_\rho : \vec{L}\in\N^{n+1} \mbox{ and } 1\le |\vec{L}| \le \deg(P)\}$, we have
\begin{eqnarray*}
P(b_\eta) & = & P(a_\eta + \theta_\eta \mu_\eta) \\
& = & P(a_\eta) + \sum_{m\in A}\sum_{|\vec{L}|_\rho = m} P_{(\vec{L})}(a_\eta)\cdot \vec{\sigma}(\theta_\eta \mu_\eta)^{\vec{L}} \\
& = & P(a_\eta) + \sum_{m\in A}\sum_{|\vec{L}|_\rho = m} P_{(\vec{L})}(a_\eta)\cdot \vec{\sigma}(\theta_\eta)^{\vec{L}} \cdot \vec{\sigma}(\mu_\eta)^{\vec{L}} \\
& = & P(a_\eta) + \sum_{m\in A} Q_{m, \eta}(\mu_\eta)
\end{eqnarray*}
where $Q_{m, \eta}$ is the $\sigma$-polynomial over $K$ given by
$$Q_{m, \eta}(x) = \sum_{|\vec{L}|_\rho = m} P_{(\vec{L})}(a_\eta)\cdot \vec{\sigma}(\theta_\eta)^{\vec{L}}\cdot \vec{\sigma}(x)^{\vec{L}}.$$
Since $P_{(\vec{L})}(a_\eta)\leadsto P_{(\vec{L})}(a)$ and $P_{(\vec{L})}(a_\eta)\not\leadsto 0$, $v(P_{(\vec{L})}(a_\eta))$ settles down eventually. Let $\gamma_{\vec{L}}$ be this eventual value. For each $m\in A$ such that $Q_{m, \eta}\not= 0$, let $\vec{L} = \vec{L}(m)$ be such that $P_{(\vec{L})}(a_\eta)\cdot\vec{\sigma}(\theta_\eta)^{\vec{L}}$ has minimal valuation. Then, for such $Q_{m, \eta}$, we can write (eventually in $\eta$),
$$Q_{m, \eta}(x) = c_{m, \eta}\cdot q_{m, \eta}(\vec{\sigma}(x)),$$
where $v(c_{m, \eta}) = \gamma_{\vec{L}} + |\vec{L}|_\rho\cdot\gamma_\eta$ and $q_{m, \eta}$ is a polynomial over $\O$ with at least one coefficient 1. This suggests another constraint on $\{\mu_\eta\}$, namely, for each $m\in A$ such that $Q_{m, \eta}\not= 0$, $v(q_{m, \eta}(\vec{\sigma}(\mu_\eta))) = 0$ (eventually in $\eta$); equivalently, $\bar{q}_{m, \eta}(\bar{\vec{\sigma}}(\bar{\mu}_\eta))\not= 0$. As usual, this constraint can be met by Axiom 2. And then, by Lemma 4.1, we have a unique $m_0$ such that eventually in $\eta$,
$$v\Big(\sum_{m\in A} Q_{m, \eta}(\mu_\eta)\Big) = v(Q_{m_{0}, \eta}(\mu_\eta)) = \gamma_{\vec{L}} + m_0\cdot\gamma_\eta,\;\;\;\;\;\;\;\; \vec{L} = \vec{L}(m_0),$$
which is increasing. Now, if $v(P(a_\eta)) \not= v(Q_{m_{0}, \eta}(\mu_\eta))$, we do nothing. However, if $v(P(a_\eta)) = \gamma_{\vec{L}} + m_0\cdot\gamma_\eta$, then replacing $\mu_\eta$ by a variable $x$, consider
\begin{eqnarray*}
& P(a_\eta) + \sum_{m\in A}\sum_{|\vec{L}|_\rho = m} P_{(\vec{L})}(a_\eta)\cdot\vec{\sigma}(\theta_\eta)^{\vec{L}}\cdot\vec{\sigma}(x)^{\vec{L}} \\
= & P(a_\eta)\Big(1 + \sum_{m\in A}\sum_{|\vec{L}|_\rho = m} P(a_\eta)^{-1} P_{(\vec{L})}(a_\eta)\cdot\vec{\sigma}(\theta_\eta)^{\vec{L}}\cdot\vec{\sigma}(x)^{\vec{L}}\Big) \\
= & P(a_\eta)H_\eta(\vec{\sigma}(x))
\end{eqnarray*}
where $H_\eta(y_0, \ldots, y_n)$ is a polynomial over $\O$ with at least one coefficient 1. So if we add the extra requirement that $\bar{H}_\eta(\bar{\vec{\sigma}}(\bar{\mu}_\eta))\not= 0$, easily fulfilled as before, we get that eventually 
$$v(P(b_\eta)) = \min\{v(P(a_\eta)), v(Q_{m_{0}, \eta}(\mu_\eta))\},$$
and since both of these are increasing, we have $P(b_\eta)\leadsto 0$.
\end{proof}

\bigskip
\section{Around Newton-Hensel Lemma}
For the moment we consider the basic problem of how to start with $a\in K$ and $P(a)\not= 0$, and find $b\in K$ with $v(P(b)) > v(P(a))$.

Before we do that, we need a little notation. Let $\K = (K, \sigma, v)$ be a multiplicative valued difference field. As already mentioned, the automorphism $\sigma$ on $K$ induces an automorphism on the value group $\Gamma$, which we also denote by $\sigma$, as follows:
$$\gamma\mapsto\sigma(\gamma) := v(\sigma(a)), \;\;\;\;\;\;\;\;\mbox{ where } \gamma = v(a) \mbox{ for some } a\in K.$$
Then for any multi-index $\vec{I}=(i_0, i_1, \ldots, i_n)\in\Z^{n+1}$, we have
$$v(\vec{\sigma}(a)^{\vec{I}}) = v(a^{i_0}(\sigma(a))^{i_1}\cdots (\sigma^n(a))^{i_n}) = \sum_{j = 0}^{n} i_j v(\sigma^j(a)) = \sum_{j = 0}^{n} i_j \sigma^j(v(a)) = \sum_{j = 0}^{n} i_j \sigma^j(\gamma).$$
In the multiplicative case, $v(\sigma^{j}(a)) = \rho^j\cdot v(a)$. Thus, we will denote the sum $\sum_{j = 0}^{n} i_j \sigma^j(\gamma)$ by $|\vec{I}|_\rho\cdot \gamma$. Also if $\vec{I} = \vec{I}_0 = (0, \ldots, 0, 1, 0, \ldots 0)$ with $1$ at the $i$-th place, we denote $P_{(\vec{I}_0)}$ by $P_{(i)}$. And then $|\vec{I}_0|_\rho\cdot\gamma = \rho^i\cdot\gamma = \sigma^i(\gamma)$. By abuse of notation, we will often identify $\vec{I}_0$ with $i$. For example, we will write $\vec{J}\not = i$ (for some multi-index $\vec{J}$) to actually mean $\vec{J}\not=\vec{I}_0$. Hopefully, this should be clear from the context.

Let $\K$ be a multiplicative valued difference field. Let $P(x)$ be a $\sigma$-polynomial over $K$ of order $\le n$, and $a\in K$. Let $\vec{I}, \vec{J}, \vec{L}\in\N^{n+1}$.
\begin{defn}
We say $(P, a)$ is in $\sigma$-hensel configuration if $P\not\in K$ and there is $0\le i\le n$ and $\gamma\in\Gamma$ such that
\begin{enumerate}
\item[(i)] $v(P(a)) = v(P_{(i)}(a)) + \rho^i\cdot\gamma \le v(P_{(j)}(a)) + \rho^j\cdot\gamma$ whenever $0\le j\le n$,
\item[(ii)] $v(P_{(\vec{J})}(a)) + |\vec{J}|_\rho\cdot\gamma < v(P_{(\vec{L})}(a)) + |\vec{L}|_\rho\cdot\gamma$ whenever $\vec{0}\not= \vec{J} < \vec{L}$ and $P_{(\vec{J})}\not=0.$
\end{enumerate}
We say $(P, a)$ is in strict $\sigma$-hensel configuration if the inequality in (i) is strict for $j\not=i$.
\end{defn}
\begin{rmk}
Note that if $(P, a)$ is in (strict) $\sigma$-hensel configuration, then $P_{(\vec{J})}(a)\not=0$ whenever $\vec{J}\not=\vec{0}$ and $P_{(\vec{J})}\not=0$, so $P(a)\not= 0$, and therefore $\gamma$ as above satisfies
$$v(P(a)) = \min_{0\le j\le n} v(P_{(j)}(a)) + \rho^j\cdot\gamma,$$
so is unique, and we set $\gamma(P, a) := \gamma$. If $(P, a)$ is not in $\sigma$-hensel configuration, we set $\gamma(P, a) := \infty.$ If $(P, a)$ is in strict $\sigma$-hensel configuration, then $i$ is unique and we set $i(P, a) := i$.
\end{rmk}
\begin{rmk}
Suppose $P$ is non-constant, $P(a)\not=0$, $v(P(a)) > 0$ and $v(P_{(\vec{J})}(a)) = 0$ for all $\vec{J}\not=\vec{0}$ with $P_{(\vec{J})}\not=0$. Then $(P, a)$ is in $\sigma$-hensel configuration with $\gamma(P, a) = v(P(a)) > 0$ and any $i$ with $0\le i\le n$; and for $\rho > 1$, $(P, a)$ is in strict $\sigma$-hensel configuration with $\gamma(P, a) = v(P(a)) > 0$ and $i(P, a) = 0$.
\end{rmk}

Now given $(P, a)$ in (strict) $\sigma$-hensel configuration, we aim to find $b\in K$ such that $v(P(b)) > v(P(a))$ and $(P, b)$ is in (strict) $\sigma$-hensel configuration. This, however, requires an additional assumption on the residue field $k$, namely that $k$ should be linear difference-closed. We will justify later on why this assumption is necessary.
\newline\newline
\textbf{Axiom 3$_n$.} If $\alpha_0, \ldots, \alpha_n\in k$ are not all 0, then the equation
$$1 + \alpha_0 x + \alpha_1\bar{\sigma}(x) + \cdots + \alpha_n \bar{\sigma}^n(x) = 0$$
has a solution in $k$.
\begin{lem}
Suppose $\K$ satisfies Axiom 3$_n$, and $(P, a)$ is in $\sigma$-hensel configuration. Then there is $b\in K$ such that
\begin{enumerate}
\item $v(b - a)\ge\gamma(P, a), \;\;\;\; v(P(b)) > v(P(a))$,
\item either $P(b) = 0$, or $(P, b)$ is in $\sigma$-hensel configuration.
\end{enumerate}
For any such $b$, we have $v(b - a) = \gamma(P, a)$ and $\gamma(P, b) > \gamma(P, a)$.
\end{lem}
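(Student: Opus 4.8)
The strategy is the classical Newton--Hensel scheme adapted to the $\sigma$-polynomial setting, mirroring the treatment in \cite{BMS} and \cite{AD}. Write $\gamma := \gamma(P,a)$ and fix $i$ with $v(P(a)) = v(P_{(i)}(a)) + \rho^i\cdot\gamma$. The idea is to look for $b = a + c$ with $v(c) = \gamma$, i.e. $c = \theta u$ where $v(\theta) = \gamma$ and $v(u) = 0$, and to choose the residue $\bar u$ so as to kill the leading part of $P(a+c)$. First I would expand
$$P(a+c) = P(a) + \sum_{\vec{L}\neq\vec{0}} P_{(\vec{L})}(a)\cdot\vec{\sigma}(c)^{\vec{L}} = P(a) + \sum_{j=0}^{n} P_{(j)}(a)\cdot\sigma^j(c) + \sum_{|\vec{L}|\geq 2} P_{(\vec{L})}(a)\cdot\vec{\sigma}(c)^{\vec{L}}.$$
By clause (ii) of $\sigma$-hensel configuration, every term with $|\vec{L}|\geq 2$ and $P_{(\vec L)}\neq 0$ has valuation strictly larger than $v(P_{(\vec J)}(a)) + |\vec J|_\rho\cdot\gamma$ for the relevant $\vec J$ of length $1$, hence strictly larger than $v(P(a))$; so the ``quadratic and higher'' tail is negligible. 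Dividing the linear part by $P(a)$ and substituting $c = \theta u$, one gets $P(a+c) = P(a)\bigl(1 + \sum_{j=0}^n \alpha_j \bar\sigma^j(\bar u) + (\text{higher order in }u) + o(1)\bigr)$ where $\alpha_j$ is the residue of $P(a)^{-1}P_{(j)}(a)\sigma^j(\theta)$, which lies in $\O_K$ by clause (i), and $\alpha_i$ is a unit (again by (i), with the minimality making $\alpha_i \neq 0$).

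Now Axiom 3$_n$ applies: since the $\alpha_j$ are not all $0$, the equation $1 + \alpha_0 x + \cdots + \alpha_n\bar\sigma^n(x) = 0$ has a solution $\bar u \in k$; lift it to $u\in\O_K$ and set $b := a + \theta u$. Then $v(P(b)) > v(P(a))$ and $v(b-a) = v(\theta) = \gamma = \gamma(P,a)$, giving clause (1). (For the refined statement $v(b-a) = \gamma(P,a)$ for \emph{any} $b$ satisfying (1): if $v(b-a) < \gamma$ the leading $j=i$ term $v(P_{(i)}(a)) + \rho^i\cdot v(b-a)$ would, by (i) and the fact that higher-order terms are dominated, force $v(P(b)) = v(P_{(i)}(a)) + \rho^i\cdot v(b-a) < v(P(a))$, a contradiction; and if $v(b-a) > \gamma$ then $v(P(b)-P(a)) > v(P(a))$ forces $v(P(b)) = v(P(a))$, again contradicting (1). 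So $v(b-a) = \gamma$.)

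For clause (2), the substantive point is that either $P(b) = 0$, or $(P,b)$ is again in $\sigma$-hensel configuration with $\gamma(P,b) > \gamma(P,a)$. Here one compares $P_{(\vec J)}(b)$ with $P_{(\vec J)}(a)$ for each $\vec J$: expanding $P_{(\vec J)}(a + \theta u)$ via its own Taylor expansion and using $(f_{(\vec I)})_{(\vec J)} = \binom{\vec I + \vec J}{\vec I} f_{(\vec I + \vec J)}$ together with clause (ii) for $(P,a)$, one checks $v(P_{(\vec J)}(b)) = v(P_{(\vec J)}(a))$ for $\vec J$ of length $1$ and that the length-$\geq 2$ inequalities of (ii) persist. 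Combined with the new value $\gamma(P,b) := v(P(b)) - \min_j(v(P_{(j)}(b)) + \text{shift})$, which is $> \gamma(P,a)$ because $v(P(b)) > v(P(a))$ while the $P_{(j)}$ valuations are unchanged, this gives that $(P,b)$ is in $\sigma$-hensel configuration. \textbf{The main obstacle} is precisely this bookkeeping in clause (2): one must verify that passing from $a$ to $b$ does not disturb clauses (i)--(ii), which requires carefully tracking how each $P_{(\vec J)}$ changes under the perturbation $a \mapsto a + \theta u$ and invoking (ii) at the right multi-indices; the estimate on the higher-order tail (that $|\vec L|\geq 2$ terms are strictly dominated, uniformly) is what makes everything go through, and in the $\rho > 1$ algebraic or transcendental cases one should double-check that $|\vec L|_\rho$ for $|\vec L| \geq 2$ genuinely exceeds $|\vec J|_\rho$ for the pertinent length-$1$ indices, which is guaranteed by clause (ii) being a hypothesis rather than something to prove.
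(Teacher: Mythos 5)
Your proposal follows the same Newton--Hensel approach as the paper: set $b = a + \theta u$ with $v(\theta) = \gamma(P,a)$, normalize the linear part, and use Axiom 3$_n$ to kill its residue. Step 1 matches the paper almost word for word (the paper denotes your $\theta$ by $\epsilon$), and your identification of the $\alpha_i$ being a unit via clause (i) is correct.

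Two remarks on the rest. First, in your argument that $v(b-a) = \gamma(P,a)$ for any $b$ satisfying (1)--(2), the case $v(b-a) < \gamma$ is superfluous --- clause (1) already stipulates $v(b-a) \ge \gamma(P,a)$ --- and the reasoning you give there is not actually valid, since clause (ii) of the $\sigma$-hensel configuration at $\gamma$ does not transfer downward to smaller values, so you cannot conclude the higher-order terms are dominated when $v(b-a) < \gamma$. Fortunately this case never arises. The $v(b-a) > \gamma$ case you handle is fine.

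Second, in your Step 2 you say one checks $v(P_{(\vec J)}(b)) = v(P_{(\vec J)}(a))$ ``for $\vec J$ of length $1$,'' but the paper (correctly) establishes this equality for \emph{every} nonzero $\vec J$ with $P_{(\vec J)}\ne 0$: this is needed because clause (ii) for $(P,b)$ quantifies over all pairs $\vec 0\ne\vec J<\vec L$, not just length-one $\vec J$. The actual bookkeeping the paper does is: expand $P_{(\vec J)}(b)$ by Taylor's formula, use the equi-characteristic-zero identity $v(P_{(\vec J)(\vec L)}(a)) = v(P_{(\vec J+\vec L)}(a))$ and clause (ii) at $\gamma$ to see every correction term has valuation strictly above $v(P_{(\vec J)}(a))$; then pick $\gamma_1$ with $v(P(b)) = \min_j v(P_{(j)}(a)) + \rho^j\gamma_1$, show $\gamma_1 > \gamma$ via the $j = i$ term from (i), and finally transfer clause (ii) from $\gamma$ to $\gamma_1$ by adding the inequality $|\vec J|_\rho(\gamma_1-\gamma) < |\vec L|_\rho(\gamma_1-\gamma)$ (which holds because $\vec J<\vec L$, $\vec J\ne\vec 0$, $\rho>0$). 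Your ``main obstacle'' paragraph correctly identifies where the work lies, but the informal claim that $|\vec L|_\rho$ for $|\vec L|\ge 2$ exceeds $|\vec J|_\rho$ for length-one $\vec J$ is only true under the componentwise restriction $\vec J<\vec L$ --- exactly the pairs clause (ii) quantifies over --- so take care not to drop that restriction.

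Overall: same approach, Step 1 correct, Step 2 a recoverable sketch with the two caveats above.
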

\begin{proof}
This is the same proof as \cite{A}, Lemma 4.4. But we include it here for the sake of completeness, and also to set the ground for the next lemma.\newline

\textit{Step 1.} Let $\gamma = \gamma(P, a)$. Pick $\epsilon\in K$ with $v(\epsilon) = \gamma$. Let $b = a + \epsilon u$, where $u\in K$ is to be determined later; we only impose $v(u)\ge 0$ for now. Consider
$$P(b) = P(a) + \sum_{|\vec{J}|\ge 1} P_{(\vec{J})}(a)\cdot \vec{\sigma}(b - a)^{\vec{J}}.$$
Therefore, $P(b) = P(a)\cdot(1 + \sum_{|\vec{J}|\ge 1} c_{\vec{J}}\cdot\vec{\sigma}(u)^{\vec{J}})$, where
$$c_{\vec{J}} = \dfrac{P_{(\vec{J})}(a)\cdot\vec{\sigma}(\epsilon)^{\vec{J}}}{P(a)}.$$
From $v(\epsilon) = \gamma$ and the fact that $(P, a)$ is in $\sigma$-hensel configuration, we obtain $\min_{0\le j\le n} v(c_j) = 0$ and $v(c_{\vec{L}}) > 0$ for $|\vec{L}| > 1$. Then imposing $v(P(b)) > v(P(a))$ forces $\bar{u}$ to be a solution of the equation
$$1 + \sum_{0\le j\le n} \bar{c}_j\cdot\bar{\sigma}^j(x) = 0.$$
By Axiom 3$_n$, we can take $u$ with this property, and then $v(u) = 0$, so $v(b - a) = \gamma(P, a)$, and $v(P(b)) > v(P(a))$.\newline

\textit{Step 2.} Assume that $P(b)\not= 0$. It remains to show that then $(P, b)$ is in $\sigma$-hensel configuration with $\gamma(P, b) > \gamma$. Let $\vec{J}\not=\vec{0}, P_{(\vec{J})}\not= 0$ and consider
$$P_{(\vec{J})}(b) = P_{(\vec{J})}(a) + \sum_{\vec{L}\not=\vec{0}} P_{(\vec{J})(\vec{L})}(a)\cdot\vec{\sigma}(b - a)^{\vec{L}}.$$
Note that $P_{(\vec{J})}(a)\not=0$. Since $\K$ is of equi-characteristic zero, $v(P_{(\vec{J})(\vec{L})}(a)) = v(P_{(\vec{J + L})}(a))$. Therefore, for all $\vec{L}\not=\vec{0}$,
$$v(P_{(\vec{J})(\vec{L})}(a)\cdot\vec{\sigma}(b - a)^{\vec{L}}) > v(P_{(\vec{J})}(a)),$$
hence $v(P_{(\vec{J})}(b)) = v(P_{(\vec{J})}(a))$. Since $P(b)\not=0$, we can pick $\gamma_1\in\Gamma$ such that
$$P(b) = \min_{0\le j\le n} v(P_{(j)}(a)) + \rho^j\cdot\gamma_1.$$
Then $\gamma < \gamma_1 : $ Pick $0\le i\le n$ such that $v(P(a)) = v(P_{(i)}(a)) + \rho^i\cdot\gamma$. So
$$\rho^{i}\cdot\gamma = v(P(a)) - v(P_{(i)}(a)) < v(P(b)) - v(P_{(i)}(a)) \le \rho^{i}\cdot\gamma_1.$$
Also for $\vec{I}, \vec{J}\not=\vec{0}$ and $\theta\in\Gamma$ with $\theta>0$, we have $|\vec{J}|_\rho\cdot\theta < |\vec{L}|_\rho\cdot\theta$ for $\vec{J} < \vec{L}$ (here we are using the fact that $\rho > 0$). Thus the inequality
$$v(P_{(\vec{J})}(a)) + |\vec{J}|_\rho\cdot\gamma < v(P_{(\vec{L})}(a)) + |\vec{L}|_\rho\cdot\gamma$$
together with $\gamma_1 > \gamma$ yields
$$v(P_{(\vec{J})}(a)) + |\vec{J}|_\rho\cdot\gamma_1 < v(P_{(\vec{L})}(a)) + |\vec{L}|_\rho\cdot\gamma_1.$$
Hence, $(P, b)$ is in $\sigma$-hensel configuration with $\gamma(P, b) = \gamma_1$.
\end{proof}
\begin{lem}
Suppose $\K$ satisfies Axiom 3$_n$ and $\rho > 1$, and $(P, a)$ is in $\sigma$-hensel configuration. Then there is $c\in K$ such that
\begin{enumerate}
\item $v(c - a)\ge\gamma(P, a), \;\;\;\; v(P(c)) > v(P(a))$,
\item either $P(c) = 0$, or $(P, c)$ is in strict $\sigma$-hensel configuration.
\end{enumerate}
For any such $c$, we have $v(c - a) = \gamma(P, a)$, $\gamma(P, c) > \gamma(P, a)$; and if $(P, a)$ was already in strict $\sigma$-hensel configuration, then $i(P, c) \le i(P, a)$.
\end{lem}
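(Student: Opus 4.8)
The plan is to apply Lemma 5.4 repeatedly, each application strictly lowering an integer invariant of the configuration, until one reaches strict configuration (or $P$ vanishes); the quantitative claims in the last two sentences will then be read off by a direct valuation computation in the style of Lemma 5.4. To set up the invariant: for $(P,a)$ in $\sigma$-hensel configuration, write $\gamma(P,a)$ as in Remark 5.2, let $S(P,a)$ be the (nonempty) set of $j\in\{0,\dots,n\}$ with $v(P(a)) = v(P_{(j)}(a)) + \rho^j\cdot\gamma(P,a)$, and set $m(P,a) := \max S(P,a)\in\N$; so $(P,a)$ is in \emph{strict} $\sigma$-hensel configuration exactly when $S(P,a)$ is a singleton, in which case $i(P,a)$ is its unique element.

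The crux — and the one place where $\rho > 1$ will be used — is the following monotonicity fact, which I would prove first: \emph{if $(P,a)$ and $(P,b)$ are both in $\sigma$-hensel configuration, $v(P_{(j)}(b)) = v(P_{(j)}(a))$ for every $j$ with $P_{(j)}\neq 0$, and $\gamma_1 := \gamma(P,b) > \gamma := \gamma(P,a)$, then $S(P,b)\subseteq\{\,j : j\le\min S(P,a)\,\}$, hence $m(P,b)\le\min S(P,a)$.} Indeed, put $j_0 := \min S(P,a)$ and fix $j$ with $j_0 < j\le n$ and $P_{(j)}\neq 0$ (if $P_{(j)} = 0$ then $j\notin S(P,b)$ trivially). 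Since $j_0\in S(P,a)$ and $v(P(a))\le v(P_{(j)}(a)) + \rho^j\cdot\gamma$ by condition (i) of Definition 5.1, we have $v(P_{(j)}(a)) - v(P_{(j_0)}(a))\ge(\rho^{j_0}-\rho^j)\cdot\gamma$; adding $(\rho^j-\rho^{j_0})\cdot\gamma_1$ to both sides and using $v(P_{(\cdot)}(b)) = v(P_{(\cdot)}(a))$ yields
$$\bigl(v(P_{(j)}(b)) + \rho^j\cdot\gamma_1\bigr) - \bigl(v(P_{(j_0)}(b)) + \rho^{j_0}\cdot\gamma_1\bigr)\ \ge\ (\rho^j-\rho^{j_0})\cdot(\gamma_1-\gamma).$$
Since $\rho > 1$, $\rho^j-\rho^{j_0} = \rho^{j_0}(\rho^{j-j_0}-1) > 0$ in the ordered ring $\Z[\rho]$, and $\gamma_1-\gamma > 0$ in $\Gamma$, so the right side is $> 0$ and $j\notin S(P,b)$. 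When $b$ comes from $(P,a)$ via Lemma 5.4 with $P(b)\neq 0$, its proof supplies the two hypotheses on $b$, so $m(P,b)\le\min S(P,a)$, which is $< m(P,a)$ unless $(P,a)$ is already strict, and $\le i(P,a)$ when it is.

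For existence I would iterate: set $a_0 := a$, and given $a_k$ with $P(a_k)\neq 0$ and $(P,a_k)$ in $\sigma$-hensel configuration, apply Lemma 5.4 to get $a_{k+1}$, stopping as soon as $P(a_{k+1}) = 0$ or $(P,a_{k+1})$ is in strict $\sigma$-hensel configuration. Lemma 5.4 keeps the configuration $\sigma$-hensel while $P(a_k)\neq 0$, and the previous paragraph makes $m(P,a_1) > m(P,a_2) > \cdots$ strictly decrease until we stop, so the process halts (these lying in $\N$), say at $c := a_K$ with $K\ge 1$. Each step of Lemma 5.4 gives $v(P(a_{k+1})) > v(P(a_k))$ and $\gamma(P,a_{k+1}) > \gamma(P,a_k)$, while $v(a_1-a_0) = \gamma(P,a)$ and $v(a_{k+1}-a_k) = \gamma(P,a_k) > \gamma(P,a)$ for $k\ge 1$; hence $v(P(c)) > v(P(a))$, $v(c-a) = \gamma(P,a)$, and $\gamma(P,c) > \gamma(P,a)$ (read as $\infty$ if $P(c) = 0$). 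If $(P,a)$ was already strict, the first application gives $m(P,a_1)\le i(P,a)$ and $m(P,\cdot)$ only drops afterward, so $i(P,c) = m(P,c)\le i(P,a)$.

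For the ``for any such $c$'' clause I would argue directly, as in Lemma 5.4: given $c\in K$ satisfying (1)--(2), expand $P(c) = P(a) + \sum_{|\vec{L}|\ge 1} P_{(\vec{L})}(a)\cdot\vec{\sigma}(c-a)^{\vec{L}}$ and use (i)--(ii) of Definition 5.1 to see that $v(c-a) > \gamma(P,a)$ would make every summand with $|\vec{L}|\ge 1$ have valuation $> v(P(a))$, forcing $v(P(c)) = v(P(a))$ and contradicting (1); so $v(c-a) = \gamma(P,a)$, and then the Step 2 computation of Lemma 5.4 (which needs only $v(c-a) = \gamma(P,a)$ and (ii)) gives $v(P_{(j)}(c)) = v(P_{(j)}(a))$ for all $j$ with $P_{(j)}\neq 0$. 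If $P(c) = 0$ we are done; otherwise $(P,c)$ is strict, and for $i\in S(P,a)$ we get $v(P_{(i)}(c)) + \rho^i\cdot\gamma(P,c)\ge v(P(c)) > v(P(a)) = v(P_{(i)}(c)) + \rho^i\cdot\gamma(P,a)$, whence $\rho^i\cdot(\gamma(P,c)-\gamma(P,a)) > 0$ and so $\gamma(P,c) > \gamma(P,a)$ (as $\rho^i > 0$); and the monotonicity fact above, applied with $b$ replaced by $c$, gives $i(P,c) = m(P,c)\le\min S(P,a)$, which equals $i(P,a)$ when $(P,a)$ is strict. The main obstacle throughout is that monotonicity fact; it is precisely what fails when $\rho = 1$ (there $\rho^j-\rho^{j_0} = 0$), so the isometric case genuinely needs a different argument.
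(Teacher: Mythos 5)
Your proof is correct and follows the same strategy as the paper: iterate Lemma 5.4, using $\rho>1$ to break ties among the minimizing indices until the configuration becomes strict (or $P$ vanishes). What you add is a cleaner bookkeeping device: isolating the invariant $m(P,\cdot)=\max S(P,\cdot)$ and proving the monotonicity fact $m(P,b)\le\min S(P,a)$ makes the termination argument precise where the paper only says informally that ``the index can only go down'' and ``$i(P,a)$ is finite,'' and your separate direct verification of the ``for any such $c$'' clause (showing $v(c-a)>\gamma(P,a)$ would force $v(P(c))=v(P(a))$, then reusing the Step 2 computation of Lemma 5.4) fills a gap the paper leaves implicit, since the paper's proof only exhibits the constructed $c$ rather than arguing for an arbitrary $c$ satisfying (1)--(2). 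The substance --- in particular the use of $\rho^j-\rho^{j_0}>0$ for $j>j_0$, which is exactly where $\rho>1$ enters and which fails for $\rho=1$ --- is the same as in the paper.
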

\begin{proof}
Let $\gamma = \gamma(P, a)$ and $i = i(P, a)$ (in case $(P, a)$ is in strict $\sigma$-hensel configuration). Since $(P, a)$ is in $\sigma$-hensel configuration, by Lemma 5.4, there is $b\in K$ such that $v(b - a) = \gamma(P, a)$, $v(P(b)) > v(P(a))$, $\gamma(P, b) > \gamma(P, a) = \gamma$ and either $P(b) = 0$ or $(P, b)$ is in $\sigma$-hensel configuration.

If $P(b) = 0$, let $c:= b$ and we are done. So suppose $P(b)\not= 0$. Then, letting $\gamma_1 = \gamma(P, b)$, we have for some $0\le j_0\le n$,
$$v(P(b)) = v(P_{(j_0)}(a)) + \rho^{j_0}\cdot\gamma_1 \le v(P_{(j)}(a)) + \rho^j\cdot\gamma_1$$
for all $0\le j\le n$.

If the above inequality is strict for $j\not= j_0$, we are done: Then $(P, b)$ is in strict $\sigma$-hensel configuration with $i(P, b) = j_0$ and $\gamma(P, b) = \gamma_1$. Moreover, if $i < j_0$, then $\rho^i\cdot(\gamma_1 - \gamma) \le \rho^{j_0}\cdot(\gamma_1 - \gamma)$ as $\gamma_1 - \gamma > 0$ and $\rho \ge 1$, and we have
\begin{eqnarray*}
v(P_{(i)}(a)) + \rho^i\cdot\gamma & < & v(P_{(j_0)}(a)) + \rho^{j_0}\cdot\gamma \\
\implies v(P_{(i)}(a)) + \rho^i\cdot\gamma + \rho^i\cdot(\gamma_1 - \gamma) & < & v(P_{(j_0)}(a)) + \rho^{j_0}\cdot\gamma + \rho^{j_0}\cdot(\gamma_1 - \gamma) \\
\implies v(P_{(i)}(a)) + \rho^i\cdot\gamma_1 & < & v(P_{(j_0)}(a)) + \rho^{j_0}\cdot\gamma_1,
\end{eqnarray*}
which is a contradiction. So $j_0 \le i$. Thus, we have, $\gamma(P, b) > \gamma(P, a)$ and $i(P, b)\le i(P, a)$. Let $c := b$.

However, if there is no such unique $j_0$, then it means there are $0\le j_0 < j_1 < \cdots < j_m \le n$ such that
$$v(P(b)) = v(P_{(j_0)}(a)) + \rho^{j_0}\cdot\gamma_1 = v(P_{(j_1)}(a)) + \rho^{j_1}\cdot\gamma_1 = \cdots = v(P_{(j_m)}(a)) + \rho^{j_m}\cdot\gamma_1.$$
Since $(P, b)$ is in $\sigma$-hensel configuration, we can find $b'\in K$ such that $v(P(b')) > v(P(b)) > v(P(a)), \gamma(P, b') > \gamma(P, b) > \gamma(P, a)$, $v(b' - b) = \gamma(P, b)$ and either $P(b') = 0$ or $(P, b')$ is in $\sigma$-hensel configuration. It follows that
\begin{eqnarray*}
v(b' - a) & = & v(b' - b + b - a) \\
& \ge & \min\{v(b' - b), v(b - a)\} \\
& = & \min\{\gamma(P, b), \gamma(P, a)\} \\
\implies v(b' - a) & = & \gamma(P, a) \;\;\;\;\;\;\;\mbox{since, }\; \gamma(P, a) < \gamma(P, b).
\end{eqnarray*}
If $P(b') = 0$, we are done. So suppose $P(b')\not= 0$. Let $\gamma_2 = \gamma(P, b')$. Since $\gamma_2 - \gamma_1 > 0$ and $\rho > 1$ (this is where we crucially use this hypothesis), we have 
$$\rho^{j_0}\cdot(\gamma_2 - \gamma_1) < \rho^{j_1}\cdot(\gamma_2 - \gamma_1) < \cdots < \rho^{j_m}\cdot(\gamma_2 - \gamma_1).$$
But then by doing the same trick as in the previous paragraph, we obtain
$$v(P_{(j_0)}(a)) + \rho^{j_0}\cdot\gamma_2 < v(P_{(j_1)}(a)) + \rho^{j_1}\cdot\gamma_2 < \cdots < v(P_{(j_m)}(a)) + \rho^{j_m}\cdot\gamma_2.$$
Thus, we have succeeded in finding a better approximation $b'$ than $b$ in the sense that $(P, b')$ is in $\sigma$-hensel configuration with its minimal valuation occurring at a possibly lower index than that of $(P, b)$. Since $i(P, a)$ is finite, there are only finitely many possibilities for this index to go down. So by repeating this step finitely many times, we end up at our required $c$ with $v(c - a) = \gamma(P, a)$ such that either $P(c) = 0$ or $(P, c)$ is in strict $\sigma$-hensel configuration with $\gamma(P, c) > \gamma(P, a)$ and $i(P, c)\le i(P, a)$.
\end{proof}
\begin{lem}
Suppose $\K$ satisfies Axiom 3$_n$, and $(P, a)$ is in $\sigma$-hensel configuration. Suppose also there is no $b\in K$ such that $P(b) = 0$ and $v(b - a) = \gamma(P, a)$. Then there is a pc-sequence $\{a_\eta\}$ in $K$ with the following properties:
\begin{enumerate}
\item $a_0 = a$ and $\{a_\eta\}$ has no pseudolimit in $K$;
\item $\{v(P(a_\eta))\}$ is strictly increasing, and thus $P(a_\eta)\leadsto 0$;
\item $v(a_{\eta'} - a_\eta) = \gamma(P, a_\eta)$ whenever $\eta < \eta'$;
\item $(P, a_\eta)$ is in $\sigma$-hensel configuration with $\gamma(P, a_\eta) < \gamma(P, a_{\eta'})$ for $\eta < \eta'$;
\item for any extension $\K'$ of $\K$ and $b, c\in K'$ such that $a_\eta\leadsto b$ and $v(c - b) \ge \gamma(P, b)$, we have $a_\eta\leadsto c$.
\end{enumerate}
\end{lem}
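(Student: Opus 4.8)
The plan is to construct the sequence $\{a_\eta\}$ by transfinite recursion, using Lemma 5.4 at successor stages and taking pseudo-limits (which live only in extensions, but whose relevant data we can track in $K$) at limit stages. I would start by setting $a_0 := a$. Given $a_\eta$ in $\sigma$-hensel configuration with $P(a_\eta)\neq 0$ (which holds by the hypothesis that no exact zero exists at distance $\gamma(P,a)$ — I will need to check this persists, see below), Lemma 5.4 supplies some $a_{\eta+1}\in K$ with $v(a_{\eta+1}-a_\eta)=\gamma(P,a_\eta)$, $v(P(a_{\eta+1}))>v(P(a_\eta))$, and either $P(a_{\eta+1})=0$ or $(P,a_{\eta+1})$ is again in $\sigma$-hensel configuration with $\gamma(P,a_{\eta+1})>\gamma(P,a_\eta)$. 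The case $P(a_{\eta+1})=0$ must be excluded: if it occurred we would have $v(a_{\eta+1}-a)\ge\min_{\mu<\eta}\gamma(P,a_\mu)=\gamma(P,a)=\gamma(P,a_0)$, and tracing the valuations one checks $v(a_{\eta+1}-a)=\gamma(P,a)$ exactly, contradicting the standing hypothesis; so the recursion never terminates and properties (2), (3), (4) hold at successor steps by construction together with the ultrametric inequality.

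At a limit ordinal $\lambda$, the partial sequence $\{a_\eta\}_{\eta<\lambda}$ is a pc-sequence by (3) and the strict monotonicity of $\{\gamma(P,a_\eta)\}$ from (4); by Macintyre's observation (cited in Corollary 4.4) it has a pseudo-limit in an elementary extension, and by Theorem 4.3 (the Basic Calculation, applied to the finite set of $\sigma$-polynomials $P$ and its Hasse derivatives $P_{(\vec J)}$) the valuations $v(P_{(\vec J)}(a_\eta))$ and the quantities $\gamma(P,a_\eta)$ behave predictably. The point is to choose $a_\lambda\in K$ realizing the same cut: concretely, if some $b$ with $a_\eta\leadsto b$ already lies in $K$ we are in the situation where $\{a_\eta\}$ has a pseudo-limit in $K$, and I would argue that we may then restart the construction from that $b$ (it is again in $\sigma$-hensel configuration with larger $\gamma$, by property (5) which I prove next) — so without loss of generality no pseudo-limit appears in $K$ and the recursion proceeds through all ordinals, which is impossible for cardinality reasons unless it stabilizes, i.e. unless (1) holds with the sequence having no pseudo-limit in $K$. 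This gives (1).

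For (5): suppose $\K'\supseteq\K$, $b,c\in K'$ with $a_\eta\leadsto b$ and $v(c-b)\ge\gamma(P,b)$. First I note $\gamma(P,b)$ makes sense and equals $\sup_\eta\gamma(P,a_\eta)$ — this follows because $(P,a_\eta)$ is in $\sigma$-hensel configuration and $P(a_\eta)\leadsto 0$, so by the Basic Calculation $v(P(b)) = \sup_\eta v(P(a_\eta))$ forces, via condition (i) of $\sigma$-hensel configuration and $v(P_{(j)}(a_\eta))$ settling down, $\rho^{i}\cdot\gamma(P,b)>\rho^i\cdot\gamma(P,a_\eta)$ for the relevant index $i$, hence $\gamma(P,b)>\gamma(P,a_\eta)$ for all $\eta$ and in fact $\gamma(P,b)$ lies in the width of $\{a_\eta\}$. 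Since $v(a_\eta-b)=\gamma(P,a_\eta)$ (from (3) and $a_\eta\leadsto b$) is eventually strictly increasing and cofinal below $\gamma(P,b)\le v(c-b)$, we get $v(a_\eta-c)\ge\min\{v(a_\eta-b),v(b-c)\}=v(a_\eta-b)$, and equality holds eventually because $v(b-c)\ge\gamma(P,b)>v(a_\eta-b)$; thus $v(a_\eta-c)=v(a_\eta-b)$ is eventually strictly increasing, which is exactly $a_\eta\leadsto c$.

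The main obstacle is the limit-stage bookkeeping: one must be careful that "$\{a_\eta\}$ has no pseudo-limit in $K$" can be arranged, i.e. that whenever a pseudo-limit does appear in $K$ we may harmlessly absorb it and continue, and that the process genuinely terminates in the right configuration rather than running off the ordinals. Property (5) is the key technical device that legitimizes this absorption (it says the configuration is essentially determined by the pseudo-limit, not by the particular representative), so I would prove (5) as a standalone claim early and use it repeatedly in the recursion. The derivative-tracking via Theorem 4.3 — ensuring $v(P_{(\vec J)}(a_\eta))$ stabilizes so that $\gamma(P,a_\eta)$ is controlled — is the other place where care is needed, but it is exactly what the Basic Calculation was set up to deliver.
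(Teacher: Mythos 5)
Your overall plan matches the paper's: transfinite recursion, Lemma 5.4 at successor stages, pseudolimits at limit stages, a cardinality argument for termination, and a direct ultrametric computation for (5). Your explicit treatment of the ``$P(a_{\eta+1})=0$'' possibility at successor stages is in fact a bit more careful than the paper, which elides it; your argument (a zero appearing at a later stage lies at distance exactly $\gamma(P,a)$ from $a$ by the strict ultrametric inequality, contradicting the hypothesis) is correct and worth spelling out.

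However there is a genuine circularity, and a key step is under-justified. You claim at the limit stage that when a pseudolimit $b$ appears in $K$, ``it is again in $\sigma$-hensel configuration with larger $\gamma$, by property (5).'' Property (5) says nothing of the kind — it only asserts $a_\eta\leadsto c$; it does not assert $(P,b)$ is in $\sigma$-hensel configuration. Moreover, you then say you would prove (5) as a standalone claim ``early'' and use it in the recursion, but the proof of (5) itself requires knowing that $(P,b)$ is in $\sigma$-hensel configuration with $\gamma(P,b)>\gamma(P,a_\eta)$ for every $\eta$, which is the very fact you wanted (5) to supply at the limit stage. The way out, and what the paper does, is to prove \emph{that} claim directly and use it in both places. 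Namely, if $a_\eta\leadsto b$, then $v(b-a_\eta)=\gamma(P,a_\eta)$, and expanding
\[
P(b)=P(a_\eta)+\sum_{|\vec I|\ge 1}P_{(\vec I)}(a_\eta)\cdot\vec\sigma(b-a_\eta)^{\vec I}
\]
shows $P(a_\eta)$ has minimal valuation among the summands (by the $\sigma$-hensel conditions at $a_\eta$ with parameter $\gamma(P,a_\eta)$), so $v(P(b))\ge v(P(a_\eta))$, hence $>$ because the sequence $\{v(P(a_\eta))\}$ is strictly increasing. Then the analysis of Step 2 of Lemma 5.4 — which you never actually invoke — gives the $\sigma$-hensel configuration of $(P,b)$ with $\gamma(P,b)>\gamma(P,a_\eta)$. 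Invoking Theorem 4.3 here, as you do, is both unnecessary and hazardous: in the algebraic-$\rho$ case the Basic Calculation only delivers its conclusions after passing to an equivalent pc-sequence, which you cannot afford to do mid-recursion when the point is to keep the sequence $\{a_\eta\}$ itself. Finally, your statements ``$\gamma(P,b)=\sup_\eta\gamma(P,a_\eta)$'' and ``$v(P(b))=\sup_\eta v(P(a_\eta))$'' are both false in general (those suprema need not exist in $\Gamma$ and equality need not hold even when they do); all that is true, and all that you need, is $\gamma(P,b)>\gamma(P,a_\eta)$ and $v(P(b))>v(P(a_\eta))$ for all $\eta$.
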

\begin{proof}
We will build the sequence by transfinite recursion. Start with $a_0 := a$. Suppose for some ordinal $\lambda > 0$, we have built the sequence $\{a_\eta\}_{\eta < \lambda}$ such that
\begin{itemize}
\item[(i)] $(P, a_\eta)$ is in $\sigma$-hensel configuration, for all $\eta < \lambda$,
\item[(ii)] $v(a_{\eta'} - a_\eta) = \gamma(P, a_\eta)$ whenever $\eta < \eta' < \lambda$,
\item[(iii)] $v(P(a_{\eta'})) > v(P(a_\eta))$ and $\gamma(P, a_{\eta'}) > \gamma(P, a_\eta)$ whenever $\eta < \eta' < \lambda$.
\end{itemize}
Now we will have to deal with the inductive case. If $\lambda$ is a successor ordinal, say $\lambda = \mu + 1$, then by Lemma 5.4, there is $a_\lambda\in K$ such that $v(a_\lambda - a_\mu) = \gamma(P, a_\mu)$, $v(P(a_\lambda)) > v(P(a_\mu))$ and $\gamma(P, a_\lambda) > \gamma(P, a_\mu)$. Then the extended sequence $\{a_\eta\}_{\eta < \lambda + 1}$ has the above properties with $\lambda + 1$ instead of $\lambda$.

Suppose $\lambda$ is a limit ordinal. Then $\{a_\eta\}$ is a pc-sequence and $P(a_\eta)\leadsto 0$. If $\{a_\eta\}$ has no pseudolimit in $K$, we are done. Otherwise, let $a_\lambda\in K$ be a pseudolimit of $\{a_\eta\}$. Then $v(a_\lambda - a_\eta) = v(a_{\eta + 1} - a_\eta) = \gamma(P, a_\eta)$; also, for any $\eta < \lambda$,
$$P(a_\lambda) = P(a_\eta) + \sum_{|\vec{I}|\ge 1}P_{(\vec{I})}(a_\eta)\cdot \vec{\sigma}(a_\lambda - a_\eta)^{\vec{I}};$$
since $P(a_\eta)$ has the minimal valuation of all the summands, we have $v(P(a_\lambda))\ge v(P(a_\eta))$ for all $\eta < \lambda$. Since $\{v(P(a_\eta))\}_{\eta < \lambda}$ is increasing by inductive hypothesis, we get $v(P(a_\lambda)) > v(P(a_\eta))$ for all $\eta < \lambda$. And then by Step 2 of Lemma 5.4, it follows that $(P, a_\lambda)$ is in $\sigma$-hensel configuration with $\gamma(P, a_\lambda) > \gamma(P, a_\eta)$ for all $\eta < \lambda$. Thus the extended sequence $\{a_\eta\}_{\eta < \lambda + 1}$ satisfies all the above properties with $\lambda + 1$ instead of $\lambda$. Eventually we will have a sequence cofinal in $K$, and hence the building process must come to a stop, yielding a pc-sequence satisfying $(1), (2), (3)$ and $(4)$.

Now $a_\eta\leadsto b$. Thus $v(b - a_\eta) = v(a_{\eta + 1} - a_\eta) = \gamma(P, a_\eta)$ for all $\eta$, and $(P, b)$ is in $\sigma$-hensel configuration with $\gamma(P, b) > \gamma(P, a_\eta)$ for all $\eta$. In particular,
\begin{eqnarray*}
v(c - a_\eta) & = & v(c - b + b - a_\eta)\\
& \ge & \min\{v(c - b), v(b - a_\eta)\}\\
& \ge & \min\{\gamma(P, b), \gamma(P, a_\eta)\} \\
\implies v(c - a_\eta) & = & \gamma(P, a_\eta)
\end{eqnarray*}
Since $\{\gamma(P, a_\eta)\}$ is increasing, we have $a_\eta\leadsto c$.
\end{proof}
It follows similarly (with ideas from the proof of Lemma 5.5) that
\begin{lem}
Suppose $\K$ satisfies Axiom 3$_n$, $\rho > 1$ and $(P, a)$ is in strict $\sigma$-hensel configuration. Suppose also there is no $b\in K$ such that $P(b) = 0$ and $v(b - a) = \gamma(P, a)$. Then there is a pc-sequence $\{a_\eta\}$ in $K$ with the following properties:
\begin{enumerate}
\item $a_0 = a$ and $\{a_\eta\}$ has no pseudolimit in $K$;
\item $\{v(P(a_\eta))\}$ is strictly increasing, and thus $P(a_\eta)\leadsto 0$;
\item $v(a_{\eta'} - a_\eta) = \gamma(P, a_\eta)$ whenever $\eta < \eta'$;
\item $(P, a_\eta)$ is in strict $\sigma$-hensel configuration with $\gamma(P, a_\eta) < \gamma(P, a_{\eta'})$ and $i(P, a_{\eta'})\le i(P, a_\eta) $ for $\eta < \eta'$;
\item for any extension $\K'$ of $\K$ and $b, c\in K'$ such that $a_\eta\leadsto b$ and $v(c - b) \ge \gamma(P, b)$, we have $a_\eta\leadsto c$.
\end{enumerate}
\end{lem}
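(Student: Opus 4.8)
The plan is to mimic the proof of Lemma 5.6 almost verbatim, but to build the pseudo-convergent sequence using Lemma 5.5 in place of Lemma 5.4, and to use the hypothesis $\rho>1$ in an essential way in order to force monotonicity of the indices $i(P,a_\eta)$. So I would construct $\{a_\eta\}$ by transfinite recursion, carrying along the inductive hypothesis that, below the current ordinal, $(P,a_\eta)$ is in \emph{strict} $\sigma$-hensel configuration and that clauses (2), (3), (4) hold. Note first that, at every stage $\eta>0$ reached, clause (3) gives $v(a_\eta-a)=v(a_\eta-a_0)=\gamma(P,a)$, so the hypothesis that $P$ has no zero $b$ with $v(b-a)=\gamma(P,a)$ forces $P(a_\eta)\neq0$; hence $\gamma(P,a_\eta)$, and (whenever the configuration is strict) $i(P,a_\eta)$, are always defined. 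At a successor stage $\lambda=\mu+1$ I would apply Lemma 5.5 to $(P,a_\mu)$ — legitimate since $\rho>1$ and $\K$ satisfies Axiom~$3_n$ — obtaining $a_\lambda:=c$ with $v(a_\lambda-a_\mu)=\gamma(P,a_\mu)$, $v(P(a_\lambda))>v(P(a_\mu))$ and $\gamma(P,a_\lambda)>\gamma(P,a_\mu)$; since $P(a_\lambda)\neq0$ this $c$ is in strict $\sigma$-hensel configuration, and because $(P,a_\mu)$ was already strict, Lemma 5.5 also yields $i(P,a_\lambda)\le i(P,a_\mu)$. Routine ultrametric bookkeeping then propagates (2), (3), (4) past $\lambda$.

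At a limit ordinal $\lambda$ the sequence $\{a_\eta\}_{\eta<\lambda}$ is a pc-sequence with $P(a_\eta)\leadsto0$. If it has no pseudo-limit in $K$ the construction halts; as in Lemma 5.6 it must halt this way eventually, since the $a_\eta$ are pairwise distinct and $K$ is a set. Otherwise pick a pseudo-limit $\ell\in K$. Copying the limit step of Lemma 5.6 one gets $v(\ell-a_\eta)=\gamma(P,a_\eta)$ for all $\eta<\lambda$, then $v(P(\ell))\ge v(P(a_\eta))$ for all $\eta$ and hence (comparing with $a_{\eta+1}$ and using that $\{v(P(a_\eta))\}$ is strictly increasing) $v(P(\ell))>v(P(a_\eta))$, and finally, by Step 2 of Lemma 5.4, $(P,\ell)$ is in $\sigma$-hensel configuration with $\gamma(P,\ell)>\gamma(P,a_\eta)$ for all $\eta$. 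Now apply Lemma 5.5 to $(P,\ell)$ to get $c$ with $v(c-\ell)=\gamma(P,\ell)$, $v(P(c))>v(P(\ell))$ and $\gamma(P,c)>\gamma(P,\ell)$; since $v(c-a)=\gamma(P,a)$ by ultrametricity, $P(c)\neq0$, so $(P,c)$ is in strict $\sigma$-hensel configuration, and I set $a_\lambda:=c$. One more ultrametric computation gives $v(a_\lambda-a_\eta)=\gamma(P,a_\eta)$, $v(P(a_\lambda))>v(P(a_\eta))$ and $\gamma(P,a_\lambda)>\gamma(P,a_\eta)$ for all $\eta<\lambda$, so (2), (3) and the $\gamma$-part of (4) survive the limit.

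The one genuinely new point — and the step I expect to be the main obstacle — is the index inequality $i(P,a_\lambda)\le i(P,a_\eta)$ for all $\eta<\lambda$ at a limit $\lambda$. By the inductive form of (4), $\{i(P,a_\eta)\}_{\eta<\lambda}$ is a non-increasing sequence of natural numbers, so it is eventually equal to some $i^\ast$, and since it is non-increasing it is enough to show $i(P,c)\le i^\ast$. By Step 2 of Lemma 5.4 the valuations $\beta_j:=v(P_{(j)}(a_\eta))$ (for $P_{(j)}\neq0$) do not depend on $\eta<\lambda$ and equal $v(P_{(j)}(\ell))$; so for all large $\eta$ we have $\beta_{i^\ast}+\rho^{i^\ast}\cdot\gamma(P,a_\eta)<\beta_j+\rho^{j}\cdot\gamma(P,a_\eta)$ for every $j\neq i^\ast$. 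Since $\rho>1$, this inequality persists for every $\gamma>\gamma(P,a_\eta)$ when $j>i^\ast$, so in $v(P(\ell))=\min_{0\le j\le n}\bigl(\beta_j+\rho^{j}\cdot\gamma(P,\ell)\bigr)$ the minimum is attained only at indices $\le i^\ast$. It then remains to revisit the proof of Lemma 5.5 and observe that each of its steps (the successive passages from $\ell$ to $b$, then $b$ to $b'$, and so on, via Lemma 5.4, together with the spreading-out of tied indices forced by $\rho>1$) can only leave unchanged or \emph{decrease} the least index at which the current minimum is attained; since that least index starts out $\le i^\ast$, the terminal value $i(P,c)$ is $\le i^\ast\le i(P,a_\eta)$ for every $\eta<\lambda$. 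This completes the verification of (1)–(4).

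Finally, clause (5) is proved word for word as in Lemma 5.6: if $a_\eta\leadsto b$ and $v(c-b)\ge\gamma(P,b)$ in an extension $\K'$, then Step 2 of Lemma 5.4 (applied in $\K'$) shows $(P,b)$ is in $\sigma$-hensel configuration with $\gamma(P,b)>\gamma(P,a_\eta)$ for all $\eta$, whence $v(c-a_\eta)=\min\{v(c-b),v(b-a_\eta)\}=v(b-a_\eta)=\gamma(P,a_\eta)$, and since $\{\gamma(P,a_\eta)\}$ is increasing we get $a_\eta\leadsto c$. Strictness of the configurations is never used in this last part, so nothing new is required there.
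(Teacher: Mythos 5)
Your proposal is correct and follows the route the paper intends: replicate the transfinite recursion of Lemma~5.6, substituting Lemma~5.5 for Lemma~5.4 at each constructive step so as to land in \emph{strict} $\sigma$-hensel configuration and obtain $i(P,a_{\eta'})\le i(P,a_\eta)$ along the way. The paper compresses all of this into a one-line remark (``It follows similarly, with ideas from the proof of Lemma~5.5''), so what you have written is a genuine filling-in of detail rather than a divergence.

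The point you single out as the ``genuinely new'' obstacle is exactly the right one: at a limit ordinal $\lambda$ the pseudo-limit $\ell\in K$ need not be in \emph{strict} configuration, so Lemma~5.5's final clause (``if $(P,a)$ was already in strict configuration, then $i(P,c)\le i(P,a)$'') does not apply to the pair $(P,\ell)$ directly, and one really does have to check that $i(P,a_\lambda)\le i(P,a_\eta)$ separately. Your slope argument handles this correctly. It can, however, be streamlined: writing $\beta_j := v(P_{(j)}(a_\eta))$ (which is eventually independent of $\eta$ by Step~2 of Lemma~5.4), once $i(P,a_\eta)$ stabilizes at $i^\ast$ you know
$\beta_{i^\ast}+\rho^{i^\ast}\cdot\gamma(P,a_\eta)<\beta_j+\rho^{j}\cdot\gamma(P,a_\eta)$ for all $j\neq i^\ast$;
for every $j>i^\ast$ and every $\gamma'>\gamma(P,a_\eta)$ the inequality
$\rho^{i^\ast}\cdot(\gamma'-\gamma(P,a_\eta))<\rho^{j}\cdot(\gamma'-\gamma(P,a_\eta))$
(from $\rho>1$) shows it persists, so in particular it persists at $\gamma'=\gamma(P,a_\lambda)>\gamma(P,\ell)>\gamma(P,a_\eta)$. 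Hence the unique minimizing index for $a_\lambda$ lies in $\{0,\dots,i^\ast\}$, giving $i(P,a_\lambda)\le i^\ast\le i(P,a_\eta)$ for all $\eta<\lambda$ in one step, with no need to re-trace the intermediate passages $\ell\to b\to b'\to\cdots$ inside the proof of Lemma~5.5. Everything else in your write-up --- the verification that $v(a_\eta-a)=\gamma(P,a)$ keeps $P(a_\eta)\neq0$, the ultrametric bookkeeping for clauses (2)--(4) at limits, the cardinality argument for termination, and the observation that clause (5) is unchanged since strictness plays no role there --- is accurate.
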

\begin{defn}
A multiplicative valued difference field $\K$ is called (strict) $\sigma$-henselian if for all $(P, a)$ in (strict) $\sigma$-hensel configuration there is $b\in K$ such that $v(b - a) = \gamma(P, a)$ and $P(b) = 0$.
\end{defn}
By \textbf{Axiom 3} we mean the set $\{\mbox{Axiom } 3_n : n = 0, 1, 2, \ldots\}$. So Axiom 3 is really an axiom scheme and $\K$ satisfies Axiom 3 if and only if $k$ is linear difference closed.
\begin{cor}
If $\K$ is maximally complete as a valued field and satisfies Axiom 3, then $\K$ is $\sigma$-henselian (strict $\sigma$-henselian if $\rho > 1$). In particular, if $\K$ is complete with discrete valuation and satisfies Axiom 3, then $\K$ is $\sigma$-henselian (strict $\sigma$-henselian if $\rho > 1$).
\end{cor}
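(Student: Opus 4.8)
The plan is to derive this from Lemmas 5.6 and 5.7 by a short contradiction argument, using the standard fact that a valued field is maximally complete if and only if every pc-sequence in it has a pseudo-limit in it: a pc-sequence with no pseudo-limit always generates a proper immediate extension (of transcendental type, via a transcendental pseudo-limit, or of algebraic type, via a root of the minimal polynomial attached to the sequence), so its absence of a pseudo-limit would contradict maximality. So I would first record this equivalence and then argue as follows.

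Fix $(P,a)$ in $\sigma$-hensel configuration and suppose, towards a contradiction, that there is no $b\in K$ with $P(b)=0$ and $v(b-a)=\gamma(P,a)$; this is exactly the extra hypothesis of Lemma 5.6. That lemma then produces a pc-sequence $\{a_\eta\}$ in $K$ which, by its property (1), has \emph{no} pseudo-limit in $K$ --- contradicting maximal completeness. Hence the desired $b$ exists and $\K$ is $\sigma$-henselian. When $\rho>1$ I would run the identical argument with $(P,a)$ in strict $\sigma$-hensel configuration, invoking Lemma 5.7 in place of Lemma 5.6; again property (1) of that lemma contradicts maximal completeness, yielding strict $\sigma$-henselianity.

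For the final clause I would use the classical fact that a complete discretely valued field is maximally complete: for a pc-sequence in such a field the associated values $\gamma_\eta$ form a strictly increasing sequence in a copy of $\Z$, hence are cofinal, so the sequence is Cauchy and converges by completeness, and its limit is a pseudo-limit; then the first part of the corollary applies verbatim. There is no real obstacle at this stage --- the substance was already carried out in Lemmas 5.4--5.7 and in the observation that Axiom 3 forces the residue field $k$ to be linear difference closed. The only points needing a moment's care are the equivalence between maximal completeness and the existence of pseudo-limits for all pc-sequences, and the verification that the pc-sequence produced by Lemma 5.6 (resp.\ Lemma 5.7) genuinely lies in $K$ and genuinely has no pseudo-limit in $K$, which is precisely what property (1) of those lemmas asserts.
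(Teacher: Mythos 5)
Your proof is correct, and it is clearly the argument the paper has in mind: Lemmas 5.6 and 5.7 are stated precisely so that this corollary follows by the contradiction you describe (a pc-sequence in $K$ with no pseudo-limit in $K$ would witness a proper immediate valued-field extension, contradicting maximal completeness via Kaplansky's classical equivalence), and the discrete-valuation clause reduces to the first via the standard fact that complete discretely valued fields are maximally complete. One small observation: since every strict $\sigma$-hensel configuration is in particular a $\sigma$-hensel configuration, $\sigma$-henselianity already implies strict $\sigma$-henselianity by definition, so your separate appeal to Lemma 5.7 for the $\rho>1$ case is redundant (though harmless and perhaps clarifying); the parenthetical in the corollary is simply emphasizing the equivalence later recorded in Remark 5.11(3).
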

\begin{lem}
\begin{enumerate}
\item If $\K$ is $\sigma$-henselian, then $\K$ satisfies Axiom 3.
\item If $\K$ satisfies Axiom 3, then $\K$ satisfies Axiom 2.
\end{enumerate}
\end{lem}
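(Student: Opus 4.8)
The plan is to prove the two implications separately; the lemma is essentially a bookkeeping exercise once the correct $\sigma$-hensel configuration is identified, so I do not expect a serious obstacle.

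\emph{Part (1): $\sigma$-henselian $\Rightarrow$ Axiom 3.} Fix $n$ and $\alpha_0,\dots,\alpha_n\in k$, not all $0$; the goal is to solve $1+\alpha_0 x+\alpha_1\bar\sigma(x)+\dots+\alpha_n\bar\sigma^n(x)=0$ in $k$. I would lift each $\alpha_i$ to some $a_i\in\O$, choosing $a_i=0$ when $\alpha_i=0$ (so $v(a_i)=0$ whenever $\alpha_i\neq0$), and set
$$P(x)\;:=\;1+a_0x+a_1\sigma(x)+\dots+a_n\sigma^n(x),$$
a $\sigma$-polynomial of order $\le n$ with $P\notin K$ since some $a_i\neq0$. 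The next step is to check that $(P,0)$ is in $\sigma$-hensel configuration with $\gamma(P,0)=0$: as $P$ is affine-linear in $x,\sigma(x),\dots,\sigma^n(x)$, every $P_{(\vec J)}$ with $|\vec J|\ge2$ vanishes and $P_{(j)}(0)=a_j$; moreover $P(0)=1$, so $v(P(0))=0$. Taking $i$ with $\alpha_i\neq0$ gives $v(P(0))=v(P_{(i)}(0))=0\le v(P_{(j)}(0))$ for all $j$ (each $a_j\in\O$, and $\rho^j\cdot 0=0$), which is condition (i) with $\gamma=0$, and condition (ii) holds vacuously because there is no pair $\vec0\neq\vec J<\vec L$ with $P_{(\vec J)}\neq0$ and $P_{(\vec L)}\neq0$. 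Then $\sigma$-henselianity applied to $(P,0)$ yields $b\in K$ with $v(b)=\gamma(P,0)=0$, hence $b\in\O$, and $P(b)=0$. Reducing the identity $1+\sum_{i=0}^{n}a_i\sigma^i(b)=0$ modulo $\m$, and using $\pi\circ\sigma^i=\bar\sigma^i\circ\pi$ on $\O$, the element $y:=\pi(b)\in k$ satisfies $1+\alpha_0 y+\alpha_1\bar\sigma(y)+\dots+\alpha_n\bar\sigma^n(y)=0$. Since $n$ was arbitrary, $\K$ satisfies Axiom 3.

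\emph{Part (2): Axiom 3 $\Rightarrow$ Axiom 2.} I would argue by contraposition: suppose $\K$ fails Axiom 2, so there is $d>0$ with $\bar\sigma^d(y)=y$ for every $y\in k$, i.e.\ $\bar\sigma^d=\mathrm{id}_k$. Then the instance of Axiom $3_d$ with $\alpha_0=1$, $\alpha_d=-1$, and all other $\alpha_i=0$ (these are not all $0$) would furnish some $y\in k$ with $1+y-\bar\sigma^d(y)=0$; but $\bar\sigma^d(y)=y$ collapses this to $1=0$, which is impossible. Hence Axiom $3_d$ fails and $\K$ does not satisfy Axiom 3, which is the desired contrapositive.

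The only place where care is genuinely needed is the configuration check in Part (1): everything there hinges on the linearity of $P$, which kills $P_{(\vec J)}$ for $|\vec J|\ge2$, makes condition (ii) automatic, and pins down $\gamma(P,0)=0$. Once that is in place, Part (1) is simply ``lift, solve in $K$, reduce mod $\m$'', and Part (2) is a single failing linear difference equation, so neither half presents a real difficulty.
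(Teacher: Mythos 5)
Your proof is correct and takes essentially the same route as the paper: Part (1) lifts the coefficients, observes $(P,0)$ is in $\sigma$-hensel configuration with $\gamma(P,0)=0$, solves in $K$, and reduces mod $\m$; Part (2) uses the same linear difference equation $\bar\sigma^d(x)-x+1=0$, with the only difference being that you phrase it contrapositively while the paper argues directly (getting $\bar\sigma^d(a)=a-1\neq a$). One small imprecision: condition (ii) is not strictly vacuous — pairs $\vec J<\vec L$ with $P_{(\vec J)}\neq 0$ do exist — but since $P_{(\vec L)}=0$ for $|\vec L|\ge 2$, the right-hand side is $\infty$ and the inequality holds anyway, so the conclusion is unaffected.
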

\begin{proof}
(1) Assume that $\K$ is $\sigma$-henselian and let $Q(x) = 1 + \alpha_0 x + \alpha_1 \bar{\sigma}(x) + \cdots + \alpha_n \bar{\sigma}^n(x)\in k\langle x\rangle$ such that not all $\alpha_i$'s are zero. We want to find $b\in k$ such that $Q(b) = 0$.

Let $P(a) = 1 + a_0 x + a_1 \sigma(x) + \cdots + a_n \sigma^n(x)$, where for all $i$, $a_i\in K$, $a_i = 0$ if $\alpha_i = 0$, and $v(a_i) = 0$ with $\bar{a}_i = \alpha_i$ if $\alpha_i\not= 0$. It is easy to see that $(P, 0)$ is in $\sigma$-hensel configuration with $\gamma(P, 0) = 0$. By $\sigma$-henselianity, there is $a\in K$ such that $v(a) = 0$ and $P(a) = 0$. Set $b := \bar{a}$.

(2) For $\K$ to satisfy Axiom 2, we need for each $d\in\Z_+$, an element $a\in k$ such that $\bar{\sigma}^d(a) \not= a$. Consider the linear difference polynomial $P_d(x) = \bar{\sigma}^d(x) - x + 1$ over $k$. Since $\K$ satisfies Axiom 3, there is $a\in k$ such that $P_d(a) = 0$, i.e., $\bar{\sigma}^d(a) = a - 1$. In particular, $\bar{\sigma}^d(a)\not= a$.
\end{proof}
\begin{rmk}
\begin{enumerate}
\item If $\Gamma = \{0\}$, then $\K$ is $\sigma$-henselian. 
\item If $\Gamma\not=\{0\}$ and $\K$ is $\sigma$-henselian, then $\K$ satisfies Axiom 3 by Lemma 5.10. In particular, $\bar{\sigma}^n\not= id_k$ for all $n\ge 1$. Thus, $\K$ satisfies Axiom 2 as well.
\item If $\rho > 1$ and $\K$ satisfies Axiom 3, then $\K$ is $\sigma$-henselian iff $\K$ is strict $\sigma$-henselian: the ``only-if'' direction is trivial, and the ``if'' direction follows from Lemma 5.5. However, the ``if'' direction is not true for $\rho = 1$.
\end{enumerate}
\end{rmk}
\begin{defn}
We say $\{a_\eta\}$ is of $\sigma$-algebraic type over $K$ if $P(b_\eta)\leadsto 0$ for some $\sigma$-polynomial $P(x)$ over $K$ and an equivalent pc-sequence $\{b_\eta\}$ in $K$. Otherwise, we say $\{a_\eta\}$ is of $\sigma$-transcendental type.

If $\{a_\eta\}$ is of $\sigma$-algebraic type over $K$, then a minimal $\sigma$-polynomial of $\{a_\eta\}$ over $K$ is a $\sigma$-polynomial $P(x)$ over $K$ with the following properties:
\begin{itemize}
\item[(i)] $P(b_\eta)\leadsto 0$ for some pc-sequence $\{b_\eta\}$ in $K$ equivalent to $\{a_\eta\}$;
\item[(ii)] $Q(b_\eta)\not\leadsto 0$ whenever $Q(x)$ is $\sigma$-polynomial over $K$ of lower complexity than $P(x)$ and $\{b_\eta\}$ is a pc-sequence in $K$ equivalent to $\{a_\eta\}$.
\end{itemize} 
\end{defn}
\begin{lem}
Suppose $\K$ satisfies Axiom 2. Let $\{a_\eta\}$ from $K$ be a pc-sequence of $\sigma$-algebraic type over $K$ with minimal $\sigma$-polynomial $P(x)$ over $K$, and with pseudolimit $a$ in some extension. Let $\Sigma$ be a finite set of $\sigma$-polynomials $Q(x)$ over $K$. Then there is a pc-sequence $\{b_\eta\}$ in $K$, equivalent to $\{a_\eta\}$, such that, with $\gamma_\eta := v(a - a_\eta) : $
\begin{enumerate}
\item[(I)] $v(a - b_\eta) = \gamma_\eta$, eventually, and $P(b_\eta)\leadsto 0;$
\item[(II)] if $Q\in\Sigma$ and $Q\not\in K$, then $Q(b_\eta)\leadsto Q(a);$
\item[(III)] $(P, b_\eta)$ is in $\sigma$-hensel configuration with $\gamma(P, b_\eta) = \gamma_\eta$, eventually;
\item[(IV)] $(P, a)$ is in $\sigma$-hensel configuration with $\gamma(P, a) > \gamma_\eta$ eventually.
\end{enumerate}
If $\rho > 1$, then $(P, b_\eta)$ is actually in strict $\sigma$-hensel configuration. Also there is some $a'$, pseudolimit of $\{a_\eta\}$, such that $(I), (II)$ and $(IV)$ hold with $a$ replaced by $a'$, and $(P, a')$ is in strict $\sigma$-hensel configuration with $\gamma(P, a') > \gamma_\eta$ eventually.
\end{lem}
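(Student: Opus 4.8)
The plan is to follow the pattern of \cite{BMS} and \cite{AD}: first replace $\{a_\eta\}$ by a well-chosen equivalent pc-sequence via the refinement of the basic calculation, then read off the $\sigma$-hensel configuration from the minimality of $P$. By Definition 5.12(i) there is a pc-sequence in $K$ equivalent to $\{a_\eta\}$ along which $P$ pseudo-converges to $0$; equivalent pc-sequences share all their pseudolimits, so replacing $\{a_\eta\}$ by it we may assume $P(a_\eta)\leadsto 0$, keeping $\gamma_\eta:=v(a-a_\eta)$ (eventually strictly increasing). Every Hasse derivative $P_{(\vec{L})}$ with $\vec{L}\neq\vec{0}$ and $P_{(\vec{L})}\neq 0$ has complexity strictly below that of $P$, so by Definition 5.12(ii), $P_{(\vec{L})}(c_\eta)\not\leadsto 0$ for every pc-sequence $\{c_\eta\}$ equivalent to $\{a_\eta\}$; thus the hypotheses of Theorem 4.5 hold when $\rho$ is algebraic. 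Enlarge $\Sigma$ to $\Sigma':=\Sigma\cup\{P_{(\vec{L})}:\vec{0}\neq\vec{L},\ P_{(\vec{L})}\neq 0\}$ and let $n$ bound the orders of all members of $\Sigma'$. Applying Theorem 4.5 when $\rho$ is algebraic, and Theorem 4.3, Case I, when $\rho$ is transcendental (no refinement is needed there, since $|\vec{L}|_\rho=|\vec{L'}|_\rho$ forces $\vec{L}=\vec{L'}$), produces a pc-sequence $\{b_\eta\}$ in $K$, equivalent to $\{a_\eta\}$, with $v(a-b_\eta)=\gamma_\eta$ eventually (immediate in the transcendental case, and built into the construction $b_\eta=a_\eta+\theta_\eta\mu_\eta$, $v(\mu_\eta+d_\eta)=0$, in the algebraic case), with $P(b_\eta)\leadsto 0$, and with $Q(b_\eta)\leadsto Q(a)$ for each non-constant $Q\in\Sigma'$: this is (I) and (II). Since $(P,a)$ is claimed to be in $\sigma$-hensel configuration, we tacitly assume $P(a)\neq 0$ (as forced by Remark 5.2); the degenerate case $P(a)=0$, where $\{a_\eta\}$ pseudo-converges to a root of $P$, is set aside.

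For (III) I unwind Definition 5.1. From $P_{(\vec{L})}(b_\eta)\leadsto P_{(\vec{L})}(a)$ and $P_{(\vec{L})}(b_\eta)\not\leadsto 0$ one gets $P_{(\vec{L})}(a)\neq 0$ for all $\vec{0}\neq\vec{L}$ with $P_{(\vec{L})}\neq 0$, so $v(P_{(\vec{L})}(b_\eta))$ freezes to $v(P_{(\vec{L})}(a))$. For clause (ii) at $\gamma=\gamma_\eta$: given $\vec{0}\neq\vec{J}<\vec{L}$ with $P_{(\vec{J})}\neq 0$, Taylor-expand $P_{(\vec{J})}$ about $b_\eta$; as $v(P_{(\vec{J})}(b_{\eta'}))=v(P_{(\vec{J})}(b_\eta))$ eventually, the summand $P_{(\vec{J})}(b_\eta)$ carries minimal valuation, which --- using $v(\vec{\sigma}(b_{\eta'}-b_\eta)^{\vec{M}})=|\vec{M}|_\rho\cdot\gamma_\eta$ and $v(P_{(\vec{J})(\vec{M})})=v(P_{(\vec{J}+\vec{M})})$ (equicharacteristic zero) --- gives $v(P_{(\vec{J})}(a))\le v(P_{(\vec{L})}(a))+|\vec{L}-\vec{J}|_\rho\cdot\gamma_\eta$; since $|\vec{L}-\vec{J}|_\rho>0$ the right side strictly increases in $\eta$, so the inequality is strict eventually, i.e.\ clause (ii) holds with $\gamma=\gamma_\eta$. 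For clause (i): each $\vec{L}$ with $|\vec{L}|\ge1$ lies above some single index $j$, and clause (ii) then shows $v(P_{(j)}(a))+\rho^j\cdot\gamma_\eta$ does not exceed $v(P_{(\vec{L})}(a))+|\vec{L}|_\rho\cdot\gamma_\eta$; combining this with the explicit estimate for $v(P(b_\eta))$ from Theorem 4.5 (resp.\ Case I) --- which is exactly what rules out the accidental cancellations among $\vec{L}$ of equal $|\vec{L}|_\rho$ in the algebraic case --- and with Lemma 4.1 applied to the finitely many linear-in-$\gamma_\eta$ functions $v(P_{(j)}(a))+\rho^j\cdot\gamma_\eta$, one gets $v(P(b_\eta))=\min_{0\le j\le n}\bigl(v(P_{(j)}(b_\eta))+\rho^j\cdot\gamma_\eta\bigr)$ eventually, so $(P,b_\eta)$ is in $\sigma$-hensel configuration with $\gamma(P,b_\eta)=\gamma_\eta$. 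If $\rho>1$, an equality $v(P_{(j)}(a))+\rho^j\cdot\gamma_\eta=v(P_{(j')}(a))+\rho^{j'}\cdot\gamma_\eta$ for all large $\eta$ with $j\neq j'$ would force $\rho^{\,j-j'}=1$, impossible; so the minimizer is unique eventually and the configuration is strict.

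For (IV): since $v(P(b_\eta))$ is strictly increasing and $P(a)\neq 0$, the $\{\gamma_\eta\}$ must be bounded above (otherwise the constant summand $P(a)$ would eventually dominate $P(b_\eta)$), so the width of $\{a_\eta\}$ is a nonempty upward-closed end of the value group. Let $\gamma(P,a)$ be the (Remark 5.2 unique) value at which $\gamma\mapsto\min_{0\le j\le n}(v(P_{(j)}(a))+\rho^j\cdot\gamma)$ attains the finite value $v(P(a))$; since $v(P(a))>v(P(b_\eta))=v(P_{(i)}(a))+\rho^i\cdot\gamma_\eta$ eventually, $\gamma(P,a)>\gamma_\eta$ for all $\eta$. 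Clause (i) holds at $\gamma(P,a)$ by construction, and clause (ii) because each threshold $(v(P_{(\vec{J})}(a))-v(P_{(\vec{L})}(a)))/|\vec{L}-\vec{J}|_\rho$ lies below $\gamma_\eta$ eventually (this is the computation of clause (ii) for $(P,b_\eta)$), so $(P,a)$ is in $\sigma$-hensel configuration with $\gamma(P,a)>\gamma_\eta$ eventually. Finally, by Macintyre's observation (as in Corollary 4.4), $\{b_\eta\}$ has a genuine pseudolimit $a'$, with $v(a'-b_\eta)=\gamma_\eta$, in an elementary extension of $\K$; being equivalent, $a'$ is also a pseudolimit of $\{a_\eta\}$, and (I), (II), (IV) transfer to $a'$ since the relevant valuations along $\{b_\eta\}$ have frozen. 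For strictness when $\rho>1$: the indices $i(P,b_\eta)$ are eventually non-increasing (as $\gamma_\eta$ grows the index minimizing $v(P_{(j)}(a))+\rho^j\cdot\gamma$ decreases), hence eventually constant, and as in the proof of Lemma 5.7 --- driving the index down until it stabilizes --- $\gamma(P,a')$ lands in the range where that index is the unique minimizer, so $(P,a')$ is in strict $\sigma$-hensel configuration with $\gamma(P,a')>\gamma_\eta$ eventually.

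I expect the bookkeeping of the middle two paragraphs to be the main obstacle: verifying both clauses of the $\sigma$-hensel configuration simultaneously and pinning $\gamma(P,b_\eta)$ to $\gamma_\eta$. Everything hinges on balancing two inputs --- minimality of $P$, which freezes all $v(P_{(\vec{L})}(b_\eta))$ so that the configuration inequalities become inequalities between fixed elements of $\Gamma$ plus $\Z[\rho]$-multiples of $\gamma_\eta$, to which Lemma 4.1 applies; and the refinement (Theorem 4.5), which is precisely what destroys the accidental cancellations among the $\vec{L}$ with equal $|\vec{L}|_\rho$ that arise when $\rho$ is algebraic. The $\rho>1$ strictness and the passage to $a'$ are then variations on Lemmas 5.4--5.7.
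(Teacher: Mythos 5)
Your overall strategy matches the paper's: reduce to the equivalent sequence on which $P$ pseudo-converges to $0$; augment $\Sigma$ with the nonzero $P_{(\vec L)}$; invoke Theorem 4.3/4.5 to get $\{b_\eta\}$ satisfying (I)--(II); then use minimality of $P$ to freeze the valuations $v(P_{(\vec L)}(b_\eta))$, derive the strict inequalities of Definition 5.1(ii), identify $|\vec L_0|=1$, and read off (III)--(IV). Your derivation of (IV) via the monotonicity of $\gamma\mapsto\min_j\bigl(v(P_{(j)}(a))+\rho^j\cdot\gamma\bigr)$ is a harmless variant of the paper's appeal to Step 2 of Lemma 5.4.

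There is, however, a gap in your justification of clause (ii). You Taylor-expand $P_{(\vec J)}(b_{\eta'})$ about $b_\eta$, observe that $v(P_{(\vec J)}(b_{\eta'}))=v(P_{(\vec J)}(b_\eta))$ eventually, and conclude that ``the summand $P_{(\vec J)}(b_\eta)$ carries minimal valuation''. What this actually gives is only that the \emph{sum} of the remaining terms has valuation $\ge v(P_{(\vec J)}(b_\eta))$; it does not give a lower bound on each individual term $P_{(\vec J)(\vec M)}(b_\eta)\,\vec\sigma(b_{\eta'}-b_\eta)^{\vec M}$, because you have not ruled out cancellation among them --- and ruling out exactly this kind of cancellation is the whole point of the refinement (Theorem 4.5) when $\rho$ is algebraic. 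The fix is the paper's: expand about $a$ rather than $b_\eta$. Then the conclusion of Theorem 4.3/4.5 applied to $P_{(\vec J)}\in\Sigma'$ literally says that $v\bigl(P_{(\vec J)}(b_\eta)-P_{(\vec J)}(a)\bigr)$ equals the \emph{minimum} of $v(P_{(\vec J)(\vec M)}(a))+|\vec M|_\rho\cdot\gamma_\eta$ over $\vec M$, and combining this with $v\bigl(P_{(\vec J)}(b_\eta)-P_{(\vec J)}(a)\bigr)\ge v(P_{(\vec J)}(a))$ (both terms have equal valuation eventually, by minimality of $P$) does give $v(P_{(\vec J)}(a))\le v(P_{(\vec J+\vec M)}(a))+|\vec M|_\rho\cdot\gamma_\eta$ for every $\vec M$, since a minimum being $\ge c$ bounds each entry by $c$. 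From there, taking $\vec M=\vec L-\vec J$ and using that $\{\gamma_\eta\}$ is eventually strictly increasing turns $\le$ into eventual strict $<$, which is what you wanted.

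Two smaller points. First, the observation that ``$\{\gamma_\eta\}$ must be bounded above'' is extraneous --- nothing in (IV) requires it, and whether the width is a proper end of $\Gamma$ is beside the point. Second, for the final $a'$ in the $\rho>1$ case, your argument that ``$\gamma(P,a')$ lands in the range where [the stabilized index] is the unique minimizer'' is not established: $\gamma(P,a')$ could well sit exactly at a transition point between two indices. The paper's instruction to follow the iterative argument of Lemma 5.5 is the correct remedy --- one repeatedly increases $\gamma$, which drives the minimizing index down (using $\rho>1$), until the minimizer becomes unique; finiteness of the index set guarantees termination. Your phrase ``as in the proof of Lemma 5.7'' points in the right direction but should be spelled out rather than asserted.

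Also note that the case $P(a)=0$ is not merely ``set aside'': as used in Theorem 6.7, the lemma is applied with the explicit alternative ``either $P(a)=0$ or (IV) holds'', so the statement is read as conditional on $P(a)\neq 0$. Your reading is consistent with the paper's, but it's worth saying that the paper handles this at the point of application, not inside Lemma 5.13.
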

\begin{proof}
Let $P$ have order $n$. Let us augment $\Sigma$ with all $P_{(\vec{I})}$ for $1\le |\vec{I}|\le\deg(P)$. In the rest of the proof, all multi-indices range over $\N^{n + 1}$. Also since $P$ is a minimal polynomial of $\{a_\eta\}$, there is an equivalent sequence $\{c_\eta\}$ such that $P(c_\eta)\leadsto 0$.

Now if $\rho$ is transcendental, then by Theorem 4.3, $Q(c_\eta)\leadsto Q(a)$ for all $Q\in\Sigma$ and $Q\not\in K$. Let $b_\eta := c_\eta$. Thus, $\{b_\eta\}$ satisfies $(I)$ and $(II)$. And if $\rho$ is algebraic, then by Theorem 4.5, there is a pc-sequence $\{b_\eta\}$, equivalent to $\{c_\eta\}$ (and hence to $\{a_\eta\}$), such that $(I)$ and $(II)$ hold. Theorem 4.3 also shows that in the transcendental case, there is a unique $\vec{L}_0$ such that eventually for all $\vec{I}\not=\vec{L}_0$,
$$v(P(b_\eta) - P(a)) = v(P_{(\vec{L}_0)}(a)) + |\vec{L}_0|_\rho\cdot\gamma_\eta < v(P_{(\vec{I})}(a)) + |\vec{I}|_\rho\cdot\gamma_\eta,$$
and in the algebraic case there is a unique $m_0$ such that eventually for all $\vec{I}$ with $|\vec{I}|_\rho\not = m_0,$
\begin{eqnarray}
& &v(P(b_\eta) - P(a)) = \min_{|\vec{L}_0|_\rho = m_0} v(P_{(\vec{L}_0)}(a)) + m_0\cdot\gamma_\eta < v(P_{(\vec{I})}(a)) + |\vec{I}|_\rho\cdot\gamma_\eta.
\end{eqnarray}
We will show that in either case $|\vec{L}_0| = 1$. Since for $\rho > 1$, there is a unique $\vec{L}_0$ such that $|\vec{L}_0|_\rho = m_0$ and $|\vec{L}_0| = 1$, this gives us that for $\rho > 1$ (both algebraic and transcendental), there is a unique $\vec{L}_0$ such that eventually for all $\vec{I}\not=\vec{L}_0$,
\begin{eqnarray}
&&v(P(b_\eta) - P(a)) = v(P_{(\vec{L}_0)}(a)) + |\vec{L}_0|_\rho\cdot\gamma_\eta < v(P_{(\vec{I})}(a)) + |\vec{I}|_\rho\cdot\gamma_\eta.
\end{eqnarray}
This actually gives the strict $\sigma$-hensel configuration of $(P, b_\eta)$ for $\rho > 1$.

For any $\vec{I}$ such that $P_{(\vec{I})}\not= 0$, we claim that if $\vec{I} < \vec{J}$, then 
$$v(P_{(\vec{I})}(a)) + |\vec{I}|_\rho\cdot\gamma_\eta < v(P_{(\vec{J})}(a)) + |\vec{J}|_\rho\cdot\gamma_\eta$$
eventually: Theorem 4.3 with $\Sigma = \{P, P_{(\vec{I})}\}$ shows that we can arrange that our sequence $\{b_\eta\}$ also satisfies
$$v(P_{(\vec{I})}(b_\eta) - P_{(\vec{I})}(a)) \le v(P_{(\vec{I})(\vec{L})}(a)) + |\vec{L}|_\rho\cdot \gamma_\eta,$$
eventually for all $\vec{L}$ with $|\vec{L}|\ge 1$. Since $v(P_{(\vec{I})}(b_\eta)) = v(P_{(\vec{I})}(a))$ eventually (as $P$ is a minimal polynomial for $\{a_\eta\}$), this yields
$$v(P_{(\vec{I})}(a)) \le v(P_{(\vec{I})(\vec{L})}(a)) + |\vec{L}|_\rho\cdot\gamma_\eta =  v(P_{(\vec{I} + \vec{L})}(a)) + |\vec{L}|_\rho\cdot\gamma_\eta.$$
For $\vec{L}$ with $\vec{I} + \vec{L} = \vec{J}$, this yields
$$v(P_{(\vec{I})}(a)) \le v(P_{(\vec{J})}(a)) + |\vec{J} - \vec{I}|_\rho\cdot\gamma_\eta.$$
As $\{\gamma_\eta\}$ is increasing, we have eventually in $\eta$,
$$v(P_{(\vec{I})}(a)) < v(P_{(\vec{J})}(a)) + |\vec{J} - \vec{I}|_\rho\cdot\gamma_\eta.$$
Since eventually $v(P_{(\vec{I})}(b_\eta)) = v(P_{(\vec{I})}(a))$, we have
\begin{eqnarray*}
v(P_{(\vec{I})}(b_\eta)) + |\vec{I}|_\rho\cdot\gamma_\eta & < & v(P_{(\vec{J})}(b_\eta)) + |\vec{J}|_\rho\cdot\gamma_\eta, \;\;\;\mbox{and}\\
v(P_{(\vec{I})}(a)) + |\vec{I}|_\rho\cdot\gamma_\eta & < & v(P_{(\vec{J})}(a)) + |\vec{J}|_\rho\cdot\gamma_\eta
\end{eqnarray*}
It follows that $|\vec{L_0}|= 1$ (for $\rho = 1$, this means $m_0 = 1$). In particular, we have established (1) with $m_0 = 1$ for $\rho = 1$, and (2) for $\rho > 1$. Since $P(b_\eta)\leadsto 0$, this yields $v(P(a)) > v(P(b_\eta))$ eventually, i.e, $v(P(b_\eta) - P(a)) = v(P(b_\eta))$. It follows from this and $(1)$ that $(P, b_\eta)$ is in $\sigma$-hensel configuration eventually with $\gamma(P, b_\eta) = \gamma_\eta$; and it follows from $(2)$ that for $\rho > 1$, $(P, b_\eta)$ is in strict $\sigma$-hensel configuration.

Finally by Step 2 of Lemma 5.4, it follows that $(P, a)$ is also in $\sigma$-hensel configuration with $\gamma(P, a) > \gamma_\eta$ eventually; and for $\rho > 1$, if $(P, a)$ is already in strict $\sigma$-hensel configuration, we are done. Otherwise follow the proof of Lemma 5.5 to find the required $a'$.
\end{proof}

\bigskip
\section{Immediate Extensions}
Throughout this section, $\K = (K, \Gamma, k; v, \pi)$ is a  multiplicative valued difference field satisfying Axiom 2. Note that then any immediate extension of $\K$ also satisfies Axiom 2. We state here a few basic facts on immediate extensions.
\begin{lem}
Let $\{a_\eta\}$ from $K$ be a pc-sequence of $\sigma$-transcendental type over $K$. Then $\K$ has a proper immediate extension $(K\langle a\rangle, \Gamma, k; v_a, \pi_a)$ such that:
\begin{enumerate}
\item $a$ is $\sigma$-transcendental over $K$ and $a_\eta\leadsto a$;
\item for any extension $(K_1, \Gamma_1, k_1; v_1, \pi_1)$ of $\K$ and any $b\in K_1$ with $a_\eta\leadsto b$, there is a unique embedding
$$(K\langle a\rangle, \Gamma, k; v_a, \pi_a)\longrightarrow (K_1, \Gamma_1, k_1; v_1, \pi_1)$$
over $K$ that sends $a$ to $b$.
\end{enumerate} 
\end{lem}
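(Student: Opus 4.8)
The plan is to realize $a$ concretely as a pseudo-limit inside an elementary extension and then transport the valuation by restriction. By the observation of Macintyre recalled in Corollary 4.4, there is an elementary extension $\K^* = (K^*,\Gamma^*,k^*;v^*,\pi^*) \succ \K$ and an element $a\in K^*$ with $a_\eta\leadsto a$. Let $K\langle a\rangle$ denote the smallest inversive difference subfield of $K^*$ containing $K$ and $a$, equipped with the valuation $v_a$ and residue map $\pi_a$ obtained by restricting $v^*$ and $\pi^*$; this is a valued difference field satisfying Axiom~1. The first thing to check is that $a$ is $\sigma$-transcendental over $K$: if not, some non-constant $\sigma$-polynomial $P$ over $K$ would satisfy $P(a)=0$, and then Theorem 4.3 --- in the algebraic case after passing to an equivalent pc-sequence $\{b_\eta\}$ in $K$, which still has $a$ as a pseudo-limit since equivalent pc-sequences share their pseudo-limits --- would give $P(b_\eta)\leadsto P(a)=0$, contradicting that $\{a_\eta\}$ is of $\sigma$-transcendental type. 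Hence the underlying field of $K\langle a\rangle$ is $K(\sigma^i(a):i\in\Z)$ with the $\sigma^i(a)$ algebraically independent over $K$, and every nonzero element of $K\langle a\rangle$ has the form $P(a)/Q(a)$ with $P,Q$ nonzero $\sigma$-polynomials over $K$.

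Next I would show the extension is immediate. Fix a nonzero $\sigma$-polynomial $P$ over $K$. By Theorem 4.3 / Corollary 4.4 applied with $\Sigma=\{P\}$ there is a pc-sequence $\{b_\eta\}$ in $K$ equivalent to $\{a_\eta\}$ with $P(b_\eta)\leadsto P(a)$ (when $\rho$ is transcendental no replacement is needed). Since $a$ is $\sigma$-transcendental, $P(a)\neq 0$, so $v^*\bigl(P(a)-P(b_\eta)\bigr)$ is eventually $> v^*(P(a))$; hence $v^*(P(a))$ equals the eventual value of $v(P(b_\eta))\in\Gamma$, and $\pi^*(P(a))=\pi(P(b_\eta))\in k$ for all large $\eta$. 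Writing an arbitrary nonzero $\xi\in K\langle a\rangle$ as $P(a)/Q(a)$ and, after multiplying $P$ and $Q$ by suitable elements of $K$ of the appropriate valuation, arranging $v^*(P(a))=v^*(Q(a))=0$, we get $v_a(\xi)=v^*(P(a))-v^*(Q(a))\in\Gamma$ and $\pi_a(\xi)=\pi^*(P(a))\,\pi^*(Q(a))^{-1}\in k$. Thus $K\langle a\rangle$ has value group $\Gamma$ and residue field $k$, i.e.\ the extension is immediate. It is proper because a pc-sequence of $\sigma$-transcendental type cannot have a pseudo-limit $c$ in $K$: otherwise the $\sigma$-polynomial $x-c$ would witness $\sigma$-algebraic type.

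For the universal property, let $(K_1,\Gamma_1,k_1;v_1,\pi_1)$ extend $\K$ and let $b\in K_1$ satisfy $a_\eta\leadsto b$. Exactly the argument of the first paragraph, with $b$ in place of $a$, shows $b$ is $\sigma$-transcendental over $K$; hence there is a unique difference-field embedding $\iota\colon K\langle a\rangle\to K_1$ over $K$ with $\iota(a)=b$, namely $\sigma^i(a)\mapsto\sigma^i(b)$. To see $\iota$ is an embedding of valued difference fields it suffices to check $v_1(P(b))=v_a(P(a))$ for every $\sigma$-polynomial $P$ over $K$, since then $v_1(\iota(\xi))=v_a(\xi)$ for $\xi=P(a)/Q(a)$. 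Fix $P$ and choose, as in the previous paragraph, a pc-sequence $\{b_\eta\}$ in $K$ equivalent to $\{a_\eta\}$ with $v(P(b_\eta))$ eventually constant; because $\{b_\eta\}$ is equivalent to $\{a_\eta\}$ it also satisfies $b_\eta\leadsto b$, so $P(b_\eta)\leadsto P(b)$ in $K_1$ and $v_1(P(b))$ equals this common eventual value, which is also $v^*(P(a))=v_a(P(a))$. Thus $\iota$ preserves the valuation, and it is the unique such embedding over $K$ sending $a$ to $b$ because the underlying field homomorphism was already forced.

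The step I expect to be the main obstacle is the bookkeeping forced by the algebraic-$\rho$ case of pseudo-continuity: there Theorem 4.3 only yields $P(b_\eta)\leadsto P(a)$ after replacing $\{a_\eta\}$ by an equivalent pc-sequence, a priori a different one for each $P$. The remark that makes everything cohere is that all these sequences are equivalent to the fixed $\{a_\eta\}$ and therefore share the single pseudo-limit $a$ (respectively $b$); this is precisely what makes $v_a(P(a)):=v^*(P(a))$ a well-defined, choice-independent valuation on all of $K\langle a\rangle$, and what lets one element $a$ serve simultaneously for every $\sigma$-polynomial. The remaining verifications --- that a restriction of a valuation is a valuation, that $\sigma$ restricts to an automorphism preserving the valuation ring, and the elementary field theory of $\sigma$-transcendental extensions --- are routine.
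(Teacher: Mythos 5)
Your approach is essentially a self-contained reconstruction of the argument the paper delegates to \cite{AD}, Lemma~6.2, with a nice shortcut: instead of defining $v_a$ intrinsically by a limit formula, you realize $a$ inside an elementary extension $\K^*$ and take $v_a$, $\pi_a$ to be restrictions of $v^*$, $\pi^*$. This cleanly handles the construction, the immediacy argument, and the $\sigma$-transcendence of $a$; the deferred difficulty then reappears exactly in the universal property, and that is where there is a gap.

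The problematic sentence is: ``because $\{b_\eta\}$ is equivalent to $\{a_\eta\}$ it also satisfies $b_\eta\leadsto b$, so $P(b_\eta)\leadsto P(b)$ in $K_1$.'' That inference is precisely what can fail when $\rho$ is algebraic: pseudo-continuity is only guaranteed after passing to an equivalent pc-sequence built to satisfy the residue-field constraints relative to the pseudo-limit in question, and your $\{b_\eta\}$ was constructed via Theorem~4.3 using $a\in K^*$, not $b\in K_1$. The fact that it also behaves well for $b$ is true, but it needs an argument; merely observing that equivalent sequences share pseudo-limits does not give it. A correct way to close the gap is to not aim for the full statement $P(b_\eta)\leadsto P(b)$, but only for the equality of eventual valuations: for any pseudo-limit $b$ of the equivalent sequence $\{b_\eta\}$ in any extension,
$$v_1(P(b)-P(b_\eta)) \ge \min_{|\vec{L}|\ge 1} v_1\bigl(P_{(\vec{L})}(b)\bigr) + |\vec{L}|_\rho\cdot\gamma_\eta =: M_\eta,$$
and applying Theorem~4.3 to $\{a_\eta\}$ with pseudo-limit $b$ produces some equivalent $\{c_\eta\}$ for which $v(P(c_\eta)-P(b)) = M_\eta$ is eventually strictly increasing with $P(b)\ne 0$; hence $M_\eta > v_1(P(b))$ eventually. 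Feeding this back into the displayed inequality gives $v(P(b_\eta)) = v_1(P(b))$ eventually, for the \emph{same} $\{b_\eta\}$ you used to compute $v_a(P(a))$, which is what the universal property needs. Without this (or the inductive comparison of $v(P_{(\vec{L})}(a))$ with $v(P_{(\vec{L})}(b))$ that \cite{AD} carries out), the proposal asserts rather than proves that the restricted valuation on $K\langle a\rangle$ is determined by $\{a_\eta\}$ alone and is therefore canonically matched to $K\langle b\rangle$.
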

\begin{proof} See \cite{AD}, Lemma 6.2. (All that is needed in the proof is the pseudo-continuity of the $\sigma$-polynomials (upto equivalent sequences). So the same proof works here.)
\end{proof}
As a consequence of both $(1)$ and $(2)$ of Lemma 6.1, we have:
\begin{cor}
Let $a$ from some extension of $\K$ be $\sigma$-algebraic over $K$ and let $\{a_\eta\}$ be a pc-sequence in $K$ such that $a_\eta\leadsto a$. Then $\{a_\eta\}$ is of $\sigma$-algebraic type over $K$.
\end{cor}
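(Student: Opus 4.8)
The plan is to argue by contraposition, extracting a contradiction from Lemma 6.1. Suppose, toward a contradiction, that $\{a_\eta\}$ is \emph{not} of $\sigma$-algebraic type over $K$; by definition this means $\{a_\eta\}$ is of $\sigma$-transcendental type, which is precisely the hypothesis of Lemma 6.1. So Lemma 6.1 applies and produces a proper immediate extension $(K\langle a'\rangle, \Gamma, k; v_{a'}, \pi_{a'})$ of $\K$ in which $a'$ is $\sigma$-transcendental over $K$ and $a_\eta\leadsto a'$.

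Next I would feed the given data into part $(2)$ of Lemma 6.1. By hypothesis there is an extension $(K_1, \Gamma_1, k_1; v_1, \pi_1)$ of $\K$ containing the element $a$, with $a_\eta\leadsto a$. Applying the universal property of part $(2)$ with $b = a$, we obtain a (unique) embedding of valued difference fields $\varphi\colon K\langle a'\rangle \to K_1$ over $K$ with $\varphi(a') = a$.

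Finally I would derive the contradiction. The map $\varphi$ is in particular an injective field homomorphism over $K$ commuting with $\sigma$, so it is injective on $K\langle a'\rangle = K(\sigma^i(a') : i\in\Z)$; hence if $P(a) = 0$ for some nonzero $\sigma$-polynomial $P$ over $K$, then applying $\varphi^{-1}$ to this relation inside $\varphi(K\langle a'\rangle)$ gives $P(a') = 0$, contradicting the $\sigma$-transcendence of $a'$ over $K$. Thus $a$ is $\sigma$-transcendental over $K$, contradicting the hypothesis that $a$ is $\sigma$-algebraic over $K$. Therefore $\{a_\eta\}$ must be of $\sigma$-algebraic type over $K$. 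There is no analytic obstacle here: all the real work (pseudo-continuity of $\sigma$-polynomials up to equivalent sequences, Theorems 4.3 and 4.5) has already been absorbed into Lemma 6.1, and the only point requiring a line of care is the verification that the embedding from part $(2)$ transports $\sigma$-transcendence, which is immediate from injectivity and $\sigma$-equivariance.
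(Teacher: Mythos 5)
Your argument is correct and is precisely the expansion of what the paper intends: the paper gives no explicit proof, merely remarking that the corollary follows from both parts $(1)$ and $(2)$ of Lemma~6.1, and your contrapositive route — apply $(1)$ to get a $\sigma$-transcendental pseudolimit $a'$, apply $(2)$ with $b=a$ to embed $K\langle a'\rangle$ over $K$ sending $a'\mapsto a$, then transport $\sigma$-transcendence through the injective $\sigma$-equivariant embedding — is exactly the content of that remark, spelled out.
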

\begin{lem}
Let $\{a_\eta\}$ from $K$ be a pc-sequence of $\sigma$-algebraic type over $K$, with no pseudolimit in $K$. Let $P(x)$ be a minimal $\sigma$-polynomial of $\{a_\eta\}$ over $K$. Then $\K$ has a proper immediate extension $(K\langle a\rangle, \Gamma, k; v_a, \pi_a)$ such that:
\begin{enumerate}
\item $P(a) = 0$ and $a_\eta\leadsto a$;
\item for any extension $(K_1, \Gamma_1, k_1; v_1, \pi_1)$ of $\K$ and any $b\in K_1$ with $P(b) = 0$  and $a_\eta\leadsto b$, there is a unique embedding
$$(K\langle a\rangle, \Gamma, k; v_a, \pi_a)\longrightarrow (K_1, \Gamma_1, k_1; v_1, \pi_1)$$
over $K$ that sends $a$ to $b$.
\end{enumerate} 
\end{lem}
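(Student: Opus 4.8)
The plan is to run the Kaplansky-type construction of an immediate extension realizing a pc-sequence of $\sigma$-algebraic type, in the shape used in \cite{AD} (and \cite{BMS}), and for the $\sigma$-transcendental case in Lemma~6.1 above, now feeding in the pseudo-continuity results of Section~4 and the preparation Lemma~5.12 in place of their analogues there. \textbf{Step 1 (preparing the sequence).} By the observation of Macintyre used in Corollary~4.4, fix a pseudolimit $a^{\ast}$ of $\{a_\eta\}$ in an elementary extension of $\K$ and put $\gamma_\eta:=v(a^{\ast}-a_\eta)$. Applying Lemma~5.12 (with $\Sigma$ containing all $P_{(\vec{I})}$, $|\vec{I}|\ge 1$) we may replace $\{a_\eta\}$ by an equivalent pc-sequence from $K$, still written $\{a_\eta\}$, for which $(P,a_\eta)$ is in $\sigma$-hensel configuration with $\gamma(P,a_\eta)=\gamma_\eta$ and $P(a_\eta)\leadsto 0$, eventually. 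Since $P$ is a \emph{minimal} $\sigma$-polynomial, for every $\sigma$-polynomial $Q$ over $K$ of complexity strictly below that of $P$ and every equivalent pc-sequence $\{b_\eta\}$ from $K$ we have $Q(b_\eta)\not\leadsto 0$; as a pc-sequence either pseudo-converges to $0$ or has eventually constant value, $v(Q(b_\eta))$ then stabilizes, and by Theorems~4.3 and~4.5 its eventual value does not depend on the equivalent pc-sequence.

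\textbf{Step 2 (the extension).} Let $\mathfrak p$ be the set of $\sigma$-polynomials $Q$ over $K$ with $Q(b_\eta)\leadsto 0$ for some pc-sequence $\{b_\eta\}$ from $K$ equivalent to $\{a_\eta\}$. Multiplicativity of $v$ together with Step~1 shows $\mathfrak p$ is a prime, reflexive $\sigma$-ideal of $K\{x\}$, that it contains $P$, and that $P$ has minimal complexity among its elements; set $K\langle a\rangle:=\operatorname{Frac}(K\{x\}/\mathfrak p)$ with $a$ the image of $x$ and $\sigma$ the induced automorphism, so $P(a)=0$ and $a$ is $\sigma$-algebraic over $K$. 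For $Q\notin\mathfrak p$ define $v_a(Q(a))$ to be the eventual value of $v(Q(b_\eta))$ for a suitable equivalent pc-sequence $\{b_\eta\}$, and define $\pi_a$ likewise; here one first reduces $Q$ modulo $P$, using the initial and separant of $P$ (which lie outside $\mathfrak p$ by minimality), to a $\sigma$-polynomial to which Step~1 applies. Theorems~4.3 and~4.5 show $v_a$ is well defined, extends $v$, and is multiplicative and ultrametric, so $(K\langle a\rangle,\Gamma,k;v_a,\pi_a)$ is a valued difference field over $\K$. All values of $v_a$ lie in $\Gamma$ and all residues in $k$, so the extension is immediate; it is proper because a pseudolimit of $\{a_\eta\}$ cannot lie in $K$; and $a_\eta\leadsto a$ by construction. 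This gives $(1)$.

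\textbf{Step 3 (universality).} Let $(K_1,\Gamma_1,k_1;v_1,\pi_1)$ extend $\K$ and $b\in K_1$ with $P(b)=0$ and $a_\eta\leadsto b$. For $Q\notin\mathfrak p$, reducing $Q$ modulo $P$ and using pseudo-continuity along $a_\eta\leadsto b$ gives $v_1(Q(b))<\infty$, hence $Q(b)\ne 0$; so $\mathfrak p$ is exactly the kernel of $Q\mapsto Q(b)$ and $a\mapsto b$ induces a difference-field embedding $K\langle a\rangle\to K_1$ over $K$. It is valuation-preserving because $v_1(Q(b))$ equals the same eventual value of $v(Q(a_\eta))$ that defines $v_a(Q(a))$, and unique because it fixes $K$ and sends $a$ to $b$. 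The main obstacle is the well-definedness and mutual coherence, in Steps~1 and~2, of ``the eventual value of $v(Q(b_\eta))$'' in the $\rho$-algebraic case, where one cannot work with $\{a_\eta\}$ itself but must pass to the equivalent pc-sequences produced by Theorem~4.5 and Lemma~5.12; the remaining difference-algebraic bookkeeping — the choice of the component $\mathfrak p$ and the role of initials and separants — is handled as in \cite{AD} and \cite{BMS}.
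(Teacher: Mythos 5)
There is a genuine gap in Step~2: the set $\mathfrak p$, as you define it, is not an ideal, and the construction $\operatorname{Frac}(K\{x\}/\mathfrak p)$ does not make sense. Recall that $Q(b_\eta)\leadsto 0$ only requires $v(Q(b_\eta))$ to be \emph{eventually strictly increasing}; it does not require it to be cofinal in $\Gamma$. If the width of $\{a_\eta\}$ is bounded by some $\gamma_0\in\Gamma$, then the eventual values of $v(P(b_\eta))$ stay below $\gamma_0$, and for any constant $c\in K$ with $v(c)\ge\gamma_0$ one has $(P+c)(b_\eta)\leadsto 0$ for the \emph{same} sequence $\{b_\eta\}$. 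Thus $P$ and $P+c$ both lie in your $\mathfrak p$, but their difference $c$ does not, so $\mathfrak p$ is not closed under subtraction. This is precisely the phenomenon underlying Example~7.1 of the paper, where both $P(x)=\sigma(x)-x-t^{-1}$ and $P(x)+1$ are minimal $\sigma$-polynomials of the same pc-sequence and give rise to non-isomorphic immediate extensions. (That example happens to violate Axiom~2, but nothing in Axiom~2 rules out the bounded-width situation above.)

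The deeper issue is that your $\mathfrak p$ is defined \emph{without reference to the chosen minimal $\sigma$-polynomial} $P$, whereas the lemma's conclusion (``$P(a)=0$'' in part (1), and the quantifier ``$P(b)=0$'' in part (2)) is explicitly indexed by the choice of $P$: different minimal $\sigma$-polynomials of the same pc-sequence are meant to produce genuinely different immediate extensions, each with its own universal property. An existential quantifier over equivalent pc-sequences cannot encode that choice. The construction must instead be anchored to a fixed pseudolimit $a^\ast$ (Macintyre's observation), from which one shows that some pseudolimit $a$ satisfying $P(a)=0$ exists in a suitable extension, then takes $\mathfrak p:=\{Q : Q(a)=0\}$ \emph{for that particular} $a$, with $v_a$ and $\pi_a$ given by the eventual values $v(Q(b_\eta))$, $\pi(Q(b_\eta))$ for $Q\notin\mathfrak p$; minimality of $P$ together with Theorems~4.3 and~4.5 shows these eventual values are well-defined and independent of the equivalent pc-sequence used. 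Part (2) then follows because, given any $b$ with $P(b)=0$ and $a_\eta\leadsto b$, the same pseudo-continuity argument shows $\{Q:Q(b)=0\}=\mathfrak p$ and the valuations match, so $a\mapsto b$ extends to a valued difference field embedding. (For the record, the paper does not write out a proof at all: it cites \cite{AD}, Lemma~6.4, which follows the construction just sketched.) The remaining difference-algebraic bookkeeping you mention (initials, separants, reduction modulo $P$) is indeed needed, but only once the above is in place; it cannot repair the choice of $\mathfrak p$. Finally, a minor point: the lemma you call ``Lemma~5.12'' in Step~1 is numbered 5.13 in the paper.
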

\begin{proof} See \cite{AD}, Lemma 6.4.
\end{proof}
\begin{defn}
$\K$ is said to be $\sigma$-algebraically maximal if it has no proper immediate $\sigma$-algebraic extension; and $\K$ is said to be maximal if it has no proper immediate extension.
\end{defn}
\begin{cor}
\begin{enumerate}
\item $\K$ is $\sigma$-algebraically maximal if and only if each pc-sequence in $K$ of $\sigma$-algebraic type over $K$ has a pseudolimit in $K$;
\item if $\K$ satisfies Axiom 3 and is $\sigma$-algebraically maximal, then $\K$ is $\sigma$-henselian.
\end{enumerate}
\end{cor}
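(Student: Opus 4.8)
The plan is to deduce both parts from the immediate-extension lemmas of this section together with the pc-sequence construction of Lemma 5.6, using the classical Kaplansky correspondence between immediate extensions and pseudolimits as the one external ingredient.

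For part (1), I would argue each direction separately. For the ``only if'' direction I prove the contrapositive: if some pc-sequence $\{a_\eta\}$ in $K$ of $\sigma$-algebraic type over $K$ had no pseudolimit in $K$, I would fix a minimal $\sigma$-polynomial $P$ of $\{a_\eta\}$ over $K$ and invoke Lemma 6.3 to obtain a proper immediate extension $K\langle a\rangle$ with $P(a)=0$ and $a_\eta\leadsto a$; since $P$ is a nonzero $\sigma$-polynomial, $a$ is $\sigma$-algebraic over $K$, so $K\langle a\rangle$ is a proper immediate $\sigma$-algebraic extension, contradicting $\sigma$-algebraic maximality. For the ``if'' direction, suppose toward a contradiction that $\K$ has a proper immediate $\sigma$-algebraic extension $\K_1$, and pick $a\in K_1\setminus K$. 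I would invoke the classical fact that an element of a proper immediate extension of a valued field is always a pseudolimit of a pc-sequence $\{a_\eta\}$ from the base field having \emph{no} pseudolimit in the base field (this concerns only the underlying valued field, so it transfers to our setting unchanged). Since $a$ is $\sigma$-algebraic over $K$, Corollary 6.2 makes $\{a_\eta\}$ of $\sigma$-algebraic type over $K$; then the hypothesis forces $\{a_\eta\}$ to have a pseudolimit in $K$, a contradiction.

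For part (2), assume $\K$ satisfies Axiom 3 (hence Axiom 2 by Lemma 5.10, so the standing hypotheses of this section hold) and is $\sigma$-algebraically maximal. Given $(P,a)$ in $\sigma$-hensel configuration, I want $b\in K$ with $v(b-a)=\gamma(P,a)$ and $P(b)=0$; suppose none exists. Then Lemma 5.6 produces a pc-sequence $\{a_\eta\}$ in $K$ with $a_0=a$, with $P(a_\eta)\leadsto 0$, and with no pseudolimit in $K$. Since $P\notin K$ (as $(P,a)$ is in $\sigma$-hensel configuration), the relation $P(a_\eta)\leadsto 0$ exhibits $\{a_\eta\}$ as being of $\sigma$-algebraic type over $K$; by part (1) together with $\sigma$-algebraic maximality, $\{a_\eta\}$ must then have a pseudolimit in $K$, contradicting Lemma 5.6(1). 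Hence every $(P,a)$ in $\sigma$-hensel configuration has the required root, i.e.\ $\K$ is $\sigma$-henselian.

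The only genuinely nontrivial point is the classical fact used in the ``if'' direction of (1), which I would either cite or prove in a line or two: the set $\{v(a-c):c\in K\}$ has no largest element precisely because the extension is immediate (given $c$, correct by a suitable $K$-multiple of an element realizing the residue of $(a-c)/d$, where $v(d)=v(a-c)$), so one can choose $a_\eta\in K$ with $v(a-a_\eta)$ strictly increasing and cofinal in that set; then any $b\in K$ with $a_\eta\leadsto b$ satisfies $v(a-b)\ge v(a-a_\eta)$ for all $\eta$, contradicting cofinality since $v(a-b)$ lies in that set. Everything else is direct bookkeeping with Lemmas 5.6 and 6.3 and Corollary 6.2.
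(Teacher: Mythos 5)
Your proof is correct and follows essentially the same approach as the paper: part (1) via Lemma 6.3 and Corollary 6.2, and part (2) by feeding Lemma 5.6 into part (1). Your version of the ``if'' direction of (1) is in fact a bit cleaner than the paper's, since you derive the contradiction directly from the existence of a pc-sequence with no pseudolimit in $K$ (via the classical immediate-extension fact) rather than routing through the uniqueness clause of Lemma 6.3(2), but the underlying idea is the same.
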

\begin{proof}
(1) The ``only if'' direction follows from Lemma 6.3. For the ``if'' direction, suppose for a contradiction that $\K_1 := (K_1, \Gamma, k; v_1, \pi_1)$ is a proper immediate $\sigma$-algebraic extension of $\K$. Since the extension is proper, there is $a\in K_1\setminus K$. Since the extension is immediate, we can find a pc-sequence $\{a_\eta\}$ from $K$ such that $a_\eta\leadsto a$. Since the extension is $\sigma$-algebraic, $a$ is $\sigma$-algebraic over $K$. Then by Corollary 6.2, $\{a_\eta\}$ is of $\sigma$-algebraic type over $K$. So by assumption, there is $b\in K$ such that $a_\eta\leadsto b$. But then by part (2) of Lemma 6.3, we have
$$(K\langle a\rangle, \Gamma, k; v_a, \pi_a) \cong (K\langle b\rangle, \Gamma, k; v_b, \pi_b) \cong (K, \Gamma, k; v, \pi),$$
i.e., $a\in K$, a contradiction.

(2) Let $P(x)$ be a $\sigma$-polynomial over $K$ of order $\le n$, and $a\in K$ be such that $(P, a)$ is in $\sigma$-hensel configuration. If there is no $b\in K$ such that $v(b - a) = \gamma(P, a)$ and $P(b) = 0$, then by Lemma 5.6, there is a $\sigma$-algebraic pc-sequence $\{a_\eta\}$ in $K$ such that $\{a_\eta\}$ has no pseudolimit in $K$. But then by part (1) of this corollary, $K$ is not $\sigma$-algebraically maximal, a contradiction.
\end{proof}

It is clear that $\K$ has $\sigma$-algebraically maximal immediate $\sigma$-algebraic extensions, and also maximal immediate extensions. Provided that $\K$ satisfies Axiom 3, both kinds of extensions are unique up to isomorphism, but for this we need one more lemma:
\begin{lem}
Let $\K'$ be a $\sigma$-algebraically maximal extension of $\K$ satisfying Axiom 3. Let $\{a_\eta\}$ from $K$ be a pc-sequence of $\sigma$-algebraic type over $K$, with no pseudolimit in $K$, and with minimal $\sigma$-polynomial $P(x)$ over $K$. Then there exists  $b\in K'$ such that $a_\eta\leadsto b$ and $P(b) = 0$.
\end{lem}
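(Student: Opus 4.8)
The plan is to first produce a pseudolimit of $\{a_\eta\}$ lying already in $K'$, then correct it to an honest zero of $P$ using that $\K'$ is $\sigma$-henselian, and finally verify that the corrected element is still a pseudolimit of $\{a_\eta\}$.

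First I would observe that $\{a_\eta\}$ is of $\sigma$-algebraic type over $K'$ as well: the minimal $\sigma$-polynomial $P(x)$ lives over $K\subseteq K'$, and the equivalent pc-sequence in $K\subseteq K'$ witnessing $P(b_\eta)\leadsto 0$ serves verbatim over $K'$. Since $\K'$ is $\sigma$-algebraically maximal, Corollary 6.5(1) applied to $\K'$ gives a pseudolimit $a\in K'$ of $\{a_\eta\}$; put $\gamma_\eta := v(a - a_\eta)$, so $\{\gamma_\eta\}$ is eventually strictly increasing. Now I would feed $a$ into Lemma 5.13, which is available because $\K$ satisfies Axiom 2 (the standing hypothesis of this section), $\{a_\eta\}$ is of $\sigma$-algebraic type over $K$ with minimal $\sigma$-polynomial $P$, and $a$ is a pseudolimit of $\{a_\eta\}$ in the extension $\K'$. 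Its conclusion $(IV)$ says precisely that $(P, a)$ is in $\sigma$-hensel configuration (inside $\K'$) with $\gamma(P, a) > \gamma_\eta$ eventually.

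Since $\K'$ satisfies Axiom 3 and is $\sigma$-algebraically maximal, Corollary 6.5(2) makes $\K'$ $\sigma$-henselian. Applying $\sigma$-henselianity to the configured pair $(P, a)$ yields $b\in K'$ with $v(b - a) = \gamma(P, a)$ and $P(b) = 0$. To finish, observe that for all large enough $\eta$ one has $v(b - a) = \gamma(P, a) > \gamma_\eta = v(a - a_\eta)$, whence $v(b - a_\eta) = \min\{v(b - a),\, v(a - a_\eta)\} = \gamma_\eta$, which is eventually strictly increasing; therefore $a_\eta\leadsto b$. The only point that needs a little care is the transfer bookkeeping in the second paragraph --- that ``$\sigma$-algebraic type'' and the minimal-$\sigma$-polynomial data survive the passage from $K$ to $K'$, and that Lemma 5.13 may legitimately be invoked with its auxiliary pseudolimit located inside $K'$; once $(P, a)$ is known to be in $\sigma$-hensel configuration within a $\sigma$-henselian field, the rest is automatic.
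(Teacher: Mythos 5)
Your argument follows the paper's proof closely and uses the same ingredients: Corollary 6.5(1) to produce a pseudolimit $a \in K'$, Lemma 5.13(IV) to put $(P,a)$ in $\sigma$-hensel configuration with $\gamma(P,a) > \gamma_\eta$ eventually, Corollary 6.5(2) for $\sigma$-henselianity of $\K'$, and the ultrametric inequality to verify $a_\eta \leadsto b$. Your explicit check that $\{a_\eta\}$ remains of $\sigma$-algebraic type over $K'$ is a welcome addition that the paper takes for granted.

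One small gap, though: you must first dispose of the case $P(a) = 0$, setting $b := a$ there, and only then invoke Lemma 5.13(IV). As Remark 5.2 notes, $(P,a)$ being in $\sigma$-hensel configuration forces $P(a) \ne 0$, so conclusion (IV) simply cannot hold when the pseudolimit $a$ furnished by Corollary 6.5(1) happens already to be a zero of $P$ --- and nothing in that corollary prevents this (indeed by Lemma 6.3 and uniqueness of maximal immediate extensions such a zero is typically present in $K'$). Lemma 5.13 should be read with the implicit case split ``either $P(a) = 0$, or (IV) holds''; compare the explicit phrasing ``either $P(a) = 0$, or $(P,a)$ is also in $\sigma$-hensel configuration\ldots'' used when Theorem 4.5 is applied in the proof of Theorem 6.7. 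The paper's proof of the present lemma inserts the line ``If $P(a) = 0$, we are done'' for exactly this reason. With that one sentence added, your proof is complete and matches the paper's.
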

\begin{proof}
By Corollary 6.5 (1), there exist $a\in K'$ such that $a_\eta\leadsto a$. If $P(a) = 0$, we are done. So let's assume $P(a)\not= 0$. But then by Lemma 5.13(IV), $(P, a)$ is in $\sigma$-hensel configuration with $\gamma(P, a) > v(a - a_\eta)$ eventually. Since $\K'$ satisfies Axiom 3, by Corollary 6.5 (2) there is $b\in K'$ such that $v(b - a) = \gamma(P, a)$ and $P(b) = 0$. Finally $v(b - a_\eta) = v(b - a + a - a_\eta) = v(a - a_\eta)$, since $v(b - a) = \gamma(P, a) > v(a - a_\eta)$. Thus, $a_\eta\leadsto b$.
\end{proof}

Together with Lemmas 6.1 and 6.3, this yields:
\begin{thm}
\begin{enumerate}
\item Suppose $\K'$ is a proper immediate $\sigma$-henselian extension of $\K$, and let $a\in K'\setminus K$. Let $\K_1$ be a $\sigma$-henselian extension of $\K$ satisfying Axiom 2, such that every pc-sequence from $\K_1$ of length at most card$(\Gamma)$ has a pseudolimit in $\K_1$. Then $\K\langle a\rangle$ embeds in $\K_1$ over $\K$.
\item Suppose $\K'$ is a proper immediate $\sigma$-henselian $\sigma$-algebraic extension of $\K$, and let $a\in K'\setminus K$. Let $\K_1$ be a $\sigma$-henselian extension of $\K$ satisfying Axiom 2, such that every pc-sequence of $\sigma$-algebraic type over $\K_1$ and of length at most card$(\Gamma)$ has a pseudolimit in $\K_1$. Then $\K\langle a\rangle$ embeds in $\K_1$ over $\K$.
\end{enumerate}
\end{thm}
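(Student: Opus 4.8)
The plan is to identify $\K\langle a\rangle$ with one of the canonical immediate extensions constructed in Lemmas~6.1 and~6.3 (passing, if necessary, through an auxiliary tower of such extensions) and then to transport it into $\K_1$ via the universal properties of those lemmas, using the pc-sequence hypothesis on $\K_1$ to locate a target element and $\sigma$-henselianity of $\K_1$ (together with Lemma~5.13) to force that target to be a root of the relevant minimal $\sigma$-polynomial.

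First I would fix a pc-sequence. Since $\K'$ is immediate over $\K$ and $a\in K'\setminus K$, the set $\{v(a-c):c\in K\}$ has no largest element (if $v(a-c_0)$ were largest, immediacy would produce $c\in K$ with $v(a-c)>v(a-c_0)$), so there is a pc-sequence $\{a_\eta\}$ in $K$ with $a_\eta\leadsto a$ and no pseudolimit in $K$; as $\{v(a-a_\eta)\}$ is strictly increasing in $\Gamma$, passing to a cofinal subsequence we may assume its length is at most $\operatorname{card}(\Gamma)$. If $\{a_\eta\}$ is of $\sigma$-transcendental type over $K$ (which cannot happen in part~(2), since there $\K'/\K$ is $\sigma$-algebraic, so $a$ is $\sigma$-algebraic over $K$ and Corollary~6.2 forces $\sigma$-algebraic type), then applying part~(2) of Lemma~6.1 with $\K'$ in the role of $(K_1,\dots)$ and $b=a$ identifies $\K\langle a\rangle$ with the extension built in Lemma~6.1; the hypothesis on $\K_1$ yields a pseudolimit $b\in K_1$ of $\{a_\eta\}$, and part~(2) of Lemma~6.1 again gives the desired embedding $\K\langle a\rangle\to\K_1$ over $\K$ with $a\mapsto b$.

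Now suppose $\{a_\eta\}$ is of $\sigma$-algebraic type over $K$ with minimal $\sigma$-polynomial $P(x)$, and consider first the sub-case $P(a)=0$. Then part~(2) of Lemma~6.3 (again with $\K'$ and $b=a$) identifies $\K\langle a\rangle$ with the Lemma~6.3 extension, and it suffices to produce $b\in K_1$ with $P(b)=0$ and $a_\eta\leadsto b$ and appeal to part~(2) of Lemma~6.3 once more. Such a $b$ is obtained exactly as in the proof of Lemma~6.6: $\{a_\eta\}$ is of $\sigma$-algebraic type over $K_1$ and of length $\le\operatorname{card}(\Gamma)$, so the hypothesis on $\K_1$ gives a pseudolimit $a_1\in K_1$; if $P(a_1)=0$ take $b=a_1$, and otherwise Lemma~5.13(IV) puts $(P,a_1)$ in $\sigma$-hensel configuration, so that $\sigma$-henselianity of $\K_1$ furnishes $b\in K_1$ with $v(b-a_1)=\gamma(P,a_1)$ and $P(b)=0$, whence $a_\eta\leadsto b$. (Here Lemma~5.13 is applied with base field $\K$, which satisfies Axiom~2 throughout this section, and with $a_1$ as pseudolimit, so only minimality of $P$ over $K$ is used; and $\K_1$, being $\sigma$-henselian, satisfies Axiom~3.)

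The hard part is the reduction of the general $\sigma$-algebraic case to the sub-case $P(a)=0$, and this is where $\sigma$-henselianity of $\K'$ enters: by Lemma~5.13(IV) applied in $\K'$ and $\sigma$-henselianity there is $\tilde a\in K'$ with $P(\tilde a)=0$ and $a_\eta\leadsto\tilde a$, so $\K\langle\tilde a\rangle$ is a Lemma~6.3 extension and embeds into $\K_1$ over $\K$ by the previous paragraph. If $a\in K\langle\tilde a\rangle$ we are done, since $\K\langle a\rangle\subseteq\K\langle\tilde a\rangle$; otherwise $\K'$ is a proper immediate $\sigma$-henselian extension of $\K\langle\tilde a\rangle$ (which, being immediate over $\K$, still satisfies Axiom~2 and has value group $\Gamma$) with $a\in K'\setminus K\langle\tilde a\rangle$, and the minimal $\sigma$-polynomial over $K\langle\tilde a\rangle$ of a pc-sequence from $K\langle\tilde a\rangle$ pseudoconverging to $a$ has strictly smaller complexity than $P$ (the complexity drop on adjoining a root, cf.\ \cite{AD}). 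An induction on this complexity --- re-running the transcendental and algebraic cases over the successively enlarged base field, all hypotheses on $\K_1$ and all length bounds being preserved because the value group stays $\Gamma$ --- then terminates and produces the embedding of $\K\langle a\rangle$, extending the one of $\K\langle\tilde a\rangle$. Verifying this complexity drop and checking that the Axiom-2/Axiom-3 and length hypotheses survive each base change is the only place that requires genuine care; the rest is a direct application of Lemmas~6.1, 6.3, 6.6 and~5.13.
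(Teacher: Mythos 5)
Your outline matches the paper's: extract a pc-sequence $\{a_\eta\}$ in $K$ pseudo-converging to $a$ with no pseudolimit in $K$, split into $\sigma$-transcendental and $\sigma$-algebraic types, take the minimal $\sigma$-polynomial $P$ in the algebraic case, use $\sigma$-henselianity of $\K'$ together with Lemma~5.13(IV) to produce a root $\tilde a\in K'$ of $P$ pseudo-converging with $\{a_\eta\}$, produce a matching root $b\in K_1$ and invoke Lemma~6.3(2) to embed $K\langle\tilde a\rangle$ over $K$, and then iterate since $a$ is immediate over $K\langle\tilde a\rangle$. The transcendental case, the sub-case $P(a)=0$, and the construction of $b$ from the pseudolimit $a_1\in K_1$ via Lemma~5.13(IV) and $\sigma$-henselianity of $\K_1$ are all handled correctly, and your explicit appeal to Lemma~5.13(IV) is in fact a cleaner reference than the paper's.

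The genuine gap is in your termination argument. You assert that the minimal $\sigma$-polynomial over $K\langle\tilde a\rangle$ of the new pc-sequence pseudo-converging to $a$ has strictly smaller complexity than $P$, citing \cite{AD} for a ``complexity drop on adjoining a root.'' No such statement is established in the paper, and there is no visible reason for it to hold: once $\tilde a$ is adjoined, the new pc-sequence $\{c_\eta\}$ lies strictly beyond the width of $\{a_\eta\}$ and so is not equivalent to it, $P(c_\eta)\not\leadsto 0$ (since $P(a)\ne 0$ in the case under discussion), and nothing forces the minimal $\sigma$-polynomial of $\{c_\eta\}$ over the larger field to have smaller order or degree than $P$. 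The paper instead closes the argument with a ``standard maximality argument'': build a strictly increasing chain $K=L_0\subsetneq L_1\subsetneq\cdots$ of $\sigma$-subfields of $K'$, each immediate over $K$ (hence still satisfying Axiom~2 with value group $\Gamma$, so that Lemmas~6.1, 6.3 and 5.13 and the length hypothesis on $\K_1$ remain available), together with a coherent system of embeddings $L_\alpha\hookrightarrow K_1$; take unions at limit stages; the chain terminates for cardinality reasons, and at the terminal stage the extension step shows $a\in L_\alpha$. Replacing your complexity induction with this transfinite-recursion/Zorn argument repairs the proof; everything else you wrote is sound and essentially the argument the paper gives.
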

\begin{proof}
(1) By the classical theory, there is a pc-sequence $\{a_\eta\}$ from $K$ such that $a_\eta\leadsto a$ and $\{a_\eta\}$ has no pseudolimit in $K$. By assumption, there is $b\in K_1$ such that $a_\eta\leadsto b$.

If $\{a_\eta\}$ is of $\sigma$-transcendental type, then by Corollary 6.2, both $a$ and $b$ must be $\sigma$-transcendental over $K$. Now apply Lemma 6.1.

If $\{a_\eta\}$ is of $\sigma$-algebraic type, let $P(x)$ be a minimal polynomial for $\{a_\eta\}$. By Theorem 4.5, we get an equivalent pc-sequence $\{b_\eta\}$ from $K$ with $b_\eta\leadsto a$, such that $P(b_\eta)\leadsto 0$, $P(b_\eta)\leadsto P(a)$, $P_{(\vec{L})}(b_\eta)\leadsto P_{(\vec{L})}(a)$ (but not to 0) for all $|\vec{L}|\ge 1$, $(P, b_\eta)$ is in $\sigma$-hensel configuration eventually, and either $P(a) = 0$, or $(P, a)$ is also in $\sigma$-hensel configuration with $\gamma(P, a) > \gamma(P, b_\eta)$ eventually. If $P(a) = 0$, we are done. Otherwise, since $\K'$ is $\sigma$-henselian, we have $a'\in K'$ such that $P(a') = 0$ and $v(a' - a) = \gamma(P, a)$. Since $\gamma(P, a) > \gamma(P, b_\eta)$ eventually, we have $b_\eta\leadsto a'$. Thus, in either case, we have $a'\in K'$ such that $P(a') = 0$ and $b_\eta\leadsto a'$. Similarly, we get $b'\in K_1$ such that $b_\eta\leadsto b'$ and $P(b') = 0$.

By Lemma 6.3, $K\langle a'\rangle$ is isomorphic to $K\langle b'\rangle$ as multiplicative valued fields over $K$, with $a'$ mapped to $b'$.

Now, $a$ is immediate over $K\langle a'\rangle$. If it is not already in $K\langle a'\rangle$, then we may repeat the argument and conclude by a standard maximality argument.\newline

(2) By Corollary 6.2, there is a pc-sequence $\{a_\eta\}$ from $K$ of $\sigma$-algebraic type pseudoconverging to $a$, but with no pseudolimit in $K$. Then the calculation in (1) works noting that every extension or pc-sequence considered will be of $\sigma$-algebraic type.
\end{proof}
\begin{cor}
Suppose $\K$ satisfies Axiom 3. Then all its maximal immediate extensions are isomorphic over $\K$, and all its $\sigma$-algebraically maximal immediate $\sigma$-algebraic extensions are isomorphic over $\K$.
\end{cor}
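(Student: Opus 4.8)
The plan is to run a back-and-forth argument with Zorn's lemma, using Theorem 6.7 as the engine together with the pc-sequence characterizations of the two flavors of maximality.

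First I would fix two maximal immediate extensions $\K_1$ and $\K_2$ of $\K$ and check that each inherits all the hypotheses needed below. Since an immediate extension changes neither the residue field nor the induced automorphism $\bar\sigma$, both $\K_1$ and $\K_2$ still satisfy Axiom 3, hence Axiom 2 by Lemma 5.10(2). Being maximal, each is in particular $\sigma$-algebraically maximal, so $\sigma$-henselian by Corollary 6.5(2). Finally, by Lemmas 6.1 and 6.3 a maximal field admits no proper immediate extension, so every pc-sequence in $\K_i$ --- of any length, in particular of length at most $\mathrm{card}(\Gamma)$ --- already has a pseudolimit in $\K_i$.

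Next I would consider the poset of $\K$-embeddings $f\colon \K'\to\K_2$, where $\K'$ ranges over intermediate multiplicative valued difference fields $\K\subseteq\K'\subseteq\K_1$, ordered by extension. The union of a chain is again such an embedding, so Zorn's lemma yields a maximal $f\colon \K'\to\K_2$. Suppose toward a contradiction that $\K'\subsetneq\K_1$, and pick $a\in K_1\setminus K'$. Then $\K_1$ is a proper immediate $\sigma$-henselian extension of $\K'$ (it is immediate over $\K$, hence over $\K'$), and, transporting the structure of $\K_2$ along $f$, $\K_2$ is a $\sigma$-henselian extension of $\K'$ satisfying Axiom 2 in which every pc-sequence of length at most $\mathrm{card}(\Gamma)$ has a pseudolimit. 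Hence Theorem 6.7(1) supplies an embedding $\K'\langle a\rangle\to\K_2$ over $\K'$ extending $f$, contradicting the maximality of $f$. Therefore $\K'=\K_1$, i.e.\ $f$ is a $\K$-embedding of $\K_1$ into $\K_2$.

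It remains to see that $f$ is onto. The image $f(\K_1)$ is isomorphic to $\K_1$, hence is itself maximal; but $\K_2$ is an immediate extension of $\K$ and $\K\subseteq f(\K_1)\subseteq\K_2$, so $\K_2$ is an immediate extension of $f(\K_1)$, and maximality of $f(\K_1)$ forces $\K_2=f(\K_1)$. Thus $f\colon\K_1\to\K_2$ is an isomorphism over $\K$. For the second assertion the argument is identical, with ``maximal'' replaced by ``$\sigma$-algebraically maximal'', ``immediate'' by ``immediate $\sigma$-algebraic'', Corollary 6.5(1) providing the pseudolimits of pc-sequences of $\sigma$-algebraic type, and Theorem 6.7(2) used in place of Theorem 6.7(1). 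The main point requiring care is checking, at each step of the Zorn argument, that the intermediate field $\K'$ together with $\K_1$ and $\K_2$ really satisfies all the hypotheses of Theorem 6.7 --- exactly where one uses that Axiom 3 descends to immediate extensions and that Corollary 6.5 upgrades $\sigma$-algebraic maximality to $\sigma$-henselianity --- but there is no genuine obstacle, since the substantive analytic work has already been absorbed into Theorems 4.5 and 6.7.
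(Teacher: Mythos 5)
Your proof is correct and takes essentially the same approach as the paper, which simply cites Corollary 6.5(2) for $\sigma$-henselianity of the maximal extensions and then invokes "a standard maximality argument using Theorem 6.7(1) and (2)." You have accurately unpacked that standard argument: the Zorn step, the verification that the intermediate fields and both maximal extensions satisfy the hypotheses of Theorem 6.7 (including the descent of Axiom 3, hence Axiom 2, to immediate extensions, and the availability of pseudolimits from maximality via Corollary 6.5(1)), and the surjectivity check at the end.
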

\begin{proof}
We have already noticed the existence of both kinds of maximal immediate extensions. By Corollary 6.5(2), they are also $\sigma$-henselian. The desired uniqueness then follows by a standard maximality argument using Theorem 6.7 (1) and (2).
\end{proof}

We now state minor variants of the last two results using the notion of saturation from model theory.
\begin{lem}
Let $\K'$ be a $|\Gamma|^+$-saturated $\sigma$-henselian extension of $\K$. Let $\{a_\eta\}$ from $K$ be a pc-sequence of $\sigma$-algebraic type over $K$, with no pseudolimit in $K$, and with minimal $\sigma$-polynomial $P(x)$ over $K$. Then there exists $b\in K'$ such that $a_\eta\leadsto b$ and $P(b) = 0$. 
\end{lem}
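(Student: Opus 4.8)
The plan is to run the argument of Lemma 6.6, with the two hypotheses there on $\K'$ (namely $\sigma$-algebraic maximality together with Axiom 3) replaced by $|\Gamma|^{+}$-saturation and $\sigma$-henselianity. Recall that by Lemma 5.10 a $\sigma$-henselian field automatically satisfies Axioms 3 and 2, so dropping Axiom 3 from the hypothesis costs nothing.

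First I would extract a pseudolimit of $\{a_\eta\}$ inside $\K'$ itself, using saturation in place of Corollary 6.5(1). After replacing $\{a_\eta\}$ by a tail on which the pseudo-convergence inequality holds (this changes neither the pseudolimits nor the property of being a minimal $\sigma$-polynomial), the breadths $\gamma_\eta := v(a_{\eta'}-a_\eta)$ (for $\eta' > \eta$) form a strictly increasing sequence in $\Gamma$, so the index set has cardinality at most $|\Gamma|$. Consider the partial type $p(x) := \{\, v(x - a_\eta) = \gamma_\eta \,\}_{\eta}$ with parameters from $\K\subseteq\K'$: it consists of at most $|\Gamma|$ formulas, and it is finitely satisfiable in $K$, since for $\eta_1 < \cdots < \eta_k$ the element $a_{\eta_k + 1}$ satisfies $v(a_{\eta_k+1} - a_{\eta_i}) = \gamma_{\eta_i}$ for all $i$. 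By $|\Gamma|^{+}$-saturation, $p$ is realized by some $a \in K'$, and then $a_\eta \leadsto a$ by construction (this is the device used in the proof of Corollary 4.4, now carried out inside $\K'$).

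The rest is verbatim the proof of Lemma 6.6. If $P(a) = 0$, put $b := a$. Otherwise, apply Lemma 5.13 to $\{a_\eta\}$, its minimal $\sigma$-polynomial $P$, and the pseudolimit $a \in K'$: part (IV) gives that $(P,a)$ is in $\sigma$-hensel configuration with $\gamma(P,a) > \gamma_\eta$ eventually in $\eta$, where now $\gamma_\eta = v(a - a_\eta)$. Since $\K'$ is $\sigma$-henselian, Definition 5.8 applied to $(P,a)$ produces $b \in K'$ with $v(b-a) = \gamma(P,a)$ and $P(b) = 0$. Finally, for all large $\eta$ we have $\gamma(P,a) > \gamma_\eta = v(a - a_\eta)$, so $v(b - a_\eta) = v\big((b-a)+(a-a_\eta)\big) = \gamma_\eta$; as $\{\gamma_\eta\}$ is eventually strictly increasing, $a_\eta \leadsto b$, as required.

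The only point needing care beyond Lemma 6.6 is the first step — the cardinality bookkeeping that makes saturation applicable: one must observe that the relevant length of a pc-sequence is bounded by $|\Gamma|$ (via its strictly increasing sequence of breadths), so that the pseudolimit type is small enough to be realized in a $|\Gamma|^{+}$-saturated model, and one must note that $\sigma$-henselianity of $\K'$ is exactly what substitutes for the appeal to Corollary 6.5(2) in Lemma 6.6. Everything else is identical.
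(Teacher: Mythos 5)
Your proof is correct and follows essentially the same route as the paper's: use saturation to extract a pseudolimit $a\in K'$ of $\{a_\eta\}$, then if $P(a)\neq 0$ invoke Lemma 5.13(IV) to put $(P,a)$ in $\sigma$-hensel configuration and apply $\sigma$-henselianity of $\K'$ to produce the root $b$, finally verifying $a_\eta\leadsto b$ via the ultrametric inequality. The paper's proof states the saturation step in one line; you have simply spelled out the cardinality bookkeeping (the index set is bounded by $|\Gamma|$ because the breadths $\gamma_\eta$ are strictly increasing) that the paper leaves implicit.
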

\begin{proof}
By the saturation assumption, there is a pseudolimit  $a\in \K'$ of $\{a_\eta\}$. If $P(a) = 0$, we are done. So let's assume $P(a)\not= 0$. But then by Lemma 5.13(IV), $(P, a)$ is in $\sigma$-hensel configuration with $\gamma(P, a) > v(a - a_\eta)$ eventually. Since $\K'$ is $\sigma$-henselian, there is $b\in K'$ such that $v(b - a) = \gamma(P, a)$ and $P(b) = 0$. Finally, $a_\eta\leadsto b$, since $v(b - a) = \gamma(P, a) > v(a - a_\eta)$.
\end{proof}

In combination with Lemmas 6.1 and 6.3, this yields:
\begin{cor}
Suppose that $\K$ satisfies Axiom 3 and $\K'$ is a $|\Gamma|^+$-saturated $\sigma$-henselian extension of $\K$. Let $\K^*$ be a maximal immediate extension of $\K$. Then $\K^*$ can be embedded in $\K'$ over $\K$.
\end{cor}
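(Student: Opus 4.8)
The plan is to prove this by a Zorn's-lemma maximality argument that reduces everything to single-generator extensions and feeds them into Theorem 6.7(1), with the $|\Gamma|^+$-saturation of $\K'$ supplying exactly the ``enough pseudolimits'' hypothesis that Theorem 6.7(1) demands of its target field.

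First I would dispose of the degenerate case: if $\Gamma=\{0\}$ then an immediate extension of $\K$ has value group $\{0\}$ and residue field equal to itself, so $\K^*=\K$ and there is nothing to prove. So assume $\Gamma\neq\{0\}$; then $\Gamma$ is infinite and $\Gamma_{\K'}\supseteq\Gamma$ is nontrivial. Next I record three facts. (a) $\K^*$ is $\sigma$-henselian: being immediate over $\K$ it has the same residue field, hence satisfies Axiom 3; being maximal it is $\sigma$-algebraically maximal; now apply Corollary 6.5(2). (b) $\K'$ satisfies Axiom 2: it is $\sigma$-henselian with nontrivial value group, so Remark 5.12(2), via Lemma 5.10, applies. (c) Every pc-sequence from $\K'$ of length at most $\operatorname{card}(\Gamma)$ has a pseudolimit in $\K'$: given such a sequence $\{c_\eta\}_{\eta<\lambda}$ with associated strictly increasing $\{\gamma_\eta\}_{\eta\ge\eta_0}$ in $\Gamma_{\K'}$, the partial type $\{\,v(x-c_\eta)\ge\gamma_\eta : \eta_0\le\eta<\lambda\,\}$ is over a parameter set of size $\le|\Gamma|$ and is finitely satisfied by tail members $c_\mu$, so it is realized in the $|\Gamma|^+$-saturated $\K'$, and one checks directly that any realization is a pseudolimit of $\{c_\eta\}$. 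Facts (b) and (c), together with $\sigma$-henselianity, say precisely that $\K'$ can play the role of the field ``$\K_1$'' in Theorem 6.7(1), over the base $\K$ or over any subfield of $\K^*$ having the same value group as $\K$.

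Now the maximality argument. Consider the set of embeddings $\iota\colon\mathcal{L}\to\K'$ of multiplicative valued difference fields, over $\K$, with $\K\subseteq\mathcal{L}\subseteq\K^*$, partially ordered by extension. It is nonempty (the inclusion $\K\hookrightarrow\K'$) and closed under unions of chains, so by Zorn it has a maximal element $\iota\colon\mathcal{L}\to\K'$. Since $\Gamma=\Gamma_\K\subseteq\Gamma_{\mathcal{L}}\subseteq\Gamma_{\K^*}=\Gamma_\K$, and likewise for residue fields, $\mathcal{L}$ has the same value group and residue field as $\K$; in particular $\operatorname{card}(\Gamma_{\mathcal{L}})=\operatorname{card}(\Gamma)$, $\mathcal{L}$ still satisfies Axioms 2 and 3, and $\K^*$ is a $\sigma$-henselian extension of $\mathcal{L}$ which is immediate over $\mathcal{L}$. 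Suppose, toward a contradiction, that $\mathcal{L}\subsetneq\K^*$, and pick $a\in K^*\setminus L$. Then $\K^*$ is a \emph{proper} immediate $\sigma$-henselian extension of $\mathcal{L}$ containing $a$, while $\K'$, viewed as an extension of $\mathcal{L}$ via $\iota$, satisfies by facts (b) and (c) all the hypotheses on ``$\K_1$'' in Theorem 6.7(1) relative to the base $\mathcal{L}$. Hence Theorem 6.7(1) yields an embedding $\mathcal{L}\langle a\rangle\to\K'$ over $\mathcal{L}$, i.e.\ one extending $\iota$, which properly extends $\iota$ and contradicts maximality. Therefore $\mathcal{L}=\K^*$, and $\iota$ is the desired embedding of $\K^*$ into $\K'$ over $\K$.

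I do not expect a genuine obstacle here: the real content was already isolated in Theorem 6.7 and Lemma 6.9, and this corollary is essentially bookkeeping. The step that needs the most care is fact (c) — correctly translating $|\Gamma|^+$-saturation into the pseudolimit property while keeping the parameter-set cardinality bounded by $|\Gamma|$ — together with the observation that, after replacing $\K$ by an intermediate $\mathcal{L}$ in the Zorn step, the relevant value group is still $\Gamma$, so the same saturation level suffices, and with remembering to treat the trivially valued base separately. Alternatively, one can bypass the citation of Theorem 6.7(1) and instead re-run its proof inside the maximality argument, using Lemma 6.1 in the $\sigma$-transcendental case and Lemma 6.9 together with Lemma 6.3 in the $\sigma$-algebraic case; this is the route suggested by the remark preceding the statement.
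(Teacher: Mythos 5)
Your proof is correct and follows essentially the same route as the paper: the paper's one-line justification (``In combination with Lemmas 6.1 and 6.3, this yields'') is exactly the combination you identify in your final remark, and your main route — verifying via $|\Gamma|^+$-saturation that $\K'$ has the pseudolimit property needed in Theorem 6.7(1), then running a Zorn/maximality argument over intermediate subfields $\K\subseteq\mathcal{L}\subseteq\K^*$ — just routes through Theorem 6.7(1) as a black box rather than directly through Lemmas 6.1, 6.3 and 6.9. Your fact (c) is the correct translation of saturation into the ``enough pseudolimits'' hypothesis (the parameter count stays bounded by $|\Gamma|$, and the ultrametric check that any realization is a pseudolimit is right), and the observation that any intermediate $\mathcal{L}$ still has value group $\Gamma$ and residue field $k$, hence inherits Axioms 2 and 3, is precisely what keeps the same saturation level sufficient at every stage.
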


\bigskip
\section{Example and Counter-example}
We will now show the consistency of our axioms by building models for our theory. The canonical models we have in mind are the generalized power series fields $k((t^{\Gamma}))$, also known as the Hahn series. Given any difference field $k$ of characteristic zero with automorphism $\bar{\sigma}$, and any $MODAG$ $\Gamma$ with automorphism $\sigma(\gamma) = \rho\cdot\gamma$, we form the multiplicative difference valued field $k((t^\Gamma))$ as follows.

As a set, $k((t^\Gamma)) := \{f:\Gamma\to k\; | \mbox{ supp}(f) := \{x\in\Gamma : f(x)\not= 0\}$ is well-ordered in the ordering induced by $\Gamma\}.$

An element $f\in k((t^\Gamma))$ is thought of as a formal power series
\begin{eqnarray*}
f & \leftrightarrow & \sum_{\gamma\in\Gamma} f(\gamma)t^\gamma \\
(f + h)(\gamma) & := & f(\gamma) + h(\gamma) \\
(fh)(\gamma) & := & \sum_{\alpha + \beta = \gamma} f(\alpha)h(\beta) \\
v(f) & := & \min\mbox{ supp}(f) \\
\sigma(f) & := & \sum_{\gamma\in\Gamma} \bar{\sigma}(f(\gamma))t^{\rho\cdot\gamma}
\end{eqnarray*}

If we choose $\rho\ge 1$, $k((t^\Gamma))$ satisfies Axiom 1. Also if we impose that $\bar{\sigma}$ is a linear difference closed automorphism on $k$, then $k((t^\Gamma))$ satisfies Axiom 2 and Axiom 3 as well. Moreover, using the fact that $k((t^\Gamma))$ is maximally complete \cite{N}, it follows from Corollary 5.9 that $k((t^\Gamma))$ is $\sigma$-henselian for $\rho\ge 1$, and strict $\sigma$-henselian for $\rho > 1$. Also note that the residue field of $k((t^\Gamma))$ is $k$, and the value group is $\Gamma$.
\newline\newline

Now we will justify why we need Axiom 3 (at least for the case $\rho > 1$). We will provide an example that shows why Axiom 3 cannot be dropped. This example is adapted from \cite{A}, Example 5.11.
\newline

\begin{example}
Let $\rho$ be any element of a real-closed field and $\rho > 1$. Let $\Gamma$ be the $MODAG$ generated by $\rho$ over $\Z$. Thus we construe $\Gamma$ as the ordered difference group $\Z[\rho, \rho^{-1}]$ with the order induced by the cut of $\rho$. Let $k$ be any field of characteristic zero, construed as a difference field equipped with its identity automorphism. And let $\K$ be the multiplicative valued difference field $(k((t^\Gamma)), \Gamma, k; v, \pi)$.

For each $n$, let $\Gamma_n := \rho^{-n}\Z[\rho]$ and let $\K_n$ be the multiplicative valued difference field $k((t^{\Gamma_n}), \Gamma_n, k; v, \pi)$. Let
$$\K_\infty := \Big(\bigcup_n k((t^{\Gamma_n})), \Gamma, k; v, \pi\Big).$$
Then $\K_\infty$ equipped with the restriction of $\sigma$, is a valued difference subfield of $\K$ and $\sigma$ is multiplicative. Let us define a sequence $\{a_n\}$ as follows:
$$a_n = \sum_{i = 1}^{n} t^{-\rho^{-i}}.$$
We claim that $\{a_n\}$ is a pc-sequence : Note that since $\rho > 1$, we have for $i < j\in\N$, $v(t^{-\rho^{-i}}) = -\rho^{-i} < -\rho^{-j} = v(t^{-\rho^{-j}})$. Hence, $v(a_{n + 1} - a_n) = v(t^{-\rho^{-(n+1)}}) = -\rho^{-(n + 1)}$, which is increasing as $n\to\infty$.

Also it is clear that $\{a_n\}$ has no pseudolimit in $\K_\infty$. Moreover, since $\sigma(t^{-\rho^{-i}}) = t^{-\rho^{-i + 1}}$, we have for
$$P(x) = \sigma(x) - x - t^{-1},$$
$P(a_n) = t^{-\rho^{-n}}\leadsto 0$, and hence $\{a_n\}$ is of $\sigma$-algebraic type over $\K_\infty$.
Now $\K_\infty$ is a union of henselian valued fields, and hence is henselian. Moreover it is of characteristic zero. Hence $\K_\infty$ is algebraically maximal. Therefore, $P(x)$ is a minimal $\sigma$-polynomial of $\{a_n\}$ over $\K_\infty$. Also,
$$P(a_n) + 1\leadsto 0,$$
and so $P(x) + 1$ is a minimal $\sigma$-polynomial of $\{a_n\}$ over $\K_\infty$ as well. By Lemma 6.3, there are immediate extensions $\K_\infty\langle a\rangle, \K_\infty\langle a'\rangle$ of $\K_\infty$ such that $a_n\leadsto a$, $P(a) = 0$, and $a_n\leadsto a'$, $P(a') + 1= 0$. Let $\L_1, \L_2$ be $\sigma$-algebraically maximal, immediate, $\sigma$-algebraic extensions of $\K_\infty\langle a\rangle, \K_\infty\langle a'\rangle$ respectively.

Now we claim that $\L_1$ and $\L_2$ are not isomorphic over $\K_\infty$. Suppose for a contradiction that they are isomorphic. Then there is $b\in\L_1$ such that $P(b) + 1 = 0$. Since $P(a) = 0$ we have
$$Q(a, b) := \sigma(b - a) - (b - a) + 1 = 0.$$
We claim that this is only possible when $v(b - a) = 0$ : if $v(b - a) > 0$, then since $\rho > 1$, we have $v(\sigma(b - a)) > v(b - a) > 0 = v(1)$. Hence, $v(Q(a, b)) = v(1) = 0$, and thus $Q(a, b)\not= 0$, a contradiction; similarly, if $v(b - a) < 0$, then $v(\sigma(b - a)) < v(b - a) < 0 = v(1)$, in which case again $v(Q(a, b)) = 0$, a contradiction. Thus, $v(b - a) = 0$.

But then, $\overline{b - a}\in k$ and $\overline{b - a}$ is a solution of
$$\bar{\sigma}(x) - x + 1 = 0,$$
which is impossible since $\bar{\sigma} = id$, contradiction.

Here we considered a particular instance of failure of Axiom 3; namely, when $\bar{\sigma}$ is the identity, the above $\bar{\sigma}$-linear equation does not have a solution in $k$. However, one can produce a similar construction for any non-degenerate inhomogeneous $\bar{\sigma}$-linear equation which does not have a solution in $k$.
\end{example}

\bigskip
\section{Extending Residue Field and Value Group}
Let $\K = (K, \Gamma, k; v, \pi)$ and $\K' = (K', \Gamma', k'; v', \pi')$ be two multiplicative valued difference fields (with the same $\rho$); let $\O$ and $\O'$ be their respective ring of integers, and let $\sigma$ denote both their difference operators. Let $\E = (E, \Gamma_E, k_E; v, \pi)$ be a common multiplicative valued difference subfield of both $\K$ and $\K'$, that is, $\E\le\K, \E\le\K'$. Then we have:
\begin{lem}
Let $a\in\O$ and assume $\alpha = \pi(a)$ is $\bar{\sigma}$-transcendental over $k_E$. Then
\begin{itemize}
\item $v(P(a)) = \min_{\vec{L}}\{v(b_{\vec{L}})\}$ for each $\sigma$-polynomial $P(x) = \sum b_{\vec{L}}\vec{\sigma}(x)^{\vec{L}}$ over $E$;
\item $v(E\langle a\rangle^\times) = v(E^\times) = \Gamma_E$, and $\E\langle a\rangle$ has residue field $k_E\langle\alpha\rangle$;
\item if $b\in\O'$ is such that $\beta = \pi(b)$ is $\bar{\sigma}$-transcendental over $k_E$, then there is a valued difference field isomorphism $\E\langle a\rangle\to\E'\langle b\rangle$ over $\E$ sending $a$ to $b$.
\end{itemize}
\end{lem}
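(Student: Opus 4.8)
\emph{First item (the crux).} The plan is to establish the three items in order; the first does essentially all the work. Given a nonzero $\sigma$-polynomial $P(x)=\sum_{\vec{L}} b_{\vec{L}}\,\vec{\sigma}(x)^{\vec{L}}$ over $E$, I would pick $\vec{L}_0$ with $v(b_{\vec{L}_0})=\min_{\vec{L}} v(b_{\vec{L}})=:\gamma$ and factor out $b_{\vec{L}_0}$, so that the coefficients $b_{\vec{L}}/b_{\vec{L}_0}$ all lie in $\O_E$, with one of them equal to $1$. Since $a\in\O$ and $\sigma(\O)=\O$, each $\sigma^i(a)\in\O$ with $\pi(\sigma^i(a))=\bar{\sigma}^i(\alpha)$, so the residue of $\sum_{\vec{L}}(b_{\vec{L}}/b_{\vec{L}_0})\,\vec{\sigma}(a)^{\vec{L}}$ equals $\tilde p\big(\bar{\vec{\sigma}}(\alpha)\big)$ for an ordinary polynomial $\tilde p$ over $k_E$ having a coefficient $1$, hence $\tilde p\ne 0$. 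Now $\bar{\sigma}$-transcendence of $\alpha$ over $k_E$ says precisely that $\alpha,\bar{\sigma}(\alpha),\dots,\bar{\sigma}^n(\alpha)$ are algebraically independent over $k_E$, so this residue is nonzero, giving $v(P(a))=\gamma$. As an immediate corollary, $v(P(a))<\infty$ for every nonzero $\sigma$-polynomial $P$ over $E$, so $a$ is $\sigma$-transcendental over $E$.

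\emph{Second item.} A nonzero $y\in E\langle a\rangle$ involves only finitely many shifts $\sigma^i(a)$, say for $-m\le i\le n$; applying $\sigma^m$ (legitimate since $\sigma(E)=E$) gives $\sigma^m(y)=P(a)/Q(a)$ for $\sigma$-polynomials $P,Q$ over $E$ with $Q(a)\ne 0$. By the first item, $v(\sigma^m(y))=v(P(a))-v(Q(a))\in\Gamma_E$, and since $\Gamma_E$ is a $MODAG$, hence closed under $\rho^{-1}\cdot$, we recover $v(y)=\rho^{-m}\cdot v(\sigma^m(y))\in\Gamma_E$; together with $E^\times\subseteq E\langle a\rangle^\times$ this yields $v(E\langle a\rangle^\times)=\Gamma_E$. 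For the residue field, when $v(y)=0$ I would normalize $P(a)$ and $Q(a)$ as above and reduce, reading off $\pi(\sigma^m(y))$ as a scalar times a ratio of nonzero polynomials in the $\bar{\sigma}^i(\alpha)$, hence an element of $k_E\langle\alpha\rangle$; applying $\bar{\sigma}^{-m}$, and noting the reverse inclusion is immediate, the residue field of $\E\langle a\rangle$ is $k_E\langle\alpha\rangle$.

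\emph{Third item.} By the first item, used in $\K$ and in $\K'$, both $a$ and $b$ are $\sigma$-transcendental over $E$, so there is a unique difference-field isomorphism $\phi\colon E\langle a\rangle\to E\langle b\rangle$ over $E$ with $\phi(a)=b$, necessarily satisfying $\phi(P(a))=P(b)$ for every $\sigma$-polynomial $P$ over $E$. To upgrade $\phi$ to an isomorphism of valued difference fields I only need $v'(\phi(y))=v(y)$ for all $y$; reducing to $\sigma$-polynomials as in the second item, it suffices to check $v'(P(b))=v(P(a))$, and this is the first item applied on each side --- both equal $\min_{\vec{L}}v(b_{\vec{L}})$, using that $\beta$ too is $\bar{\sigma}$-transcendental over $k_E$. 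Compatibility with $\pi$ then follows, and compatibility with $\sigma$ is built into $\phi$. I expect the only genuine difficulty to be the first item, where the whole idea is that the residue of a suitably normalized evaluated $\sigma$-polynomial is a nonzero ordinary polynomial in algebraically independent elements; the rest is bookkeeping, the recurring nuisance being that $\sigma$ is an automorphism, so elements of $E\langle a\rangle$ may involve negative shifts $\sigma^{-i}(a)$ --- handled uniformly by multiplying by $\sigma^m$ and using that $\Gamma_E$ and $k_E$ are closed under $\sigma^{-1}$.
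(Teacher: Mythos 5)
Your proof is correct. The paper does not give an argument of its own for this lemma --- it simply cites \cite{AD}, Lemma 2.5 --- so there is nothing in the paper to compare against beyond the expectation that the cited proof uses the same standard normalize-and-reduce argument you give: factor out a coefficient $b_{\vec{L}_0}$ of minimal valuation, observe that the residue of $\sum (b_{\vec{L}}/b_{\vec{L}_0})\vec{\sigma}(a)^{\vec{L}}$ is a nonzero ordinary polynomial over $k_E$ evaluated at the algebraically independent elements $\alpha, \bar{\sigma}(\alpha), \ldots$, hence nonzero, pinning down $v(P(a))=\min_{\vec{L}} v(b_{\vec{L}})$. You also correctly anticipate and handle the one real subtlety: $E\langle a\rangle$ is the \emph{inversive} difference subfield and so its elements may involve negative shifts $\sigma^{-i}(a)$; applying $\sigma^m$ and using that $E$, $\Gamma_E$ and $k_E$ are $\sigma^{-1}$-stable reduces everything to $\sigma$-polynomials with only nonnegative shifts, after which the second and third items follow by bookkeeping exactly as you describe.
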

\begin{proof}
See \cite{AD}, Lemma 2.5.
\end{proof}
\begin{lem}
Let $P(x)$ be a non-constant $\sigma$-polynomial over the valuation ring of $E$ whose reduction $\bar{P}(x)$ has the same complexity as $P(x)$. Let $a\in\O, b\in\O'$, and assume that $P(a) = 0, P(b) = 0$, and that $\bar{P}(x)$ is a minimal $\bar{\sigma}$-polynomial of $\alpha := \pi(a)$ and of $\beta := \pi'(b)$ over $k_E$. Then
\begin{itemize}
\item $\E\langle a\rangle$ has value group $v(E^\times) = \Gamma_E$ and residue field $k_E\langle\alpha\rangle$;
\item if there is a difference field isomorphism $k_E\langle\alpha\rangle\to k_E\langle\beta\rangle$ over $k_E$ sending $\alpha$ to $\beta$, then there is a valued difference field isomorphism $\E\langle a\rangle\to\E\langle b\rangle$ over $\E$ sending $a$ to $b$.
\end{itemize}
\end{lem}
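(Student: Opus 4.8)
The plan is to mimic the proof of the analogous residue-algebraic lemma in the classical theory (\cite{AD}, Lemma 2.6), adapted to the $\sigma$-polynomial setting, using the hypothesis that $\bar P$ has the same complexity as $P$ to transfer information between $\E\langle a\rangle$ and its residue field. First I would set up notation: write $P(x) = \sum_{\vec L} b_{\vec L}\,\vec\sigma(x)^{\vec L}$ with $b_{\vec L}\in\O_E$, and let $\bar P(x) = \sum_{\vec L}\bar b_{\vec L}\,\bar{\vec\sigma}(x)^{\vec L}$ be its reduction; the complexity hypothesis says no cancellation of the top data occurs on reduction, so in particular $\bar P$ is non-constant of the same order $n$ and the same degree in $x_n$. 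Since $\bar P$ is a minimal $\bar\sigma$-polynomial of $\alpha$ over $k_E$, the residue difference field $k_E\langle\alpha\rangle$ is $\bar\sigma$-algebraic over $k_E$ and, more precisely, every element of $k_E\langle\alpha\rangle$ is a $k_E$-linear combination of monomials $\bar{\vec\sigma}(\alpha)^{\vec I}$ with $\vec I$ ranging over a fixed finite set determined by the complexity of $\bar P$ (the usual reduction: whenever $\bar\sigma^n(\alpha)$ appears to a power at least $\deg_{x_n}\bar P$, rewrite using $\bar P(\alpha)=0$, and similarly for the $\sigma$-shifts of this relation).

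The heart of the argument is the first bullet: showing $v(E\langle a\rangle^\times)=\Gamma_E$ and that the residue field is exactly $k_E\langle\alpha\rangle$. The inclusion $k_E\langle\alpha\rangle\subseteq$ (residue field of $\E\langle a\rangle$) is clear since $\pi(a)=\alpha$. For the reverse, take any nonzero $y\in E\langle a\rangle$; write it as $y = f(a)/g(a)$ where $f,g$ are $\sigma$-polynomials over $E$, and I want to show $v(y)\in\Gamma_E$ and, when $v(y)=0$, that $\pi(y)\in k_E\langle\alpha\rangle$. The key sub-claim is: for any $\sigma$-polynomial $f$ over $E$ with $f(a)\ne 0$, one has $v(f(a)) = v(c)$ for some $c\in E^\times$, and if $v(f(a))=v(c)$ then $\pi(f(a)/c)\in k_E\langle\alpha\rangle$. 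To prove this, reduce $f$ modulo the relations $P(a)=0,\ \sigma P(a)=0,\ \ldots$ (i.e.\ work modulo the difference ideal generated by $P$) to bring $f(a)$ into a ``reduced'' form as a combination of the finitely many monomials $\vec\sigma(a)^{\vec I}$, $\vec I$ in the fixed finite index set; the complexity hypothesis guarantees this reduction does not inflate valuations uncontrollably and that the reduced coefficients still lie in (a localization of) $\O_E$ — more carefully, after multiplying through by a suitable element of $E^\times$ we may assume the reduced coefficients lie in $\O_E$ with at least one a unit, whence the reduced expression has residue a nonzero element of $k_E[\bar{\vec\sigma}(\alpha)]$, using that $\alpha$ satisfies no relation of lower complexity than $\bar P$. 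This forces $v(f(a))$ to equal the valuation of the element of $E^\times$ we factored out, and gives the residue in $k_E\langle\alpha\rangle$. Applying this to $f$ and $g$ separately yields both halves of the first bullet.

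For the second bullet, given a difference-field isomorphism $\lambda: k_E\langle\alpha\rangle\to k_E\langle\beta\rangle$ over $k_E$ with $\lambda(\alpha)=\beta$, I would define a map $\E\langle a\rangle\to\E\langle b\rangle$ by $f(a)\mapsto f(b)$ for $\sigma$-polynomials $f$ over $E$ (extended to the fraction field in the obvious way). Well-definedness and injectivity come down to: $f(a)=0 \iff f(b)=0$ for $\sigma$-polynomials $f$ over $E$. Since $P(a)=0=P(b)$ and $\bar P$ is a minimal $\bar\sigma$-polynomial of both $\alpha$ and $\beta$, the difference ideal of $a$ over $E$ and that of $b$ over $E$ are both generated by $P$ (here one uses that any $f$ vanishing at $a$ would, after the reduction above, produce a lower-complexity relation on $\alpha$ unless $f$ is already in the $\sigma$-ideal of $P$); hence the two vanishing ideals coincide and the map is a well-defined field isomorphism. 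It commutes with $\sigma$ by construction (both sides send $\sigma(f(a)) = (\sigma f)(a)$), and it is a valued-field isomorphism because by the first bullet the valuation on either side is $\Gamma_E$-valued and determined by the residue data, which $\lambda$ respects: concretely, for $f(a)$ with $v(f(a))=v(c)$, $c\in E^\times$, the residue $\pi(f(a)/c)$ maps under $\lambda$ to $\pi'(f(b)/c)$, so $v'(f(b))=v(c)=v(f(a))$.

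The main obstacle I expect is making the ``reduction modulo the $\sigma$-ideal of $P$'' step precise while controlling valuations — i.e.\ verifying that the complexity hypothesis on $\bar P$ is exactly what is needed to ensure that rewriting $f(a)$ using $P(a)=0$ and its $\sigma$-shifts never causes a loss of valuation information (so that the reduced coefficients, after clearing an element of $E^\times$, remain in $\O_E$ with a unit among them). This is the place where one genuinely uses $\bar P$ and $P$ having the same complexity, rather than just $\bar P$ being nonconstant, and it is where I would expect to cite or adapt \cite{AD}, Lemma 2.6 rather than redo the bookkeeping.
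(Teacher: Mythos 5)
Your proposal is correct in substance and mirrors the argument underlying the cited result; in fact the paper gives no proof at all for this lemma beyond the single line ``See \cite{AD}, Lemma 2.6,'' which is precisely what you anticipate at the end of your sketch. The core ideas you identify — multiply by an element of $E^\times$ to put the coefficients in $\O_E$ with a unit among them, use the equal-complexity hypothesis so that the leading coefficient of $P$ is a unit and division/reduction modulo $P$ and its $\sigma$-shifts stays in $\O_E$, then invoke minimality of $\bar P$ to conclude the reduced residue is nonzero — are exactly the ingredients in the AD argument.
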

\begin{proof}
See \cite{AD}, Lemma 2.6.
\end{proof}

Now we will show how to extend the value group. Recall that $\Gamma$ is a model of $MODAG$. Before stating the results, we need a couple of definitions.
\begin{defn}
For a given $\sigma$-polynomial $P(x) = \sum b_{\vec{L}}\vec{\sigma}(x)^{\vec{L}}$ over $K$ and $a\in K$, we say $a$ is generic for $P$ if $v(P(a)) = \min\{v(b_{\vec{L}}) + |\vec{L}|_\rho\cdot v(a)\}$.
\end{defn}
\begin{defn}
An element $a\in K$ (or $K'$) is said to be generic over $\E$ if $a$ is generic for all $\sigma$-polynomials $P(x) = \sum b_{\vec{L}}\vec{\sigma}(x)^{\vec{L}}$ over $E$.
\end{defn}
\begin{lem}
Assume $\K$ satisfies Axiom 2. Then, for each $\gamma\in\Gamma$ and $P(x) = \sum b_{\vec{L}}\vec{\sigma}(x)^{\vec{L}}$ over $K$, there is $a\in K$ such that $v(a) = \gamma$ and $a$ is generic for $P$.
\end{lem}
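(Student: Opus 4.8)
The plan is to reduce the problem of prescribing the valuation to the problem of choosing a suitable unit, and then to feed the unit-choice into Axiom~2 exactly as in the Basic Calculation of Section~4. We may assume $P\neq 0$, since otherwise every $a$ with $v(a)=\gamma$ is trivially generic for $P$. Since $v$ is surjective, first I would fix $\theta\in K^\times$ with $v(\theta)=\gamma$ and look for $a$ of the form $a=\theta u$ with $u$ a unit of $\O$; then $v(a)=\gamma$ automatically. Expanding, $P(\theta u)=\sum_{\vec L} b_{\vec L}\cdot\vec\sigma(\theta)^{\vec L}\cdot\vec\sigma(u)^{\vec L}$, and by Axiom~1 (iterated) $v(\vec\sigma(\theta)^{\vec L})=|\vec L|_\rho\cdot\gamma$, so the $\vec L$-th coefficient $b_{\vec L}\vec\sigma(\theta)^{\vec L}$ has valuation $v(b_{\vec L})+|\vec L|_\rho\cdot\gamma$. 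Let $m_0$ be the minimum of these valuations over the (nonempty) set of $\vec L$ with $b_{\vec L}\neq 0$, and let $S$ be the set of $\vec L$ attaining it.

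Next I would normalize. Pick $\vec L_0\in S$ and set $c_{\vec L}:=b_{\vec L}\vec\sigma(\theta)^{\vec L}/\big(b_{\vec L_0}\vec\sigma(\theta)^{\vec L_0}\big)$, so that $P(\theta u)=b_{\vec L_0}\vec\sigma(\theta)^{\vec L_0}\cdot Q(u)$, where $Q(x):=\sum_{\vec L}c_{\vec L}\vec\sigma(x)^{\vec L}$ is a $\sigma$-polynomial over $\O$ (each $v(c_{\vec L})\ge 0$, with equality exactly for $\vec L\in S$). Its reduction $\bar Q(x)=\sum_{\vec L\in S}\bar c_{\vec L}\vec\sigma(x)^{\vec L}$ is then a nonzero $\bar\sigma$-polynomial over $k$, since the coefficient of $\vec\sigma(x)^{\vec L_0}$ is $1$.

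Finally I would invoke Axiom~2 (via its consequence noted after its statement, \cite{C}): for the nonzero ordinary polynomial $\tilde Q(X_0,\dots,X_n)$ associated to $\bar Q$, the set $\{y\in k: \tilde Q(\bar{\vec\sigma}(y))\neq 0\}$ is infinite, hence contains some $y\neq 0$. Lift $y$ to $u\in\O$ with $\pi(u)=y$; then $v(u)=0$, and since $\pi(Q(u))=\bar Q(\pi(u))=\tilde Q(\bar{\vec\sigma}(y))\neq 0$ we get $v(Q(u))=0$. Therefore $a:=\theta u$ satisfies $v(a)=\gamma$ and $v(P(a))=v(b_{\vec L_0}\vec\sigma(\theta)^{\vec L_0})+v(Q(u))=m_0=\min_{\vec L}\{v(b_{\vec L})+|\vec L|_\rho\cdot v(a)\}$, i.e.\ $a$ is generic for $P$.

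The first two steps are pure bookkeeping about valuations of monomials; the genuine input is Axiom~2, used to guarantee that the reduced polynomial $\bar Q$ does not vanish identically on $k$. I do not anticipate a serious obstacle here — the one point to be careful about is selecting the residue $y$ to be \emph{nonzero} among the infinitely many admissible values, so that the lift $u$ is an actual unit ($v(u)=0$) and not merely an element of $\O$.
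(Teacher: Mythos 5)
Your proof is correct and follows essentially the same route as the paper's: pick $\theta$ with $v(\theta)=\gamma$, seek $a=\theta u$ with $u$ a unit, factor out a coefficient of minimal valuation so that $P(\theta u)=dQ(u)$ with $Q$ over $\O$ having a unit coefficient, and invoke Axiom~2 to find a residue avoiding the zero set of $\bar Q$. The one point you flag --- choosing the residue $y$ nonzero so that the lift is a genuine unit --- is a small but real clarification that the paper's write-up elides.
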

\begin{proof}
Let $c\in K$ be such that $v(c) = \gamma$. If $c$ is already generic for $P$, set $a := c$ and we are done. Otherwise, pick $\epsilon\in K$ such that $v(\epsilon) = 0$ (we will decide later how to choose $\epsilon$) and set $a := c\epsilon$. Note that $v(a) = v(c) = \gamma$. Then, $P(a) = \sum b_{\vec{L}}\vec{\sigma}(c)^{\vec{L}}\vec{\sigma}(\epsilon)^{\vec{L}}$. Choosing $d\in K^\times$ such that $v(d) = \min\{v(b_{\vec{L}}\vec{\sigma}(c)^{\vec{L}})\} = \min\{v(b_{\vec{L}}) + |\vec{L}|_\rho\cdot\gamma\}$, we can write $P(a) = dQ_P(\epsilon)$, where $Q_P(\epsilon)$ is over the ring of integers of $K$. Consider $\overline{Q_P}(\bar{\epsilon})$, a $\bar{\sigma}$-polynomial over $k$; choose $\bar{\epsilon}\in k$ such that $\overline{Q_P}(\bar{\epsilon})\not= 0$, which is possible since $\K$ satisfies Axiom 2. Let $\epsilon\in K$ be such that $\pi(\epsilon) = \bar{\epsilon}$. Then $v(Q_P(\epsilon)) = 0$, and hence $v(P(a)) = v(d) = \min\{v(b_{\vec{L}}) + |\vec{L}|_\rho\cdot\gamma\}$. Thus, $a$ is generic for $P$ and $v(a) = \gamma$. 
\end{proof}
\begin{rmk}
It is clear from the proof of Lemma 8.5 that if we replace $P(x)$ by a finite set of $\sigma$-polynomials $\{P_1(x), \ldots, P_m(x)\}$ of possibly different orders, then by choosing $\bar{\epsilon}\in k$ such that it doesn't solve any of the related $m$ equations $\overline{Q_{P_i}}(x) = 0$ over $k$ (which is again possible to do as $\K$ satisfies Axiom 2), we can find $a\in K$ such that $a$ is generic for $\{P_1, \ldots, P_m\}$. 
\end{rmk}

\begin{defn}
Let $P(x) = \sum b_{\vec{L}}\vec{\sigma}(x)^{\vec{L}}$ be a $\sigma$-polynomial over $K$ and $a\in K$. Write $P(a x) = dQ_P(x)$, where $d\in K$ is such that $v(d) = \min\{v(b_{\vec{L}}) + |\vec{L}|_\rho\cdot v(a)\}$. Then $Q_P(x)$ is a $\sigma$-polynomial over $\O_K$, and thus $\overline{Q_P}(x)$ is a $\bar{\sigma}$-polynomial over $k$. We say $\overline{Q_P}(x)$ is a $k$-$\bar{\sigma}$-polynomial corresponding to $(P, a)$.
\end{defn}

\begin{lem}
Let $\gamma\in\Gamma\setminus\Gamma_E$. Let $\kappa$ be an infinite cardinal such that $|k_E|\le\kappa$. Assume $\K, \K'$ satisfy Axiom 2 and are $\kappa^+$-saturated. Then 
\begin{enumerate}
\item[(i)] there is $a\in K$, generic over $\E$, with $v(a) = \gamma$;
\item[(ii)] $E\langle a\rangle$ has value group $\Gamma_E\langle\gamma\rangle$, and residue field $k_{E\langle a\rangle}$ of size $\le\kappa$;
\item[(iii)] if $\gamma'\in\Gamma'$ is such that there is a valued difference group isomorphism $\Gamma_E\langle\gamma\rangle\to\Gamma_E\langle\gamma'\rangle$ over $\Gamma_E$ (in the language of $MODAG$), and $a'\in K'$ is such that $a'$ is generic over $\E$ with $v(a') = \gamma'$, then there is a valued difference field isomorphism $\E\langle a\rangle\to\E\langle a'\rangle$ over $\E$ sending $a$ to $a'$.
\end{enumerate}
\end{lem}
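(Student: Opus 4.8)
Since $v\colon K\to\Gamma\cup\{\infty\}$ is surjective, fix $c_0\in K$ with $v(c_0)=\gamma$; I will produce $\epsilon\in\O_K$ with $v(\epsilon)=0$ so that $a:=c_0\epsilon$ is generic over $\E$. For a $\sigma$-polynomial $P=\sum_{\vec{L}}b_{\vec{L}}\vec{\sigma}(x)^{\vec{L}}$ over $E$, write $P(c_0x)=d_P\cdot Q_P(x)$ as in Definition 8.7, so $v(d_P)=\min_{\vec{L}}\{v(b_{\vec{L}})+|\vec{L}|_\rho\cdot\gamma\}$, $Q_P$ is over $\O_K$, and $\overline{Q_P}$ is a nonzero $k$-$\bar{\vec{\sigma}}$-polynomial (nonzero because $d_P$ realises the minimum of the coefficient valuations); then $a=c_0\epsilon$ is generic for $P$ exactly when $v(Q_P(\epsilon))=0$, i.e.\ when $\overline{Q_P}(\bar{\vec{\sigma}}(\bar{\epsilon}))\neq 0$. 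Hence it suffices to realise in the residue field $k$ the partial type $p(z)$ consisting of $z\neq 0$ and of $\overline{Q_P}(\bar{\vec{\sigma}}(z))\neq 0$ for every $\sigma$-polynomial $P$ over $E$, and then lift a realisation to $\epsilon\in\O_K$ via $\pi$. Finite satisfiability of $p$ in $k$ is Remark 8.6 together with Axiom 2 for $\K$. To apply $\kappa^+$-saturation of $k$ (inherited from that of $\K$, $k$ being interpretable) I must bound the parameter set of $p$: dividing $P(c_0x)$ by $b_{\vec{L}_0}\vec{\sigma}(c_0)^{\vec{L}_0}$ for a multi-index $\vec{L}_0$ attaining the minimum, one checks that every coefficient of every $\overline{Q_P}$ lies in $k_E(\{s_{\vec{M}}\})$, where for each multi-index $\vec{M}$ with $|\vec{M}|_\rho\cdot\gamma\in\Gamma_E$ one fixes $e_{\vec{M}}\in E$ with $v(e_{\vec{M}})=|\vec{M}|_\rho\cdot\gamma$ and sets $s_{\vec{M}}:=\overline{\vec{\sigma}(c_0)^{\vec{M}}/e_{\vec{M}}}\in k$. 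Only countably many $\vec{M}$ are relevant, so this field has cardinality $\le|k_E|\le\kappa$; thus $p$ is a type of size $\le\kappa$ over a set of size $\le\kappa$, hence realised, finishing (i).

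\textbf{Part (ii).} Let $a$ be generic over $\E$ with $v(a)=\gamma$. Any $y\in E\langle a\rangle^\times$ satisfies $\sigma^m(y)=P(a)/R(a)$ for some $\sigma$-polynomials $P,R$ over $E$ and some $m\in\Z$ (using that $E$ is $\sigma$-invariant), so by genericity $v(\sigma^m(y))=v(P(a))-v(R(a))\in\Gamma_E+\Z[\rho,\rho^{-1}]\cdot\gamma=\Gamma_E\langle\gamma\rangle$; since $v(\sigma^m(y))=\rho^m\cdot v(y)$ and $\Gamma_E\langle\gamma\rangle$ is a $\Z[\rho,\rho^{-1}]$-submodule, $v(y)\in\Gamma_E\langle\gamma\rangle$. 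As also $\gamma=v(a)$ and $\Gamma_E=v(E^\times)$, the value group of $E\langle a\rangle$ is exactly $\Gamma_E\langle\gamma\rangle$. For the residue field, the same division computation as in (i) (now with $a$ in place of $c_0$) shows that the residue of any valuation-$0$ element of $E\langle a\rangle$ lies in $k_E(\{s_{\vec{M}}\})$, with $s_{\vec{M}}:=\overline{\vec{\sigma}(a)^{\vec{M}}/e_{\vec{M}}}$; since $\gamma=v(a)$ and $s_{\vec{M}}$ is itself such a residue, this subfield equals $k_{E\langle a\rangle}$ (it is automatically $\bar{\sigma}$-closed, a shift of $\vec{M}$ keeping $|\vec{M}|_\rho\cdot\gamma$ inside the $\sigma$-invariant group $\Gamma_E$), and it has cardinality $\le|k_E|\le\kappa$.

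\textbf{Part (iii).} Let $\psi\colon\Gamma_E\langle\gamma\rangle\to\Gamma_E\langle\gamma'\rangle$ be the given $MODAG$-isomorphism over $\Gamma_E$ with $\psi(\gamma)=\gamma'$. Genericity forces both $a$ and $a'$ to be $\sigma$-transcendental over $E$: a nonzero $\sigma$-polynomial $P$ over $E$ has $v(P(a))=\min_{\vec{L}}\{v(b_{\vec{L}})+|\vec{L}|_\rho\cdot\gamma\}<\infty$, so $P(a)\neq 0$, and likewise for $a'$; hence $a\mapsto a'$ extends to a difference-field isomorphism $\phi\colon E\langle a\rangle\to E\langle a'\rangle$ over $E$ (both are the difference field freely generated over $E$ by a single $\sigma$-transcendental element). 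For $P=\sum_{\vec{L}}b_{\vec{L}}\vec{\sigma}(x)^{\vec{L}}$ over $E$, genericity on both sides gives $v(P(a))=\min_{\vec{L}}\{v(b_{\vec{L}})+|\vec{L}|_\rho\cdot\gamma\}$ and $v'(\phi(P(a)))=v'(P(a'))=\min_{\vec{L}}\{v(b_{\vec{L}})+|\vec{L}|_\rho\cdot\gamma'\}$, and these two minima correspond under $\psi$, which fixes $\Gamma_E$ (hence the $v(b_{\vec{L}})$) and, being a $\Z[\rho,\rho^{-1}]$-module map, sends $|\vec{L}|_\rho\cdot\gamma$ to $|\vec{L}|_\rho\cdot\gamma'$. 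Passing to ratios, $\phi$ preserves the valuation, so it is a valued difference field isomorphism $\E\langle a\rangle\to\E\langle a'\rangle$ over $\E$ sending $a$ to $a'$.

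\textbf{Main obstacle.} The delicate point, which powers both (i) and (ii), is the cardinality bookkeeping: a priori the genericity conditions involve coefficients scattered through all of $K$, and one must verify they are governed by only the countably many ``residual torsion witnesses'' $s_{\vec{M}}$ lying over $k_E$, so that the ambient parameter set has size $|k_E|\le\kappa$ and $\kappa^+$-saturation closes the argument. Everything else is routine transport of structure.
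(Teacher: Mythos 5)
Your proof matches the paper's route in each part: in (i) you lift a realization of the type of non-vanishing conditions on the reduced polynomials $\overline{Q_P}$ after fixing a seed $c_0$ with $v(c_0)=\gamma$; in (ii) genericity forces both the value group and residue field; in (iii) genericity yields $\sigma$-transcendence and the valuations are then dictated by the $MODAG$-isomorphism $\Gamma_E\langle\gamma\rangle\to\Gamma_E\langle\gamma'\rangle$. The one substantive addition is your cardinality bookkeeping via the witnesses $s_{\vec{M}}$. The paper simply asserts that $|p(y)|\le\kappa^{<\omega}=\kappa$ and $|k_{E\langle a\rangle}|\le\kappa$ ``by cardinality considerations,'' but the hypothesis of the lemma (and the smallness notion used later) bounds only $|k_E|$ and $|\Gamma_E|$, not $|E|$, so a priori there could be more than $\kappa$ distinct reductions $\overline{Q_P}$ or residues $\overline{P(a)}$. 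Your observation that every coefficient of every $\overline{Q_P}$ lies in $k_E(\{s_{\vec{M}}\})$ --- a field of size $\le\kappa$ generated over $k_E$ by the at most countably many $s_{\vec{M}}$ indexed by $\vec{M}$ with $|\vec{M}|_\rho\cdot\gamma\in\Gamma_E$ --- supplies exactly the missing parameter bound that makes the saturation step in (i) and the residue field estimate in (ii) go through. You have correctly located, and closed, a point the paper leaves implicit.
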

\begin{proof}
(i) Fix $c\in K$ such that $v(c) = \gamma$. For each $\sigma$-polynomial $P(x)$ over $E$, let $\overline{Q_P}(x)$ be a $k$-$\bar{\sigma}$-polynomial corresponding to $(P, c)$,  and define
$$\varphi_P(y) := \overline{Q_P}(y)\not=0$$
i.e. $\varphi_P(y)$ is the first-order formula with parameters from $k$ saying ``$y$ is not a root of $\overline{Q_P}$''. Let
\begin{eqnarray*}
p(y) & := & \{\varphi_P(y)\; |\; P \mbox{ is a } \sigma\mbox{-polynomial over } E\}.
\end{eqnarray*}
By Axiom 2, $p(y)$ is finitely consistent, and hence consistent. So it's a type over $E$. Moreover by cardinality considerations, $|p(y)|\le\kappa^{<\omega} = \kappa$ (since $\kappa$ is infinite). Since $\K$ is $\kappa^+$-saturated, $p(y)$ is realized in $\K$. In particular, there is $y\in k$ such that $y$ is not a root of any $\overline{Q_P}$. Choosing $\epsilon\in\O$ with $\pi(\epsilon) = y$ and setting $a := c\epsilon$, we then have that $v(a) = \gamma$ and $a$ is generic for all $\sigma$-polynomials $P(x)$ over $E$, i.e., $a$ is generic over $\E$.

(ii) Since $a$ is generic over $\E$, it is clear that $v(E\langle a\rangle^\times) = \Gamma_E\langle\gamma\rangle$, which clearly has size at most $\kappa$. Moreover, since each element of the residue field comes from an element of valuation zero, the size of $k_{E\langle a\rangle}$ is at most the size of the set $\{P(x)\;|\; P \mbox{ is a } \sigma\mbox{-polynomial over } E \mbox{ and } v(P(a)) = 0\}$, which, again by cardinality considerations, is at the most $\kappa$. Thus $|k_{E\langle a\rangle}|\le\kappa$.

(iii) Finally notice that if $b$ is generic over $\E$, then $v(P(b))\not=\infty$ for any $\sigma$-polynomial $P(x)$ over $E$; i.e., $P(b)\not= 0$. So $b$ is $\sigma$-transcendental over $E$. In particular, $a$ and $a'$ are $\sigma$-transcendental over $E$. Thus there is a difference field isomorphism $\psi : E\langle a\rangle\to E\langle a'\rangle$ over $E$ sending $a$ to $a'$. But, since $v(a) = \gamma$, $v(a') = \gamma'$, $\Gamma_E\langle\gamma\rangle\cong\Gamma_E\langle\gamma'\rangle$ over $\Gamma_E$, and $a$ and $a'$ are both generic over $\E$, the valuations are already determined and matched up on both sides, i.e., $\psi$ is actually a valued difference field isomorphism.
\end{proof}

\bigskip
\section{Embedding Theorem}
To prove completeness and relative quantifier elimination of the theory of multiplicative valued difference fields, we would use the following test:
\begin{test}
Let $T$ be a a theory in a first order language $\L$. Suppose that $T$ has no finite models, that $T$ is complete with respect to the atomic theory (i.e., for each atomic sentence $\psi$ of $\L$, $T\vdash \psi$ or $T\vdash \neg\psi$), and that $T$ has at least one constant symbol. Then the following are equivalent:
\begin{itemize}
\item $T$ is complete and eliminates quantifiers.
\item In any model of set theory in which GCH holds, if $\M, \mN \models T$ are saturated models of $T$ of the same cardinality, $A\subseteq \M$ and $B\subseteq \mN$ are substructures of cardinality strictly less than that of $\M$, and $f: A\to B$ is an $\L$-isomorphism, then there is an $\L$-isomorphism $g: \M\to \mN$ such that $g|_A = f$.
\item If $\M, \mN \models T$ are $\kappa^+$-saturated for some infinite cardinal $\kappa \ge |T|$, $A\subseteq \M$ is a substructure of $\M$ of cardinality at most $\kappa$, $f: A\hookrightarrow \mN$ is an $\L$-embedding, and $a\in \M$, then there is a substructure $A'$ of $\M$ containing $A$ and $a$ and an extension of $f$ to an $\L$-embedding $g: A'\hookrightarrow \mN$.
\end{itemize}
\end{test}
To that end we would like to know when can we extend isomorphism between ``small'' substructures, and the main theorem of this section, Theorem 9.5, gives an answer to that question.

For the moment we will work in a 4-sorted language, where we have our usual 3 sorts for the valued field $K$, the value group $\Gamma$ and the residue field $k$, and we add to it a fourth sort called the $RV$. This represents the language of the leading terms introduced in \cite{BK}, and explained further in \cite{S2} and \cite{F}. We could have just worked with a 2-sorted language with $K$ and $RV$ (we call this the leading term language). But the 2-sorted language is interpretable in and also interprets the 4-sorted language. So to make things more transparent we stick to the 4-sorted language. Before we proceed further, we would like to recall some preliminaries about the RV structures. Recall that we are always dealing with the equi-characteristic zero case.

\vspace{2em}
\hspace{-1.5em} \textbf{\underline{Preliminaries}} 

Let $\K = (K, \Gamma, k; v, \pi, \rho)$ be a multiplicative valued field. Let $\O$ be the ring of integers, and $\m$ be its maximal ideal. Let $K^\times$, $\O^\times$ and $k^\times$ denote the set of units of $K$, $\O$ and $k$ respectively. As it turns out $(1 + \m)$ is a subgroup of $K^\times$ under multiplication. We denote the factor group as $RV := K^\times/1 + \m$ and the natural quotient map as $\rv: K^\times\to RV$. To extend the map to whole of $K$, we introduce a new symbol ``$\infty$'' (as we do with value groups) and define $\rv(0) = \infty$. Though $RV$ is defined merely as a group, it inherits much more structure from $\K$.

To start with, since the valuation $v$ on $\K$ is given by the exact sequence
$$\xymatrix{1\ar[r] & \O^\times\ar[r]^\iota & K^\times \ar[r]^v & \Gamma\ar[r] & 0}$$
and since $1 + \m\le \O^\times$, the valuation descends to $RV$ giving the following exact sequence
$$\xymatrix{1\ar[r] & k^\times\ar[r]^\iota & RV \ar[r]^v & \Gamma\ar[r] & 0}$$
(note that $\O^\times/1 + \m = (\O/\m)^\times = k^\times$). In fact, it follows straight from the definitions that,
\begin{lem}
For all non-zero $x, y\in K$, the following are equivalent:
\begin{enumerate}
\item $\rv(x) = \rv(y)$
\item $v(x - y) > v(y)$
\item $\pi(x/y) = 1$
\end{enumerate}
\end{lem}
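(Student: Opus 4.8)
The statement to prove, Lemma 9.1, asserts the equivalence of three conditions on nonzero $x, y \in K$: $\rv(x) = \rv(y)$; $v(x-y) > v(y)$; and $\pi(x/y) = 1$. This is a direct unwinding of definitions, so the plan is to prove a cycle of implications (or simply prove each pair of equivalences) using only the definitions of $RV = K^\times/(1+\m)$, the valuation, and the residue map $\pi$.

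\medskip

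First I would unwind $\rv(x) = \rv(y)$: by definition of the quotient group $RV = K^\times/(1+\m)$, this holds iff $x/y \in 1 + \m$, i.e., iff $x/y = 1 + z$ for some $z \in \m$, equivalently $x/y - 1 \in \m$, i.e., $v(x/y - 1) > 0$. Multiplying inside the valuation by $v(y)$ (using that $v$ is a valuation, so $v(x/y - 1) = v(x - y) - v(y)$), this says exactly $v(x-y) - v(y) > 0$, i.e., $v(x-y) > v(y)$. This establishes $(1) \Leftrightarrow (2)$. For $(1) \Leftrightarrow (3)$: the condition $x/y \in 1+\m$ means $x/y \in \O^\times$ with $x/y \equiv 1 \pmod{\m}$, which is precisely the statement $\pi(x/y) = 1$ — here one should note that $x/y \in 1 + \m$ automatically forces $x/y \in \O^\times$ (since $1 + \m \subseteq \O^\times$), so $\pi(x/y)$ is defined, and conversely $\pi(x/y) = 1$ presupposes $x/y \in \O$ and then forces $x/y - 1 \in \m$, hence $x/y \in 1+\m$.

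\medskip

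The only minor subtlety — and it is the closest thing to an obstacle here, though it is not really one — is checking that $1 + \m$ is indeed a subgroup of $K^\times$, so that the quotient $RV$ and all the above manipulations make sense; but this is already asserted in the Preliminaries paragraph preceding the lemma, so I may simply invoke it. A second point worth a sentence is the well-definedness of writing $\pi(x/y)$: one must observe that in each implication the relevant quotient lies in $\O$ before applying $\pi$. Beyond that, the proof is a routine chain of rewritings of a single condition ``$x/y - 1$ has positive valuation'' into its three equivalent guises, and I would present it compactly as the two equivalences $(1)\Leftrightarrow(2)$ and $(1)\Leftrightarrow(3)$ rather than a three-cycle, since the symmetry through condition $(1)$ makes the argument cleanest.
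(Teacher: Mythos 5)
Your proof is correct, and it is the natural direct argument: all three conditions are equivalent rewritings of ``$x/y \in 1+\m$,'' via the identity $v(x/y - 1) = v(x-y) - v(y)$ for $(1)\Leftrightarrow(2)$ and via the observation that $1+\m \subseteq \O^\times$ for $(1)\Leftrightarrow(3)$. Worth noting for comparison: the paper does not spell out a proof at all; it simply cites Flenner's dissertation (Proposition 1.3.3) and then appends two small remarks --- that when $x,y\in\O$ condition (3) is equivalent to $\pi(x)=\pi(y)$, and that (2) and (3) each imply $v(x)=v(y)$. Your argument is precisely the standard unwinding that the cited source contains, so there is no real divergence in method, only in the level of detail written down. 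One tiny stylistic point: when you say ``multiplying inside the valuation by $v(y)$'' you mean using the multiplicativity $v(a/b)=v(a)-v(b)$; that phrasing could be cleaned up, but the mathematics is right.
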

\begin{proof}
See \cite{F}, Proposition 1.3.3.

Also note that if $x, y\in\O$, then the last condition is equivalent to saying $\pi(x) = \pi(y)$. And both (2) and (3) imply that $v(x) = v(y)$.
\end{proof}

Since $\sigma$ is multiplicative (and hence $\sigma(\m) = \m$), the difference operator on $K$ induces a difference operator (which we again call by $\sigma$) on $RV$ as follows:
$$\sigma(\rv(x)) := \rv(\sigma(x)).$$
It trivially follows that the induced $\sigma$ is also multiplicative with the same $\rho$.

$RV$ also inherits an image of addition from $K$ via the relation
$$\oplus(x^1_{\rv}, \ldots, x^n_{\rv}, z_{\rv}) = \exists x^1, \ldots, x^n, z\in K\;(x^1_{\rv} = \rv(x^1)\; \wedge\;\cdots\;\wedge \; x^n_{\rv} = \rv(x^n)\; \wedge\; z_{\rv} = \rv(z) \;\wedge\; x^1 + \cdots + x^n = z).$$
The sum $x^1_{\rv} + \cdots + x^n_{\rv}$ is said to be well-defined (and $= z_{\rv}$) if there is exactly one $z_{\rv}$ such that $\oplus(x^1_{\rv}, \ldots, x^n_{\rv}, z_{\rv})$ holds. Unfortunately this is not always the case. Fortunately, the sum is well-defined when it is ``expected'' to be. Formally,
\begin{lem}
$\rv(x_1) + \cdots + \rv(x_n)$ is well-defined $($and is equal to $\rv(x_1 + \cdots + x_n))$ if and only if $v(x_1 + \cdots + x_n) = \min\{v(x_1), \ldots, v(x_n)\}$.
\end{lem}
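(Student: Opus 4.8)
The plan is to reduce everything to Lemma~9.2 (which says $\rv(y)=\rv(x)$ iff $v(y-x)>v(x)$, for non-zero $x,y$) together with the ultrametric inequality $v(z_1+\cdots+z_n)\ge\min_i v(z_i)$. Throughout I write $s:=x_1+\cdots+x_n$ and $m:=\min_i v(x_i)$, and I recall that $v(s)\ge m$ always holds, so the two sides of the claimed equivalence differ only in whether $v(s)=m$ or $v(s)>m$ (with $v(s)=\infty$, i.e. $s=0$, allowed). I also note at the outset that the tuple $y_i:=x_i$ always witnesses $\oplus(\rv(x_1),\dots,\rv(x_n),\rv(s))$, so that ``the sum is not well-defined'' means precisely that some \emph{other} tuple of witnesses produces a value $\ne\rv(s)$.

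For the direction $(\Leftarrow)$ I would assume $v(s)=m$. Given any witnesses $y_1,\dots,y_n$ with $\rv(y_i)=\rv(x_i)$, Lemma~9.2 gives $v(y_i-x_i)>v(x_i)\ge m$, hence $v\big(\sum_i(y_i-x_i)\big)>m=v(s)$; since $\sum_i y_i-s=\sum_i(y_i-x_i)$, Lemma~9.2 applied again yields $\rv\big(\sum_i y_i\big)=\rv(s)$. Thus the value of the sum is forced to be $\rv(s)=\rv(x_1+\cdots+x_n)$, and it is attained, so the sum is well-defined with that value.

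For $(\Rightarrow)$ I would prove the contrapositive: assuming $v(s)>m$, I exhibit a second tuple of witnesses whose sum has $\rv$-value $\ne\rv(s)$. Fix $j$ with $v(x_j)=m$ and perturb only coordinate $j$. If $s\ne0$, put $y_j:=x_j-s$ and $y_i:=x_i$ for $i\ne j$; then $v(y_j-x_j)=v(s)>m=v(x_j)$, so $\rv(y_j)=\rv(x_j)$, while $\sum_i y_i=0$, giving the value $\rv(0)=\infty\ne\rv(s)$. If $s=0$, pick $u\in\m\setminus\{0\}$ (the trivial-valuation case being vacuous) and put $y_j:=x_j(1+u)$, $y_i:=x_i$ for $i\ne j$; then $v(y_j-x_j)=v(x_j u)>v(x_j)$, so $\rv(y_j)=\rv(x_j)$, while $\sum_i y_i=x_j u\ne0$, giving the value $\rv(x_j u)\ne\infty=\rv(s)$. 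In either case $\oplus$ admits two distinct values, so the sum is not well-defined.

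I do not anticipate a genuine obstacle here; the lemma is essentially bookkeeping. The one point that needs care is in $(\Rightarrow)$: one must remember that ``not well-defined'' means the \emph{existence} of a second admissible value (an admissible value always exists), and one must treat the degenerate case $s=0$ separately via the multiplicative perturbation above rather than the additive one. This is essentially the computation carried out in \cite{F}.
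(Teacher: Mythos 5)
Your proof is correct in substance and supplies the computation that the paper merely cites: the paper's own ``proof'' of this lemma is a pointer to Flenner's thesis (Propositions 1.3.6--1.3.8), so there is no argument in the paper to compare against. The split into $(\Leftarrow)$ via Lemma~9.2 applied to $\sum_i y_i$ and $s$, together with $(\Rightarrow)$ by exhibiting a second admissible value through the perturbations $y_j = x_j - s$ (when $s\ne 0$) and $y_j = x_j(1+u)$ (when $s=0$), is exactly the bookkeeping the cited propositions carry out.

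One claim in your proposal is not right as stated: the parenthetical ``the trivial-valuation case being vacuous.'' If $v$ is trivial then $\m=\{0\}$, $RV=K^\times$, and $\rv$ is injective on $K^\times$ (and sends $0$ to $\infty$). Taking, say, $x_1=1$, $x_2=-1$, one has $s=0$, so $v(s)=\infty>0=m$; yet the $\oplus$-witnesses are forced (injectivity of $\rv$ pins down each $y_i$), so the sum \emph{is} well-defined with value $\infty$. Thus the lemma as literally stated fails in this degenerate case; it tacitly assumes $\Gamma\ne\{0\}$, which the paper does assume where the lemma is actually used (the proof of Theorem~9.5 disposes of $\Gamma=\{0\}$ at the outset). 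So rather than ``vacuous'' you should say ``excluded by hypothesis.'' A second, smaller point: in the $(\Leftarrow)$ direction you invoke Lemma~9.2 to get $v(y_i-x_i)>v(x_i)$, which requires $x_i\ne0$; when $x_i=0$ one has $\rv(x_i)=\infty$, hence $y_i=0$ and $y_i-x_i=0$, so those terms drop out and the estimate $v\big(\sum_i(y_i-x_i)\big)>m$ still holds provided some $x_i\ne0$ (and if all $x_i=0$ the claim is trivial). Neither point affects the overall correctness, but the first one is a genuine misstatement worth fixing.
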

\begin{proof}
See \cite{F}, Proposition 1.3.6, 1.3.7 and 1.3.8.
\end{proof}
Thus, we construe $RV$ as a structure in the language $\L_\rv := \{\cdot, ^{-1}, \oplus, v, \sigma, 1\}$. And finally, we have
\begin{prop}
$\Gamma$ and $k$ are interpretable in $RV$.
\end{prop}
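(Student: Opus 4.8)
The plan is to realize $\Gamma$ as a definable quotient of $RV$ and $k$ as a definable subset of $RV$ with one adjoined point, in each case transporting the relevant structure from the $\L_\rv$-structure on $RV$; the only non-routine verifications will concern the addition on $k$, and these will be supplied by Lemmas 9.2 and 9.3. The starting observation is that $k^\times$ (which, under the exact sequence $1\to k^\times\to RV\xrightarrow{v}\Gamma\to 0$, is the image of $\O^\times$, equivalently $\{x\in RV : v(x)=v(1)\}$) is an $\L_\rv$-definable subset of $RV$, because the valuation predicate $v$ belongs to $\L_\rv$.

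To interpret $\Gamma$, I would put on $RV$ the $\L_\rv$-definable equivalence relation $x\sim y :\iff x\cdot y^{-1}\in k^\times$, observe that $v$ induces a bijection $RV/{\sim}\to\Gamma$, and then transport structure along it: the group law on $RV/{\sim}$ induced by the multiplication of $RV$ (well defined, $k^\times$ being a subgroup of the abelian group $RV$); the linear order $[x]\le[y] :\iff v(x)\le v(y)$; and the automorphism $[x]\mapsto[\sigma(x)]$. All three are $\L_\rv$-definable on $RV/{\sim}$, and under the bijection they correspond exactly to the addition, the order, and the automorphism $\gamma\mapsto\rho\cdot\gamma$ of $\Gamma$. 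Hence $\Gamma$, with its full $MODAG$-structure, is interpretable in $RV$; this part is the routine formation of a definable quotient group carrying its induced order and automorphism.

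To interpret $k$, I would take as domain the definable set $R:=\{x\in RV : v(x)=v(1)\}=k^\times$ together with one extra point $0_k$ (adjoined by a standard coding, or one may simply take $0_k:=\rv(0)$), with multiplication the multiplication of $RV$ on $R$ extended by $0_k\cdot x=x\cdot 0_k=0_k$, unit $1$, automorphism $\sigma$ on $R$ together with $\bar{\sigma}(0_k)=0_k$, and addition defined for $x,y\in R$ by letting $x+_k y$ be the (necessarily unique) $z\in R$ with $\oplus(x,y,z)$ if such a $z$ exists and $x+_k y:=0_k$ otherwise, with $0_k+_k x:=x$. All of this is $\L_\rv$-definable. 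To check that the resulting structure is the residue field $k$ of $\K$, I would fix representatives $a,b\in\O^\times$ with $\rv(a)=x$ and $\rv(b)=y$ and argue as follows. If $\pi(a)+\pi(b)\ne 0$ then $v(a+b)=0=\min\{v(a),v(b)\}$, so by Lemma 9.3 the sum $\rv(a)\oplus\rv(b)$ is well defined and equal to $\rv(a+b)\in R$; moreover $\rv(a+b)$ is then the only element of $R$ that $\oplus$ attaches to $(x,y)$ (for any witnessing $a',b'$ one has $v((a'+b')-(a+b))>0=v(a+b)$, whence $\rv(a'+b')=\rv(a+b)$ by Lemma 9.2), and under the identification $\rv(c)\leftrightarrow\pi(c)$ of $R$ with $k^\times$ (the closing remark of Lemma 9.2) this element corresponds to $\pi(a+b)=\pi(a)+\pi(b)$. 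If instead $\pi(a)+\pi(b)=0$, then $v(a+b)>0$ and no element of $R$ is attached by $\oplus$ to $(x,y)$ (every witnessing pair $a',b'$ has $v(a'+b')>0$), so $x+_k y=0_k$, as it must be. The multiplicative and $\bar{\sigma}$-structures match trivially.

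I expect the only real obstacle to be the addition on $k$: one must confirm that, on the valuation-zero part $R$, the relation $\oplus$ attaches a unique element of $R$ to $(x,y)$ precisely off the cancellation locus $\{(x,y):\pi(a)+\pi(b)=0\}$, and that this locus is therefore $\L_\rv$-definable, which is exactly what Lemmas 9.2 and 9.3 give. Everything else is bookkeeping with definable quotients.
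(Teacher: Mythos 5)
Your construction is exactly what the paper cites to Flenner, Proposition 3.1.4 (the paper's own text is just the citation plus the observation that $\sigma$ descends to the quotient $\Gamma$ and to $k^\times$). The argument is correct and matches that intended proof: realizing $\Gamma$ as the quotient $RV/k^\times$ with transported order and automorphism, realizing $k$ as $\{x\in RV : v(x)=v(1)\}$ with an adjoined zero, and recovering residue-field addition from $\oplus$ together with the uniqueness check are precisely the content of Lemmas 9.2 and 9.3, which you invoke in the right places.
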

\begin{proof}
See \cite{F}, Proposition 3.1.4. Note that the proof there is done for valued fields. For our case, the difference operator on $V$ is interpreted in terms of the difference operator on $RV$ as $\sigma(v(x)) = v(\sigma(x))$. And since the nonzero elements $\bar{x}$ of the residue field are in bijection with $x\in RV$ such that $v(x) = 0$, $\bar{\sigma}(\bar{x})$ is interpreted in the obvious way as $\sigma(x)$.
\end{proof}

Now we describe the embeddings. Let $\K = (K, \Gamma, k, RV; v, \pi, \rv, \rho)$ and $\K' = (K', \Gamma', k', RV'; v', \pi', \rv', \rho)$ be two $\sigma$-henselian multiplicative valued difference fields of equal characteristic zero, satisfying Axiom 1 (with the given $\rho$). Recall that, by Lemma 5.10, $\K$ and $\K'$ satisfy Axiom 2 and Axiom 3. We denote the difference operator of both $\K$ and $\K'$ by $\sigma$, and their ring of integers by $\O$ and $\O'$ respectively. Let $\E = (E, \Gamma_E, k_E, RV_E; v, \pi, \rv, \rho)$ and $\E' = (E', \Gamma_{E'}, k_{E'}, RV_{E'}; v', \pi', \rv', \rho)$ be valued difference subfields of $\K$ and $\K'$ respectively. We say a bijection $\psi: E\to E'$ is an \textit{admissible isomorphism} if it has the following properties:
\begin{enumerate}
\item $\psi$ is an isomorphism of multiplicative valued difference fields;
\item the induced isomorphism $\psi_\rv: RV_E\to RV_{E'}$ in the language $\L_\rv$ is \textit{elementary}, i.e., for all formulas $\varphi(x_1, \ldots, x_n)$ in $\L_\rv$, and $\xi_1, \ldots, \xi_n\in RV_E$,
$$RV\models\varphi(\xi_1,\ldots, \xi_n) \iff RV'\models\varphi(\psi_\rv(\xi_1), \ldots, \psi_\rv(\xi_n));$$
\item the induced isomorphism $\psi_r: k_E\to k_{E'}$ of difference fields is \textit{elementary}, i.e., for all formulas $\varphi(x_1, \ldots, x_n)$ in the language of difference fields, and $\alpha_1, \ldots, \alpha_n\in k_E$,
$$k\models\varphi(\alpha_1,\ldots, \alpha_n) \iff k'\models\varphi(\psi_r(\alpha_1), \ldots, \psi_r(\alpha_n));$$
\item the induced isomorphism $\psi_v: \Gamma_E \to \Gamma_{E'}$ is elementary, i.e, for all formulas $\varphi(x_1, \ldots, x_n)$ in the language of MODAG, and $\gamma_1, \ldots, \gamma_n\in\Gamma_E$,
$$\Gamma\models\varphi(\gamma_1,\ldots, \gamma_n) \iff \Gamma'\models\varphi(\psi_v(\gamma_1), \ldots, \psi_v(\gamma_n)).$$
\end{enumerate}
(Note that it is enough to maintain (1) and (2) above, since (3) and (4) are consequences of (2) because of Proposition 9.3).

Our main goal is to be able to extend such admissible isomorphisms. For this we need certain degree of saturation on $\K$ and $\K'$. Fix an infinite cardinal $\kappa$ and lets assume that $\K$ and $\K'$ are $\kappa^+$-saturated. Recall that, since the language is countable, such models exist assuming GCH. We then say a substructure $\E = (E, \Gamma_E, k_E, RV_E; v, \pi, \rv, \rho)$ of $\K$ (respectively of $\K'$) is \textit{small} if $|\Gamma_E|, |k_E| \le \kappa$. While extending the isomorphism, we do it in steps and at each step we typically extend the isomorphism from some $E$ to $E\langle a\rangle$, which is obviously small if $E$ is; and then reiterate the process $\kappa$ many times, which again preserves smallness. Eventually we reiterate this process countably many times and take union of an increasing sequence of countably many small fields, which also preserves smallness. Having said all that, we now state the Embedding Theorem.

\begin{thm}[Embedding Theorem]
Suppose $\K, \K', \E, \E'$ are as above with $\K, \K'$ $\kappa^+$-saturated and $\E, \E'$ small. Assume $\psi: E\to E'$ is an admissible isomorphism and let $a\in K$. Then there exist $b\in K'$ and an admissible isomorphism $\psi': E\langle a\rangle\cong E'\langle b\rangle$ extending $\psi$ with $\psi(a) = b$.
\end{thm}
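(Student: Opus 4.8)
The plan is to run the Ax--Kochen--Ershov back-and-forth in the style of \cite{BMS} and \cite{AD}: first bring $\E$ and $\E'$ into a shape where the $\sigma$-henselianity and pseudo-convergence machinery of Sections~5--6 applies, then split on the Kaplansky-style trichotomy for $a$, producing $b$ in each case by realizing an appropriate type in the saturated $\K'$ and invoking one of the extension lemmas of Sections~6 and~8; the genuinely delicate point is that the extended isomorphism stay \emph{admissible}.

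\textbf{Step 1 (reductions).} Since $\psi_r,\psi_v$ are elementary we have $k_E\preceq k$ and $\Gamma_E\preceq\Gamma$; because $\K$ is $\sigma$-henselian it satisfies Axioms~2 and~3 (Lemma~5.10), and these being first-order schemes, so do $\E$ and $\E'$. Let $\E^*\subseteq\K$ be a maximal immediate extension of $\E$; by Corollary~6.10 (with $\K'$, which is $\kappa^+$-saturated and $\sigma$-henselian, in the role of the ambient field) $\psi$ extends to an embedding of $\E^*$ into $\K'$, and since $\E^*/\E$ is immediate the value group, residue field and $RV$ are unchanged, so the extension is still admissible and $\E^*$ still small. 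Doing the same on the $\K'$ side, we may assume henceforth that $\E$ and $\E'$ are maximal immediate extensions of themselves, hence $\sigma$-algebraically maximal and, by Corollary~6.5(2), $\sigma$-henselian. (We re-impose this reduction after the step below.)

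\textbf{Step 2 (trichotomy).} Assume $a\notin E$, and look at $v(a-E)=\{v(a-c):c\in E\}$. If it has \emph{no} maximum, extract a pc-sequence $\{a_\eta\}$ from $E$ of length $\le|\Gamma_E|\le\kappa$ with $a_\eta\leadsto a$ and no pseudolimit in $E$; $\sigma$-algebraic maximality of $\E$ together with Corollary~6.5(1) forces it to be of $\sigma$-transcendental type, so $a$ is $\sigma$-transcendental over $E$ (Corollary~6.2) and $\E\langle a\rangle/\E$ is immediate (Lemma~6.1). By $\kappa^+$-saturation of $\K'$ the image sequence $\{\psi(a_\eta)\}$ has a pseudolimit $b\in K'$, necessarily $\sigma$-transcendental over $E'$, and the universal property in Lemma~6.1 yields an admissible (because immediate) isomorphism $\E\langle a\rangle\cong\E'\langle b\rangle$ extending $\psi$ with $a\mapsto b$. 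If $v(a-E)$ \emph{has} a maximum, subtract from $a$ an element of $E$ attaining it, so that $\gamma:=v(a)=\max v(a-E)$, and distinguish two subcases. (i) $\gamma\notin\Gamma_E$: using maximality of $v(a-E)$ and the $MODAG$-purity of $\Gamma_E$ in $\Gamma$ (a consequence of $\Gamma_E\preceq\Gamma$, which excludes coincidences $v(b_{\vec L})+|\vec L|_\rho\cdot\gamma=v(b_{\vec M})+|\vec M|_\rho\cdot\gamma$ with $|\vec L|_\rho\neq|\vec M|_\rho$), one checks $a$ is generic over $\E$ in the sense of Definition~8.4; realize the cut of $\gamma$ over $\Gamma_{E'}$ by $\gamma'\in\Gamma'$ (by $\kappa^+$-saturation of $\Gamma'$ and elementarity of $\psi_v$), so that $\psi_v$ extends elementarily to $\Gamma_E\langle\gamma\rangle\cong\Gamma_{E'}\langle\gamma'\rangle$, and apply Lemma~8.8 to obtain $b\in K'$ generic over $\E'$ with $v(b)=\gamma'$ and an isomorphism $\E\langle a\rangle\cong\E'\langle b\rangle$ extending $\psi$. (ii) $\gamma\in\Gamma_E$: dividing $a$ by an element of $E$ of valuation $\gamma$, we may take $v(a)=0$, and then maximality of $v(a-E)$ forces $\alpha:=\pi(a)\notin k_E$; realize $\psi_{r,*}(\mathrm{tp}(\alpha/k_E))$ by $\beta\in k'$ (by $\kappa^+$-saturation of $k'$ and elementarity of $\psi_r$) and lift $\beta$ to a unit $b\in\O'$. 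If $\alpha$ is $\bar\sigma$-transcendental over $k_E$ apply Lemma~8.1; if $\alpha$ is $\bar\sigma$-algebraic with minimal $\bar\sigma$-polynomial $\bar P$, take a same-complexity lift $P$ over $\O_E$, note that $(P,a)$ is in $\sigma$-hensel configuration (Remark~5.3, since $\bar P_{(\vec J)}(\alpha)\neq0$ by minimality), use $\sigma$-henselianity of $\K$ to replace $a$ by an exact root of $P$ with the same residue (and similarly in $\K'$), apply Lemma~8.2, and descend back to $\E\langle a\rangle$. In either subcase the value group grows only by $\gamma$ and the residue field only by $\alpha$.

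\textbf{Step 3 (admissibility) and the main obstacle.} In each case the resulting $\psi'$ is a valued difference field isomorphism extending $\psi$ with $a$ in its domain, and what remains is to see that $\psi'_{rv}$ is elementary. By the remark following the definition of admissible isomorphism this reduces, via Proposition~9.3, to: (a) $\psi'_v$ and $\psi'_r$ are elementary — which is clear, as the value group and residue field were each grown by a single realized type (and not at all in the immediate case); and (b) the $RV$-structure of $\E\langle a\rangle$ is freely coordinatized by its value group and residue field, which is exactly the relative quantifier elimination for the leading-term sort of \cite{BK}, \cite{S2}, \cite{F}. The heart of the matter is precisely step~(b): keeping control of the $RV$-structure through the one-step extensions, where the theory of the $RV$-sort is genuinely used. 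Secondary technical points are the genericity verification in subcase~2(i) (which really uses purity of $\Gamma_E$ and would fail for a non-pure subgroup) and the root-and-descent maneuver in the $\bar\sigma$-algebraic part of subcase~2(ii). Step~1 is the key simplification: without making $\E$ $\sigma$-algebraically maximal and $\sigma$-henselian, the no-maximum case would not collapse to the $\sigma$-transcendental situation and the subcases of the maximum case would not be reachable in a single extension step.
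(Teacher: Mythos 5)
Your proposal reorganizes the argument in a way closer to the classical AKE back‑and‑forth (normalize the subfield first, then a case split on the single element $a$), whereas the paper iterates the residue-field and value-group extensions to \emph{saturate} $E$ against $E\langle a\rangle$ before doing the maximal immediate extension \emph{last}. This reorganization is a legitimate idea, but as written it contains several genuine errors.

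First, the opening claim of Step~1, ``since $\psi_r,\psi_v$ are elementary we have $k_E\preceq k$ and $\Gamma_E\preceq\Gamma$,'' is false. Elementarity of $\psi_v$ in the sense of the paper means the \emph{partial map} $\psi_v\colon\Gamma_E\to\Gamma_{E'}$ preserves all formulas as evaluated in $\Gamma$ and $\Gamma'$; when $\E=\E'$ and $\psi=\mathrm{id}$ this is vacuously true and says nothing about $\Gamma_E$ being an elementary substructure. Consequently you cannot conclude that $\E$ already satisfies Axioms~2 and~3. The paper explicitly has to \emph{arrange} Axiom~3 on $\E$ by adjoining, via the residue-extension step, solutions of the linear $\bar\sigma$-equations over $k_E$ that are not yet solvable in $k_E$.

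Second, and most seriously, in subcase 2(i) you claim that $\gamma=v(a)\notin\Gamma_E$ together with maximality of $v(a-E)$ and a purity argument already makes $a$ generic over $\E$. The purity argument (even granting divisibility of $\Gamma_E$, which was to be established) only rules out coincidences $v(b_{\vec L})+|\vec L|_\rho\gamma=v(b_{\vec M})+|\vec M|_\rho\gamma$ when $|\vec L|_\rho\neq|\vec M|_\rho$. It does nothing about distinct indices $\vec L\neq\vec M$ with $|\vec L|_\rho=|\vec M|_\rho$ (for $\rho=2$, think $\vec L=(2,0)$ vs.\ $\vec M=(0,1)$); those monomials have the same valuation and their sum can cancel to higher valuation. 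Genericity is a genuine additional constraint (this is the point of Lemma~8.5 and the type $p(y)$ in Lemma~8.8(i): one must choose the unit $\epsilon$ so that $\bar\epsilon$ avoids the roots of all the residual polynomials $\overline{Q_P}$). The given $a$ has no reason to satisfy it, and maximality of $v(a-E)$ is automatic once $\gamma\notin\Gamma_E$, so it carries no extra information. This is exactly why the paper \emph{does not} try to show that $a$ is generic; it introduces a fresh generic $b$ via Lemma~8.8(i) and only later returns to $a$ through the immediate-extension step.

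Third, in subcase 2(ii), ``apply Lemma~8.2, and descend back to $\E\langle a\rangle$'' is not a step: Lemma~8.2 produces an isomorphism $\E\langle e\rangle\cong\E'\langle e'\rangle$ where $e$ is the $\sigma$-Hensel root with $\pi(e)=\alpha$, and there is no reason for $a$ to lie in $\E\langle e\rangle$. The same issue affects 2(i) once the genericity gap is fixed by introducing a fresh generic: after one extension step, $a$ is usually still outside the new $E$. You acknowledge the need to re-impose Step~1, but the resulting transfinite iteration and its termination (which in the paper is achieved cleanly by first killing Case~(1), then Case~(2), and only then running the immediate step) is not actually set up.

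Finally, a structural remark: once $\E$ is replaced by a \emph{maximal} immediate extension (as in your Step~1), it has no proper immediate extension inside $\K$, so the ``$v(a-E)$ has no maximum'' branch of your trichotomy is vacuous — a pc-sequence from $E$ pseudo-converging to $a$ without a pseudolimit in $E$ would give a proper immediate extension of $\E$ by Lemma~6.1. This is not a contradiction in your argument, but it suggests you conflated ``maximal'' with ``$\sigma$-algebraically maximal''; only the latter is what the reduction actually needs, and the difference matters for which cases remain.

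In summary, the outline captures the right ingredients (Lemmas~6.1, 6.3, 8.1, 8.2, 8.8, Corollaries~6.8, 6.10, and Proposition~9.3 for admissibility) but the central claim that the case split lands $a$ in a one-step extension is not justified: the genericity of $a$ in 2(i) is false in general, the ``descent'' in 2(ii) is undefined, and the iteration needed to repair both is not in place. The paper avoids all three problems by extending the residue field and value group to be closed under the residues and valuations of $E\langle a\rangle$ \emph{before} taking the maximal immediate extension.
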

\begin{proof}
Note that the theorem is obvious if $\Gamma = \{0\}$. So let's assume that $\Gamma\not=\{0\}$. Also wlog, we may assume $a\in\O_K$. We will extend the isomorphism in steps. Note that we have three cases to consider:
\begin{enumerate}
\item[Case(1).] There exists $c\in E\langle a\rangle$ such that $\pi(c)\in k\setminus k_E$;
\item[Case(2).] There exists $c\in E\langle a\rangle$ such that $v(c)\in\Gamma\setminus\Gamma_E;$
\item[Case(3).] For all $c\in E\langle a\rangle, \pi(c)\in k_E$ and $v(c)\in\Gamma_E$.
\end{enumerate}

\vspace{1em}
\hspace{-1.2em}\textbf{\underline{Step I: Extending the residue field}}

Let $c\in E\langle a\rangle$ be such that $\alpha := \pi(c)\not\in k_E$. Since $k^\times\hookrightarrow RV$, $\alpha\in RV$. By saturation of $\K'$, we can find $\alpha'\in RV'$ and an $\L_\rv$-isomorphism $RV_E\langle\alpha\rangle\cong RV_{E'}\langle\alpha'\rangle$ extending $\psi_\rv$ and sending $r\mapsto r'$ that is elementary as a partial map between $RV$ and $RV'$. Note that then $\alpha'\in k'$. Now we have two cases to consider.

\textit{Subcase I.} $\alpha$ (respectively, $\alpha'$) is $\bar{\sigma}$-transcendental over $k_E$ (respectively, $k_{E'}$). In that case, pick $d\in\O$ and $d'\in\O'$ such that $\pi(d) = \alpha$ and $\pi(d') = \alpha'$. Then by Lemma 8.1, there is an admissible isomorphism $\E\langle d\rangle\cong\E'\langle d'\rangle$ extending $\psi$ with small domain $(E\langle d\rangle, \Gamma_E, k_E\langle\alpha\rangle)$ and sending $d$ to $d'$.

\textit{Subcase II.} $\alpha$ is $\bar{\sigma}$-algebraic over $k_E$. Let $P(x)$ be a $\sigma$-polynomial over $\O_\E$ such that $\bar{P}(x)$ is a minimal $\bar{\sigma}$-polynomial of $\alpha$. Pick $d\in\O$ such that $\pi(d) = \alpha$. If $P(d)\not= 0$ already, then $(P, d)$ is in $\sigma$-hensel configuration with $\gamma(P, d) > 0$. Since $\K$ is $\sigma$-henselian, there is $e\in\O$ such that $P(e) = 0$ and $\pi(e) = \pi(d) = \alpha$. Likewise, there is $e'\in\O'$ such that $P^{\psi}(e') = 0$ and $\pi(e') = \alpha'$, where $P^{\psi}$ is the difference polynomial over $E'$ corresponding to $P$ under $\psi$. Then by Lemma 8.2, there is an admissible isomorphism $\E\langle e\rangle\cong\E'\langle e'\rangle$ extending $\psi$ with small domain $(E\langle e\rangle, \Gamma_E, k_E\langle\alpha\rangle)$ and sending $e$ to $e'$.

Note that in either case, we have been able to extend the admissible isomorphism to a small domain that includes $\alpha$. Since $E$ is small, so is $E\langle a\rangle$, i.e., $|k_{E\langle a\rangle}|\le\kappa$. Thus, by repeating Step I $\kappa$ many times, we are able to extend the admissible isomorphism to a small domain $\E_1$ such that for all $c\in E\langle a\rangle$ with $\pi(c)\not\in k_E$, we have $\pi(c)\in k_{E_1}$. Continuing this countably many times, we are able to build an increasing sequence of small domains $\E = \E_0\subset \E_1 \subset\cdots\subset\E_i\subset\cdots$ such that for each $c\in E_i\langle a\rangle$ with $\pi(c)\not\in k_{E_i}$, we have $\pi(c)\in k_{E_{i + 1}}$. Taking the union of these countably many small domains, we get a small domain, which we still call $\E$, such that $\psi$ extends to an admissible isomorphism with domain $E$ and for all $c\in E\langle a\rangle$, we have $\pi(c)\in k_E$, i.e., we are not in Case(1) anymore.\newline

\hspace{-1.2em}\textbf{\underline{Step II: Extending the value group}}

Let $c\in E\langle a\rangle$ be such that $\gamma := v(c)\not\in \Gamma_E$. Let $b\in K$ be generic over $\E$ with $v(b) = \gamma$. Let $r := \rv(b)$. By saturation of $\K'$, find $r'\in RV'$ and an $\L_\rv$-isomorphism $RV_E\langle r\rangle\cong RV'\langle r'\rangle$ extending $\psi_\rv$, sending $r\mapsto r'$, that is elementary as a partial map between $RV$ and $RV'$. Let $b' \in K'$ be such that $\rv'(b') = r'$. 

We claim that $b'$ is generic over $\E'$ : for any $P(x) = \sum b_{\vec{L}}\vec{\sigma}(x)^{\vec{L}}$ with $b_{\vec{L}}\in E'$, let $P^{\psi^{-1}}(x) = \sum a_{\vec{L}}\vec{\sigma}(x)^{\vec{L}}$ be the corresponding $\sigma$-polynomial over $E$ with $a_{\vec{L}}\in E$ and $\psi(a_{\vec{L}}) = b_{\vec{L}}$. Since $b$ is generic over $\E$, we have $v(P^{\psi^{-1}}(b)) = \min\{v(a_{\vec{L}}) + |\vec{L}|_\rho\cdot\gamma\}$, and hence by Lemma 9.3, we have
$$\rv(P^{\psi^{-1}}(b)) = \sum \rv(a_{\vec{L}}\vec{\sigma}(b)^{\vec{L}}) = \sum \rv(a_{\vec{L}})\vec{\sigma}(\rv(b))^{\vec{L}} = \sum \rv(a_{\vec{L}}) \vec{\sigma}(r)^{\vec{L}}.$$
Then 
$$\rv'(P(b')) = \psi_\rv(\rv(P^{\psi^{-1}}(b))) = \psi_\rv\Big(\sum \rv(a_{\vec{L}}) \vec{\sigma}(r)^{\vec{L}}\Big) = \sum \rv'(b_{\vec{L}}) \vec{\sigma}(r')^{\vec{L}} = \sum \rv'(b_{\vec{L}}\vec{\sigma}(b')^{\vec{L}}),$$
and hence by Lemma 9.3 again, we have $v(P(b')) = \min\{v(b_{\vec{L}}) + |\vec{L}|_\rho\cdot v(b')\}$, i.e., $b'$ is generic over $\E'$. Then by Lemma 8.8, since $\K$ satisfies Axiom 2, there is an admissible isomorphism from $\xymatrix{\E\langle b\rangle\ar^\cong[r] & \E'\langle b'\rangle}$ extending $\psi$ and sending $b\mapsto b'$.

Thus we have been able to extend the admissible isomorphism to a small domain that includes $\gamma$. Since $E$ is small, so is $E\langle a\rangle$, i.e., $|\Gamma_{E\langle a\rangle}|\le\kappa$. Thus, by repeating Step II $\kappa$ many times, we are able to extend the admissible isomorphism to a small domain $\E_1$ such that for all $c\in E\langle a\rangle$ with $v(c)\not\in\Gamma_E$, we have $v(c)\in\Gamma_{E_1}$. Continuing this countably many times, we are able to build an increasing sequence of small domains $\E = \E_0\subset \E_1 \subset\cdots\subset\E_i\subset\cdots$ such that for each $c\in E_i\langle a\rangle$ with $v(c)\not\in\Gamma_{E_i}$, we have $v(c)\in\Gamma_{E_{i + 1}}$. Taking the union of these countably many small domains, we get a small domain, which we still call $\E$, such that $\psi$ extends to an admissible isomorphism with domain $E$ and for all $c\in E\langle a\rangle$, we have $v(c)\in\Gamma_E$, i.e., we are not in Case(2) anymore.\newline

\hspace{-1.2em}\textbf{\underline{Step III: Immediate Extension}}

After doing Steps I and II, we are reduced to the case when $E\langle a\rangle$ is an immediate extension of $\E$ where both fields are equipped with the valuation induced by $\K$. Let $\E\langle a\rangle$ be the valued difference subfield of $\K$ that has $E\langle a\rangle$ as the underlying field. In this situation, we would like to extend the admissible isomorphism, not just to $\E\langle a\rangle$, but to a maximal immediate extension of $\E\langle a\rangle$ and use Corollary 6.10. However, for that we need $\E$ to satisfy Axiom 2 and Axiom 3. Since Axiom 3 implies Axiom 2 by Lemma 5.10, it is enough to extend $\E$ such that it satisfies Axiom 3. Recall that $\K$ satisfies Axiom 3. Now to make $\E$ satisfy Axiom 3, for each linear $\bar{\sigma}$-polynomial $P(x)$ over $k_E$, if there is already no solution to $P(x)$ in $k_E$, find a solution $\alpha\in k$ and follow Step I. Since there are at most $\kappa$ many such polynomials, we end up in a small domain. Thus, after doing all these, we can assume $\E$ satisfies Axiom 2 and Axiom 3. Let $\E^*$ be a maximal immediate valued difference field extension of $\E\langle a\rangle$. Then $\E^*$ is a maximal immediate extension of $\E$ as well. Similarly let $\E'^*$ be a maximal immediate extension of $\E'$. Since such extensions are unique by Corollary 6.8, and by Corollary 6.10 they can be embedded in $\K$ (respectively $\K'$) over $\E$ (respectively $\E'$) by saturatedness of $\K$ (respectively $\K'$), we have that $\psi$ extends to a valued field isomorphism $\E^*\cong\E'^*$. Since $k_{E^*} = k_E$ and $\Gamma_{E^*} = \Gamma_E$, it follows by Snake Lemma on the following diagram that $RV_{E^*} = RV_E$: 
$$\xymatrix{1\ar[r] & k^\times_E\ar[r]\ar[d]^{id} & RV_E\ar[r]\ar[d] & \Gamma_E\ar[r]\ar[d]^{id} & 0\\
1\ar[r] & k^\times_{E^*}\ar[r] & RV_{E^*}\ar[r] & \Gamma_{E^*}\ar[r] & 0}.$$
Thus, the isomorphism is actually admissible. It remains to note that $a$ is in the underlying difference field of $\E^*$.
\end{proof}

\bigskip
\section{Completeness and Quantifier Elimination Relative to RV}
We now state some model-theoretic consequences of Theorem 9.5. We use `$\equiv$' to denote the relation of elementary equivalence, and $\preccurlyeq$ to denote the relation of elementary submodel. Recall that we are working in the 4-sorted language $\L_4$ with sorts $K$ for the valued field, $\Gamma$ for the value group, $k$ for the residue field and $RV$ for the RV. The language also has a function symbol $\sigma$ going from the field sort to itself. Let $\K = (K, \Gamma, k, RV; v, \pi, \rv, \rho)$ and $\K' = (K', \Gamma', k', RV'; v', \pi',$ $\rv', \rho)$ be two $\sigma$-henselian multiplicative valued difference fields (in the 4-sorted language) of equi-characteristic zero satisfying Axiom 1 (with the same $\rho$).

\begin{thm}
$\K\equiv_{\L_4} \K'$ if and only if $RV\equiv_{\L_\rv} RV'$.
\end{thm}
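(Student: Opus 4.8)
The plan is to deduce this Ax--Kochen--Ershov style transfer principle from the Embedding Theorem (Theorem 9.5) via the standard back-and-forth argument packaged in Test 9.4. The forward direction is trivial: $RV$ is interpretable in $\K$ (it is literally one of the sorts, and in fact $\Gamma$ and $k$ are interpretable in $RV$ by Proposition 9.3), so any $\L_4$-elementary equivalence $\K\equiv\K'$ immediately restricts to $RV\equiv_{\L_\rv} RV'$. All the content is in the converse.

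For the converse, suppose $RV\equiv_{\L_\rv} RV'$. First I would pass to highly saturated elementary extensions: by GCH choose $\kappa$ an infinite cardinal $\ge|\L_4|$ and replace $\K,\K'$ by $\kappa^+$-saturated elementary extensions $\K_1\succcurlyeq\K$, $\K_1'\succcurlyeq\K'$ (these remain $\sigma$-henselian multiplicative valued difference fields of equi-characteristic zero satisfying Axiom 1 with the same $\rho$, since all of that is first-order in $\L_4$; $\sigma$-henselianity is the axiom scheme of Definition 5.7, and Axioms 2 and 3 come for free by Lemma 5.10). Since $RV_1\succcurlyeq RV$ and $RV_1'\succcurlyeq RV'$ and $RV\equiv RV'$, we have $RV_1\equiv_{\L_\rv} RV_1'$. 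It suffices to show $\K_1\equiv_{\L_4}\K_1'$. Now take the prime field $\Q$ (with $\sigma=\mathrm{id}$, trivial valuation) sitting inside both $\K_1$ and $\K_1'$ as a common small valued difference subfield $\E_0$; the identity map on it is an admissible isomorphism, since $RV_{\E_0}$ is the prime $\L_\rv$-substructure, and a partial isomorphism between $RV_1$ and $RV_1'$ on that substructure is automatically elementary because $RV_1\equiv RV_1'$ (substructures generated by no parameters have the same atomic type, and elementary equivalence of the ambient structures promotes this to a full elementary map — here one uses that both $RV$'s are, e.g., $\aleph_1$-saturated, or simply iterates the embedding step).

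With this base case in hand, I would run the back-and-forth: build an increasing chain of small admissible isomorphisms $\psi_i\colon \E_i\to\E_i'$ between small valued difference subfields of $\K_1$ and $\K_1'$, alternately ensuring every element of $K_1$ eventually lands in some $\E_i$ (``forth'') and every element of $K_1'$ eventually lies in the image of some $\E_i'$ (``back''), at each stage invoking Theorem 9.5 to extend the current admissible isomorphism to include one more field element. Taking the union over a chain of length equal to the common cardinality of (suitably chosen) $\kappa^+$-saturated models — or more cheaply, just arguing that the union of the $\psi_i$ along $\omega\cdot|K_1|$-many steps witnesses a full isomorphism $\K_1\cong\K_1'$ when the two are taken to be saturated of the same cardinality — gives an $\L_4$-isomorphism $\K_1\cong\K_1'$, whence $\K_1\equiv_{\L_4}\K_1'$ and so $\K\equiv_{\L_4}\K'$. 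Concretely, the cleanest route is to phrase this exactly as the third bullet of Test 9.4: Theorem 9.5 is precisely that extension-of-embeddings property, so together with the observations that the theory has no finite models (the residue field has characteristic $0$, so $K$ is infinite), has a constant symbol ($0$ or $1$), and — crucially — the relevant completeness-over-atomics is controlled by $RV$, one concludes relative quantifier elimination and hence the stated transfer.

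The main obstacle, and the one point that needs care rather than mere bookkeeping, is the very first use of the hypothesis $RV\equiv_{\L_\rv} RV'$: the Embedding Theorem presupposes that we start from an \emph{admissible} isomorphism, i.e.\ one whose induced $RV$-map is elementary, and nothing in Theorem 9.5 by itself produces such a seed. One must manufacture the initial admissible isomorphism on the prime difference subfield purely from $RV\equiv RV'$, and then check that admissibility is genuinely preserved at limit stages of the back-and-forth (it is, since a union of elementary partial maps between fixed structures is elementary). A secondary subtlety is ensuring that ``small'' is preserved through the transfinite construction — each application of Theorem 9.5 enlarges $\Gamma_E$ and $k_E$ by at most $\kappa$, and one iterates $\le\kappa$ times and then takes a countable union, exactly as described in the discussion preceding Theorem 9.5, so smallness survives. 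Once these two points are dispatched, the rest is the standard Ax--Kochen--Ershov machine.
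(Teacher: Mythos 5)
Your proposal is correct and follows essentially the same route as the paper: the forward direction is immediate, and the converse proceeds by taking $\Q$ (with trivial valuation and $\sigma=\mathrm{id}$) as a common substructure, observing that the identity map on it is admissible because the rationals in $RV$ are $\emptyset$-definable so that $RV\equiv_{\L_\rv} RV'$ forces the induced $RV$-map to be elementary, and then invoking the Embedding Theorem (Theorem 9.5) in a back-and-forth over suitably saturated elementary extensions. You have merely spelled out the saturation and back-and-forth bookkeeping that the paper's one-line proof leaves implicit.
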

\begin{proof}
The ``only if'' direction is obvious. For the converse, note that $(\Q, \{0\}, \Q, \Q;$ $v, \pi, \rv, \rho)$, with $v(q) = 0$, $\pi(q) = q$ and $\rv(q) = q$ for all $q\in\Q$, is a substructure of both $\K$ and $\K'$, and thus the identity map between these two substructures is an admissible isomorphism. Now apply Theorem 9.5.
\end{proof}
\begin{thm}
Let $\E = (E, \Gamma_E, k_E, RV_E; v, \pi, \rv, \rho)$ be a $\sigma$-henselian multiplicative valued difference subfield of $\K$, satisfying Axiom 1, such that $RV_E\preccurlyeq_{\L_\rv} RV$. Then $\E\preccurlyeq_{\L_4}\K$. 
\end{thm}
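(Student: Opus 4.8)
The plan is to reduce this to the Embedding Theorem (Theorem 9.5) by a back-and-forth argument between sufficiently saturated elementary extensions. Assuming GCH, fix an infinite cardinal $\kappa$ with $\kappa\geq|K|$ and $\kappa^{<\kappa}=\kappa$, and let $\widehat{\K}\succcurlyeq\K$ and $\widehat{\E}\succcurlyeq\E$ be $\kappa^+$-saturated elementary extensions of cardinality $\kappa^+$. Since $\E\subseteq\K\preccurlyeq\widehat{\K}$, we may regard $\E$ as a valued difference field substructure of $\widehat{\K}$ as well as of $\widehat{\E}$; in both it is \emph{small}, because $|\Gamma_E|\leq|\Gamma|\leq\kappa$ and $|k_E|\leq|k|\leq\kappa$. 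Being $\sigma$-henselian, of equi-characteristic zero, and satisfying Axiom 1 with the given $\rho$ are all expressible by first-order axiom schemes, hence preserved under $\preccurlyeq$, so $\widehat{\K}$ and $\widehat{\E}$ are again $\sigma$-henselian multiplicative valued difference fields to which Theorem 9.5 applies (with $\widehat{\K},\widehat{\E}$ in the roles of $\K,\K'$).

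The crucial point is that the identity map $\psi_0:=\mathrm{id}_E\colon E\to E$ is an admissible isomorphism between $\E\leq\widehat{\K}$ and $\E\leq\widehat{\E}$. Property (1) of admissibility is trivial, and (3), (4) follow from (2) by Proposition 9.3, so it suffices to verify (2): that the induced map $\mathrm{id}\colon RV_E\to RV_E$ is elementary as a partial map between $RV_{\widehat{\K}}$ and $RV_{\widehat{\E}}$. This is exactly where the hypothesis $RV_E\preccurlyeq_{\L_\rv}RV$ enters. From $\E\preccurlyeq_{\L_4}\widehat{\E}$ we get $RV_E\preccurlyeq_{\L_\rv}RV_{\widehat{\E}}$ (every $\L_\rv$-formula is an $\L_4$-formula in the $RV$-sort), and from $RV_E\preccurlyeq_{\L_\rv}RV\preccurlyeq_{\L_\rv}RV_{\widehat{\K}}$ (the last because $\K\preccurlyeq_{\L_4}\widehat{\K}$) we get $RV_E\preccurlyeq_{\L_\rv}RV_{\widehat{\K}}$. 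Hence for every $\L_\rv$-formula $\varphi$ and tuple $\bar{\xi}$ from $RV_E$,
$$RV_{\widehat{\K}}\models\varphi(\bar{\xi})\iff RV_E\models\varphi(\bar{\xi})\iff RV_{\widehat{\E}}\models\varphi(\bar{\xi}),$$
so $\psi_0$ is admissible (and, since admissibility is symmetric in source and target, so is $\psi_0^{-1}$).

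Now carry out a back-and-forth of length $\kappa^+$ between $\widehat{\K}$ and $\widehat{\E}$ starting from $\psi_0$: enumerate the field sorts of $\widehat{\K}$ and $\widehat{\E}$ in order type $\kappa^+$ and, at successor stages, apply Theorem 9.5 to adjoin the next field element of $\widehat{\K}$ (forth) and then, with the roles reversed, the next field element of $\widehat{\E}$ (back) to the current admissible isomorphism between small substructures, taking unions at limit stages. Every substructure produced is generated over $\E$ by at most $\kappa$ field elements, so its value group and residue field have size at most $|\Gamma_E|+\kappa=\kappa$ and it stays small; and it is enough to chase field elements, since every element of $\Gamma_{\widehat{\K}}$, $k_{\widehat{\K}}$ or $RV_{\widehat{\K}}$ is of the form $v(c)$, $\pi(c)$ or $\rv(c)$ for a field element $c$ (and likewise over $\widehat{\E}$). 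The union of the chain is therefore an $\L_4$-isomorphism $h\colon\widehat{\K}\xrightarrow{\ \sim\ }\widehat{\E}$ with $h|_E=\mathrm{id}_E$. Consequently, for every $\L_4$-sentence $\theta$ with parameters from $E$, $\K\models\theta\iff\widehat{\K}\models\theta\iff\widehat{\E}\models\theta\iff\E\models\theta$, using $\K\preccurlyeq\widehat{\K}$, the isomorphism $h$ fixing $E$, and $\E\preccurlyeq\widehat{\E}$. Since $\E\subseteq\K$, this gives $\E\preccurlyeq_{\L_4}\K$. The only genuine obstacle is the verification that $\mathrm{id}_E$ is admissible — which fails in general without the assumption $RV_E\preccurlyeq RV$ — together with the routine cardinality bookkeeping needed to keep the intermediate substructures small; the rest is a standard back-and-forth.
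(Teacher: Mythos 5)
Your proof is correct and follows essentially the same route as the paper's: both reduce the claim to the Embedding Theorem by observing that, thanks to the hypothesis $RV_E\preccurlyeq_{\L_\rv}RV$, the identity on $\E$ is an admissible isomorphism between (saturated extensions of) $\K$ and an elementary extension of $\E$, and then running a back-and-forth. The paper's proof is terse and leaves the passage to saturated extensions, the verification that $\mathrm{id}_E$ is admissible, and the back-and-forth bookkeeping implicit; you have simply spelled out those standard steps.
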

\begin{proof}
Take an elementary extension $\K'$ of $\E$. Then $\K'$ satisfies Axiom 1, and is also $\sigma$-henselian. Moreover $(E, \Gamma_E, k_E, RV_E; \cdots)$ is a substructure of both $\K$ and $\K'$, and hence the identity map is an admissible isomorphism. Hence, by Theorem 9.5, we have $\K\equiv_\E\K'$. Since $\E\preccurlyeq_{\L_4} \K'$, this gives $\E\preccurlyeq_{\L_4}\K$.
\end{proof}
\begin{cor}
$\K$ is decidable if and only if $RV$ is decidable.
\end{cor}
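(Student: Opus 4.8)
The plan is to derive this from Theorem 10.1 together with the elementary fact that a complete theory with a recursively enumerable axiomatization is decidable; the two implications are handled separately.

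Suppose first that $\mathrm{Th}(\K)$ is decidable. Since $RV$ is one of the sorts of $\K$, relativizing quantifiers to the $RV$-sort gives an effective map $\varphi\mapsto\varphi^{*}$ from $\L_\rv$-sentences to $\L_4$-sentences such that $RV\models\varphi$ if and only if $\K\models\varphi^{*}$. Hence an algorithm deciding membership in $\mathrm{Th}(\K)$ yields one deciding membership in $\mathrm{Th}(RV)$, so $RV$ is decidable.

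Conversely, suppose $RV$ is decidable. Let $T$ be the $\L_4$-theory of $\sigma$-henselian multiplicative valued difference fields of equi-characteristic zero satisfying Axiom 1. First I would check that $T$ is recursively axiomatizable: the valued-field and equi-characteristic-zero axioms, Axiom 1 (together with $\rho\ge 1$ and the $MODAG$ axioms on $\Gamma$, i.e.\ the scheme Axiom OM indexed by tuples of integers), and $\sigma$-henselianity are all given by recursive schemes; in particular, for each complexity triple $c$ the assertion ``for every $\sigma$-polynomial $P$ of complexity $\le c$ and every $a$, if $(P,a)$ is in $\sigma$-hensel configuration then there is $b$ with $v(b-a)=\gamma(P,a)$ and $P(b)=0$'' is a single first-order $\L_4$-sentence, since being in $\sigma$-hensel configuration and the value $\gamma(P,a)$ are first-order definable (Definition 5.1 and Remark 5.2). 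Put
$$T' \;:=\; T\cup\{\,\varphi^{*} : \varphi\in\mathrm{Th}_{\L_\rv}(RV)\,\}.$$
Since $\mathrm{Th}(RV)$ is decidable, $T'$ is a decidable — hence recursively enumerable — set of $\L_4$-sentences. Now $T'$ is complete: it is consistent because $\K\models T$ and $\K\models\varphi^{*}$ for every $\varphi\in\mathrm{Th}(RV)$ (as $RV$ is the $RV$-sort of $\K$), so $\K\models T'$; and if $\K_1\models T'$ then $\K_1$ is a $\sigma$-henselian multiplicative valued difference field of equi-characteristic zero satisfying Axiom 1 whose $RV$-sort $RV_1$ satisfies $RV_1\equiv_{\L_\rv}RV$, whence $\K_1\equiv_{\L_4}\K$ by Theorem 10.1. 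Thus all models of $T'$ are elementarily equivalent, $\mathrm{Th}(\K)$ is exactly the set of $\L_4$-consequences of $T'$, and being a complete theory with a recursively enumerable axiomatization it is decidable.

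The only point needing any care is the recursive axiomatizability of $T$ — in effect, that the $\sigma$-henselianity scheme can be written down effectively; the remainder is the routine ``complete plus recursively axiomatized implies decidable'' argument, with Theorem 10.1 supplying the completeness of $T'$. (One should also note that the $MODAG$-structure on $\Gamma$, and in particular the cut of $\rho$, is interpretable in $RV$ by Proposition 9.3, so the hypothesis of Theorem 10.1 that the two fields carry ``the same $\rho$'' is guaranteed once $RV_1\equiv_{\L_\rv}RV$.)
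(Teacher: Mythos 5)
The paper gives no written proof of this corollary, stating it as a direct consequence of Theorem 10.1; your argument is exactly the standard way to obtain it, and it is correct. Both directions are right: the forward direction is immediate since $RV$ is a definable sort of the $\L_4$-structure $\K$, and the backward direction correctly combines recursive axiomatizability of the $\sigma$-henselian class (where you rightly note that $\sigma$-hensel configuration and $\gamma(P,a)$ are first-order, making the henselianity scheme effective) with completeness of $T\cup\{\varphi^{*}:\varphi\in\mathrm{Th}_{\L_\rv}(RV)\}$ via Theorem 10.1, and your parenthetical remark that the ``same $\rho$'' hypothesis is secured by Proposition 9.3 addresses the one point that might otherwise be overlooked.
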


The proofs of these results use only weak forms of the Embedding Theorem, but now we turn to a result that uses its full strength: a relative elimination of quantifiers for the theory of $\sigma$-henselian multiplicative valued difference fields of equi-characteristic 0 that satisfy Axiom 1.

\begin{thm}
Let $T$ be the $\L_4$-theory of $\sigma$-henselian multiplicative valued difference fields of equi-characteristic zero satisfying Axiom 1, and $\phi(x)$ be an $\L_4$-formula. Then there is an $\L_4$-formula $\varphi(x)$ in which all occurrences of field variables are free, such that
$$T \vdash \phi(x) \iff \varphi(x).$$
\end{thm}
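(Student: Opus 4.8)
The plan is to derive this relative quantifier elimination from the Embedding Theorem (Theorem 9.5) by the standard back-and-forth test, in the form appropriate to a multisorted language with distinguished sorts (a relative analogue of the third item of Test 9.1). First I would record the criterion: $T$ eliminates field quantifiers --- i.e. every $\L_4$-formula is $T$-equivalent to one in which all field variables occur free --- if and only if for every infinite cardinal $\kappa$, every pair $\M,\mN\models T$ that are $\kappa^+$-saturated, and every pair of field tuples $\bar a$ from $\M$ and $\bar a'$ from $\mN$ that satisfy the same $\L_4$-formulas in which all field variables occur free, the tuples $\bar a$ and $\bar a'$ satisfy the same complete $\L_4$-type. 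The ``only if'' direction is immediate, and the ``if'' direction is the usual compactness argument: were some $\phi(x)$ not $T$-equivalent to any field-quantifier-free $\varphi(x)$, compactness would produce $\kappa^+$-saturated $\M,\mN$ and tuples $\bar a,\bar a'$ with the same field-quantifier-free type but with $\M\models\phi(\bar a)$ and $\mN\models\neg\phi(\bar a')$, contradicting the criterion. Parameters from the non-field sorts occurring in $\phi$ are harmless and can be carried along throughout.

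Next I would set up the dictionary between ``same field-quantifier-free type'' and ``admissible isomorphism''. For a field tuple $\bar a$ in a model $\M\models T$, let $\E_{\bar a}:=\Q\langle\bar a\rangle$ be the small (countable) multiplicative valued difference subfield of $\M$ generated over the prime field. The point is that $\bar a$ and $\bar a'$ satisfy the same $\L_4$-formulas with all field variables free if and only if the field isomorphism $\bar a\mapsto\bar a'$ is well-defined from $\E_{\bar a}$ onto $\E_{\bar a'}$ and is an admissible isomorphism in the sense of Section 9. Indeed, every $\L_4$-formula with its field variables free is, modulo $T$, a Boolean combination of atomic $\L_4$-statements about $\sigma$-polynomial terms in $\bar a$, of $\L_\rv$-formulas (with $RV$-quantifiers permitted) applied to the elements $\rv(P(\bar a))$ as $P$ ranges over $\sigma$-polynomials over $\Q$, and --- by Proposition 9.3 --- their $\Gamma$- and $k$-analogues, which reduce to the previous ones. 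The atomic facts say precisely that $\bar a\mapsto\bar a'$ is an isomorphism of valued difference fields on the generated subfields, and the $\L_\rv$-facts say precisely that the induced map $RV_{\E_{\bar a}}\to RV_{\E_{\bar a'}}$ extends to a partial elementary map $RV(\M)\to RV(\mN)$; taken together these are exactly conditions (1)--(2), hence also (3)--(4), of admissibility.

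With these reductions in hand, it remains to verify that the family $\mathcal{F}$ of admissible isomorphisms between small multiplicative valued difference subfields of two $\kappa^+$-saturated models $\M,\mN\models T$ has the back-and-forth property; an easy induction, using that smallness is preserved under the countably many iterations of the extension steps as in Section 9, then shows that any $\psi_0\in\mathcal{F}$ is elementary, which by the dictionary gives $\bar a\equiv\bar a'$ and contradicts $\phi(\bar a)\wedge\neg\phi(\bar a')$. For the forth step, given $\psi\in\mathcal{F}$ with domain $\E$ and an element $c\in\M$: if $c$ is a field element, Theorem 9.5 directly extends $\psi$ within $\mathcal{F}$ to have $c$ in its domain; if $c$ lies in the $RV$-sort (the $\Gamma$- and $k$-sorts being subsumed via Proposition 9.3), first extend $\psi_\rv$ to a partial elementary map with $c$ in its domain, say $c\mapsto c'$ --- possible because $RV(\mN)$ is $\kappa^+$-saturated as an $\L_\rv$-structure, being interpretable in the $\kappa^+$-saturated structure $\mN$ --- then pick $d\in K(\M)$ with $\rv(d)=c$ and $d'\in K(\mN)$ with $\rv'(d')=c'$ and extend $\psi$ to an admissible isomorphism $\E\langle d\rangle\to\E'\langle d'\rangle$ sending $d\mapsto d'$. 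The main obstacle is exactly this last extension: the \emph{statement} of the Embedding Theorem only produces \emph{some} admissible extension by a field element, whereas here the image $d'$ (equivalently the $RV$-datum $c'$) is prescribed in advance. This is nonetheless available --- it is precisely what Steps I and II in the proof of Theorem 9.5 achieve, since there one is free to choose the image in $RV'$ of the new residue-field or value-group element, and when $\rv(d)\in RV_\E$ already no new matching is required --- but it must be extracted from the proof of Theorem 9.5 rather than from its statement (or Theorem 9.5 restated with the prescribed $RV$-image as a hypothesis). Everything else --- the compactness reduction, the identification of the field-quantifier-free type with admissibility, and the saturation of the $RV$-sort --- is routine multisorted bookkeeping.
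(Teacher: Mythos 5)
Your proof follows essentially the same route as the paper's: reduce by compactness to comparing types of small substructures inside sufficiently saturated models, identify equality of field-quantifier-free types with the existence of an admissible isomorphism between the generated substructures, and then run a back-and-forth driven by the Embedding Theorem (Theorem 9.5). The paper's proof is very terse --- it defines $rqftp$, observes that equal $rqftp$'s yield an admissible isomorphism, and then simply says ``apply Theorem 9.5'' --- so your explicit treatment of the compactness reduction and of the dictionary between $rqftp$-equality and admissibility is a welcome expansion of what the paper leaves implicit.

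The one place you overcomplicate matters is the ``main obstacle'' you raise in the last paragraph. To add an $RV$-element $c$ to the domain in the forth step you propose to first extend $\psi_\rv$ to a partial elementary map with a \emph{prescribed} image $c'$, then realize $c$ and $c'$ by field elements $d,d'$, then extend $\psi$ so that $d\mapsto d'$; and you rightly note that Theorem 9.5 as stated does not let you prescribe the image of $d$. But none of that is needed. Simply choose any $d\in K(\M)$ with $\rv(d)=c$ and apply Theorem 9.5 as stated: it produces $b\in K(\mN)$ and an admissible isomorphism $\psi'\colon E\langle d\rangle\to E'\langle b\rangle$ extending $\psi$ with $\psi'(d)=b$. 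Then $c=\rv(d)\in RV_{E\langle d\rangle}$ lies in the domain of $\psi'_\rv$, which by admissibility is a partial elementary $\L_\rv$-map extending $\psi_\rv$, and its image is simply $\rv'(b)$ --- whatever that happens to be. In a back-and-forth between $\kappa^+$-saturated models one never needs to \emph{prescribe} images in advance; one only needs each forth step (and, symmetrically, each back step) to succeed for \emph{some} image. The same device --- apply Theorem 9.5 to a field preimage --- handles the $\Gamma$- and $k$-sorts as well, with no appeal to Proposition 9.3 beyond what is already baked into admissibility. So the perceived gap is self-imposed, and the appeal to the internals of Steps I and II of the proof of Theorem 9.5 (or to a restated Theorem 9.5 with prescribed $RV$-image) is superfluous.
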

\begin{proof}
Let $\varphi$ range over $\L_4$-formulas in which all occurrences of field variables are free. For a model $\K = (K, \Gamma, k, RV; v, \pi, \rv, \rho)$ of $T$ and $a\in K^l$, $\gamma\in\Gamma^m$, $\alpha\in k^n$ and $r\in RV^s$, let
$$rqftp^{\K}(a, \gamma, \alpha, r) := \{\varphi : \K \models \varphi(a, \gamma, \alpha, r)\}.$$
Let $\K, \K'$ be models of $T$ and suppose
$$(a, \gamma, \alpha, r)\in K^l\times \Gamma^m\times k^n\times RV^s, \;\;\;\;\;\;\;\;\;\;(a', \gamma', \alpha', r')\in K'^l\times \Gamma'^m\times k'^n\times RV'^s$$
are such that $rqftp^{\K}(a, \gamma, \alpha, r) = rqftp^{\K'}(a', \gamma', \alpha', r')$. It suffices to show that
$$tp^{\K}(a, \gamma, \alpha, r) = tp^{\K'}(a', \gamma', \alpha', r').$$
Let $\E$ (respectively $\E'$) be the multiplicative valued difference subfield of $\K$ (respectively $\K'$) generated by $a, \gamma, \alpha$ and $r$ (respectively $a', \gamma', \alpha'$ and $r'$). Then there is an admissible isomorphism $\E\to\E'$ that maps $a\to a'$, $\gamma\to\gamma'$, $\alpha\to\alpha'$ and $r\to r'$. Now apply Theorem 9.5.
\end{proof}

\bigskip
\section{Completeness and Quantifier Elimination Relative to $(k, \Gamma)$}
Although the leading term language is already interpretable in the language of pure valued fields and is therefore closer to the basic language, the definable sets in this language are really obscure. We therefore would like to move to the 3-sorted language with the valued field $K$, value group $\Gamma$ and residue field $k$, where definable sets are much more transparent. It is well-known that in the presence of a ``cross-section'', the two-sorted structure $(K, RV)$ is interpretable in the three-sorted structure $(K, \Gamma, k)$. As a result any admissible isomorphism, as defined in the section on Embedding Theorem, boils down to one that satisfies properties (1), (3) and (4) only, because in the presence of a cross-section, (2) follows from (3) and (4). What that effectively means is that now we have completeness and quantifier elimination relative to the value group and the residue field. Before we can make all these explicit, we need to know when multiplicative valued difference fields can be equipped with a cross-section. So let $\K=(K, \Gamma, k; v, \pi, \rho)$ be a multiplicative valued difference field.
\begin{defn}
A cross-section $s:\Gamma\to K^\times$ on $\K$ is a cross-section on $\K$ as valued field such that for all $\gamma\in\Gamma$ and $\tau = \sum_{j = 0}^{n} i_j\sigma^j\in\Z[\sigma]$,
$$s(\tau(\gamma)) = \vec{\sigma}(s(\gamma))^{\vec{I}},$$
where $\vec{I} = (i_0, \ldots, i_n)$. As an example, for Hahn difference fields $k((t^\Gamma))$ we have a cross-section given by $s(\gamma) = t^\gamma$.
\end{defn}
Recall that we construe $K^\times$ as a left $\Z[\sigma]$-module (w.r.t. multiplication) under the action 
$$\Big(\sum_{j = 0}^n i_j\sigma^j\Big)a = \vec{\sigma}(a)^{\vec{I}},$$
where $\vec{I} = (i_0, \ldots, i_n)$ (we will freely switch between these two notations and the corresponding $\vec{I}$ or the $i_j$'s will be clear from the context); similarly we construe $\Gamma$ also as a left $\Z[\sigma]$-module (w.r.t. addition) under the action
$$\Big(\sum_{j = 0}^n i_j\sigma^j\Big)\gamma = \sum_{j = 0}^n i_j\rho^j\cdot\gamma.$$
Also, since $v(\vec{\sigma}(a)^{\vec{I}}) = \sum_{j = 0}^n i_j\rho^j\cdot v(a)$, we have an exact sequence of $\Z[\sigma]$-modules
$$\xymatrix{1 \ar[r] & \O^\times \ar[r]^\iota & K^\times \ar[r]^v & \Gamma \ar[r] & 0},$$
where $\O^\times$ is multiplicative group of units of the valuation ring $\O$. Clearly then, existence of a cross-section on $\K$ corresponds to this exact sequence being a split sequence. Before we proceed further, we need a few preliminaries from algebra.

\vspace{2em}
\hspace{-1.2em}\textbf{\underline{Preliminaries.}} Let $R$ be a commutative ring with identity (for our case $R = \Z[\sigma])$.
\begin{defn}
For two left $R$-modules $N\subseteq M$, $N$ is said to be pure in $M$ (notation: \xymatrix{N\ar@{^(->}^<<{p}[r] & M}) if for any $m-by-n$ matrix $(r_{ij})$ with entries in $R$, and any set $y_1, \ldots, y_m$ of elements of $N$, if there exist elements $x_1, \ldots, x_n\in M$ such that
$$\sum_{j = 1}^n r_{ij} x_j = y_i \;\;\;\;\;\;\;\;\mbox{for } i = 1,\ldots, m$$
then there also exist elements $x'_1, \ldots, x'_n\in N$ such that
$$\sum_{j = 1}^n r_{ij} x'_j = y_i \;\;\;\;\;\;\;\;\mbox{for } i = 1,\ldots, m.$$
\end{defn}
\begin{defn}
If $M, N$ are left $R$-modules and $f: N\to M$ is an injective homomorphism of left $R$-modules, then $f$ is called pure injective if $f(N)$ is pure in $M$ (notation: \xymatrix{N\ar@{^(->}^<<{p}^f[r] & M}).
\end{defn}
\begin{defn}
A left $R$-module $E$ is called pure-injective if for any pure injective module homomorphism $f: X \to Y$, and an arbitrary module homomorphism $g: X\to E$, there exist a module homomorphism $h: Y\to E$ such that $hf = g$, i.e. the following diagram commutes:
\begin{eqnarray*}
\xymatrix{X\ar@{^(->}^<<{p}^f[rr] \ar[d]^g & & Y\ar@{-->}^h[dll] \\
E & &}
\end{eqnarray*}
\end{defn}
\begin{thm}
Every $|R|^+$-saturated left $R$-module $E$ is pure-injective.
\end{thm}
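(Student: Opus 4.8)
The plan is to verify the lifting property in the definition of pure-injectivity directly, reducing to one-generator pure extensions and then using the $|R|^+$-saturation of $E$ to realize a short partial type consisting of linear equations.

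So suppose $f\colon X\to Y$ is a pure injective homomorphism of left $R$-modules; we may take $f$ to be the inclusion of a pure submodule $X\subseteq Y$. Given $g\colon X\to E$, we seek $h\colon Y\to E$ with $h|_X=g$. By a Zorn's lemma argument (chains of partial extensions have upper bounds via unions, and one keeps the growing domain pure in $Y$ --- see the last paragraph) the content reduces to the following one-generator step: if $Z\subseteq Y$ is pure in $Y$, $h_Z\colon Z\to E$ extends $g$, and $c\in Y$, then $h_Z$ extends over $Z+Rc$. To carry this out, set $I:=\{r\in R:rc\in Z\}$, a left ideal, and consider the partial type in one variable $w$ with parameters from $E$,
\[ p(w)\ :=\ \{\, r\cdot w = h_Z(rc)\ :\ r\in I\,\}. \]
If $e\in E$ realizes $p(w)$, then $h(z+rc):=h_Z(z)+re$ defines an $R$-homomorphism on $Z+Rc$ extending $h_Z$, well-defined because $z+rc=z'+r'c$ forces $r-r'\in I$ and hence $(r-r')e=h_Z\big((r-r')c\big)=h_Z(z'-z)$.

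Everything therefore comes down to the finite satisfiability of $p(w)$ in $E$, and this is exactly where purity enters. Given $r_1,\dots,r_n\in I$, the system $\{\,r_kz=r_kc : 1\le k\le n\,\}$ in the unknown $z$ has the solution $z=c$ in $Y$, and all its parameters $r_kc$ lie in $Z$; since $Z$ is pure in $Y$ there is $z_0\in Z$ with $r_kz_0=r_kc$ for all $k$, whence $e_0:=h_Z(z_0)$ satisfies $r_ke_0=h_Z(r_kz_0)=h_Z(r_kc)$ and realizes the finite subset $\{\,r_kw=h_Z(r_kc):k\le n\,\}$ of $p(w)$. As $|p(w)|\le|R|$ and its parameter set has size at most $|R|$, the $|R|^+$-saturation of $E$ produces a realization $e\in E$ of $p(w)$, completing the one-generator step.

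The step I expect to demand the most care is the reduction of the full statement to this one-generator step, namely keeping the domain pure in $Y$ throughout the recursion: $X$ pure in $Y$ does not make $X+Rc$ pure in $Y$, so one cannot simply adjoin generators one at a time. Instead one builds $h$ along a transfinite chain of submodules of $Y$ that are \emph{pure in $Y$}, at each successor stage extending $h_Z$ not merely over $Z+Rc$ but over a pure submodule of $Y$ containing both $Z$ and $c$ (every subset of $Y$ lies in such a submodule), and at limit stages using that a union of a chain of pure submodules is pure; one must check that at each stage only a system of at most $|R|$ equations needs to be solved, so that $|R|^+$-saturation still suffices --- equivalently, that pure-injectivity may be tested on pure extensions with codomain of size $\le|R|+\aleph_0$. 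A cleaner alternative is to invoke the standard equivalence of pure-injectivity with algebraic compactness and then run the very same saturation argument on an arbitrary finitely solvable system of $\le|R|$ linear equations over $E$.
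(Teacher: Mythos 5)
The paper offers no proof of this statement, only the citation to Cherlin, so there is no argument in the paper to compare against; I evaluate the attempt on its own.

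Your one-generator step is carried out correctly: the type $p(w)$ is well chosen, the well-definedness check is right, and purity of $Z$ in $Y$ gives exactly the finite satisfiability needed, with $|p(w)|\le|R|$. You are also right, and commendably honest, to flag the reduction to this step as the delicate point, and the worry is genuine: if the Zorn invariant is merely ``$h_Z$ defined on a submodule of $Y$,'' the maximal $(Z,h_Z)$ need not have $Z$ pure in $Y$ and the finite-satisfiability argument breaks; if the invariant is ``$Z$ pure in $Y$,'' then $Z+Rc$ is not eligible, so the one-generator step does not by itself advance the chain. Your proposed fix --- jump from $Z$ to a pure $Z'\supseteq Z+Rc$ --- is the right idea, but the proposal leaves its two load-bearing parts unestablished. (i) One needs the lemma that such a $Z'$ can be chosen with $Z'/Z$ generated by at most $|R|+\aleph_0$ elements; this holds, via an $\omega$-stage closure under purity witnesses in which purity of $Z$ in $Y$ is used to identify systems whose parameter tuples agree modulo $Z$, keeping the count of witnesses to be added $\le|R|+\aleph_0$ at each stage, but it is a real argument. (ii) Extending $h_Z$ over such a $Z'$ is no longer a one-generator step: one must realize a type in $\le|R|+\aleph_0$ variables (one per generator of $Z'/Z$), finitely satisfiable again by purity of $Z$ in $Y$, and realized because $|R|^+$-saturation handles types in $\le|R|$ variables over parameter sets of size $\le|R|$. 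Note also that your ``equivalently, pure-injectivity may be tested on pure extensions with codomain of size $\le|R|+\aleph_0$'' is not quite the reduction you need, since $|Z'|$ is as large as $|Z|$; the relevant smallness is of $Z'/Z$. The ``cleaner alternative'' via algebraic compactness is legitimate but moves the same transfinite-chain work into the unproved equivalence, and algebraic compactness for systems of size $\le|R|$ is not yet algebraic compactness. In short: right approach, correctly located gap, but the gap is acknowledged rather than closed.
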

\begin{proof}
See \cite{Ch}, page 171.
\end{proof}

That's all the preliminaries we need. Since $\Z[\sigma]$ is countable, any $\aleph_1$-saturated $\Z[\sigma]$-module is pure injective by Theorem 10.5. So let's assume $\K$ is $\aleph_1$-saturated. Then $\O^\times$ is also $\aleph_1$-saturated (as it is definable in $\K$) and hence pure-injective. Now since we have the exact sequence
$$\xymatrix{1 \ar[r] & \O^\times \ar[r]^\iota & K^\times \ar[r]^v & \Gamma \ar[r] & 0},$$
if we can show that $\O^\times$ is pure in $K^\times$, then we will be able to complete the following diagram
$$\xymatrix{\O^\times\ar@{^(->}^<<{p}^\iota[rr] \ar[d]^{\mbox{id}} & & K^\times\ar@{-->}^h[dll] \\ \O^\times & &}$$
which will give us a splitting of the above exact sequence.

Unfortunately $\O^\times$ is not pure in $K^\times$ in general. As it turns out, if $\rho$ is transcendental, then $\O^\times$ is pure in $K^\times$ without any further assumption on $\K$. However, if $\rho$ is algebraic, we need a further axiom to have the cross-section exist.

\vspace{2em}
\hspace{-1.2em}\textbf{\underline{Case I : $\rho$ is transcendental}}

In this case, $\Gamma$ is torsion free as a $\Z[\sigma]$-module, as $\sum_{j = 0}^n i_j \rho^j \not= 0$ for any tuple $\vec{I} = (i_0, \ldots, i_n)\in\Z^{n+1}$, $\vec{I}\not=\vec{0}$. Thus, for any $w\in \O^\times$ and $\vec{I}\in\Z^{n+1}$ with $\vec{I}\not= \vec{0}$, if $\vec{\sigma}(m)^{\vec{I}} = w$ for some $m\in K^\times$, then we have
$$\sum_{j = 0}^n i_j\rho^j\cdot v(m) = v(\vec{\sigma}(m)^{\vec{I}}) = v(w) = 0,$$
which implies $v(m) = 0$, i.e., $m\in \O^\times$. 

Now suppose we have a system of equations $\sum_{j = 1}^n (Q_{ij})x_j = y_i$, $i = 1, \ldots, m$, with $Q_{ij}$'s from $\Z[\sigma]$ and $y_i$'s from $\O^\times$, which has a solution $\bar{x} = (x_1, \ldots, x_n)^T$ in $K^\times$. Let $M = (Q_{ij})$ be the corresponding $m-by-n$ matrix and $\bar{\gamma} = (v(x_1), \ldots, v(x_n))^T$. Then $M\bar{\gamma} = \bar{0}$ is the corresponding $\Z[\sigma]$-linear system over $\Gamma$. If $m > n$, then by elementary linear algebra we know that at least $(m - n)$ equations are linearly dependent on the remaining $n$ equations, and doesn't give us any new information. So by row reduction, we may assume that $m\le n$. Now if $m < n$, then again by elementary linear algebra, we know that there are at most $m$ pivot columns and hence at least $(n - m)$ free columns (variables), which means we may assign any value to those variables and still have a solution for the whole system. Assigning the value $0$ to those free variables, we may assume $m = n$, i.e., it is a square system. If the determinant of $M$ is zero, then again one of the rows is linearly dependent on the other rows, and by row reduction we can reduce to one of the earlier cases. Thus, we may assume $M$ has non-zero determinant. But such a system has the unique solution $\bar{\gamma} = \bar{0}$, which implies the $x_i$'s are already in $\O^\times$.

Hence, $\O^\times$ is pure in $K^\times$. And so we have a cross-section $s: \Gamma\to K^\times$.

\vspace{2em}
\hspace{-1.2em}\textbf{\underline{Case II : $\rho$ is algebraic}}

Let $P(x) = i_0 + i_1 x + \cdots + i_n x^n$ be the minimal (monic) polynomial of $\rho$ over $\Z$. Let $\vec{I} = (i_0, \ldots, i_n)$ and define
$$P^\sigma := \sum_{j = 0}^n i_j \sigma^j.$$
Note that for any $a\in K^\times$, $v((P^\sigma)a) = v((\sum_{j = 0}^n i_j \sigma^j)a) = v(\vec{\sigma}(a)^{\vec{I}}) = P(\rho)\cdot v(a) = 0$. Clearly in this case, $\Gamma$ is not torsion-free. In fact, for any $\gamma\in\Gamma$, $Tor(\gamma) = (P^\sigma)$, which is a prime ideal in $\Z[\sigma]$. To make things work here, we need the following axiom:
$$\mbox{\textbf{Axiom 4.} }\forall\gamma\in\Gamma\;\exists a\in K^\times (v(a) = \gamma\; \wedge\; (P^\sigma)a = 1).$$
To check that this works, let us define
\begin{eqnarray*}
G_K & = & \{a\in K^\times : (P^\sigma)a = 1\} \;\;\mbox{ and}\\
G_{\O} & = & \{a\in G_K : v(a) = 0\}
\end{eqnarray*}
It is routine to check that $G_K$ is a subgroup of $K^\times$, and $G_{\O}$ is a subgroup of $\O^\times$. Since both of them are definable in $\K$, they are $\aleph_1$-saturated too, and hence pure-injective. Moreover by Axiom 4, $v|_{G_K} : G_K\to\Gamma$ is surjective, and so
$$\xymatrix{1\ar[r] & G_{\O}\ar^\iota[r] & G_K\ar[r]^v & \Gamma\ar[r] & 0}$$
is an exact sequence. We claim that $G_{\O}$ is pure in $G_K$.

Note that, for any $S\in\Z[\sigma]$, we can write $S = QP^\sigma + R$, for some $S, R\in\Z[\sigma]$ with $R = 0$ or deg$(R) < $ deg$(P^\sigma)$. Then, for any $a\in G_K$, we have
$$(S)a = (QP^\sigma + R)a = (Q)(P^\sigma)a \cdot (R)a = (Q)1 \cdot (R)a = (R)a,$$
i.e., $G_K$ as a $\Z[\sigma]$-module is isomorphic to $G_K$ as a $\Z[\sigma]/\langle P^\sigma\rangle$-module. Similarly for $G_{\O}$. Moreover it follows that if $R\not= 0$, i.e., if $P^\sigma$ does not divide $S$, then
$$(S)v(a) = 0 \implies (R)v(a) = 0 \implies v(a) = 0,$$
the last equality follows from the fact that deg$(R) < $ deg$(P^\sigma)$ and $P$ is the minimal polynomial for $\rho$. Thus, $\Gamma$ is torsion-free as a $\Z[\sigma]/\langle P^\sigma\rangle$-module. Therefore, the sequence
$$\xymatrix{1\ar[r] & G_{\O}\ar^\iota[r] & G_K\ar[r]^v & \Gamma\ar[r] & 0}$$
is exact as a map of $\Z[\sigma]/\langle P^\sigma\rangle$-modules. As $\Gamma$ is now torsion-free as a $\Z[\sigma]/\langle P^\sigma\rangle$-module, we can follow the same argument as in the transcendental case and have that $G_{\O}$ is pure in $G_K$ (recall that $\Z[\sigma]/\langle P^\sigma\rangle$ is an integral domain as $\langle P^\sigma\rangle$ is a prime ideal). In particular, there is a section $s:\Gamma\to G_K$ as a map of $\Z[\sigma]/\langle P^\sigma\rangle$-modules, which is easily seen to be a map of $\Z[\sigma]$-modules also, since $G_K$ as a $\Z[\sigma]$-module is isomorphic to $G_K$ as a $\Z[\sigma]/\langle P^\sigma\rangle$-module. Since $G_K$ is a subgroup of $K^\times$, we have our required section $s:\Gamma\to K^\times$. \newline

The upshot of all these is the following
\begin{thm}
Each multiplicative valued difference field $\K = (K, \Gamma, k; v, \pi, \rho)$ satisfying Axiom 4 has an elementary extension which can be equipped with a cross-section.
\end{thm}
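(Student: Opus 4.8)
The plan is to pass to a sufficiently saturated elementary extension of $\K$ and there split the fundamental exact sequence $1\to\O^\times\to K^\times\xrightarrow{v}\Gamma\to 0$ of $\Z[\sigma]$-modules, exactly as prepared in the discussion above. First I would fix an $\aleph_1$-saturated elementary extension $\K^*=(K^*,\Gamma^*,k^*;v,\pi,\rho)$ of $\K$; this exists since the language is countable, and $\K^*$ again satisfies Axiom 4 because Axiom 4 is a single first-order sentence (the condition $(P^\sigma)a=1$ unwinds to $a^{i_0}\sigma(a)^{i_1}\cdots\sigma^n(a)^{i_n}=1$ with the $i_j$ the fixed coefficients of the minimal polynomial of $\rho$). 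It then suffices to equip $\K^*$ with a cross-section; and by the definitions of the $\Z[\sigma]$-actions ($\tau(\gamma)=\sum_j i_j\rho^j\cdot\gamma$ on $\Gamma$ and $\tau(a)=\vec{\sigma}(a)^{\vec{I}}$ on $K^\times$) it is enough to produce a $\Z[\sigma]$-module homomorphism $s\colon\Gamma^*\to K^{*\times}$ with $v\circ s=\mathrm{id}$, since any such $s$ automatically satisfies the identity $s(\tau(\gamma))=\vec{\sigma}(s(\gamma))^{\vec{I}}$ of Definition 11.1.

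To obtain $s$ I would separate the two cases already analyzed, the point being that the purity computations there are valid in any model, while the $\aleph_1$-saturation of $\K^*$ is what supplies pure-injectivity through Theorem 11.5. If $\rho$ is transcendental, Case I shows $\O^{*\times}$ is pure in $K^{*\times}$; since $\O^{*\times}$ is definable in the $\aleph_1$-saturated $\K^*$ it is $\aleph_1$-saturated, hence pure-injective as a $\Z[\sigma]$-module (recall $\Z[\sigma]$ is countable). Applying pure-injectivity to $\mathrm{id}_{\O^{*\times}}$ along the pure embedding $\O^{*\times}\hookrightarrow K^{*\times}$ gives a retraction, hence a $\Z[\sigma]$-linear splitting $s$ of $v$. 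If $\rho$ is algebraic, I would use Axiom 4 in $\K^*$ to form the definable $\Z[\sigma]$-submodules $G_{K^*}=\{a\in K^{*\times}:(P^\sigma)a=1\}$ and $G_{\O^*}=G_{K^*}\cap\O^{*\times}$, on which the action factors through the integral domain $\Z[\sigma]/\langle P^\sigma\rangle$ and for which $v$ restricts to a surjection $G_{K^*}\to\Gamma^*$; as in Case II, $\Gamma^*$ is torsion-free over $\Z[\sigma]/\langle P^\sigma\rangle$ and $G_{\O^*}$ is pure in $G_{K^*}$, and $G_{\O^*}$, being definable hence $\aleph_1$-saturated, is pure-injective, so the sequence $1\to G_{\O^*}\to G_{K^*}\to\Gamma^*\to 0$ admits a $\Z[\sigma]/\langle P^\sigma\rangle$-linear section $s\colon\Gamma^*\to G_{K^*}\subseteq K^{*\times}$. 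Because the $\Z[\sigma]$-action on $G_{K^*}$ factors through $\Z[\sigma]/\langle P^\sigma\rangle$, this $s$ is also $\Z[\sigma]$-linear, and composing with $G_{K^*}\hookrightarrow K^{*\times}$ yields the desired cross-section on $\K^*$.

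Thus the argument mostly amounts to assembling the pieces already in place above, and I do not anticipate a genuinely hard step. The two places deserving a line of care are the bookkeeping observations flagged in the previous paragraphs: that a $\Z[\sigma]$-module splitting of $v$ literally is a cross-section in the sense of Definition 11.1, and that a $\Z[\sigma]/\langle P^\sigma\rangle$-linear section of $v|_{G_{K^*}}$ is automatically $\Z[\sigma]$-linear. The passage from $\K$ to its elementary extension $\K^*$ costs nothing, since the theorem only asserts the existence of \emph{some} elementary extension carrying a cross-section.
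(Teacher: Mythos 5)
Your proposal follows the paper's own argument essentially verbatim: pass to an $\aleph_1$-saturated elementary extension (which still satisfies the first-order Axiom 4), note that definable submodules of a saturated model are pure-injective over the countable ring $\Z[\sigma]$, and split the valuation exact sequence using the purity arguments of Cases I and II — restricting to $G_{K^*}$ and $G_{\O^*}$ when $\rho$ is algebraic and observing that a $\Z[\sigma]/\langle P^\sigma\rangle$-linear section is automatically $\Z[\sigma]$-linear. This is exactly how the paper assembles its proof, and your bookkeeping remarks match the ones the paper itself flags.
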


\begin{rmk}
\begin{enumerate}
\item Note that if $\rho$ is an honest integer, say $n$, then $P(x) = x - n$ and hence $P^\sigma(x) = x^{-n}\sigma(x)$. Thus for $\rho = n$, Axiom 4 says : 
$$\forall\gamma\in\Gamma\;\exists a\in K^\times ( v(a) = \gamma \mbox{ and } \sigma(x) = x^n).$$ 
For $\rho = 1$, this is precisely the axiom of ``enough constants'' of \cite{S}. Following this analogy, we will say that $\K$ has enough constants if either $\rho$ is transcendental, or $\rho$ is algebraic with minimal polynomial $P(x)$ and $\K$ satisfies Axiom 4.
\item If $\rho = 1$ and $\K$ is $\sigma$-henselian, then $\K$ satisfies Axiom 4 automatically : Let $\gamma\in\Gamma$. WMA $\gamma \ge 0$. Let $c\in K$ be such that $v(c) = \gamma$. Consider 
$$Q(\epsilon) = \sigma(\epsilon) - \dfrac{c}{\sigma(c)}\epsilon.$$
Note that $v\Big(\dfrac{c}{\sigma(c)}\Big) = 0$. Hence, $Q$ is a linear $\sigma$-polynomial over $\O$. Thus, $\bar{Q}(\bar{\epsilon})$ is a linear $\bar{\sigma}$-polynomial over $k$. Since $k$ is linear difference closed (by Lemma 5.10), we can find $\bar{\epsilon}\in k$ such that $\bar{Q}(\bar{\epsilon}) = 0$. Choose $\epsilon\in\O$ such that $\pi(\epsilon) = \bar{\epsilon}$. In particular, $v(\epsilon) = 0$ and $(Q, \epsilon)$ is in $\sigma$-hensel configuration with $\gamma(Q, \epsilon) > 0$, and hence has a root, say $b$, with $v(b - \epsilon) = \gamma(Q, \epsilon) > 0$. This forces $v(b) = 0$ and
$$\sigma(cb) = \sigma(c)\sigma(b) = cb.$$
Note that $v(cb) = v(c) + v(b) = \gamma$. Set $a := cb$.
\item If $\rho = \frac{p}{q}\in\Q$, then one might follow the pattern of (2) and, instead of imposing Axiom 4 on $\K$, demand that the residue field $k$ is not only linear difference closed, but also satisfies equations of the form
$$Q(x) = \bar{\sigma}(x) - ax^{p/q},$$
where $a\in k$. Then Axiom 4 is automatically enforced on a $\sigma$-henselian $\K$. However, it should be noted that taking this approach is stronger than imposing Axiom 4, because Axiom 4 doesn't necessarily imply solutions to such equations in the residue field.
\item Axiom 4 is consistent with the other axioms. In particular, the formal power series fields described in Section 7 are a model of Axiom 4 too. For each $\gamma$, one special element from $k((t^\Gamma))$ satisfying Axiom 4 is $t^\gamma$. Also note that the map $s:\Gamma\to k((t^\Gamma))^\times$ sending $\gamma\mapsto t^\gamma$ is a cross-section on $k((t^\Gamma))$.
\end{enumerate}
\end{rmk}

Once we have the cross-section in place, we have the following result:
\begin{prop}
Suppose $\K$ has a cross-section $s:\Gamma\to K^\times$. Then $RV$ is interpretable in the two-sorted structure $(\Gamma, k)$ with the first sort in the language of $MODAG$ and the second in the language of difference fields. 
\end{prop}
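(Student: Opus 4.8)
The plan is to exhibit an explicit interpretation of $RV$ inside the two-sorted structure $(\Gamma, k)$, using the cross-section $s$ to reconcile the multiplicative structure of $RV$ with the additive structure of $\Gamma$. Recall the exact sequence $1\to k^\times\to RV\xrightarrow{v}\Gamma\to 0$ from the Preliminaries of Section 9. The cross-section $s:\Gamma\to K^\times$ descends to a section $\bar s:\Gamma\to RV$ of $v$ (namely $\gamma\mapsto\rv(s(\gamma))$), because $v(s(\gamma))=\gamma$; moreover since $s$ is a homomorphism of $\Z[\sigma]$-modules and $\rv$ is a homomorphism commuting with $\sigma$, the map $\bar s$ is a $\Z[\sigma]$-module section as well. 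First I would use this to define the underlying set of the interpreted copy of $RV$ to be $k^\times\times\Gamma$ (a definable subset of the product sort), with the intended bijection $(\alpha,\gamma)\mapsto \iota(\alpha)\cdot\bar s(\gamma)\in RV$; this is a bijection precisely because $\bar s$ splits the sequence, so every element of $RV$ is uniquely $\iota(\alpha)\bar s(\gamma)$ with $\gamma=v(\cdot)$ and $\alpha$ the residue of $\cdot/s(\gamma)$.

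Next I would transport the $\L_\rv$-structure $\{\cdot,\,^{-1},\,\oplus,\,v,\,\sigma,\,1\}$ through this bijection and check each piece is definable in $(\Gamma,k)$. Multiplication is the easy part: $(\alpha,\gamma)\cdot(\alpha',\gamma')=(\alpha\alpha',\gamma+\gamma')$, using that $\bar s$ is a homomorphism; inversion and the identity $1=(1,0)$ are likewise immediate; and the valuation map is just the projection $(\alpha,\gamma)\mapsto\gamma$. For $\sigma$ I would use $\sigma(\iota(\alpha)\bar s(\gamma))=\iota(\bar\sigma(\alpha))\bar s(\rho\cdot\gamma)$, so $\sigma(\alpha,\gamma)=(\bar\sigma(\alpha),\rho\cdot\gamma)$, which is definable since $\rho\cdot\gamma$ is a term in the language of $MODAG$ and $\bar\sigma$ is in the language of difference fields. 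The genuinely substantive clause is the partial addition predicate $\oplus$. Here I would reduce to the residue field using Lemma 9.2 and Lemma 9.3: given elements $r_i=\iota(\alpha_i)\bar s(\gamma_i)$, $i=1,\dots,n$, the sum $r_1+\cdots+r_n$ is well-defined and equals $z$ exactly when $v(r_1+\cdots+r_n)=\min_i v(r_i)$; letting $\delta:=\min_i\gamma_i$ (definable in $\Gamma$), factor out $\bar s(\delta)$, so that the question becomes whether $\sum_{i:\gamma_i=\delta}\alpha_i + (\text{terms of positive valuation})$ has residue determined — i.e. whether $\sum_{i:\gamma_i=\delta}\alpha_i\neq 0$ in $k$, in which case $z=\bigl(\sum_{i:\gamma_i=\delta}\alpha_i,\ \delta\bigr)$, and when that sum is $0$ the predicate $\oplus$ simply fails to pin down a value. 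This is a first-order condition on the $\alpha_i$ in $k$ together with order comparisons on the $\gamma_i$ in $\Gamma$, hence definable in $(\Gamma,k)$; the disjunction over which indices achieve the minimum $\delta$ is finite for each fixed arity $n$, so $\oplus$ (which is a fixed-arity relation for each $n$) is definable.

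I expect the main obstacle to be the bookkeeping in the $\oplus$ clause — correctly handling the case analysis over which of the $\gamma_i$ realize the minimum, and verifying via Lemma 9.2 that the ``expected'' value is the only value of $\oplus$ when the leading residue sum is nonzero while no value is singled out when it vanishes (so that the interpreted $\oplus$ faithfully reproduces the partiality of $\oplus$ in $RV$). Everything else — multiplication, inverse, $v$, $\sigma$, the constant — is routine once the bijection $k^\times\times\Gamma\to RV$ is set up via $\bar s$, and the existence of $\bar s$ is exactly what the cross-section hypothesis provides. I would close by remarking that, combined with Proposition 9.3 (which interprets $\Gamma$ and $k$ in $RV$), this shows $RV$ and $(\Gamma,k)$ are bi-interpretable once a cross-section is present, which is what licenses passing from completeness/quantifier elimination relative to $RV$ to the same relative to $(\Gamma,k)$.
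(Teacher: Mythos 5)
Your construction is essentially the paper's: both use the cross-section to produce the bijection $k^\times\times\Gamma\to RV$ via $(\alpha,\gamma)\mapsto\iota(\alpha)\cdot\rv(s(\gamma))$ and then transport the $\L_\rv$-structure, reading off $v$ and $\sigma$ in exactly the way you do. The only presentational differences are that the paper adjoins a point $(0,0)\mapsto\infty$ to make $f$ total on $RV\cup\{\infty\}$, and it delegates the definability of multiplication and of the partial $\oplus$-relation to a citation of Flenner's Proposition 3.1.6, whereas you sketch the $\oplus$ case analysis (minimum-achieving indices, vanishing versus nonvanishing leading residue sum) yourself — a harmless expansion of the cited lemma rather than a different route.
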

\begin{proof}
Let $S = (\Gamma\times k^\times)\cup\{(0, 0)\}$. Note that $S$ is a definable subset of $\Gamma\times k$ (in particular, the second co-ordinate is zero only when the first is too). Define $f: S\to RV\cup\{\infty\}$ by
$$f((\gamma, a)) = \left\{ \begin{tabular}{ll} $s(\gamma)a$ & if $a\not= 0$\\ $\infty$ & if $(\gamma, a) = (0, 0)$\end{tabular}\right.$$
Now it follows from \cite{F}, Proposition 3.1.6, that $f$ is a bijection, and that the inverse images of multiplication and $\oplus$ on $RV$ are definable in $S$. Moreover, if $a\not= 0$, then $v(s(\gamma)a) = v(s(\gamma)) + v(a) = \gamma + 0 = \gamma$, and if $a = 0$, then $v(\infty) = \infty$. Thus the inverse image of the valuation map is $\{\langle (\gamma, a), \gamma\rangle\}\cup\{\langle (0, 0), \infty\rangle\}$. Finally, since $\sigma(s(\gamma)a) = s(\sigma(\gamma))\bar{\sigma}(a)$, the inverse image of the difference operator on $RV$ is given by $\{ \langle (\gamma, a), (\sigma(\gamma), \bar{\sigma}(a))\rangle\}$. Hence the result follows.
\end{proof}

As an immediate corollary of Proposition 11.8 and Theorem 11.6, we have
\begin{cor}
If $\K = (K, \Gamma, k, RV; v, \pi, \rv, \rho)$ and $\K' = (K', \Gamma', k', RV'; v', \pi', \rv', \rho)$ are two multiplicative valued fields satisfying Axiom 1 (with the same $\rho$) and Axiom 4, and $\Gamma\equiv\Gamma'$ in the language of $MODAG$ and $k\equiv k'$ in the language of difference fields, then $RV\equiv_{\L_\rv} RV'$.
\end{cor}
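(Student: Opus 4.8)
The plan is to reduce to the case of fields equipped with cross-sections, where $RV$ becomes interpretable in the pair $(\Gamma,k)$, and then to transport elementary equivalence through that interpretation. By Theorem 11.6 I would pass to $\L_4$-elementary extensions $\K_1\succcurlyeq\K$ and $\K_1'\succcurlyeq\K'$ which carry cross-sections; these still satisfy Axiom 1 (with the same $\rho$) and Axiom 4, being elementary extensions. Since each of $RV$, $\Gamma$ and $k$ (with its induced structure) is interpretable in the four-sorted structure without parameters, an $\L_4$-elementary extension restricts to an elementary extension on each of these: $RV_{\K}\preccurlyeq_{\L_\rv}RV_{\K_1}$, $\Gamma\preccurlyeq\Gamma_1$ in the language of $MODAG$, $k\preccurlyeq k_{\K_1}$ in the language of difference fields, and likewise for the primed structures. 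In particular $RV\equiv_{\L_\rv}RV_{\K_1}$ and $RV'\equiv_{\L_\rv}RV_{\K_1'}$, while the hypotheses $\Gamma\equiv\Gamma'$ and $k\equiv k'$ give, by transitivity, $\Gamma_1\equiv\Gamma_1'$ and $k_{\K_1}\equiv k_{\K_1'}$.

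Next I would observe that the two-sorted language with first sort in the language of $MODAG$ and second sort in the language of difference fields has no symbols relating the two sorts, so every formula in it is equivalent to a Boolean combination of pure first-sort and pure second-sort formulas; consequently $\Gamma_1\equiv\Gamma_1'$ together with $k_{\K_1}\equiv k_{\K_1'}$ yields $(\Gamma_1,k_{\K_1})\equiv(\Gamma_1',k_{\K_1'})$ as two-sorted structures. By Proposition 11.8, using the cross-sections on $\K_1$ and $\K_1'$, the $\L_\rv$-structure $RV_{\K_1}$ is interpretable in $(\Gamma_1,k_{\K_1})$, and inspection of that proof shows the interpreting formulas form the same scheme for every multiplicative valued difference field with a cross-section: after transporting along the bijection $f$ of that proof, the operations are defined purely in $\Gamma$- and $k$-terms, mentioning neither the chosen cross-section nor the particular model. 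Since a fixed interpretation scheme sends elementarily equivalent structures to elementarily equivalent structures, we conclude $RV_{\K_1}\equiv_{\L_\rv}RV_{\K_1'}$. Chaining the equivalences, $RV\equiv_{\L_\rv}RV_{\K_1}\equiv_{\L_\rv}RV_{\K_1'}\equiv_{\L_\rv}RV'$, which is the assertion.

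The content of the argument is entirely formal; the only points that need care are (i) verifying that the interpretation produced in the proof of Proposition 11.8 is genuinely uniform --- model-independent and independent of the choice of cross-section --- so that it transports $\equiv$, and (ii) checking that an $\L_4$-elementary extension really does restrict to an elementary extension on the $RV$-, $\Gamma$- and $k$-sorts in their respective reduct languages, which is exactly where one uses that those sorts are parameter-free interpretable in $\K$. Neither of these is a genuine obstacle, so the corollary is indeed immediate from Theorem 11.6 and Proposition 11.8.
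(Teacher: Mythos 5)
Your argument is correct and is exactly the intended derivation: the paper presents the corollary as an immediate consequence of Theorem 11.6 and Proposition 11.8 without writing out the details, and your proof supplies precisely those details---passing to $\L_4$-elementary extensions carrying cross-sections, noting that elementary equivalence of the $\Gamma$- and $k$-sorts transfers to the extensions, using the Feferman--Vaught-style observation that a two-sorted language with no cross-sort symbols reduces to Boolean combinations of pure-sort sentences, and then transporting $\equiv$ through the uniform interpretation of $RV$ in $(\Gamma,k)$ given by Proposition 11.8. The two points you flag as needing care (uniformity of the interpretation scheme and restriction of $\L_4$-elementary equivalence to the individual sorts) are both handled correctly and are indeed the only places where anything has to be checked.
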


This allows us to work in the 3-sorted language $\L_3$, where we have a sort $K$ for the valued field, a sort $\Gamma$ for the value group and a sort $k$ for the residue field, eliminating the need for the $RV$ sort. Recall that the language also has a function symbol $\sigma$ going from the field sort to itself. Combining Corollary 11.9 with Theorems 10.1, 10.2 and 10.4 and Corollary 10.3, we then have the following nice results. Let $\K = (K, \Gamma, k; v, \pi, \rho)$ and $\K' = (K', \Gamma', k'; v', \pi', \rho)$ be two $\sigma$-henselian multiplicative valued difference fields, satisfying Axiom 1 (with the same $\rho$) and Axiom 4, of equi-characteristic zero. Then
\begin{thm}
$\K\equiv_{\L_3}\K'$ if and only if $\Gamma\equiv\Gamma'$ in the language of $MODAG$ and $k\equiv k'$ in the language of difference fields.
\end{thm}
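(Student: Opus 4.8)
The plan is to derive Theorem 11.10 from the machinery already assembled, namely the relative completeness statement over $RV$ (Theorem 10.1) together with the reduction of $RV$-equivalence to $(\Gamma, k)$-equivalence (Corollary 11.9). The forward direction is immediate: if $\K\equiv_{\L_3}\K'$, then since $\Gamma$ and $k$ are $\L_3$-interpretable in $\K$ (they are honest sorts of the structure), any $\L_3$-elementary equivalence restricts to elementary equivalence of the value groups in the language of $MODAG$ and of the residue fields in the language of difference fields.

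For the converse, suppose $\Gamma\equiv\Gamma'$ in the language of $MODAG$ and $k\equiv k'$ in the language of difference fields. First I would pass to the four-sorted picture: view $\K$ and $\K'$ as $\L_4$-structures $(K,\Gamma,k,RV;v,\pi,\rv,\rho)$ and $(K',\Gamma',k',RV';v',\pi',\rv',\rho)$ by adjoining the $RV$ sort with its interpreted operations. Next I would invoke Corollary 11.9: since both $\K$ and $\K'$ satisfy Axiom 1 (with the same $\rho$) and Axiom 4, and their value groups and residue difference fields are respectively elementarily equivalent, we obtain $RV\equiv_{\L_\rv} RV'$. Then Theorem 10.1 gives $\K\equiv_{\L_4}\K'$. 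Finally, since the $\L_3$-structure is (as observed at the start of Section 11) obtained from the $\L_4$-structure by forgetting the interpreted $RV$ sort, $\L_4$-elementary equivalence entails $\L_3$-elementary equivalence, and we conclude $\K\equiv_{\L_3}\K'$.

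There is one point of care worth spelling out, which is arguably the only real obstacle: Corollary 11.9 and Theorem 10.1 are stated for $\sigma$-henselian multiplicative valued difference fields satisfying the relevant axioms, and one must check these hypotheses are genuinely in force. By assumption $\K,\K'$ are $\sigma$-henselian, of equi-characteristic zero, and satisfy Axioms 1 and 4; by Lemma 5.10 they also satisfy Axioms 2 and 3, so all the ingredients feeding into Theorem 10.1 (which rests on the Embedding Theorem 9.5) are available. The use of Axiom 4 is confined to Corollary 11.9, where it guarantees — via Theorem 11.6 — that after passing to an elementary extension one may install a cross-section and thereby interpret $RV$ in $(\Gamma,k)$; note that this passage to elementary extensions is harmless, since $RV$-theory, $\Gamma$-theory and $k$-theory are all preserved, and it is $RV\equiv_{\L_\rv}RV'$ that we extract. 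No blow-up in saturation is needed beyond what the cited results already absorb. Thus the argument is essentially a bookkeeping composition of Corollary 11.9 with Theorem 10.1, the substance having been done in the earlier sections.
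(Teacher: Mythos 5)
Your proof is correct and follows the paper's intended route exactly: the paper disposes of this theorem with the single sentence ``Combining Corollary 11.9 with Theorems 10.1, 10.2 and 10.4 and Corollary 10.3, we then have the following nice results,'' and your argument is precisely the bookkeeping composition of Corollary 11.9 (reducing $RV$-equivalence to $(\Gamma,k)$-equivalence, via Theorem 11.6 and Proposition 11.8) with Theorem 10.1, plus the trivial reduct observation for the converse and for passing from $\L_4$ to $\L_3$. You spell out the hypothesis-checking that the paper leaves implicit (that $\sigma$-henselianity gives Axioms 2 and 3 via Lemma 5.10, and that the elementary-extension step in installing a cross-section is harmless), which is exactly the right care to take.
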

\begin{thm}
Let $\E = (E, \Gamma_E, k_E; v, \pi, \rho)$ be a $\sigma$-henselian multiplicative valued difference subfield of $\K$, satisfying Axiom 1 and Axiom 4, such that $\Gamma_E\preccurlyeq\Gamma$ in the language of $MODAG$ and $k_E\preccurlyeq k$ in the language of difference fields. Then $\E\preccurlyeq_{\L_3}\K$.
\end{thm}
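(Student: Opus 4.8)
The plan is to imitate the proof of Theorem 10.2, descending to the $RV$-sort, with Axiom 4 supplying the extra input that lets us compare the $RV$-structures of $\E$ and $\K$. The first move is to reduce the whole statement to the single assertion $RV_E \preccurlyeq_{\L_\rv} RV$. Regard $\E$ and $\K$ as $\L_4$-structures in the canonical way (the $RV$-sort being interpretable in any valued field, so the $\L_3$-structure $\E$ comes equipped with its $RV_E$). Granting $RV_E \preccurlyeq_{\L_\rv} RV$, and using that $\E$ is a $\sigma$-henselian multiplicative valued difference subfield of $\K$ satisfying Axiom 1, Theorem 10.2 gives $\E \preccurlyeq_{\L_4} \K$; since every $\L_3$-formula is in particular an $\L_4$-formula, this restricts to $\E \preccurlyeq_{\L_3} \K$. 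So everything reduces to $RV_E \preccurlyeq_{\L_\rv} RV$, which is the relative form of Corollary 11.9.

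To prove $RV_E \preccurlyeq_{\L_\rv} RV$ I would run the cross-section construction of Section 11 relatively over $\E$. Since $RV \preccurlyeq_{\L_\rv} RV^{+}$ whenever $\K \preccurlyeq_{\L_3} \K^{+}$, I may first pass to a sufficiently saturated elementary extension $\K^{+}$ of $\K$; being $\aleph_1$-saturated and satisfying Axiom 4, $\K^{+}$ carries a cross-section $s^{+}\colon \Gamma^{+}\to (K^{+})^{\times}$ (as in the proof of Theorem 11.6). The essential point is to choose $s^{+}$ so that the composite $\Gamma_E \hookrightarrow \Gamma^{+} \xrightarrow{s^{+}} (K^{+})^{\times} \xrightarrow{\rv} RV^{+}$ factors through $RV_E$, i.e. is a section $\Gamma_E \to RV_E$ of the canonical projection $RV_E \to \Gamma_E$. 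Once this is arranged, Proposition 11.8 presents $RV^{+}$ as interpreted in $(\Gamma^{+}, k^{+})$ by explicit formulas, and the very same formulas then interpret $RV_E$ inside $(\Gamma_E, k_E)$, each $\xi \in RV_E$ corresponding to the pair formed by $v(\xi) \in \Gamma_E$ and its angular component, which now lies in $k_E$. As $\Gamma_E \preccurlyeq \Gamma \preccurlyeq \Gamma^{+}$ in the language of $MODAG$ and $k_E \preccurlyeq k \preccurlyeq k^{+}$ as difference fields, the structure $(\Gamma_E, k_E)$ is an elementary substructure of $(\Gamma^{+}, k^{+})$; transporting this elementarity through the interpretation gives $RV_E \preccurlyeq_{\L_\rv} RV^{+}$, and then $RV_E \le RV \le RV^{+}$ together with $RV \preccurlyeq_{\L_\rv} RV^{+}$ forces $RV_E \preccurlyeq_{\L_\rv} RV$.

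To produce the desired $s^{+}$ I would, as in Section 11, work with the exact sequence $1 \to \O^{\times} \to (K^{+})^{\times} \to \Gamma^{+} \to 0$ (or its $G_\O$-analogue for algebraic $\rho$), which splits because the relevant unit group is pure-injective and, by Axiom 4, pure in $(K^{+})^{\times}$. The extra requirement is to fit the splitting to $\E$: one wants a section over $\Gamma_E$ compatible with $\E$, and then to extend it through $\Gamma^{+}$ using that $\Gamma_E$ is pure in $\Gamma^{+}$ (being an elementary submodel of $\Gamma \preccurlyeq \Gamma^{+}$) and pure-injectivity of the unit group. This relativisation — equivalently, a cross-section of $\K^{+}$ whose restriction to $\Gamma_E$ is visible inside $\E$ — is where Axiom 4 on $\E$ is genuinely used, and I expect it to be the main obstacle of the proof; the reduction to the $RV$-sort via Theorem 10.2 and the transport of elementarity through Proposition 11.8 are by comparison routine.
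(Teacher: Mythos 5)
Your reduction is the right one, and it matches what the paper intends: by Theorem 10.2 it suffices to establish $RV_E \preccurlyeq_{\L_\rv} RV$, and since every $\L_3$-formula is an $\L_4$-formula this yields $\E\preccurlyeq_{\L_3}\K$. Passing to a saturated elementary extension $\K^+$, equipping it with a cross-section $s^+$, and pushing elementarity of $(\Gamma_E,k_E)\preccurlyeq(\Gamma^+,k^+)$ through the interpretation of Proposition 11.8 is likewise exactly the mechanism the paper uses to convert ``elementary in $\Gamma$ and $k$'' into ``elementary in $RV$.'' The final chaining step ($RV_E\preccurlyeq RV^+$, $RV\preccurlyeq RV^+$, $RV_E\subseteq RV$ $\Rightarrow$ $RV_E\preccurlyeq RV$) is also correct.

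The genuine gap is the one you flag yourself: the construction of $s^+$ with $\rv\circ s^+(\Gamma_E)\subseteq RV_E$. Your plan is to first produce a $\Z[\sigma]$-module section over $\Gamma_E$ landing inside $RV_E$ (equivalently, to split $1\to G_{k_E}\to G_{RV_E}\to\Gamma_E\to 0$, or at least do so compatibly with $\E$), and then extend to $\Gamma^+$ using purity of $\Gamma_E$ in $\Gamma^+$ and pure-injectivity of the relevant unit group. The extension step is fine. But the base step is not: the pure-injectivity argument of Section 11 requires $\aleph_1$-saturation, and neither $\E$ nor $k_E$ nor $RV_E$ is assumed saturated, so there is no reason the sequence $1\to G_{k_E}\to G_{RV_E}\to\Gamma_E\to 0$ should split. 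Without that splitting, the rest of the argument (the $k_E$-valued angular component of $\xi\in RV_E$, the claim that $(\Gamma_E,k_E)$ with the restricted interpretation formulas produces $RV_E$ rather than some other complement of $(k^+)^\times$ over $\Gamma_E$ inside $RV^+$) does not go through. Also, a small slip: purity of $\O^\times$ (resp.\ $G_\O$) in $K^\times$ (resp.\ $G_K$) comes from torsion-freeness of $\Gamma$ over the appropriate ring, not from Axiom 4; Axiom 4 is what makes $v\colon G_K\to\Gamma$ surjective, i.e.\ gives exactness on the right. So the overall architecture is correct and aligned with the paper, but the single step you identify as ``the main obstacle'' is indeed unresolved, and your proposed route to it (split first over $\E$, then extend) does not work as stated because the splitting over $\E$ is not available without saturation.
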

\begin{thm}
$\K$ is decidable if and only if $\Gamma$ and $k$ are decidable.
\end{thm}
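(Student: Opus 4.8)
The plan is to present $\mathrm{Th}_{\L_3}(\K)$ as the deductive closure of a theory that is recursively enumerable relative to $\mathrm{Th}(\Gamma)$ and $\mathrm{Th}(k)$, and then to quote the elementary fact that a complete, recursively enumerable theory is decidable. The forward direction I would dispose of first: $\Gamma$ and $k$ are sorts of $\L_3$ (and are in any case interpretable in $\K$, cf.\ Proposition 9.3), so reading a sentence in the language of $MODAG$, resp.\ a sentence in the language of difference fields, as an $\L_3$-sentence about the value-group sort, resp.\ the residue-field sort, yields a computable reduction of $\mathrm{Th}(\Gamma)$ and of $\mathrm{Th}(k)$ to $\mathrm{Th}_{\L_3}(\K)$; hence if $\K$ is decidable, so are $\Gamma$ and $k$.

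For the converse, assume $\mathrm{Th}(\Gamma)$ and $\mathrm{Th}(k)$ are decidable. First I would fix a recursive set $T_0$ of $\L_3$-axioms for $\sigma$-henselian multiplicative valued difference fields of equi-characteristic zero satisfying Axiom 1 and Axiom 4. The field, valuation and ``$\sigma(\O)=\O$'' axioms are finite in number; equi-characteristic zero is the usual scheme $1+\cdots+1\neq 0$; Axiom 1 is the scheme (Axiom OM) forcing the induced automorphism to make the value-group sort into a $MODAG$; $\sigma$-henselianity is a scheme with one sentence per pair $(n,d)$, asserting that whenever $(P,a)$ is in $\sigma$-hensel configuration there is $b$ with $P(b)=0$ and $v(b-a)=\gamma(P,a)$; and Axiom 4 can be written as the scheme running over monic $L=\sum_{j\le d} i_j\sigma^j\in\Z[\sigma]$: ``if $L\cdot\gamma=0$ for every $\gamma$ in the value-group sort, then for each such $\gamma$ there is $a$ in the field with $v(a)=\gamma$ and $\vec{\sigma}(a)^{\vec{I}}=1$'' (this is vacuous when $\rho$ is transcendental, and, since the $L$ annihilating $\Gamma$ are then exactly the monic multiples of the operator attached to the minimal polynomial of $\rho$, it is equivalent to Axiom 4 when $\rho$ is algebraic). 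Thus $T_0$ is recursive and $\K\models T_0$.

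Next I would put $T^{\ast}:=T_0\cup\{\varphi^{\Gamma}:\varphi\in\mathrm{Th}(\Gamma)\}\cup\{\psi^{k}:\psi\in\mathrm{Th}(k)\}$, where $\varphi^{\Gamma}$ and $\psi^{k}$ denote the $\L_3$-sentences obtained by relativizing to the value-group sort and to the residue-field sort respectively. Since the value group of $\K$ is $\Gamma$ and its residue field is $k$, we have $\K\models T^{\ast}$. Moreover $T^{\ast}$ is complete: if $\K_1,\K_2\models T^{\ast}$, then (completeness of $\mathrm{Th}(\Gamma)$ and of $\mathrm{Th}(k)$) the value group of each $\K_i$ is elementarily equivalent to $\Gamma$ and the residue field of each $\K_i$ is elementarily equivalent to $k$, hence the two value groups are elementarily equivalent and so are the two residue fields, whence $\K_1\equiv_{\L_3}\K_2$ by Theorem 11.10. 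Therefore $\mathrm{Th}_{\L_3}(\K)$ is exactly the deductive closure of $T^{\ast}$. Finally $\mathrm{Th}(\Gamma)$ and $\mathrm{Th}(k)$, being decidable, are recursively enumerable, and $T_0$ is recursive, so $T^{\ast}$ is recursively enumerable; being also complete, $T^{\ast}$ is decidable, and hence so is $\mathrm{Th}_{\L_3}(\K)$.

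I expect the only genuine obstacle to be the bookkeeping above --- in particular, checking carefully that ``enough constants'' (Axiom 4) really is a recursive scheme in $\L_3$, so that $T_0$, and hence $T^{\ast}$, is recursively enumerable; the remaining steps are soft. As an alternative that sidesteps the direct use of Theorem 11.10, one can argue through the $RV$-sort: Corollary 10.3 reduces decidability of $\K$ to that of $RV$; Proposition 9.3 again gives the forward implication; and for the converse one passes --- harmlessly, since decidability depends only on the theory --- to an elementary extension of $\K$ carrying a cross-section (Theorem 11.6), interprets $RV$ in the two-sorted structure $(\Gamma,k)$ by Proposition 11.8, and applies a Feferman--Vaught argument to conclude that $\mathrm{Th}(\Gamma,k)$, hence $\mathrm{Th}(RV)$, hence $\mathrm{Th}(\K)$, is decidable.
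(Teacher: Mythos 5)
Your proof is correct and fills in what the paper only gestures at (the paper says Theorem 11.12 follows by ``combining Corollary 11.9 with \ldots Corollary 10.3'' but gives no details). The forward direction is exactly right: $\Gamma$ and $k$ are sorts of $\L_3$, so there is an obvious computable reduction. For the converse, your primary argument---exhibit a recursive (hence r.e.) axiomatization $T_0$, relativize $\mathrm{Th}(\Gamma)$ and $\mathrm{Th}(k)$ to the corresponding sorts, and note that the resulting theory $T^\ast$ is complete by Theorem 11.10 and r.e., hence decidable---is the standard way to extract decidability from a relative completeness theorem, and is what the paper implicitly intends. The only nontrivial bookkeeping is, as you say, checking that Axiom 4 can be written as a recursive scheme, and your formulation (quantifying over monic $L\in\Z[\sigma]$ with the antecedent that $L$ annihilates the value-group sort) is correct: any such $L$ is a monic multiple $Q\cdot P^\sigma$, and since $\sigma(1)=1$ one has $(L)a=(Q)(P^\sigma)a=(Q)1=1$ whenever $(P^\sigma)a=1$, so the scheme is equivalent to the paper's single sentence. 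Your alternative route through $RV$ (Corollary 10.3, then pass to an elementary extension with a cross-section via Theorem 11.6, then Proposition 11.8 plus a Feferman--Vaught argument for $(\Gamma,k)$) is the one the paper's ``combining'' remark more literally points to; it buys the ability to avoid writing down a recursive axiom set directly, at the cost of invoking Feferman--Vaught and a saturated extension. Either way the proof is sound.
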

And finally,
\begin{thm}
Let $T$ be the $\L_3$-theory of $\sigma$-henselian multiplicative valued difference fields of equi-characteristic zero satisfying Axiom 1 and Axiom 4, and $\phi(x)$ be an $\L_3$-formula. Then there is an $\L_3$-formula $\varphi(x)$ in which all occurrences of field variables are free, such that
$$T \vdash \phi(x) \iff \varphi(x).$$
\end{thm}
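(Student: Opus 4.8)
The plan is to rerun the argument of Theorem 10.4 in the 3-sorted language $\L_3$, using a cross-section to replace the $RV$-sort by its interpretation in $(\Gamma,k)$. As usual, the asserted relative quantifier elimination is equivalent to the following statement: letting $\varphi$ range over $\L_3$-formulas in which all field variables occur free, and setting $rqftp^{\K}(a,\gamma,\alpha):=\{\varphi:\K\models\varphi(a,\gamma,\alpha)\}$, whenever $\K,\K'\models T$ and $(a,\gamma,\alpha)\in K^l\times\Gamma^m\times k^n$, $(a',\gamma',\alpha')\in K'^l\times\Gamma'^m\times k'^n$ satisfy $rqftp^{\K}(a,\gamma,\alpha)=rqftp^{\K'}(a',\gamma',\alpha')$, then $tp^{\K}(a,\gamma,\alpha)=tp^{\K'}(a',\gamma',\alpha')$. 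So it suffices to establish this.

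First I would pass to $\kappa^+$-saturated elementary extensions of $\K$ and $\K'$ for a large $\kappa$; this affects neither $rqftp$ nor $tp$ of the distinguished tuples and keeps us inside $T$. Such extensions are in particular $\aleph_1$-saturated, and since $T$ contains Axiom 4, the computation in Section 11 proving Theorem 11.6 (pure-injectivity of $\O^\times$, resp.\ of $G_{\O}$, in an $\aleph_1$-saturated model, yielding a splitting of the exact sequence $1\to\O^\times\to K^\times\to\Gamma\to 0$ of $\Z[\sigma]$-modules) in fact equips $\K$ and $\K'$ themselves with cross-sections $s$ and $s'$. Thus we may assume $\K,\K'$ are $\kappa^+$-saturated, $\sigma$-henselian, satisfy Axioms 1 and 4, and carry cross-sections.

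Next I would let $\E$ (resp.\ $\E'$) be a small valued difference subfield of $\K$ (resp.\ $\K'$) containing $a,\gamma,\alpha$ (resp.\ $a',\gamma',\alpha'$) and closed under the cross-section; using equality of $rqftp$ and the saturation of $\K'$, one extends the correspondence between the distinguished tuples to an $\L_3$-isomorphism $\psi\colon\E\to\E'$ commuting with $s$ and $s'$ and elementary on the $\Gamma$- and $k$-sorts (any $\L_3$-statement about an element of $\Gamma_E$ or $k_E$ is, over the generators, one of the formulas recorded in $rqftp$, while the extra residue-field elements produced by the cross-section — the angular components — are matched up one at a time as in Step I of Theorem 9.5, against the background saturation, keeping $\E$ small with value group still $\Gamma_E$). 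Since $\E,\E'$ are closed under cross-sections, Proposition 11.8 realizes $RV_E$ and $RV_{E'}$ as the structures $(\Gamma_E\times k_E^\times)\cup\{(0,0)\}$ and $(\Gamma_{E'}\times k_{E'}^\times)\cup\{(0,0)\}$ interpreted purely in $(\Gamma_E,k_E)$ and $(\Gamma_{E'},k_{E'})$; hence the induced $\psi_\rv\colon RV_E\to RV_{E'}$ is elementary as a partial map between $RV$ and $RV'$, because $\psi_v$ and $\psi_r$ are. So $\psi$ is an admissible isomorphism between small substructures, Theorem 9.5 applies, and the usual back-and-forth built on it forces $tp^{\K}(a,\gamma,\alpha)=tp^{\K'}(a',\gamma',\alpha')$, completing the proof by the reformulation above.

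The main obstacle is confined to the third paragraph: reducing admissibility condition (2) to conditions (3) and (4) requires a cross-section, so one must verify both that a cross-section survives the passage to models saturated enough for the Embedding Theorem (handled via the proof of Theorem 11.6) and that closing a small substructure under it — angular components included — keeps the substructure small with unchanged value group, so that $\psi_\rv$ inherits elementarity from $\psi_v$ and $\psi_r$. Granting this, the remainder is a verbatim copy of the proof of Theorem 10.4.
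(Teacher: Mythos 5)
The overall strategy you outline—pass to saturated models, invoke the Section 11 pure-injectivity computation to get cross-sections, close small substructures under the cross-section so that admissibility condition (2) reduces to (3) and (4) via Proposition 11.8, and then run the Embedding Theorem—is exactly what the paper's terse ``combining Corollary 11.9 with Theorems 10.1, 10.2, 10.4 and Corollary 10.3'' is gesturing at, and you are right to flag the third paragraph as where the content lives. But it is precisely there that the argument has a gap, and in my view it is not a bookkeeping gap that more care would fill.

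The problem is the claim that the angular components ``are matched up one at a time as in Step I of Theorem 9.5.'' Step I of Theorem 9.5 lets you extend an \emph{already admissible} $\psi$ by realizing a residue-field type via saturation; you are free to pick the realization. Here, however, you have committed to $\psi$ commuting with the \emph{fixed} cross-sections $s$ and $s'$, so $\psi(s(\gamma))=s'(\psi_v(\gamma))$ is forced, and hence the image of the angular component $\pi(a/s(v(a)))$ is forced to be $\pi'(\psi(a)/s'(\psi_v(v(a))))$ --- there is nothing left to choose. For $\psi_r$ to be elementary you then need these two forced residue elements to have the same difference-field type, and this is not implied by $rqftp^{\L_3}(a,\gamma,\alpha)=rqftp^{\L_3}(a',\gamma',\alpha')$. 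The residual type of $\pi(a/s(v(a)))$ is invisible to the $\L_3$-quantifier-free type of $a$: for instance, if $\rho$ is transcendental and $a=c\,t^{\gamma_0}\in k((t^\Gamma))$ with $\gamma_0>0$, then for every $\L_{ring,\sigma}$-term $t$ one has $v(t(\vec\sigma(a)))\in\Z[\rho,\rho^{-1}]\gamma_0$ and $\pi(t(\vec\sigma(a)))\in\Q\cup\{0,\infty\}$, since distinct monomials $\vec\sigma(a)^{\vec I}$ have distinct valuations $|\vec I|_\rho\gamma_0$ and the leading coefficient cancels. In particular $rqftp^{\L_3}(a)$ does not know whether $c=\pi(a/t^{\gamma_0})$ is a square in $k$, while the $\L_3$-formula $\exists z\,(z^2=a)$ does depend on exactly that. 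So two elements $a=c\,t^{\gamma_0}$ and $a'=c'\,t^{\gamma_0}$ with $c$ a square and $c'$ a non-square have equal $rqftp^{\L_3}$ but unequal $tp^{\L_3}$, and no admissible isomorphism $\psi$ with $\psi(a)=a'$ can exist (one checks directly that $\rv(a)$ is a square in $RV$ but $\rv(a')$ is not, so condition (2) fails no matter how one arranges things).

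You cannot escape this by choosing a different $d'\in K'$ with $v(d')=\psi_v(v(a))$ so as to match types of angular components (which saturation does allow), because then the resulting partial map $\gamma\mapsto d'_\gamma$ is not a restriction of any cross-section on $\K'$: the angular-component defect $\pi'(d'/s'(\psi_v(\gamma)))$ would have to be consistently divisible, and in the example above it is forced to be a non-square. So the proposed construction of an admissible isomorphism from $\L_3$-data, which is the heart of your third paragraph and of the paper's claim, breaks down. The same phenomenon, incidentally, is why Theorem 10.4 works in $\L_4$ ($\rv(a)$ records the angular component, so $rqftp^{\L_4}$ sees $c$) but the descent to $\L_3$ is not the formal consequence the paper's one-line ``proof'' suggests: relative quantifier elimination down to $(\Gamma,k)$ requires either an angular-component map in the language, or hypotheses (e.g.\ on $k$) strong enough to make such residue choices irrelevant. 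As written, your proof reproduces the paper's intended reduction faithfully, but both share this unaddressed obstruction.
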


\bigskip


\begin{thebibliography}{1}

\bibitem{S}
T.~Scanlon.
\newblock A model complete theory of valued ${D}$-fields.
\newblock {\em Journal of Symbolic Logic}, 65(4):1758-1784, 2000.

\bibitem{BMS}
L.~B$\acute{\mbox{e}}$lair, A.~Macintyre, T.~Scanlon.
\newblock Model theory of the Frobenius on the Witt vectors.
\newblock {\em American Journal of Mathematics}, 129(3):665-721, 2007.

\bibitem{AD}
S.~Azgin, L.~van den Dries.
\newblock Equivalence of valued fields with value preserving automorphism.
\newblock {\em Journal of the Institute of Mathematics of Jussieu}, Available on CJO 01 Jul 2010 doi:10.1017/S1474748010000174.

\bibitem{A}
S.~Azgin.
\newblock Valued fields with contractive automorphism and Kaplansky fields.
\newblock {\em Journal of Algebra}, 324(10):2757-2785, 2010.

\bibitem{KS}
H.~Kikyo, S.~Shelah.
\newblock The strict order property and generic automorphisms.
\newblock {\em Journal of Symbolic Logic}, 67(1):214-216, 2002.

\bibitem{K} 
I.~Kaplansky.
\newblock Maximal fields with valuations.
\newblock {\em Duke Mathematical Journal}, 9:303-321, 1942.

\bibitem{N}
B.~H.~Neumann.
\newblock On ordered division rings.
\newblock{\em Transactions of the American Mathematical Society} 66(1):202-252, 1949.

\bibitem{BK}
S. Basarab, F.~-V.~Kuhlmann.
\newblock An isomorphism theorem for Henselian algebraic extensions of valued fields.
\newblock{\em Manuscripta Mathematica}, 77(1):113-126, 1992.

\bibitem{S2}
T.~Scanlon.
\newblock Quantifier elimination for the relative Frobenius.
\newblock{\em Valuation Theory and Its Applications Volume II}, Fields Institute Communication Series, (AMS, Providence), 323-352, 2003.

\bibitem{F}
J.~Flenner.
\newblock The relative structure of Henselian valued fields.
\newblock PhD Thesis, available at \url{http://www.nd.edu/~jflenner/papers/dissertation.pdf}.

\bibitem{P}
F.~Point.
\newblock Existentially closed ordered difference fields and rings.
\newblock {\em Mathematical Logic Quarterly}, 56(3):239-256, 2010.

\bibitem{D}
Lou van den Dries.
\newblock Quantifier elimination for linear formulas over ordered and valued fields.
\newblock{\em Bull. Soc. Math. Belg. S\'er.}, B 33(1):19-31, 1981

\bibitem{C} 
R.~M.~Cohn.
\newblock {\em Difference Algebra.}
\newblock Interscience Publishers, John Wiley \& Sons, New York-London-Sydney, 1965.

\bibitem{Ch}
G.~Cherlin.
\newblock{\em Model Theoretic Algebra - Selected Topics}, volume 521 of {\em Lecture Notes in Mathematics.}
\newblock Springer-Verlag, Berlin-New York, 1976.

\end{thebibliography}
\end{document}